\tikzset{
  symbol/.style={
    draw=none,
    every to/.append style={
      edge node={node [sloped, allow upside down, auto=false]{$#1$}}}
  }
}
\renewcommand{\subsubsection}[1]{\vspace{2mm}\noindent\textbf{#1.}}
\definecolor{newblue}{RGB}{94,89,144}
\definecolor{newblue2}{cmyk}{1,0.6,0,0.06}
\definecolor{grey}{gray}{0.5}
\newtheorem{theorem}{Theorem}
\newtheorem{proposition}[theorem]{Proposition}
\newtheorem{lemma}[theorem]{Lemma}
\newtheorem{corollary}[theorem]{Corollary}
\newtheorem{problem}[theorem]{Problem}
\theoremstyle{remark}
\newtheorem{definition}[theorem]{Definition}
\newtheorem{remark}[theorem]{Remark}
\newtheorem{assumption}[theorem]{Assumption}
\newtheorem{example}[theorem]{Example}
\numberwithin{figure}{section}
\numberwithin{table}{section}
\numberwithin{equation}{section}
\numberwithin{theorem}{section}
\newlist{deflist}{enumerate}{1}
\setlist[deflist]{label=(\alph{deflisti}),ref={\thetheorem(\alph{deflisti})}}
\newlist{subdeflist}{enumerate}{1}
\setlist[subdeflist]{label=(\roman{subdeflisti}),ref={\thetheorem(\alph{deflisti})(\roman{subdeflisti})}}
\newlist{exlist}{enumerate}{1}
\setlist[exlist]{label=(\alph{exlisti}),ref={\thetheorem(\alph{exlisti})}}
\newlist{remlist}{enumerate}{1}
\setlist[remlist]{label=(\alph{remlisti}),ref={\thetheorem(\alph{remlisti})}}
\crefname{section}{Sec.}{Sec.}
\crefname{table}{Tab.}{Tab.}
\crefname{figure}{Fig.}{Fig.}
\crefname{appendix}{App.}{App.}
\crefname{equation}{}{}
\crefname{theorem}{Thm.}{Thm.}
\crefname{proposition}{Prop.}{Prop.}
\crefname{lemma}{Lem.}{Lem.}
\crefname{corollary}{Cor.}{Cor.}
\crefname{problem}{Prob.}{Prob.}
\crefname{definition}{Def.}{Def.}
\crefname{deflisti}{Def.}{Def.}
\crefname{subdeflisti}{Def.}{Def.}
\crefname{remark}{Rem.}{Rem.}
\crefname{remlisti}{Rem.}{Rem.}
\crefname{note}{Note}{Note}
\crefname{assumption}{Ass.}{Ass.}
\crefname{example}{Ex.}{Ex.}
\crefname{exlisti}{Ex.}{Ex.}
\newcommand{\dd}{\,\mathrm{d}}
\newcommand{\abs}[1]{\left|#1\right|}
\newcommand{\norm}[1]{\left\|#1\right\|}
\newcommand{\B}{\mathcal{B}}
\newcommand{\C}{\mathcal{C}}
\newcommand{\D}{\mathcal{D}}
\newcommand{\F}{\mathcal{F}}
\renewcommand{\H}{\mathcal{H}}
\renewcommand{\L}{\mathcal{L}}
\newcommand{\M}{\mathcal{M}}
\newcommand{\Nbb}{\mathbb{N}}
\newcommand{\N}{\mathcal{N}}
\renewcommand{\P}{P}
\newcommand{\Pp}{\mathbb{P}}
\newcommand{\R}{\mathbb{R}}
\renewcommand{\S}{\mathcal{S}}
\newcommand{\T}{\mathscr{T}}
\newcommand{\V}{\mathcal{V}}
\newcommand{\W}{\mathcal{W}}
\newcommand{\X}{\mathcal{X}}
\newcommand{\Z}{\mathcal{Z}}
\newcommand{\reg}{\mathrm{reg}}
\newcommand{\dual}[1]{\mathcal{R}(#1)}
\newcommand{\dualcoupled}[1]{\underline{\mathcal{R}}(#1)}
\newcommand{\dualbulk}[1]{{\mathcal{R}_\Omega}(#1)}
\newcommand{\dualsurface}[1]{{\mathcal{R}_\Gamma}(#1)}
\newcommand{\A}{\mathscr{A}}
\renewcommand{\b}{\mathscr{B}}
\renewcommand{\c}{\mathscr{C}}
\DeclareMathAlphabet{\mathpzc}{OT1}{pzc}{m}{it} %
\newcommand{\la}{\mathpzc{a}\,}
\newcommand{\lb}{\mathpzc{b}\,}
\newcommand{\lc}{\mathpzc{c}\,}
\newcommand{\nortime}{\partial^\circ}
\newcommand{\md}{\partial^\bullet}
\newcommand{\mdh}{{\partial^\bullet_h}}
\newcommand{\mdell}{{\partial^\bullet_\ell}}
\newcommand{\mdK}{{\partial^\bullet_K}}
\newcommand{\eps}{\varepsilon}
\newcommand{\id}{\mathrm{Id}}
\renewcommand{\tilde}{\widetilde}
\renewcommand{\min}{\mathrm{min}}
\DeclareMathOperator{\diam}{\mathrm{diam}}
\DeclareMathOperator{\meas}{\mathrm{meas}}
\DeclareMathOperator{\rank}{\mathrm{rank}}
\DeclareMathOperator{\trace}{\mathrm{trace}}
\DeclareMathOperator{\interior}{\mathrm{int}}
\DeclareMathOperator{\adv}{\mathrm{adv}}
\newcommand{\dt}{\frac{\mathrm{d}}{\mathrm{d} t}}
\renewcommand{\vec}[1]{{\ensuremath{\boldsymbol{#1}}}}
\newcommand{\utext}{\ensuremath{\mathrm{u}}}
\newcommand{\vtext}{\ensuremath{\mathrm{v}}}
\let\Oldnu\nu
\renewcommand{\nu}{\vec{\Oldnu}}
\let\Oldmu\mu
\renewcommand{\mu}{\vec{\Oldmu}}
\DeclareMathAlphabet{\mathcal}{OMS}{cmsy}{m}{n}
\DeclareSymbolFont{newfont}{OML}{cmm}{m}{it}%
\DeclareMathSymbol{\myvarrho}{3}{newfont}{37}%
\newcounter{rconstno}
\newcommand{\rconst}[1]{\refstepcounter{rconstno}\label{#1}\ref*{#1}}
\begin{document}

\title[Finite element analysis for PDE in evolving domains]{A unified  theory for continuous in time evolving finite element space 
approximations to partial differential equations in evolving domains}

\author[C. M. Elliott]{C. M.\ Elliott\textsuperscript{1}}
\thanks{\textsuperscript{1}Mathematics Institute, Zeeman Building, University of Warwick, Coventry. CV4 7AL. UK. \emph{Email}. C.M.Elliott@warwick.ac.uk\\ 
The work of CME was supported by a Royal Society Wolfson Research Merit Award.}

\author[T. Ranner]{T. Ranner\textsuperscript{2}}
\thanks{\textsuperscript{2}School of Computing, University of Leeds, Leeds. LS2 9JT. UK. %
\emph{Email}. T.Ranner@leeds.ac.uk\\ The work of TR was supported by Engineering and Physical Sciences Research
Council (EPSRC EP/J004057/1) and a Leverhulme Trust Early Career Fellowship.}

 \keywords{evolving finite element spaces, abstract error analysis, parabolic equations on moving domains,
advection-diffusion on evolving surfaces, bulk-surface parabolic equations, evolving surface finite element methods,
evolving bulk finite element methods }

\date{\today}

\begin{abstract}
We develop a unified theory for continuous in time  finite element discretisations of partial 
differential equations posed in evolving domains including  the consideration of equations posed on evolving surfaces 
and bulk domains as well as coupled surface bulk systems. We use  an abstract
variational setting with time dependent function spaces and abstract time dependent finite element spaces. Optimal {a priori} bounds are shown under usual 
assumptions on perturbations of bilinear forms and approximation properties of the abstract finite element spaces. The abstract theory is applied to 
evolving  finite elements in both flat and curved spaces. Evolving bulk and surface isoparametric finite element spaces defined on 
evolving triangulations are defined and developed. These spaces are used to define approximations to  parabolic equations in 
general domains for which the abstract theory is shown to apply.   Numerical experiments are described which confirm the rates of convergence.
\end{abstract}

\maketitle
\begin{center}
\textbf{Dedicated to the memory of John W. Barrett}
\end{center}

  \tableofcontents

\section{Introduction}

{

}

In this paper, we develop a unified theory for finite element discretisations of partial 
differential equations posed in evolving domains including  the consideration of equations posed on evolving surfaces 
and bulk domains as well as coupled surface bulk systems. The discretisation is based on 
evolving finite element spaces defined on evolving triangulations using isoparametric elements. Optimal order a priori error bounds are proven. 
This unification is achieved by using an abstract
variational setting with time dependent abstract  function spaces and time dependent abstract  finite element spaces. Given a time dependent Hilbert triple
\begin{equation*}
  \V(t) \subset \H(t) \subset \V^*(t),
\end{equation*}
the abstract strong formulation is: Find $\utext(t)\in \V(t)$ such that
\begin{subequations}
  \begin{align}
    \md \utext + \L(t) \utext + \omega(t) \utext & = 0 && \mbox{ in  } \V^*(t) \\
    \utext(0) & = \utext_0,
  \end{align}
\end{subequations}
where $\V(t)$ is an appropriate time dependent  Hilbert space with dual $\V^*(t)$, $\md \utext$ is an 
appropriate abstract material derivative arising from the evolution of the spaces,
$\L(t)$ is an abstract elliptic operator satisfying suitable coercivity properties and $\omega(t)$ is a lower order  term arising from evolution of the space.
Similar to the case of time independent function spaces this equation may be written in variational form as
\begin{problem}
  Given $\utext_0 \in \V_0 := \V(0)$, find $\utext \in L^2_\V$ with $\md \utext \in L^2_\H$ such that for almost every $t \in [0,T]$,
  \begin{equation}
    \label{eq:cts-scheme-intro}
    m( t; \md \utext, \varphi )
    + g( t; \utext, \varphi )
    + a( t; \utext, \varphi )
    = 0
    \qquad \mbox{ for all } \varphi \in \V(t),
  \end{equation}
  subject to the initial condition $\utext( 0 ) = \utext_0$.
\end{problem}
Here $L^2_\V$ and $ L^2_\H$ are generalisations of the Bochner spaces $L^2(0,T;V)$ and $L^2(0,T;H)$, where $V \subset H \subset V^*$ is a time-independent Hilbert triple.
The bilinear form $a(\cdot,\cdot)$ is associated with the elliptic operator $\L$ and the bilinear forms $m$ and $g$ are associated 
with the $\H(t)$-inner product and its time derivative (i.e.\ the operator $\omega$). The above problem is also well posed for $\utext_0\in \H(0)$ 
where we seek  $\md \utext \in L^2_{\V^*}$. Here we seek error bounds for sufficiently smooth solutions.

We formulate and analyse  an  abstract finite element discretisation based on a Galerkin ansatz with perturbations of the bilinear forms.
Under assumptions on the approximation of geometry and the approximation of function spaces by abstract finite element spaces optimal order error bounds
are proved. Then we construct realisations of this abstract setting in the context of partial differential equations on evolving domains.
This is achieved by means of the construction of  evolving finite element spaces on evolving triangulations of time dependent surfaces and bulk domains from first principals.
We give a concrete realisation of these spaces based on evolving Lagrange isoparametric finite elements.

This approach is applied to three model problems:  a 
linear parabolic problem in an evolving, bounded bulk domain in $\R^{n+1}$,  a linear parabolic problem on an evolving compact $n$-dimensional surface embedded in $\R^{n+1}$ and a linear parabolic problem coupling problems in an
 evolving, bounded bulk domain in $\R^{n+1}$ to a problem on its boundary.
In each case, we assume that the evolution of the problem domain is prescribed.
The abstract approach is applicable to other situations including nonlinear equations, coupled equations and problems with dynamic boundary conditions.%

\subsection{Background}
\label{sec:background}

Partial differential equations posed on complex evolving domains arise in numerous settings such as surfactant transport on fluid interfaces, 
receptor ligand dynamics on cell surfaces and phase separation on dissolving alloy surfaces \citep{DecEllKor15, EllRanVen15, BarGarNur15, AlpEllTer17, TorMilArr19, ZimTosLan19}. 
Numerical approaches to solve these problems include surface finite elements, implicit surface formulations, diffuse interface approximations, 
trace finite elements, unfitted finite elements, finite volume schemes and mesh free methods. 
See the works of \citet{Dzi88, DziEll07, DecDziEll09, DziEll10, DecEllRan14, DecSty18,  OlsReu16,  BurHanLar16,  LehOlsXu18, LehOls19, GieMul14, DecEllMiu19, SucKuh19} and the review of \citet{DziEll13}.

The problem of solving parabolic problems in evolving bulk domains has been studied for many years.
In particular, we mention the ALE (arbitrary Lagrangian-Eulerian) approach first proposed by
\citet{Hirt1974} in the context of finite difference methods and by \citet{Donea1982,Hughes1981} for finite element methods.
Analysis of a similar problem considering both spatial and temporal discretisation is given by \citet{BadCod06,BofGas04,ForNob99,ForNob04,Gas01,Nob01,GawLew15}.
The analysis by \citet{Bonito2013a,Bonito2013} provides optimal order convergence for a discrete Galerkin in time approach.

The study of finite element methods for partial differential equations posed on surfaces started with the seminal work  of \citet{Dzi88}.
Evolving surface finite elements were introduced and analysed  for  an advection diffusion  equation posed on evolving surfaces by \citet{DziEll07, DziEll12, DziEll13}. In these works optimal error bounds were proved for
piecewise linear finite elements on triangulated surface  evolved using the normal and advective velocity.
In this work we consider a more general parabolic equation on surfaces and discretisations which cover the case of 
 higher-order schemes \citep{Hei05,Dem09,Kov18} and arbitrary Lagrangian-Eulerian methods \citep{EllSty12,EllVen15}.
The discretisations  presented in this paper can be combined with different time stepping schemes \citep{DziLubMan11,DziEll12,LubManVen13,KovPow18}
 to provide a fully discrete scheme.
We also mention the analysis of \citet{KovLiLub17} who study a diffusion equation on the surface which drives the a priori unknown evolution of the surface.

Our abstract approach will be applied to equations posed on evolving bulk domains and coupled bulk surface systems.
See the work of \citet{EllRan13, GroOlsReu14-pp, BurHanLar16} for approaches to stationary surface problems.
\citet{KovLub17} extended the results for piecewise linear elements to  a coupled bulk-surface system.
The functional analytic setting will be the product of spaces over the bulk domain, $\Omega(t)$, and the surface, $\Gamma(t)$.

\subsection{Some partial differential equations on evolving domains}
Let $T > 0$. For $t \in [0,T]$ let $\Omega(t)$ denote an $(n+1)$-dimensional bounded, connected, open subset (domain)  in $\R^{n+1}$, for $n=1,2,3$. We denote by $\Gamma(t)$ the boundary $\partial \Omega(t)$  assumed to be  a  sufficiently smooth compact $n$-dimensional orientable hypersurface with unit normal $\nu(t)$ pointing outward from $\Omega(t)$.
We write $\Omega_0 = \Omega(0)$, $\Gamma_0 = \Gamma(t)$. 

We assume that there exists a sufficiently smooth mapping (called the flow map)
$\Phi_{(\cdot)}(\cdot)\colon [0,T] \times \bar \Omega_0  \rightarrow \R^{n+1}$
satisfying

\begin{enumerate}
\item $\Phi_t(\cdot)$ is a diffeomorphism of $\bar \Omega_0$ onto $\bar \Omega(t)$ for each $t \in [0,T)$;
\item $\Phi_0(\cdot)=\id |_{\bar\Omega_0}$.
\end{enumerate}
We will write $\Phi_{-t} \colon \bar\Omega(t) \to \bar\Omega_0$ for the inverse of $\Phi_t$.
 We define the material velocity $w$ by
\begin{multline*}
w:\bar \Omega_T \left( :=  \bigcup_{t \in (0,T)} \bar \Omega(t) \times \lbrace t \rbrace \right)
 \rightarrow \R^{n+1}, \\
w(x,t):= \frac{\partial \Phi_t}{\partial t}\bigl( (\Phi_t(\cdot))^{-1}(x) \bigr), \quad x \in \bar \Omega(t),
t\in (0,T)
\end{multline*}
which may be written as
\begin{equation} \label{bulk-veldef}
\frac{\partial}{\partial t}\Phi_t(p) = w(\Phi_t(p),t), \; t \in (0,T), \quad
\Phi_0(p) = p.
\end{equation}

We use the terminology that $\Omega(t)$ is an evolving  bulk domain and $\Gamma(t)$ is an evolving  surface domain. In order to define the evolution of the domains as sets we need only to specify  the normal velocity $V\nu=(w\cdot \nu)|_{\Gamma(t)}\nu$ of $\Gamma(t)$. The tangential components of $w= \frac{\partial \Phi_t}{\partial t}$ allows for an  arbitrary parametrisation of the domain.

Illustrative examples of the boundary value problems we wish to consider are:
\begin{enumerate}
\item
Find a time-dependent scalar field $\utext$ such that on an evolving Cartesian bulk domain  $\Omega(t)$%
\begin{subequations}
  \label{eq:bulk-eqn}
  \begin{align}
    \label{eq:bulk-eqn-pde}
    \utext_t+\nabla \cdot (\lb_\Omega \utext) - \nabla \cdot (\la_\Omega \nabla \utext)+\lc_\Omega \utext
    & = 0
    && \mbox{ on } \Omega(t) \\
    \label{eq:bulk-eqn-bc}
    ( \la_\Omega \nabla \utext - (\lb_\Omega -V\nu)\utext) \cdot \nu
    & = 0
    && \mbox{ on } \Gamma(t) \\
    \label{eq:bulk-eqn-initial}
    \utext(0 )
    & = \utext_0
    && \mbox{ on } \Omega_0 := \Omega(0),
  \end{align}
\end{subequations}
where $\la_\Omega$ is a smooth diffusion tensor , $\lb_\Omega$ a smooth vector field 
and $\lc_\Omega$ a smooth scalar field.

\item

Seek a time-dependent scalar surface field $\utext$ such that
\begin{subequations}
  \label{eq:surf-eqn}
  \begin{align}
    \label{eq:surf-eqn-pde}
    \nortime   \utext + \nabla_\Gamma \cdot (\lb_\Gamma \utext)  - \nabla_\Gamma \cdot (\la_\Gamma \nabla_\Gamma \utext)  +\lc_\Gamma\utext
    & = 0
    && \mbox{ on } \Gamma(t) \\
    \label{eq:surf-eqn-initial}
      \utext (0 )
    & =   \utext_0
    && \mbox{ on } \Gamma_0 := \Gamma(0),
  \end{align}
\end{subequations}
where $\la_\Gamma$ is a smooth diffusion tensor which maps the tangent space of $\Gamma$ into itself %
and $\lc_\Gamma$ is a smooth scalar field. Here $\lb_\Gamma$ is a tangential vector  field.  
 We use the notation  $\nortime  \utext$ for the normal time derivative \citep{CerFriGur05, DziEll13a}, which denotes 
 the time derivative of a function along a trajectory on $\Gamma(t)$ moving in the normal direction.  
 This equation is supposed to hold pointwise on $\Gamma(t)$ on trajectories evolving from
$\Gamma(0)$ with velocity $V \nu$.

\item
 Find a time-dependent pair $(\utext,\vtext)$ with $\utext$ a scalar volumetric field and $\vtext$ a scalar surface field such that
\begin{subequations}
  \label{eq:bulksurf-eqn}
  \begin{align}
    \label{eq:bulksurf-bulk}
     \utext_t + \nabla \cdot (\lb_\Omega \utext)  -\nabla \cdot (\la_\Omega \nabla \utext)+\lc_\Omega \utext
& = 0
    && \mbox{ on } \Omega(t) \\
    \label{eq:bulksurf-coupling}
    ( \la_\Omega \nabla \utext - (\lb_\Omega-V \nu) \utext ) \cdot \nu +  \alpha \utext - \beta \vtext
    & = 0
    && \mbox{ on } \Gamma(t) \\
    \label{eq:bulksurf-surf}
    \nortime \vtext + \nabla _\Gamma \cdot ( \lb_\Gamma  \vtext)  - \nabla_\Gamma \cdot (\la_\Gamma  \vtext ) +\lc_\Gamma \vtext + \beta v-\alpha u
    & = 0
    && \mbox{ on } \Gamma(t) \\
    \label{eq:bulksurf-eqn-bulk-initial}
    \utext( \cdot, 0 )
    & = \utext_0
    && \mbox{ on } \Omega_0 := \Omega(0) \\
    \label{eq:bulksurf-eqn-surf-initial}
    \vtext( \cdot, 0 )
    & = \vtext_0
    && \mbox{ on } \Gamma_0 := \Gamma(0),
  \end{align}
\end{subequations}
where  $\alpha$ and $\beta$ are positive constants.
Equation \cref{eq:bulksurf-coupling} couples the equations posed on the domain $\Omega(t)$ and its boundary $\Gamma(t)$.
\end{enumerate}
\begin{remark}

 \begin{remlist}
   \item
  We do not explain the models in the above equations other than to comment that examples may be derived as  conservation laws for  scalar quantities subject to  
    diffusive and advective fluxes with linear  reactions. Note that in these equations the only part of the material velocity $w$   that appears is the normal velocity of $\Gamma(t)$.
    
      \item
    Our formulation of these initial boundary value problems   allows, by means of a tangential 
    velocity field, for a reparametrisation of the evolving domains.  In our definition of material velocity the velocity component tangential to $\Gamma(t)$ is used to define a parametrisation of the domains $\Omega(t)$ and $\Gamma(t)$.
This is an  {\it Arbitrary Lagrangian Eulerian (ALE)}  approach.  A consequence is that the differential operators and the function spaces in the abstract setting depend on this  {\it ALE} velocity. In the discretisations the meshes we use are transported using this velocity.
   
  \item
 Working  with evolving 
triangulations on evolving domains has advantages 
when the domain is unknown and is to be found. For example the 
hypersurface $\Gamma(t)$ may be a free boundary or a surface evolving via  a geometric evolution equation coupled to the
parabolic equation on the bulk or surface domain, \cite{PozSti17, KovLiLub17, BarDecSty17, PozSti18}. Evolving triangulated surfaces are often computed as approximations of geometric evolution flows, \cite{DecDziEll05, BarGarNur19, KovLiLub18}. 

  \item
  The analysis applies to the use of evolving finite element spaces in the case of domains which are time independent. This may be useful when devising schemes which evolve the mesh in time in order to  adjust to the solution. 
 
  \end{remlist}
\end{remark}

\subsection{Contributions}
\label{sec:contributions}
The primary contributions of this work which generalises the evolving surface finite element continuous in time analysis of \cite{DziEll07, DziEll13}  are:-
\begin{itemize}
\item
The analysis is carried out in a generalised  abstract setting  extending  earlier work by allowing for  arbitrary parametrisations and higher order approximations together with domains with boundaries.
\item
We construct  evolving finite element spaces  and derive approximation results for quasi-uniform evolving surface and bulk  triangulations using isoparametric  elements.
\item
We show that the resulting finite element methods satisfy the necessary geometric and interpolation estimates for the abstract theory to apply.
\item
  We give a complete presentation of the  theory for the numerical approximation of parabolic equations on evolving domains using  evolving finite element spaces %
and  give three examples of  concrete  realisations to initial boundary value problems involving time dependent domains in flat and curved space.
In particular the approach to evolving bulk domains with boundary requires a more complex treatment than that of surfaces without boundary and the abstract theory is structured to allow this.

\item
The analysis provides error bounds for higher order isoparametric approximations of  the case of evolving curved hypersurfaces, 
evolving bulk domains and evolving coupled surface- bulk systems.

\end{itemize}

\subsection{Extensions}
\begin{remlist}
\item
All the theory presented in this paper will be applicable with the addition of 
    sufficiently smooth right hand side functionals under natural appropriate assumptions.
\item
    Although we will apply our abstract formulation to settings where we approximate a known continuous domain, our intention is that this framework may also be applied to situations where the evolution of the domain is a priori unknown. 
       One way to interpret this work would be to determine the minimum requirements of an evolving finite element method in order to have a sensible, well-posed method.
       
       \item
In this paper we do not consider the  example of a parabolic PDE posed on an evolving $n$-dimensional curved sub-manifold $\Gamma(t)$ in $\R^{n+1}$ with a moving boundary $\partial \Gamma(t)$ on which there is  an appropriate boundary condition. 
On the other hand, it is possible to take the perspective in our first example on an evolving bulk domain 
that  $\Omega(t)\subset \R^{n+2}$ is  an evolving flat 
  sub-manifold with moving boundary  and work with the definitions from \cref{sec:sfem} (see also \cref{ex:bulk-embed-sfe}), 
   though we choose not to follow this approach.   
\end{remlist}

\subsection{Outline}
 The article is set out in three parts. The first part comprises the abstract theory.   In \cref{sec:abstract-formulation}, we introduce the abstract
 functional analytic setting in which we pose the continuous partial differential equations.
 An abstract analysis of evolving finite element methods is provided in \cref{AbstractNA}.
The second part comprises the construction of and approximation theory for evolving finite element spaces.
We introduce our basic theory for evolving bulk and evolving surface finite elements in \cref{sec:bfem} and \cref{sec:sfem}.
 \Cref{sec:bulk-lift,sec:lift-fem} give technical details on relating the finite elements used in our computational methods to the structures in the underlying continuous problem.
 The third part comprises  applications to three PDE settings.
\Cref{BULKPDE} to \cref{sec:application3} apply these ideas to tackle three model problems.
In  \cref{sec:numerical-results}  results of numerical experiments are given   confirming  the proven error bounds.
Finally, there is an appendix (\cref{sec:faa-di-bruno})
 with a technical result concerning a  Fa\'a di Bruno formula for parametric surfaces.

\clearpage
\part{Abstract theory}\label{abstract}
\section{Abstract formulation}\label{sec:abstract-formulation}
\subsection{Evolving function spaces}
We introduce an abstract functional analytic setting formulated  by \citet{AlpEllSti15a-pp} generalising  the surface parabolic PDE setting of \citet{Vie14}.
One of the key novelties of these works is to provide the basic theory for evolving Bochner-like spaces for evolving Hilbert spaces 
such as $H^1(\Gamma(t))$ in order make a definition similar to ``$L^2(0,T; H^1(\Gamma(t)))$''. Using this formulation, we can pose partial differential equations on evolving domains in a fully rigorous setting.

\begin{remark}
The work of \citet{AlpEllSti15a-pp} uses a Lagrangian formulation where the evolving domain is parametrised over the initial domain.
This matches well with the arbitrary Lagrangian-Eulerian finite element methods we will consider. The setting may be applied to evolving parametrisations of fixed domains. 
A different functional analytic setting may be more appropriate for different discretisation approaches such as the trace finite element method \citep{OlsReuXu14,OlsReu16} or the implicit surface approach \citep{DziEll10}.
\end{remark}
\begin{definition}
  [Compatibility]
  \label{def:compatibility}

  For $t \in [0,T]$, let $\X(t)$ be a separable Hilbert space and denote by $\X_0 := \X(0)$. Let $\phi_t \colon \X_0 \to \X(t)$ be a family of invertible, linear homeomorphisms, with inverse $\phi_{-t} \colon \X(t) \to \X_0$, such that there exists $C_\X > 0$ such that for every $t \in [0,T]$
  \begin{align*}
    \norm{ \phi_t \eta }_{\X(t)} 
    & \le C_\X \norm{ \eta }_{\X_0} 
    && \mbox{ for all } \eta \in \X_0 \\
    \norm{ \phi_{-t} \eta }_{\X_0} 
    & \le C_\X^{-1} \norm{ \eta }_{\X(t)}
    && \mbox{ for all } \eta \in \X(t),
  \end{align*}
  and such that the map $t \mapsto  \norm{ \phi_t \eta }_{\X(t)}$ is continuous for all $\eta \in \X_0$.
  Under these circumstances, we call the pair $( \X(t), \phi_t )_{t \in [0,T]}$ \emph{compatible}. We call the map $\phi_t$ 
  the \emph{push-forward} operator and $\phi_{-t}$ the \emph{pull-back} operator.
\end{definition}

\begin{remark}
  \label{rem:compatible-subsace}
  If $\S(t)$ is a closed subspace in $\H(t)$ for each $t \in [0,T]$ and $\phi_t$ maps $\S_0 := \S(0) \to \S(t)$, 
  then $( \S(t), \phi_t|_{\S_0} )_{t \in [0,T]}$ form a compatible pair.
\end{remark}

\begin{definition}
  [Evolving Hilbert triple]
  \label{def:evolving-hilbert-triple}
  For each $t \in [0,T]$, let $\V(t)$ and $\H(t)$ be real, separable Hilbert spaces with $\V_0 := \V(0)$ and $\H_0 := \H(0)$ such 
  that inclusion $\V(t) \subset \H(t)$ is continuous and dense.
  We will write $\norm{\cdot}_{\V(t)}$ and $\norm{\cdot}_{\H(t)}$ for the norms on $\V(t)$ and $\H(t)$, $(\cdot,\cdot)_{\H(t)}$ for 
  the inner product on $\H(t)$ and $\langle \cdot, \cdot \rangle_{\V^*(t), \V(t)}$ for the pairing of $\V(t)$ with its dual.
  We assume there exists a family of linear homeomorphisms $\phi_t \colon \H_0 \to \H(t)$ such that $( \H(t), \phi_t )_{t \in [0,T]}$ and $( \V(t), \phi_t|_{\V_0} )_{t \in [0,T]}$ are compatible.
  We will write $\phi_t$ for $\phi_t|_{\V_0}$.
  It follows that $\H(t) \subset \V^*(t)$ continuously and densely.
  Under these assumptions, we say that $( \V(t), \H(t), \V^*(t) )_{t \in [0,T]}$ is an \emph{evolving Hilbert triple}.
\end{definition}

For a compatible pair, we can define an equivalent structure to Bochner spaces in an evolving context. For $( \X(t), \phi_t )_{t \in [0,T]}$ a compatible pair, we define $L^2_\X$ to be
\begin{equation}
  \label{eq:L2X}
  L^2_\X := 
  \left\{ 
    \eta \colon [0,T] \to \bigcup_{t \in [0,T]} \X(t) \times \{ t \},
    t \mapsto ( \bar\eta(t), t ) :
    \phi_{-(\cdot)} \bar\eta(\cdot) \in L^2( 0, T; \X_0 )
  \right\},
\end{equation}
with norm
\begin{equation*}
  \norm{ \eta }_{L^2_\X} := \left( \int_0^T \norm{ \bar\eta }_{\X(t)}^2 \dd t \right)^{\frac{1}{2}}.
\end{equation*}
One can show that the space $L^2_\X$ is a separable Hilbert space \citep[Cor.~2.12]{AlpEllSti15a-pp}, $L^2_\X$ is isomorphic to $L^2(0,T;\X_0)$ and
\begin{equation*}
  {C_\X}^{-1} \norm{ \eta }_{L^2_{\X}}
  \le \norm{ \phi_{-(\cdot)} \eta }_{L^2(0,T;\X_0)}
  \le C_\X \norm{ \eta }_{L^2_\X} \qquad \mbox{ for all } \eta \in L^2_\X.
\end{equation*}

For $k \ge 0$, we also define the space of smoothly evolving in time functions by
\begin{align}
  \label{eq:CkX}
  & C^k_\X := \left\{ \eta \in L^2_\X : t \mapsto \phi_{-t} \eta( t ) \in C^k( [0,T], \X_0 ) \right\} \\
  \label{eq:DX}
  & \D_\X( 0, T) := \left\{ \eta \in L^2_\X : t \mapsto \phi_{-t} \eta(t ) \in \D( ( 0,T); \X_0 ) \right\},
\end{align}
where $\D( (0,T), \X_0 )$ is the space of $\X_0$-valued infinitely differentiable functions compactly supported in the interval $(0,T)$.

For $\eta \in C^1_\X$, we can define a strong material derivative which we denote by $\md \eta \in C^0_\X$ by
\begin{equation}
  \label{eq:strong-md}
  \md \eta := \phi_t \left( \dt (\phi_{-t} \eta) \right).
\end{equation}
This is a temporal derivative which takes into account that fact that $\X(t)$ is changing as well as the function $\eta$.

\begin{remark}
  \label{rem:different-phi}
  If $( \X(t), \phi_t^{(1)} )_{t \in [0,T]}$ and $( \X(t), \phi_t^{(2)} )_{t \in [0,T]}$ are both compatible pairs, with $\phi_t^{(1)} \neq \phi_t^{(2)}$, then the spaces $L^2_{(\X,\phi^{(1)})}$ and $L^2_{(\X,\phi^{(2)})}$ induced using each push forward map are the same vector space with distinct but equivalent norms.
  On the other hand, in general, the spaces $C^k_{(\X,\phi^{(1)})}$ and $C^k_{(\X,\phi^{(2)})}$, when they are non-trivial, are different spaces for $k>0$.
  In such cases where ambiguity will occur we will add the push-forward map to the subscript of $C^k_{(\cdot)}$ so no confusion will occur.
\end{remark}

We can define a weak material derivative for which an integration by parts in time formula holds. This  takes into account the evolution of the 
space $\H(t)$ and is often called a transport formula when applied to the derivative of the time dependent inner product. It generalises the notion of the 
Reynold's transport formula, \cite{CerFriGur05}. In order to provide this definition, we require a further assumption on $\H(t)$.

\begin{assumption}
  \label{as:c-exists}
  We shall assume the following for all $\eta_0, \zeta_0 \in \H_0$:
  \begin{align*}
    & \theta( t, \eta_0 ) := \dt \norm{ \phi_t \eta_0 }_{\H(t)}^2 \mbox{ exists classically} \\
    & \eta_0 \mapsto \theta( t, \eta_0 ) \mbox{ is continuous } \\
    & \abs{ \theta( t, \eta_0 + \zeta_0 ) - \theta( t; \eta_0 - \zeta_0 ) }
    \le c \norm{ \eta_0 }_{\H_0} \norm{ \zeta_0 }_{\H_0},
  \end{align*}
  with the constant $c$ independent of $t \in [0,T]$.
\end{assumption}

We define $\hat{g}( t; \cdot, \cdot ) \colon \H_0 \times \H_0 \to \R$ by
\begin{equation*}
  \hat{g}( t; \eta_0, \zeta_0 ) := \frac{1}{4} 
  \big(
  \theta( t, \eta_0 + \zeta_0 ) - \theta( t, \eta_0 - \zeta_0 )
  \big).
\end{equation*}
Then we have a bilinear form $g( t; \cdot, \cdot ) \colon \H(t) \times \H(t) \to \R$ by
\begin{equation}
  \label{eq:oldg}
  g_0( t; \eta, \zeta ) := \hat{g}( t; \phi_{-t} \eta, \phi_{-t} \zeta ).
\end{equation}
It can be shown that the map $t \mapsto g_0( t; \eta, \zeta )$ is measurable for $\eta, \zeta \in L^2_\H$ and 
we have the following bound independently of $t$:
\begin{equation}
  \label{eq:c-bound}
  \abs{ g_0( t; \eta, \zeta ) } \le c \norm{ \eta }_{\H(t)} \norm{ \zeta }_{\H(t)}.
\end{equation}

We say a function $\eta \in L^2_\V$ has a weak material derivative $\md \eta \in L^2_{\V^*}$ if
\begin{equation*}
  \int_0^T \langle \md \eta, \zeta \rangle_{\V^*(t), \V(t)} \dd t
  = \int_0^T -( \eta, \md \zeta )_{\H(t)} + g_0( t; \eta, \zeta ) \dd t
\end{equation*}
for all $\zeta \in \D_\V( 0, T)$.
\begin{remark}
 The weak and strong material derivatives coincide for $\eta \in C^1_\V$.
\end{remark}
\begin{lemma}
  [Abstract transport formula]
  For all $\eta, \zeta \in L^2_\V$ with weak material derivatives $\md \eta, \md \zeta \in L^2_{\V*}$ we have
  \begin{equation}
    \label{eq:abs-transport}
    \dt ( \eta, \zeta )_{\H(t)}
    = \langle \md \eta, \zeta \rangle_{\V^*(t), \V(t)}
    + \langle \md \zeta, \eta \rangle_{\V^*(t), \V(t)}
    + g_0( t; \eta, \zeta ).
  \end{equation}
\end{lemma}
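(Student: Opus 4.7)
The plan is to prove the identity first for smoothly evolving functions via polarisation and the chain rule, and then to extend by density through the pull-back to $\V_0$.

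\emph{Smooth case.} Take $\eta,\varphi \in C^1_\V$, so the strong material derivatives $\md\eta,\md\varphi \in C^0_\H$ coincide with the weak ones and all duality pairings reduce to $\H(t)$-inner products. Setting $\eta_0(t) := \phi_{-t}\eta(\cdot,t)$ and $\varphi_0(t) := \phi_{-t}\varphi(\cdot,t)$, both lie in $C^1([0,T];\V_0)$. The polarisation identity on $\H(t)$ reads
\begin{equation*}
(\eta,\varphi)_{\H(t)} = \tfrac{1}{4}\bigl(\|\eta+\varphi\|_{\H(t)}^2 - \|\eta-\varphi\|_{\H(t)}^2\bigr).
\end{equation*}
I would differentiate each norm-squared by the chain rule: Assumption~\ref{as:c-exists} gives the partial $t$-derivative of $\|\phi_t\zeta\|_{\H(t)}^2$ at frozen $\zeta\in\H_0$ as $\theta(t,\zeta)$, while at fixed $t$ the Fr\'echet derivative of $\zeta\mapsto\|\phi_t\zeta\|_{\H(t)}^2$ acting on $h$ is $2(\phi_t\zeta,\phi_t h)_{\H(t)}$. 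Evaluating along $\zeta(t)=\eta_0(t)\pm\varphi_0(t)$ with velocity $\dot\zeta=\dot\eta_0\pm\dot\varphi_0$, invoking $\phi_t\dot\eta_0=\md\eta$ from \eqref{eq:strong-md}, and subtracting, bilinearity cancels the diagonal contributions while the $\theta$-difference assembles into $\hat g$, leaving exactly $(\md\eta,\varphi)_{\H(t)}+(\eta,\md\varphi)_{\H(t)}+g(t;\eta,\varphi)$.

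\emph{Extension by density.} Write $\W := \{\xi \in L^2_\V : \md\xi \in L^2_{\V^*}\}$ with its graph norm. Pulling back by $\phi_{-t}$ identifies $\W$, up to the equivalent norm of Remark~\ref{rem:different-phi}, with the classical space $\{u \in L^2(0,T;\V_0) : u' \in L^2(0,T;\V_0^*)\}$, in which $C^\infty([0,T];\V_0)$ is dense; pushing forward, approximate $\eta,\varphi$ in $\W$ by $\eta_n,\varphi_n \in C^1_\V$. Integrating the smooth identity over $[s,t] \subset [0,T]$ yields the corresponding transport formula in integrated form. One may pass to the limit: the duality terms converge via the $L^2_{\V^*}\times L^2_\V$ pairing, and the $g$-term converges by the bilinear bound \eqref{eq:c-bound} together with the continuous inclusion $\W \hookrightarrow L^2_\H$. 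Taking Lebesgue points then gives \eqref{eq:abs-transport} for a.e.\ $t$.

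\emph{Principal obstacle.} The substantive technical point is the pair of facts that $C^1_\V$ is dense in $\W$ and that elements of $\W$ admit a $C^0_\H$ representative. These look circular, but are resolved in the standard Lions-lemma manner: first prove the identity for smooth elements, use it applied to differences $\eta_n - \eta_m$ to deduce a uniform $C^0_\H$-bound, pass to a continuous limit, and finally identify that limit with the representative entering \eqref{eq:abs-transport}. Beyond this, the proof is routine given Assumption~\ref{as:c-exists} and \eqref{eq:c-bound}.
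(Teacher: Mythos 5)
The paper does not actually prove this lemma; it defers entirely to \citet[Theorem~2.40]{AlpEllSti15a-pp}. Your proposal therefore fills a gap that the paper leaves open, and the overall strategy you sketch --- polarisation plus chain rule for the smooth case, Lions-type density argument with a uniform $C^0_\H$-bound obtained by applying the smooth identity to differences --- is the standard route and is, in substance, how the cited reference proceeds. The algebra in the smooth case is correct: subtracting the differentiated polarisation identities, the diagonal terms $(\eta,\md\eta)$ and $(\varphi,\md\varphi)$ cancel, the cross terms assemble into $(\md\eta,\varphi)+(\eta,\md\varphi)$, and the $\theta$-differences give $\hat{g}$ by its very definition.

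Two points deserve more care than you give them. First, the chain rule for $t\mapsto\|\phi_t\zeta(t)\|_{\H(t)}^2$ along a $C^1$ curve $\zeta$ requires decomposing a total derivative into a ``frozen-argument'' $t$-derivative (supplied by Assumption~\ref{as:c-exists} as $\theta$) and a Fr\'echet derivative at fixed $t$. Assumption~\ref{as:c-exists} only asserts existence of $\theta(t,\cdot)$ for each $t$ and continuity in the second argument; you would need at least local uniformity in $t$, or joint continuity of $\theta$, before the two partial derivatives can legitimately be summed. This is a genuine hypothesis to verify, not a formality. Second, the claim that the pull-back identifies $\W=\{\eta\in L^2_\V:\md\eta\in L^2_{\V^*}\}$ with the classical Lions space $\{u\in L^2(0,T;\V_0):u'\in L^2(0,T;\V_0^*)\}$ is itself nontrivial: the weak material derivative is defined using $g$ and the $\H(t)$-inner product, while the classical distributional time derivative is defined using the fixed $\H_0$-pairing, and the equivalence of the two passes through an adjoint of $\phi_t$ and does not reduce to the simple norm equivalence of Remark~\ref{rem:different-phi}. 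That identification is precisely what the cited reference works hard to establish, and your density argument implicitly presupposes it. With those two steps spelled out, your proof is complete and matches the reference's approach; without them it is a correct sketch rather than a self-contained argument.
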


\begin{proof}
  See \citet[Thm.~2.40]{AlpEllSti15a-pp}.
\end{proof}

The natural solution space is the space $\W( \V, \V^* )$ given by
\begin{equation}
  \label{eq:space-W}
  \W( \V, \V^* ) := \{ \eta \in L^2_\V : \md \eta \in L^2_{\V^*} \},
\end{equation}
which we equip with the inner product
\[
  ( \eta, \zeta )_{\W( \V, \V* )} = \int_0^T ( \eta, \zeta )_{\V(t)} \dd t + \int_0^T ( \md \eta, \md \zeta )_{\V^*(t)} \dd t.
\]

\begin{assumption}
  \label{ass:evolving-space-equivalence}
  Let $W( \V_0, \V^*_0 )$ be the space given by
  \[
    W( \V_0, \V_0^* ) := \{ \eta \in L^2( 0, T; \V_0 ) : \eta_t \in L^2( 0, T; \V_0^* ) \},
  \]
  equipped with the inner product
  \[
    ( \eta, \zeta )_{W( \V_0, \V^*_0 )} = \int_0^T ( \eta, \zeta )_{\V} \dd t + \int_0^T ( \eta', \zeta' )_{\V^*} \dd t.
  \]
  We assume there is an \emph{evolving space equivalence} between $\W( \V, \V^* )$ and $W( \V_0, \V_0^* )$ that is
  \[
    \eta \in \W( \V, \V^* ) \mbox{ if and only if } \phi_{-(\cdot)} \eta (\cdot ) \in W( \V_0, \V_0^* )
  \]
  and there exists $c_1, c_2 > 0$ such that
  \[
    c_1 \norm{ \phi_{-(\cdot)} \eta (\cdot ) }_{W( \V_0, \V_0^*)}
    \le \norm{ \eta }_{\W( \V, \V^* )}
    \le c_2 \norm{ \phi_{-(\cdot)} \eta (\cdot ) }_{W( \V_0, \V_0^*)}.
  \]
\end{assumption}

\subsection{Abstract formulation of the partial differential equation}
\label{sec:abs-pde}

Let $T > 0$. We assume we are in the setting that we have an evolving Hilbert triple $( \V(t), \H(t), \V^*(t) )_{t \in [0,T]}$ and 
\cref{ass:evolving-space-equivalence,as:c-exists} hold so that we have a weak material derivative, which we denote, \ $\md \eta$ 
for appropriate $\eta$, and a transport formula for the $\H(t)$-inner product.

We assume that we have three time dependent bilinear forms $m, g$, and $a$
\begin{align*}
  m( t; \cdot, \cdot ) \colon & \H(t) \times \H(t) \to \R \\
  g( t; \cdot, \cdot ) \colon & \H(t) \times \H(t) \to \R \\
  a( t; \cdot, \cdot ) \colon & \V(t) \times \V(t) \to \R.
\end{align*}
We consider problems of the following form:
\begin{problem}
  \label{prob:cts-scheme}
  Given $\utext_0 \in \H_0$, find $\utext \in  \W( \V, \V^* )$ %
  such that for almost every $t \in [0,T]$,
  \begin{equation}
    \label{eq:cts-scheme}
    m( t; \md \utext, \zeta )
    + g( t; \utext, \zeta )
    + a( t; \utext, \zeta )
    = 0
    \qquad \mbox{ for all } \zeta \in \V(t),
  \end{equation}
  subject to the initial condition $\utext(0 ) = \utext_0$.
\end{problem}

\begin{remark}
  \label{rem:solution-space}
  The abstract formulation of \citet{AlpEllSti15a-pp} allows for a weaker formulation with solutions in 
  $\W( \V, \V^* )$ \cref{eq:space-W} and initial condition in $\H_0$. However in order to obtain optimal order error bounds we require more smoothness for the solution.  We leave the consideration of error analysis for solutions which are less regular in time to a future work. \end{remark}

In order to make sense of this formulation we restrict to the following assumptions on the bilinear forms holding for each $t \in [0,T]$.
\begin{description}
\item[Assumptions on $m$] First, we assume that $m( t; \cdot, \cdot )$ is symmetric:
  \begin{equation}
    \label{eq:m-symmetric} \tag{M1}
    m( t; \eta, \zeta ) = m( t; \zeta, \eta ) \qquad \mbox{ for } \eta, \zeta \in \H(t).
  \end{equation}
  We assume that there exists $c_{\rconst{M2a}}, c_{\rconst{M2b}} > 0$ such that for all $t \in [0,T]$, we have
  \begin{equation}
    \label{eq:m-bounded} \tag{M2}
    c_{\ref*{M2a}} \norm{ \eta }_{\H(t)} \le \left( m( t; \eta, \eta ) \right)^{1/2} \le c_{\ref*{M2b}} \norm{ \eta }_{\H(t)} \qquad \mbox{ for all } \eta \in \H(t).
  \end{equation}
\item[Assumptions on $g$]
  We assume the bilinear form $g( t; \cdot, \cdot )$ satisfies
  \begin{equation}
    \label{eq:c-formula} \tag{G1}
    \dt m( t; \eta, \zeta ) = m( t; \md \eta, \zeta ) + m( t; \eta, \md \zeta )+ g( t; \eta, \zeta ) \qquad \mbox{ for } \eta, \zeta \in C^1_\H,
  \end{equation}
  and such that there exists $c_{\rconst{c:G2}} > 0$ such that for all $t \in [0,T]$
  \begin{equation}
    \label{eq:c-bounded} \tag{G2}
    \abs{ g( t; \eta, \zeta ) } \le c_{\ref*{c:G2}} \norm{ \eta }_{\H(t)} \norm{ \zeta }_{\H(t)} \qquad \mbox{ for } \eta, \zeta \in \H(t).
  \end{equation}

\item[Assumptions on $a$]
  \begin{enumerate}[label=(\alph*)]
  \item We assume that the map
    \begin{equation}
      \label{eq:a-meas} \tag{A1}
      t \mapsto a( t; \eta, \zeta ) \qquad \mbox{ for } \eta, \zeta \in L^2_\V
    \end{equation}
    is measurable and can be decomposed
    \begin{equation}
      \label{eq:a-decomp} \tag{A2}
      a( t; \eta, \zeta ) = a_s( t; \eta, \zeta ) + a_n( \eta, \zeta ) \qquad \mbox{ for } \eta, \zeta \in L^2_\V,
    \end{equation}
    where $a_s$ and $a_n$ are both measurable bilinear forms:
    \[
      a_s( t; \cdot, \cdot ) \colon \V(t) \times \V(t) \to \R, \qquad
      a_n( t; \cdot, \cdot ) \colon \H(t) \times \V(t) \to \R.
    \]
    We assume that $a_s$ is symmetric and allow $a_n \equiv 0$.
  \item We assume there exists $c_{\rconst{c:A3a}}, c_{\rconst{c:A3b}}, c_{\rconst{c:A4}}, c_{\rconst{c:A5}} > 0$ such that
    \begin{align}
      \label{eq:as-coercive} \tag{A3}
      a_s( t; \eta, \eta )
      & \ge c_{\ref*{c:A3a}} \norm{ \eta }_{\V(t)}^2 - c_{\ref*{c:A3b}} \norm{ \eta }_{\H(t)}^2
      && \mbox{ for } \eta \in \V(t) \\
      \label{eq:as-bounded} \tag{A${}_s$4}
      \abs{ a_s( t; \eta, \zeta ) }
      & \le c_{\ref*{c:A4}} \norm{ \eta }_{\V(t)} \norm{ \zeta }_{\V(t)}
      && \mbox{ for } \eta, \zeta \in \V(t) \\
      \label{eq:an-bounded} \tag{A${}_n$4}
      \abs{ a_n( t; \eta, \zeta ) }
      & \le c_{\ref*{c:A5}} \norm{ \eta }_{\H(t)} \norm{ \zeta }_{\V(t)}
      && \mbox{ for } \eta \in \H(t), \zeta \in \V(t).
    \end{align}
    We note that together \cref{eq:as-bounded} and \cref{eq:an-bounded} imply
    \begin{align}
      \label{eq:a-bounded} \tag{A4}
      \abs{ a( t; \eta, \zeta ) }
      & \le (c_{\ref*{c:A4}}+c_{\ref*{c:A5}}) \norm{ \eta }_{\V(t)} \norm{ \zeta }_{\V(t)}
      && \mbox{ for } \eta, \zeta \in \V(t).
    \end{align}

  \item We assume the existence of bilinear forms $b_s( t; \cdot, \cdot ) \V(t) \times \V(t) \to \R$ and $b_n( t; \cdot, \cdot ) \colon \H(t) \times \V(t) \to \R$ such that
    \begin{align}
      \label{eq:bs-exist} \tag{B${}_s$1}
      \dt a_s( t; \eta, \zeta )
      & = a_s( t; \md \eta, \zeta ) + a_s( t; \eta, \md \zeta ) + b_s( t; \eta, \zeta )
      && \mbox{ for all } \eta, \zeta \in C^1_{\V} \\
      \label{eq:bn-exist} \tag{B${}_n$1}
      \dt a_n( t; \eta, \zeta )
      & = a_n( t; \md \eta, \zeta ) + a_n( \eta, \md \zeta ) + b_n( t; \eta, \zeta )
      && \mbox{ for all } \eta \in C^1_\H, \zeta \in C^1_{\V},
    \end{align}
    and that there exists $c_{\rconst{c:B2a}}, c_{\rconst{c:B2b}} > 0$ such that
    \begin{align}
      \label{eq:bs-bound} \tag{B${}_s$2}
      \abs{ b_s( t; \eta, \zeta ) }
      & \le c_{\ref*{c:B2a}} \norm{ \eta }_{\V(t)} \norm{ \zeta }_{\V(t)}
      && \mbox{ for all } \eta, \zeta \in \V(t) \\
      \label{eq:bn-bound} \tag{B${}_n$2}
      \abs{ b_n( t; \eta, \zeta ) }
      & \le c_{\ref*{c:B2b}} \norm{ \eta }_{\H(t)} \norm{ \zeta }_{\V(t)}
      && \mbox{ for all } \eta \in \H(t), \zeta \in \V(t).
    \end{align}
  \item We define $b( t; \cdot, \cdot ) \colon \V(t) \times \V(t) \to \R$ to be
    \begin{equation}
      \label{eq:b-exist} \tag{B1}
      b( t; \eta, \zeta ) := b_s( t ; \eta, \zeta ) + b_n( t; \eta, \zeta ) \qquad \mbox{ for } \eta, \zeta \in \V(t)
    \end{equation}
    and note that \cref{eq:bs-bound} and \cref{eq:bn-bound} together imply that
    \begin{align}
      \label{eq:b-bound} \tag{B2}
      \abs{ b( t; \eta, \zeta ) }
      & \le ( c_{\ref*{c:B2a}} + c_{\ref*{c:B2b}} ) \norm{ \eta }_{\V(t)} \norm{ \zeta }_{\V(t)}
      && \mbox{ for all } \eta, \zeta \in \V(t).
    \end{align}
  \end{enumerate}

\end{description}

\begin{remark}
  We allow for the case that $a$ is non-symmetric. This is similar to the choice of \citet{AlpEllSti15a-pp} but in contrast to much of the finite element literature (e.g.\ \citet{EllVen15} use different bilinear forms for diffusion and advection terms in a parabolic operator).
\end{remark}

\begin{assumption}
  \label{ass:u0-approx}
  We assume there exists a basis $\{ \eta_j^0 \}_{j \in \Nbb}$ of $\V_0$ and a sequence $\{ \utext_{0N} \}_{N \in \Nbb}$ with $\utext_{0N} \in \textrm{span}\{ \eta_1^0, \ldots, \eta_N^0 \}$ for each $N$ such that there exists $c_1, c_2 > 0$ (which do not depend on $N$ or $\utext_0$) with
  \begin{align*}
    \utext_{0N} & \to \utext_0 && \mbox{ in } \V_0 \\
    \norm{ \utext_{0N} }_{\H_0} & \le c_1 \norm{ \utext_0 }_{\H_0} \\
    \norm{ \utext_{0N} }_{\V_0} & \le c_2 \norm{ \utext_0 }_{\V_0}.
  \end{align*}
\end{assumption}

\begin{remark}\label{rem:ass:u0-approx}
In the case of  $\V_0$ being compactly embedded in $\H_0$, \cref{ass:u0-approx} follows by Hilbert-Schmidt theory 
\end{remark}

\begin{theorem}
  \label{thm:abs-exist}
  Let Assumptions \cref{eq:m-symmetric}, \cref{eq:m-bounded}, \cref{eq:c-formula}, \cref{eq:c-bounded}, \cref{eq:a-meas}, \cref{eq:a-decomp}, \cref{eq:as-coercive}, \cref{eq:as-bounded}, \cref{eq:bs-exist}, \cref{eq:bn-exist}, \cref{eq:bs-bound}, \cref{eq:bn-bound} and \cref{ass:u0-approx} hold.
  Then problem \cref{eq:cts-scheme} has a unique solution which satisfies the stability bound
  \begin{align}\label{Hstab}
   \sup_{t\in [0,T]} \norm{ \utext }_{\H(t)}^2 + \int_0^T  \norm{ \utext }_{\V(t)}^2 \dd t
    & \le c(T) \norm{ \utext_0 }_{\H_0}^2
    \end{align}
    and if $\utext_0\in \V_0$ then
    \begin{align}
    \label{eq:cts-stab}
  \sup_{t\in [0,T]} \norm{ \utext }_{\V(t)}^2 +\int_0^T \norm{ \md \utext }_{\H(t)}^2 \dd t
    & \le c(T) \norm{ \utext_0 }_{\V_0}^2.
  \end{align}
\end{theorem}

\begin{proof}
Employing \cref{ass:u0-approx} existence may be proved  by a Galerkin argument similar to \citet[Thm.~3.6 and 3.13]{AlpEllSti15a-pp}. The  a priori estimates can be shown for Galerkin approximations and then used in standard compactness arguments.
We do not show the details here but indicate the arguments in deriving estimates in  the continuous setting. 

  Testing \cref{eq:cts-scheme} with $\utext$:
  \[
    m( t; \md \utext, \utext ) + g( t; \utext, \utext ) + a( t; \utext, \utext ) = 0.
  \]
  \cref{eq:c-formula} gives after integrating in time 

\[
    m( t; \utext, \utext ) + \int_0^t a( t'; \utext, \utext ) \dd t' = m( 0, \utext_0, \utext_0 ) - \int_0^t g( t', \utext, \utext ) \dd t'.
  \]

Applying \cref{eq:m-bounded}, \cref{eq:as-coercive}, \cref{eq:an-bounded} and \cref{eq:c-bounded}:
  \begin{multline*}
    c_{\ref*{M2a}}^2 \norm{ \utext }_{\H(t)}^2 + \int_0^t ( c_{\ref*{c:A3a}} \norm{ \utext }_{\V(t')}^2 - c_{\ref*{c:A3b}} \norm{ \utext }_{\H(t')}^2 - c_{\ref*{c:A5}} \norm{ \utext }_{\V(t')} \norm{ \utext }_{\H(t')} ) \dd t' \\
    \le c_{\ref*{M2b}}^2 \norm{ \utext_0 }_{\H_0}^2 + c_{\ref*{c:G2}} \int_0^t \norm{ \utext }_{\H(t')}^2 \dd t'.
  \end{multline*}
We infer that
  \[
    c_{\ref*{M2a}}^2 \norm{ \utext }_{\H(t)}^2 + \int_0^t \frac{c_{\ref*{c:A3a}}}{2} \norm{ \utext }_{\V(t')}^2 \dd t
    \le c_{\ref*{M2b}}^2 \norm{ \utext_0 }_{\H_0}^2 + (  c_{\ref*{c:A3b}} + \frac{c_{\ref*{c:A5}}^2}{2 c_{\ref*{c:A3a}}} + c_{\ref*{c:G2}} ) \int_0^t \norm{ \utext }_{\H(t')}^2 \dd t'
  \]
  and applying a Gr\"{o}nwall inequality we see the first stability bound.
  
  The bound \cref{eq:cts-stab} uses the decomposition $a(t;\cdot,\cdot)=a_s(t;\cdot,\cdot)+a_n(t;\cdot,\cdot)$ \cref{eq:a-decomp}. We test \cref{eq:cts-scheme} with $\md \utext$   \[
    m( t; \md \utext, \md \utext ) + g( t; \utext, \md \utext ) + a( t; \utext, \md \utext ) = 0.
  \]
  \cref{eq:bs-exist} and \cref{eq:bn-exist} gives
  \[
    a( \utext, \md \utext )
    = \dt ( \frac{1}{2} a_s( \utext, \utext ) + a_n( \utext, \utext ) )
    - \frac{1}{2} b_s( \utext, \utext ) - b_n( \utext, \utext ) - a_n( \md \utext, \utext ),
  \]
  so we have
  \begin{multline*}
    m( t; \md \utext, \md \utext )
    + \dt ( \frac{1}{2} a_s( t; \utext, \utext ) + a_n( t; \utext, \utext ) ) \\
    = -g( t; \utext, \md \utext )
    + \frac{1}{2} b_s( t; \utext, \utext ) + b_n( t; \utext, \utext ) + a_n( t; \md \utext, \utext ).
  \end{multline*}
  Integrating forwards in time gives
  \begin{align*}
    & \int_0^t m( t'; \md \utext, \md \utext ) \dd t'
    + ( \frac{1}{2} a_s( t; \utext, \utext ) + a_n( t; \utext, \utext ) ) \\
    & = ( \frac{1}{2} a_s( 0; \utext_0, \utext_0 ) + a_n( 0; \utext_0, \utext_0 ) ) \\
    & \qquad + \int_0^t ( -g( t'; \utext, \md \utext )
    + \frac{1}{2} b_s(t';  \utext, \utext ) + b_n( t'; \utext, \utext ) + a_n( t'; \md \utext, \utext ) ) \dd t'.
  \end{align*}
  Applying \cref{eq:as-coercive}, \cref{eq:an-bounded}, and  a Young's inequality we observe:
  \begin{align*}
    & ( \frac{1}{2} a_s( t; \utext, \utext ) + a_n( t; \utext, \utext ) ) \\
    & \ge \frac{c_{\ref*{c:A3a}}}{2} \norm{ \utext }_{\V(t)}^2 - c_{\ref*{c:A3b}} \norm{ \utext }_{\H(t)}^2 - c_{\ref*{c:A5}} \norm{ \utext }_{\V(t)} \norm{ \utext }_{\H(t)} \\
    & \ge \frac{c_{\ref*{c:A3a}}}{2} \norm{ \utext }_{\V(t)}^2 - c_{\ref*{c:A3b}} \norm{ \utext }_{\H(t)}^2
      - ( \frac{c_{\ref*{c:A3a}}}{4} \norm{ \utext }_{\V(t)}^2 + \frac{c_{\ref*{c:A5}}^2}{c_{\ref*{c:A3a}}} \norm{ \utext }_{\H(t)}^2 ) \\
    & = \frac{c_{\ref*{c:A3a}}}{4} \norm{ \utext }_{\V(t)}^2 - ( c_{\ref*{c:A3b}} + \frac{c_{\ref*{c:A5}}^2}{c_{\ref*{c:A3a}}} ) \norm{ \utext }_{\H(t)}^2;
  \end{align*}
  applying \cref{eq:as-bounded} and \cref{eq:an-bounded}, we see
  \[
    ( \frac{1}{2} a_s( 0; \utext_0, \utext_0 ) + a_n( 0; \utext_0, \utext_0 ) )
    \le ( \frac{c_{\ref*{c:A3b}}}{2} + c_{\ref*{c:A5}} ) \norm{ \utext_0 }_{\V_0}^2.
  \]

   From the previous two inequalities with \cref{eq:m-bounded}, \cref{eq:c-bounded}, \cref{eq:bs-bound}, \cref{eq:bn-bound} and \cref{eq:an-bounded}, we infer
  \begin{align*}
    & c_{\ref*{M2a}} \int_0^t \norm{ \md \utext }_{\H(t')}^2 \dd t'
    +\frac{c_{\ref*{c:A3a}}}{4} \norm{ \utext }_{\V(t)}^2 - ( c_{\ref*{c:A3b}} + \frac{c_{\ref*{c:A5}}^2}{c_{\ref*{c:A3a}}} ) \norm{ \utext }_{\H(t)}^2 \\
    & \le
      ( \frac{c_{\ref*{c:A3b}}}{2} + c_{\ref*{c:A5}} ) \norm{ \utext_0 }_{\V_0}^2 \\
    & \qquad
      + \int_0^t ( c_{\ref*{c:G2}} \norm{ \utext }_{\H(t')} \norm{ \md \utext }_{\H(t')}
      + \frac{c_{\ref*{c:B2a}}}{2} \norm{ \utext }_{\V(t')}^2
      + c_{\ref*{c:B2b}} \norm{ \utext }_{\V(t')}^2
      + c_{\ref*{c:A5}} \norm{ \md \utext }_{\H(t')} \norm{ \utext }_{\V(t')} ) \dd t'\\
    &  \le
      ( \frac{c_{\ref*{c:A3b}}}{2} + c_{\ref*{c:A5}} ) \norm{ \utext_0 }_{\V_0}^2
      + \int_0^t ( ( \frac{ 4 c_{\ref*{c:G2}}^2 }{ c_{\ref*{M2a}}}
      + \frac{c_{\ref*{c:B2a}}}{2}
      + c_{\ref*{c:B2b}}
      + \frac{ 4 c_{\ref*{c:A5}}^2 }{ c_{\ref*{M2a}}} ) \norm{ \utext }_{\V(t')}^2 + \frac{c_{\ref*{M2a}}}{2} \norm{ \md \utext }_{\H(t')}^2 ) \dd t'.
  \end{align*}
  Rearranging and applying a Gr\"{o}nwall inequality gives the desired result.
\end{proof}

\begin{remark}
Note that $\utext$ also satisfies the variational form of \cref{eq:cts-scheme}
\begin{align}
  \label{eq:abs-var-form}
  \dt m( t; \utext, \zeta )
  + a( t; \utext, \zeta )
  = m( t; \utext, \md \zeta )
  \qquad \mbox{ for } \zeta \in L^2_{\V} \mbox{ with } \md \zeta \in L^2_{\H}.
\end{align}
\end{remark}

\section{Abstract discretisation analysis}\label{AbstractNA}

In this section we present an abstract discretisation and a numerical analysis. %
We assume that for each $h \in (0, h_0)$ there are Hilbert spaces $( \H_h(t), \norm{ \cdot }_{\H_h(t)} )$ and $( \V_h(t), \norm{ \cdot }_{\V_h(t)} )$ for all $t \in [0,T]$, for some value of $h_0$ fixed throughout this section. %
These spaces are used to help with the stability and error analysis. We assume that all constants are independent of $h \in (0,h_0)$ unless indicated.
Throughout this section the integer $k$ will denote a further discretisation parameter denoting the order of approximation.
Our method will be based in a finite dimensional subspace $\S_h(t) \subset \V_h(t)$.
\begin{itemize}
\item We assume that the evolving Hilbert space $\V_h(t)$ is continuously embedded in $\H_h(t)$ uniformly for $h \in (0,h_0)$:
For the two norms $\norm{ \cdot }_{\H_h(t)}$ and $\norm{ \cdot }_{\V_h(t)}$ there exists a constant $c_{\rconst{c:HhVh}} > 0$ such that for all $h \in (0,h_0)$ and all $t \in (0,T)$, we have
\begin{equation}
  \norm{ \eta_h }_{\H_h(t)} \le c_{\ref*{c:HhVh}} \norm{ \eta_h }_{\V_h(t)} \qquad \mbox{ for all } \eta_h \in \V_h(t).
\end{equation}
\item We assume we have a push forward map $\phi^h_t \colon \H_{h,0} := \H_h(0) \to \H_h(t)$ such that $( \H_h(t), \phi^h_t )_{t \in [0,T]}$ and $( \V_h(t), \phi^h_t |_{\V_{h,0}} )_{t \in [0,T]}$ ($\V_{h,0}:= \V_h(0)$) are compatible pairs (\cref{def:compatibility}) uniformly in $h$:
That is there exists $c_{\rconst{c:pf-h-stab-H}}, c_{\rconst{c:pf-h-stab-V}} > 0$ such that, for all $h \in (0,h_0)$,
\begin{equation}
  \label{eq:uniformly-compatible}
  \begin{aligned}
    c_{\ref*{c:pf-h-stab-H}}^{-1} \norm{ \eta_h }_{\H_{h,0}}
    & \le \norm{ \phi^h_t \eta_h }_{\H_h(t)}
    \le c_{\ref*{c:pf-h-stab-H}} \norm{ \eta_h }_{\H_{h,0}}
    && \mbox{ for all } \eta_h \in \H_{h,0} \\
    c_{\ref*{c:pf-h-stab-V}}^{-1} \norm{ \eta_h }_{\V_{h,0}}
    & \le \norm{ \phi^h_t \eta_h }_{\V_h(t)}
    \le c_{\ref*{c:pf-h-stab-V}} \norm{ \eta_h }_{\V_{h,0}}
    && \mbox{ for all } \eta_h \in \V_{h,0}.
  \end{aligned}
\end{equation}
\item This allows us to define the spaces $L^2_{\H_h}, L^2_{\V_h}$ and $C^1_{\H_h}, C^1_{\V_h}$ as in  \cref{eq:L2X} and  \cref{eq:CkX}.
For $\eta_h\in C^1_{\H_h}$, we  denote by  $\mdh \eta_h$  the (strong) material derivative (c.f. \cref{eq:strong-md}) with respect to the push-forward map $\phi^h_t$  defined by
\begin{equation}
  \label{eq:abs-mdh}
  \mdh \eta_h:=\phi^h_t(\frac{d}{dt}\phi^h_{-t}\eta_h).
\end{equation}

\end{itemize}

\subsection{Abstract discrete method}\label{absfem}

Let $T > 0$ and $h \in (0,h_0)$.
Let $\{ \S_h(t) \}_{t \in [0,T]}$ be an evolving, finite-dimensional space which is a subspace of $\V_h(t)$ at each $t \in [0,T]$ and satisfies $\phi^h_t( \S_{h,0} ) = \S_h(t)$ (where $\S_{h,0} = \S_h(0)$).
Since $\S_h(t)$ is a closed subspace of $\V_h(t)$ it is a Hilbert space and forms a compatible pair $( \S_h(t), \phi^h_t|_{\S_{h,0}} )_{ t \in [0,T]}$ (\cref{rem:compatible-subsace}).
In particular, we have well defined spaces $L^2_{\S_h}$ \cref{eq:L2X} and $C^1_{\S_h}$ \cref{eq:CkX} and the material derivative $\mdh \chi_h$ is well defined for $\chi_h \in C^1_{\S_h}$ (\cref{eq:strong-md,eq:abs-mdh}).

\subsubsection{Basis functions}
Let the dimension of $\S_h(t)$ be $N$ for all $t \in [0,T]$.
We write $\{ \chi_i(0 ) \}_{i=1}^N$ for a basis of $\S_{h,0}$ and push-forward to construct a time dependent  basis $\{ \chi_i(t ) \}_{i=1}^N$ of $\S_h(t)$ by
\begin{equation}
  \label{eq:abs-basis-def}
  \chi_i(t ) = \phi_t^h ( \chi_i(0 ) ).
\end{equation}
The following important transport properties of the basis functions hold.
\begin{lemma}
  \label{lem:basis-trans}
  The material derivative of a basis function is zero,
  \[
    \mdh \chi_j = 0 \qquad \mbox{ for } 1 \le j \le N.
  \]
  Furthermore, any function $\chi_h \in C^1_{\S_h}$, which can be written as $\chi_h = \sum_{j=1}^N \gamma_j(t) \chi_j(t)$, satisfies
  \begin{equation}
    \mdh \chi_h(x,t) = \sum_{i=1}^N \dot\gamma_j(t) \chi_j(\cdot, t)
    \quad \mbox{ for } x \in \Gamma_h(t).
  \end{equation}
\end{lemma}

\begin{proof}
 By definition, it follows that 
\begin{equation}
  \label{eq:abs-basis-transport}
  \mdh \chi_j =\phi_t(\frac{d}{dt}\phi_{-t}\chi_j(t))=\phi_t(\frac{d}{dt}\chi_j(0))= 0
\end{equation}
so that for a decomposition
\[
  \chi_h(t):=\sum_{j=1}^N\gamma_j(t)\chi_j(t)  \qquad \text{ for all } \chi_h\in \S_h(t),
\]
we compute that
\[
  \mdh \chi_h=\sum_{j=1}^N\dot \gamma_j(t)\chi_j(t) \qquad \mbox{ for all } \chi_h\in C^1_{\S_h}.
  \qedhere
\]
\end{proof}

\subsubsection{Discrete bilinear forms}
Let $m_h$ and $a_h$ be two time dependent bilinear forms:
\begin{align*}
  m_h( t; \cdot, \cdot ) & \colon \H_h(t) \times \H_h(t) \to \R \\
  a_h( t; \cdot, \cdot ) & \colon \V_h(t) \times \V_h(t) \to \R.
\end{align*}
We make assumptions similar to those in \cref{sec:abs-pde}.

We assume that $m_h( t; \cdot, \cdot )$ is symmetric:
\begin{equation}
  \label{eq:mh-symmetric} \tag{M$_h$1}
  m_h( t; \eta_h, \zeta_h ) = m_h( t; \zeta_h, \eta_h )
  \qquad \mbox{ for } \eta_h, \zeta_h \in \H_h(t).
\end{equation}
We assume that there exists $c_{\rconst{c:Mh2a}}, c_{\rconst{c:Mh2b}} > 0$ such that for all $t \in [0,T]$, we have
\begin{equation}
  \label{eq:mh-bounded} \tag{M$_h$2}
  c_{\ref*{c:Mh2a}} \norm{ \eta_h }_{\H_h(t)} \le \big( m_h( t; \eta_h, \eta_h ) \big)^{\frac{1}{2}}
  \le c_{\ref*{c:Mh2b}} \norm{ \eta_h }_{\H_h(t)}
  \qquad \mbox{ for } \eta_h \in \H_h(t).
\end{equation}

We assume we have a transport formula for the bilinear form $m_h$: there exists a bilinear form $g_h( t; \cdot, \cdot ) \colon \H_h(t) \times \H_h(t) \to \R$ such that
\begin{equation}
  \label{eq:ch-formula} \tag{G$_h$1}
  \dt m_h( t; \eta_h, \zeta_h )
  = m_h( t; \mdh \eta_h, \zeta_h ) + m_h( t; \eta_h, \mdh \zeta_h ) + g_h( t; \eta_h, \zeta_h )
  \qquad \mbox{ for } \eta_h, \zeta_h \in C^1_{\H_h}.
\end{equation}
We assume that there exists $c_{\rconst{c:Gh2}} > 0$ such that for all $t \in [0,T]$
\begin{equation}
  \label{eq:ch-bounded} \tag{G$_h$2}
  \abs{ g_h( t; \eta_h, \zeta_h ) } \le c_{\ref*{c:Gh2}} \norm{ \eta_h }_{\H_h(t)} \norm{ \zeta_h }_{\H_h(t)}
  \qquad \mbox{ for } \eta_h, \zeta_h \in \H_h(t).
\end{equation}

We assume that the map
\begin{equation}
  \label{eq:ah-meas} \tag{A$_h$1}
  t \mapsto a_h( t; \eta_h, \zeta_h )
  \qquad \mbox{ for } \eta_h, \zeta_h \in L^2_{\V_h},
\end{equation}
is measurable, and there exists constants $c_{\rconst{c:Ah2a}}, c_{\rconst{c:Ah2b}}, c_{\rconst{c:Ah3}} > 0$ such that for all $t \in [0,T]$, we have
\begin{align}
  \label{eq:ah-coercive} \tag{A$_h$2}
  a_h( t; \eta_h, \eta_h )
  & \ge c_{\ref*{c:Ah2a}} \norm{ \eta_h }_{\V_h(t)}^2 - c_{\ref*{c:Ah2b}} \norm{ \eta_h }_{\H_h(t)}^2
  && \mbox{ for } \eta_h \in \V_h(t) \\
  \label{eq:ah-bounded} \tag{A$_h$3}
  \abs{ a_h( t; \eta_h, \zeta_h ) }
  & \le c_{\ref*{c:Ah3}} \norm{ \eta_h }_{\V_h(t)} \norm{ \zeta_h }_{\V_h(t)}
  && \mbox{ for } \eta_h, \zeta_h \in \V_h(t).
\end{align}
We assume a transport formula for the $a_h$ bilinear form, that  there exists a bilinear form $b_h( t; \cdot, \cdot ) \colon \V_h(t) \times \V_h(t) \to \R$ such that
\begin{equation}
  \label{eq:bh-formula} \tag{B$_h$1}
  \dt a_{h}( t; \eta_h, \zeta_h )
  = a_{h}( t; \mdh \eta_h, \zeta_h ) + a_h( t; \eta_h, \mdh \zeta_h ) + b_h( t; \eta_h, \zeta_h )
  \qquad \mbox{ for } \eta_h, \zeta_h \in C^1_{\V_h}
\end{equation}
and that there exists $c_{\rconst{c:Bh2}} > 0$ such that for all $t \in [0,T]$ we have
\begin{equation}
  \label{eq:bh-bounded} \tag{B$_h$2}
  \abs{ b_h( t; \eta_h, \zeta_h ) }
  \le c_{\ref*{c:Bh2}} \norm{ \eta_h }_{\V_h(t)} \norm{ \zeta_h }_{\V_h(t)}
  \qquad \mbox{ for } \eta_h, \zeta_h \in \V_h(t).
\end{equation}

\subsubsection{Abstract discrete variational problem and well posedness}
\label{sec:abs-stab-est}
Motivated by the variational form \cref{eq:abs-var-form}, we consider semi-discrete problems of the following form:
\begin{problem}
  \label{pb:fem}
  Given $U_{h,0} \in \S_{h,0}$, find $U_h \in C^1_{\S_h}$ such that $U_h(0)=U_{h,0}$ and
  \begin{equation}
    \label{eq:fem}
    \dt m_h( t; U_h, \chi_h )
    + a_h( t; U_h, \chi_h )
    = m_h( t; U_h, \mdh \chi_h )
    \qquad \mbox{ for all } \chi_h \in C^1_{\S_h}.
  \end{equation}
\end{problem}

We note that due to Assumption \cref{eq:ch-formula}, the discrete scheme \cref{eq:fem} can be re-written as
\begin{equation}
  \label{eq:abs-weak-form}
  m_h( t; \mdh U_h, \chi_h ) + g_h( t; U_h, \chi_h ) + a_h( t; U_h, \chi_h ) = 0
  \qquad
  \mbox{ for } \chi_h \in C^1_{\S_h}.
\end{equation}

The solution $U_h$ may be written as  a decomposition into the time dependent basis functions $\{ \chi_i \}_{i=1}^N$ of $\{ \S_h(t) \}_{t \in [0,T]}$,
\begin{equation}
  \label{eq:decomp}
  U_h( x, t ) = \sum_{i=1}^N \alpha_i(t) \chi_i( x, t ) \qquad \mbox{ for } x \in \Gamma_h(t),
\end{equation}
where $\alpha(t) = ( \alpha_1(t), \ldots, \alpha_N(t) ) \in \R^N$. Using this notation \cref{pb:fem} is equivalent to finding a solution $\alpha \in C^1( [0,T]; \R^N )$ of the (finite dimensional) system of ordinary differential equations:
\begin{equation}
  \label{eq:matrix-form}
  \dt \big( \M(t) \alpha(t) \big) + \S(t) \alpha(t) = 0,
\end{equation}
where
\begin{equation*}
  \M(t)_{ij} = m_h( t; \chi_j, \chi_i )
  \qquad
  \S(t)_{ij} = a_h( t; \chi_j, \chi_i )
  \qquad
  \mbox{ for } i,j=1,\ldots,N.
\end{equation*}
Here, we have used the fact that $\mdh \chi_i = 0$ for $1 \le i \le N$.

We first wish to show that there exists a solution to our discrete scheme satisfying a stability bound similar to \cref{Hstab} for the continuous case.
Due to our abstract formulation the calculations follow in a similar way to \cref{thm:abs-exist}.

\begin{theorem}
  [Existence and stability of finite element method]
  \label{thm:abs-stab}
  Let Assumptions \cref{eq:mh-symmetric}, \cref{eq:mh-bounded}, \cref{eq:ch-formula}, \cref{eq:ch-bounded}, \cref{eq:ah-meas}, \cref{eq:ah-coercive}, \cref{eq:ah-bounded}, \cref{eq:bh-formula} and \cref{eq:bh-bounded} hold with constants independent of $h \in (0,h_0)$. Then \cref{eq:fem} has a unique solution $U_h \in C^1_{\S_h}$ with $\mdh U_h \in C^0_{\S_h}$ and there exists a constant $C(T)>0$ independent of $h \in (0,h_0)$ such that
  \begin{equation}
    \label{eq:fem-stab}
    \sup_{t \in (0,T)} \norm{ U_h }_{\H_h(t)}^2
    + \int_0^T \norm{ U_h }_{\V_h(t)}^2 \dd t \le C(T) \norm{ U_{h,0} }_{\H_h(0)}^2.
  \end{equation}
\end{theorem}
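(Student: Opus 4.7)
The plan is to first reduce the scheme to a linear system of ODEs in the coefficient vector $\alpha(t)$, deduce existence and uniqueness from classical linear ODE theory, and then obtain the stability bound by testing with the discrete solution itself and applying a Grönwall argument adapted to the evolving inner product.

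First I would work in the basis $\{\chi_i\}_{i=1}^N$ of $\V_h(t)$ obtained by push-forward of a basis of $\V_h(0)$, for which $\mdh \chi_i = 0$. Using the decomposition \eqref{eq:decomp}, the semi-discrete scheme \eqref{eq:fem} is equivalent to the linear system \eqref{eq:matrix-form}, namely $\dt(\M(t)\alpha(t)) + \S(t)\alpha(t) = 0$. By \eqref{eq:mh-symmetric} and \eqref{eq:mh-bounded} the matrix $\M(t)$ is symmetric and uniformly positive definite in $t$, hence invertible with a uniformly bounded inverse. Applying \eqref{eq:ch-formula} to each $\chi_i$ and polarising yields $\dot\M_{ij}(t) = g_h(t;\chi_i,\chi_j)$, which is bounded uniformly by \eqref{eq:ch-bounded}; together with \eqref{eq:ah-meas} and \eqref{eq:ah-bounded} this shows that \eqref{eq:matrix-form} rewrites as $\dot\alpha = A(t)\alpha$ with $A(\cdot)$ measurable and essentially bounded on $[0,T]$. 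Carathéodory theory for linear ODEs then yields a unique absolutely continuous solution $\alpha$ on $[0,T]$; the claimed continuity $U_h \in C^0_{\V_h}$ with $\mdh U_h \in C^0_{\V_h}$ follows from the continuity of the matrix entries of $\M$ and $\S$, where \eqref{eq:bh-formula}/\eqref{eq:bh-bounded} give the analogous identity $\dot\S_{ij}(t) = b_h(t;\chi_i,\chi_j)$ for $\S$.

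Next I would establish \eqref{eq:fem-stab}. Using \eqref{eq:ch-formula} the scheme \eqref{eq:fem} is equivalent to \eqref{eq:abs-weak-form}; testing with $\phi_h = U_h$ and invoking \eqref{eq:ch-formula} once more gives
\begin{equation*}
  \tfrac{1}{2}\dt m_h(t;U_h,U_h)+\tfrac{1}{2}g_h(t;U_h,U_h)+a_h(t;U_h,U_h)=0.
\end{equation*}
Coercivity \eqref{eq:ah-coercive}, the bound \eqref{eq:ch-bounded} on $g_h$, and the norm equivalence \eqref{eq:mh-bounded} then imply
\begin{equation*}
  \dt m_h(t;U_h,U_h) + 2c_4\,\norm{U_h}_{\V_h(t)}^2 \le C\, m_h(t;U_h,U_h),
\end{equation*}
where $C$ depends only on $c_1,c_2,c_3,c_5$. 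Dropping the non-negative gradient term and applying Grönwall's inequality produces $\sup_{t\in(0,T)}\norm{U_h}_{\H_h(t)}^2 \le Ce^{CT}\norm{U_{h,0}}_{\H_h(0)}^2$; keeping the gradient term, integrating over $[0,T]$, and substituting the just-obtained pointwise bound on the right-hand side yields the $L^2_{\V_h}$ contribution. Together these give \eqref{eq:fem-stab} with a constant independent of $h$, since all structural constants are uniform in $h$ by assumption.

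The main technical subtlety is the Grönwall step in the evolving setting: because the inner product $m_h(t;\cdot,\cdot)$ itself depends on $t$ one cannot differentiate $\norm{U_h}_{\H_h(t)}^2$ directly, and the correction term $g_h$ produced by \eqref{eq:ch-formula} must be absorbed using \eqref{eq:ch-bounded} and \eqref{eq:mh-bounded}. Global-in-time existence is not a genuine obstacle: the underlying ODE is linear, so local Carathéodory existence combined with the a priori bound above rules out any loss of regularity over the fixed interval $[0,T]$, and uniformity of every constant in $h$ is automatic from the standing hypothesis that \eqref{eq:mh-symmetric}--\eqref{eq:bh-bounded} hold with constants independent of $h$.
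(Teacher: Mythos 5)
Your proposal is correct and takes essentially the same route as the paper: reduce \eqref{eq:fem} to the linear system \eqref{eq:matrix-form} via the pushed-forward basis with $\mdh\chi_i=0$, invoke $\eqref{eq:mh-bounded}$ and $\eqref{eq:ch-formula}$ to see that $\M(t)$ is invertible and $C^1$, obtain existence and uniqueness from linear ODE theory, and then test with $\phi_h=U_h$ and use the transport identity \eqref{eq:ch-formula} together with coercivity \eqref{eq:ah-coercive} and the bounds \eqref{eq:ch-bounded}, \eqref{eq:mh-bounded} before closing with Grönwall. The only cosmetic differences are that you invoke Carathéodory rather than classical continuous-coefficient theory (both are valid here, since polarising \eqref{eq:ch-formula} and \eqref{eq:bh-formula} with the basis functions in fact shows the coefficient matrices are $C^1$), and that you run Grönwall on the pointwise differential inequality before integrating, whereas the paper integrates first and then applies an integral Grönwall; these are interchangeable.
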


\begin{proof}
  We consider the problem in the matrix form \cref{eq:matrix-form}. Since $\M(\cdot) \in C^1( 0,T; \R^{N \times N})$ \cref{eq:ch-formula} and is invertible \cref{eq:mh-bounded}, this is equivalent to
  \begin{equation}
    \label{eq:4}
    \dot\alpha(t) + \M^{-1}(t) \big( \M'(t) + \S(t) \big) \alpha(t) = 0.
  \end{equation}
  This is a linear system of ordinary equations with $C^0$ coefficients (easily verified).
  Standard theory implies there exists a unique solution $\alpha \in C^1( 0, T; \R^N )$, which can be translated as $U_h \in C^1_{\S_h}$.

  To show the energy bound, we start by testing \cref{eq:fem} with $\chi_h = U_h$:
  \begin{equation*}
    \dt m_h( t; U_h, U_h ) + a_h( t; U_h, U_h ) - m_h( t; U_h, \mdh U_h ) = 0.
  \end{equation*}
  The transport equality \cref{eq:ch-formula} implies that
  \begin{equation*}
    \frac{1}{2} \dt m_h( t; U_h, U_h ) + a_h( t; U_h, U_h )
    = -\frac{1}{2} g_h( t; U_h, U_h ).
  \end{equation*}
 The desired stability bound follows using the same calculations as used in deriving \cref{Hstab}.
 
\end{proof}

\subsection{Abstract lifted finite element spaces}
\label{sec:error-assumptions}
The discrete space $\S_h(t)$ is not assumed to be contained in the continuous space $\V(t)$. This is an example of a ``variational crime'' \citep{StrangBook}. However it is convenient  to prove error bounds in the spaces $\H(t)$ and  $\V(t)$.
To do this we use lifted discrete spaces using an embedding map $\lambda_h(\cdot,t) \colon \S_h(t) \to \S_h^\ell(t) \subset \V(t)$.
The error analysis will relate the solution of \cref{eq:cts-scheme} $\utext$ with a so-called lift $\lambda_h(U_h,t)$
of the discrete solution.

We require two further time dependent Hilbert spaces $\{ \Z_0(t) \}_{t \in [0,T]}$ and $\{ \Z(t) \}_{t \in [0,T]}$ which satisfy $\Z(t) \subset \Z_0(t) \subset \V(t)$ for each $t \in [0,T]$ with the inclusions uniformly continuous.
Furthermore, we assume that $( \Z_0(t), \phi_t|_{\Z_0(0)} )_{t \in [0,T]}$ and $( \Z(t), \phi_t|_{\Z(0)} )_{t \in [0,T]}$ are both compatible pairs \cref{def:compatibility} so that we may define the spaces $L^2_{\Z_0}, L^2_\Z$ \cref{eq:L2X} and $C^1_{\Z_0}, C^1_\Z$ \cref{eq:CkX}.
These spaces are abstractions of spaces of smoother functions.

The link between all the function spaces used in the section is shown in \cref{fig:space-relations}.

\begin{figure}[tb]
  \centering
  \[
    \begin{tikzcd}
      \H(t) \arrow[r, hookleftarrow] \arrow[ddd, leftrightarrow]
      & \V(t) \arrow[r, hookleftarrow]
      & \Z_0(t) \arrow[r,hookleftarrow]
      & \Z(t) \\
      & \S_h^\ell(t) \arrow[u,symbol=\subset] \arrow[d, leftrightarrow]
      & & \\
      & \S_h(t)  \\
      \H_h(t) \arrow[r, hookleftarrow]
      & \V_h(t) \arrow[u, symbol=\supset] \arrow[uuu, leftrightarrow, bend right=30]
      &  &
    \end{tikzcd}
  \]
  \caption{The relationships between different spaces used in this section.
    $\subset$ denotes subspace inclusion,
    $\hookrightarrow$ denotes continuous embedding,
    $\leftrightarrow$ denotes that the lift is a bijection between these spaces.
   }
  \label{fig:space-relations}
\end{figure}

\subsubsection{Lifting operator}
  Let $t \in [0,T]$.
  We assume there is a continuous, bijective, linear function $\lambda_h( \cdot, t ) \colon \H_h(t) \to \H(t)$ with inverse denoted by $\varsigma_h( \cdot, t ) \colon \H(t) \to \H_h(t)$ such that $\lambda_h( \cdot, t )|_{\V_h(t)}$ is also a bijection onto $\V(t)$.
 We will denote $(\cdot)^{\ell} := \lambda_h( \cdot, t )$ and $( \cdot )^{-\ell} := \varsigma_h( \cdot, t )$, i.e. $\eta_h^\ell(\cdot, t) := \lambda_h( \eta_h, t )( \cdot )$ for $\eta_h \in \H_h(t)$ and $\eta^{-\ell}( \cdot, t ) := \varsigma_h( \eta, t ) ( \cdot )$ for $\eta \in \H(t)$.

  We assume that the lifting map is bounded and  that there exists $c_{\rconst{c:L12a}}, c_{\rconst{c:L12b}} > 0$ such that for all $t \in [0,T]$
\begin{align}
  \label{eq:lift-stab-H} \tag{L1}
  c_{\ref*{c:L12a}} \norm{ \eta_h^\ell }_{\H(t)} & \le \norm{ \eta_h }_{\H_h(t)} \le c_{\ref*{c:L12b}} \norm{ \eta^\ell_h }_{\H(t)}
  && \mbox{ for all } \eta_h \in \H_h(t) \\
  \label{eq:lift-stab-V} \tag{L2}
  c_{\ref*{c:L12a}} \norm{ \eta^\ell_h }_{\V(t)} & \le \norm{ \eta_h }_{\V_h(t)} \le c_{\ref*{c:L12b}} \norm{ \eta^\ell_h }_{\V(t)}
  && \mbox{ for all } \eta_h \in \V_h(t).
\end{align}

\begin{remark}
  As a convention, variables denoted by $\ell$ correspond to lifted objects and $-\ell$ to inverse lifted objects. The construction of each depends on the parameter $h$.
\end{remark}

\subsubsection{Lifted push-forward and pull back maps}
  The lift of the discrete space push-forward map induces a new push-forward map on $\H(t)$.
  Let $\phi^\ell_t \colon \H(0) \to \H(t)$ by given by
  \begin{equation}
    \label{eq:phiellt}
    \phi^\ell_t( \eta ) := \lambda_h( \phi^h_t ( \varsigma_h( \eta, 0 ) ), t ) \qquad \mbox{ for } \eta \in \H(0),
  \end{equation}
  with inverse $\phi^\ell_{-t} \colon \H(t) \to \H(0)$ given by
  \begin{equation}
    \label{eq:phiellt-}
    \phi^\ell_{-t}( \eta ) := \lambda_h( \phi^h_{-t}( \varsigma( \eta, t ) ), 0 ) \quad \mbox{ for } \eta \in \H(t),
  \end{equation}
  so that
  \begin{align*}
    {\phi}^\ell_t( \eta_h^\ell ) & = ( \phi^h_t ( \eta_h ) )^\ell && \mbox{ for } \eta_h \in \H_h(0) \\
    {\phi}_{-t}^\ell( \eta_h^\ell ) & = ( \phi^h_{-t}( \eta_h ) )^{\ell} && \mbox{ for } \eta_h \in \H_h(t).
  \end{align*}
  Note that $(\cdot)^\ell$ is a time dependent operator.
  Our assumptions imply that both pairs $\{ \H(t), \phi^\ell_t \}_{t \in [0,T]}$ and $\{ \V(t), \phi^\ell_t|_{\V_0} \}_{t \in [0,T]}$ are compatible uniformly in $h$ (\cref{def:compatibility}). For example, applying that $(\H_h(t), \phi^h_t)_{t \in [0,T]}$ is compatible and \cref{eq:lift-stab-H} gives
  \begin{align*}
    \norm{ \phi^\ell_t \eta }_{\H(t)}
    & \le c \norm{ \phi^h_t ( \eta^{-\ell} ) }_{\H_h(t)}
    \le c \norm{ \eta^{-\ell} }_{\H_{h,0}}
    \le c \norm{ \eta }_{\H_0}
    && \mbox{ for } \eta \in \H_0.
  \end{align*}

  We use the notation $C^1_{(\H,\phi^\ell)}$ and $C^1_{(\V,\phi^\ell)}$ for the spaces of smoothly evolving functions, each with respect to the pull back $\phi^\ell_{-t}$ (c.f. \cref{eq:CkX}).
We recall that the definition of $L^2_\H$ and $L^2_\V$ only does not depend on the choice of push-forward map up to norm equivalence (\cref{rem:different-phi}).
We assume the following inclusions hold:
\begin{align}
  \label{eq:abs-C1-inclusion} \tag{L3}
  C^1_{(\H,\phi)} \cap C^0_\V \subset C^1_{(\H,\phi^\ell)}, \qquad \mbox{ and } \qquad
  C^1_{(\V,\phi)} \cap C^0_{\Z_0} \subset C^1_{(\V,\phi^\ell)}.
\end{align}

\subsubsection{Lifted material derivative}
We denote by $\mdell \eta$ the material derivative for the push-forward map $\phi^\ell_t$ \cref{eq:strong-md}:
\begin{equation}
  \label{eq:lift-md}
  \mdell \eta := \phi_t^\ell\frac{d}{dt}(\phi_{-t}^\ell\eta) \qquad \mbox{ for all } \eta \in C^1_{(\H, \phi^\ell)}.
\end{equation}

This is a different material derivative to the material derivative defined with respect to the push-forward map $\phi^h_t$ \cref{eq:abs-mdh}.
However, as observed in \cite{DziEll13},  our construction implies that  first taking material derivatives and then lifting is the same as first lifting and then taking material derivatives:
\begin{lemma}[Commutation of material derivative and lifting] The following hold:
\label{lem:commutation-md-lift}
\begin{itemize}
\item For all $\eta_h \in C^1_{\H_h}$
\begin{equation}
  \label{eq:abs-md-commute}
  \mdell (\eta_h^\ell) = (\mdh \eta_h)^\ell.
\end{equation}
\item
 \label{lem:abs-lift-C1-inclusions}
  $\eta_h \in C^1_{\H_h}$ if, and only if, $\eta_h^\ell \in C^1_{(\H,\phi^\ell)}$, and $\eta_h \in C^1_{\V_h}$ if, and only if, $\eta_h^\ell \in C^1_{(\V,\phi^\ell)}$.
\end{itemize}
\end{lemma}
\begin{proof}
Indeed, applying the definitions \cref{eq:phiellt} and \cref{eq:phiellt-}, we see
\begin{align*}
  \mdell ( \eta_h^\ell )
  = \phi^\ell_t \dt \bigl( \phi^\ell_{-t} ( \eta_h^\ell ) \bigr)
  = \Biggl( \phi^h_t \Bigl( \bigl( \dt ( \phi^h_{-t} \eta_h )^\ell \bigr)^{-\ell} \Bigr) \Biggr)^\ell
  = \Biggl( \phi^h_t \Bigl( \dt ( \phi^h_{-t} \eta_h ) \Bigr) \Biggr)^\ell
  = ( \mdh \eta_h )^\ell,
\end{align*}
since the lift at time $t=0$ and time derivative commute and $(\cdot)^\ell$ and $(\cdot)^{-\ell}$ are inverses.
By similar reasoning, we have that
  \[
    \mdh (\eta^{-\ell}) = \mdell \eta \qquad \mbox{ for all } \eta \in C^1_{(\H,\phi^\ell)}.
  \]
In particular, applying \cref{eq:lift-stab-H,eq:lift-stab-V}, we have the second statement.
\end{proof}

\subsubsection{Abstract lifted transport formulae}
We assume that we have a transport formula for functions in $C^1_{(\H,\phi^\ell)}$ and $C^1_{(\V,\phi^\ell)}$ for the $m$ and $a$ bilinear forms.
We assume that there exists bilinear forms $g_\ell( t; \cdot, \cdot ) \colon \H(t) \times \H(t) \to \R$ and $b_\ell( t; \cdot,\cdot ) \colon \V(t) \times \V(t) \to \R$ such that
\begin{align}
  \label{eq:ctilde-formula} \tag{G${}_\ell$1}
  \dt m( t; \eta, \zeta )
  & = m( t; \mdell \eta, \zeta ) + m( t; \eta, \mdell \zeta ) + g_\ell( t; \eta, \zeta )
  && \mbox{ for } \eta, \zeta \in C^1_{(\H,\phi^\ell)} \\
  \label{eq:btilde-formula} \tag{B${}_\ell$1}
  \dt a( t; \eta, \zeta )
  & = a( t; \mdell \eta, \zeta ) + a( t; \eta, \mdell \zeta) + b_\ell( t; \eta, \zeta )
  && \mbox{ for } \eta, \zeta \in C^1_{(\V,\phi^\ell)}.
\end{align}
We assume that for these bilinear forms there exists $c_1, c_2 > 0$ such that for all $t \in [0,T]$ and all $h \in (0,h_0)$,
\begin{align}
  \label{eq:ctilde-bounded} \tag{G${}_\ell$2}
  \abs{ g_\ell( t; \eta, \zeta ) }
  & \le c_1 \norm{ \eta }_{\H(t)} \norm{ \zeta }_{\H(t)}
  && \mbox{ for } \eta, \zeta \in \H(t) \\
  \label{eq:btilde-bounded} \tag{B${}_\ell$2}
  \abs{ b_\ell( t; \eta, \zeta ) }
  & \le c_2 \norm{ \eta }_{\V(t)} \norm{ \zeta }_{\V(t)}
  && \mbox{ for } \eta, \zeta \in \V(t).
\end{align}

\subsubsection{Lifted finite element space}
  Let $t \in [0,T]$.
  The lifting process allows us to introduce a new space $\S_h^\ell(t)$ by
  \begin{align}
    \label{eq:abs-Vhell}
    \S_h^\ell(t) & := \{ \chi_h^\ell : \chi_h \in \S_h(t) \}.
  \end{align}
  This is a subspace of $\V(t)$ for each $t \in [0,T]$.

  The lifted discrete space $\S_h^\ell(t)$ is a closed subspace of $\V(t)$ and $\phi^\ell_t( \S_{h,0}^\ell ) = \S_h^\ell(t)$ so we may infer that $( \S_h^\ell(t), \phi^\ell_t|_{\S_{h,0}^\ell} )_{t \in [0,T]}$ is a compatible pair (\cref{rem:compatible-subsace}).
  Hence, the spaces $L^2_{\S_h^\ell}$ \cref{eq:L2X} and $C^1_{\S_h^\ell}$ \cref{eq:CkX} are well defined and we will write $\mdell \chi_h^\ell$ for the material derivative of $\chi_h^\ell \in C^1_{\S_h^\ell}$.
  From our assumptions, in general it does not hold that $\phi_t( \S_{h,0}^\ell ) = \S_h^\ell(t)$.

\subsubsection{Approximation property of $\S_h^\ell(t)$}
  For each $t \in [0,T]$, we assume that there exists a well defined interpolation operator $I_h \colon \Z_0(t) \to \S_h^\ell(t)$ such that there exists a constant $c > 0$ such that for all $t \in [0,T]$
  \begin{align}
    \label{eq:interp-Z0} \tag{I1}
    \norm{ \eta - I_h \eta }_{\H(t)}
    + h \norm{ \eta - I_h \eta }_{\V(t)}
    & \le c h^2 \norm{ \eta }_{\Z_0(t)}
    && \mbox{ for } \eta \in \Z_0(t) \\
    \label{eq:interp-Z} \tag{I2}
    \norm{ \eta - I_h \eta }_{\H(t)}
    + h \norm{ \eta - I_h \eta }_{\V(t)}
    & \le c h^{k+1} \norm{ \eta }_{\Z(t)}
    && \mbox{ for } \eta \in \Z(t).
  \end{align}

  \subsubsection{Assumptions on the geometric approximation}
  Finally, we assume we have the following relations between continuous and discrete bilinear forms.
We assume that there exists constants $c > 0$ such that for all $t \in [0,T]$ the following holds for all $\eta_h, \zeta_h \in \V_h(t)$ with lifts $\eta_h^\ell, \zeta_h^\ell \in \V(t)$ we have
  \begin{align}
    \label{eq:m-error} \tag{P1}
    \abs{ m( t; \eta_h^\ell, \zeta_h^\ell ) - m_h( t; \eta_h, \zeta_h ) }
    & \le c h^{k+1} \norm{ \eta_h^\ell }_{\V(t)} \norm{ \zeta_h^\ell }_{\V(t)} \\
    \label{eq:c-error} \tag{P2}
    \abs{ g_\ell( t; \eta_h^\ell, \zeta_h^\ell ) - g_h( t; \eta_h, \zeta_h ) }
    & \le c h^{k+1} \norm{ \eta_h^\ell }_{\V(t)} \norm{ \zeta_h^\ell }_{\V(t)} \\
    \label{eq:ct-error} \tag{P3}
    \abs{ g_\ell( t; \eta_h^\ell, \zeta_h^\ell ) - g( t; \eta_h, \zeta_h ) }
    & \le c h^{k} \norm{ \eta_h^\ell }_{\V(t)} \norm{ \zeta_h^\ell }_{\V(t)} \\
    \label{eq:a-error} \tag{P4}
    \abs{ a( t; \eta_h^\ell, \zeta_h^\ell ) - a_{h}( t; \eta_h, \zeta_h ) }
    & \le c h^{k} \norm{ \eta_h^\ell }_{\V(t)} \norm{ \zeta_h^\ell }_{\V(t)} \\
    \label{eq:b-error} \tag{P5}
    \abs{ b_\ell( t; \eta_h^\ell, \zeta_h^\ell ) - b_{h}( t; \eta_h, \zeta_h ) }
    & \le c h^{k} \norm{ \eta_h^\ell }_{\V(t)} \norm{ \zeta_h^\ell }_{\V(t)} \\
    \label{eq:bt-error} \tag{P6}
    \abs{ b_\ell( t; \eta_h^\ell, \zeta_h^\ell ) - b( t; \eta_h, \zeta_h ) }
    & \le c h^{k} \norm{ \eta_h^\ell }_{\V(t)} \norm{ \zeta_h^\ell }_{\V(t)}.
  \end{align}
  For $\eta, \zeta \in \Z_0(t)$ with inverse lifts $\eta^{-\ell}, \zeta^{-\ell}$, we have
  \begin{align}
    \label{eq:a-error2} \tag{\ref*{eq:a-error}'}
    \abs{ a( t; \eta, \zeta ) - a_{h}( t; \eta^{-\ell}, \zeta^{-\ell} ) }
    & \le c h^{k+1} \norm{ \eta }_{\Z_0(t)} \norm{ \zeta }_{\Z_0(t)} \\
    \label{eq:b-error2} \tag{\ref*{eq:b-error}'}
    \abs{ b_\ell( t; \eta, \zeta) - b_{h}( t; \eta^{-\ell}, \zeta^{-\ell} ) }
    & \le c h^{k+1} \norm{ \eta }_{\Z_0(t)} \norm{ \zeta }_{\Z_0(t)}.
  \end{align}
  For $\eta \in C^1_{\Z_0}$ and $\zeta \in \Z_0(t)$, with inverse lifts $\eta^{-\ell}$ and $\zeta^{-\ell}$, we have
  \begin{align}
    \label{eq:amd-error2} \tag{P7}
    \abs{ a( t; \mdell \eta, \zeta ) - a_{h}( t; \mdh \eta^{-\ell}, \zeta^{-\ell} ) }
    & \le c h^{k+1} ( \norm{ \eta }_{\Z_0(t)} + \norm{ \md \eta }_{\Z_0(t)} ) \norm{ \zeta}_{\Z_0(t)}.
  \end{align}
  Finally, we assume
  \begin{align}
    \label{eq:md-error} \tag{P8}
    \norm{ \mdell \zeta - \md \zeta }_{\H(t)}
    & \le c h^{k+1} \norm{ \zeta }_{\V(t)}
    && \mbox{ for } \zeta \in C^1_{\V} \\
    \label{eq:mdV-error} \tag{P9}
    \norm{ \mdell \zeta - \md \zeta}_{\V(t)}
    & \le c h^{k} \norm{ \zeta }_{\Z_0(t)}
    && \mbox{ for } \zeta \in C^1_{\Z_0}.
  \end{align}

\subsection{Ritz projection}
\label{sec:ritz}

It is convenient to introduce a Ritz projection which is  a standard approach in the  finite element analysis of evolution equations \citep{Tho06} also applied to problems on evolving surfaces \citep[e.g.][]{DziEll13}. The  Ritz projection is defined  with respect to   modified positive definite bilinear forms $a^\kappa$ and $a_h^\kappa$.

\subsubsection{A new bilinear form}
We know from Assumptions \cref{eq:m-bounded}, \cref{eq:mh-bounded}, \cref{eq:as-coercive}, \cref{eq:an-bounded} and \cref{eq:ah-coercive}, there exists $\kappa > 0$ such that there exists $c > 0$ such that for all $t \in [0,T]$ and all $h \in (0,h_0)$
\begin{align}
  \label{eq:akappa-coercive}
  a( t; \eta, \eta ) + \kappa m( t; \eta, \eta )
  & \ge c \norm{ \eta }_{\V(t)}^2
  && \mbox{ for } \eta \in \V(t) \\
  \label{eq:ahkappa-coercive}
  a_h( t; \eta_h, \eta_h ) + \kappa m_h( t; \eta_h, \eta_h )
  & \ge c \norm{ \eta_h }_{\V_h(t)}^2
  && \mbox{ for } \eta_h \in \V_h(t).
\end{align}
We now take $\kappa$ fixed in the sequel. Thus, we infer that the bilinear forms:
\begin{align*}
  a^\kappa( t; \eta, \zeta )
  & := a( t; \eta, \zeta ) + \kappa m( t; \eta, \zeta )
  && \mbox{ for } \eta, \zeta \in \V(t) \\
  a_h^\kappa( t; \eta_h, \zeta_h )
  & := a_h( t; \eta_h, \zeta_h ) + \kappa m_h( t; \eta_h, \zeta_h )
  && \mbox{ for } \eta_h, \zeta_h \in \V_h(t),
\end{align*}
are uniformly coercive for all $h \in (0,h_0)$.

\pagebreak
\subsubsection{The projection}
\begin{definition}
  The Ritz projection is an operator $\Pi_h \colon \V(t) \to \S_h(t)$.
  For $z \in \V(t)$, $\Pi_h z$ is given as the unique solution of
  \begin{equation}
    \label{eq:ritz}
    a_h^\kappa( t; \Pi_h z, \chi_h )
    = a^\kappa( t; z, \chi_h^\ell )
    \qquad
    \mbox{ for all } \chi_h \in \S_h(t) \mbox{ with lift } \chi_h^\ell \in \S_h^\ell(t).
  \end{equation}
  We denote by $\pi_h z = ( \Pi_h z )^\ell \in \S_h^\ell(t)$.
\end{definition}

We will further assume that there exists a constant $c_{\rconst{c:B3}} > 0$ such that for all $t \in [0,T]$, for $\eta = z - \pi_h z \in \V(t)$ and all $\zeta \in \Z_0(t)$ we have
\begin{equation}
  \label{eq:b-bound2} \tag{B3}
  \abs{ b( t; \eta, \zeta) }
  \le c_{\ref*{c:B3}} \left( \norm{ \eta }_{\H(t)} + h \norm{ \eta }_{\V(t)} + h^{k+1} \norm{ z }_{\Z(t)} \right)
  \norm{ \zeta }_{\Z_0(t)}.
\end{equation}

\begin{remark}\label{B3prime}
In an application it may be possible to prove  that a simpler version of   \cref{eq:b-bound2} is sufficient.  For example in the  case of
 of surfaces without boundary  for all $t \in [0,T]$, for $\eta \in \V(t)$ and all $\zeta \in \Z_0(t)$ we have
 \begin{equation}
  \label{eq:b-bound2prime} \tag{B3'}
  \abs{ b( t; \eta, \zeta) }
  \le c_{\ref*{c:B3}} \norm{ \eta }_{\H(t)} 
  \norm{ \zeta }_{\Z_0(t)}
\end{equation}
holds.
\end{remark}

\subsubsection{A dual problem}
We introduce the dual problem: Given $\xi \in \H(t)$, find $\dual{\xi} \in \V(t)$ such that
\begin{equation}
  \label{eq:dual-problem}
  a^\kappa( t; \eta, \dual{\xi} ) = m( t; \xi, \eta )
  \qquad \mbox{ for } \eta \in \V(t).
\end{equation}
Assumptions on $\kappa$ along with the previous assumptions from \cref{sec:abs-pde} imply that \cref{eq:dual-problem} has a unique solution and we assume the regularity condition that there exists $c > 0$ such that
\begin{equation}
  \label{eq:dual-reg} \tag{R2}
  \norm{ \dual{\xi} }_{\Z_0(t)} \le c \norm{ \xi }_{\H(t)},
\end{equation}
where the constant is independent of $\xi$ and time $t$.

\subsubsection{Ritz error analysis}

\begin{lemma}
  \label{lem:ritz}
  For each $z \in \V(t)$, there exists a unique solution $\Pi_h z$ of \cref{eq:ritz}. There exists a constant $c > 0$ such that for all $h \in (0,h_0)$ and all $t \in [0,T]$ we have
  \begin{equation}
    \label{eq:ritz-stab}
    \norm{ \Pi_h z }_{\V_h(t)} \le c \norm{ z }_{\V(t)}
    \qquad \mbox{ for } z \in \V(t).
  \end{equation}
  Furthermore, there exists a constant $c > 0$ such that for all $t \in [0,T]$ and $h \in (0,h_0)$,
  \begin{equation}
    \label{eq:ritz-bound}
    \norm{ z - \pi_h z }_{\H(t)} + h \norm{ z - \pi_h z }_{\V(t)}
    \le c h^{k+1} \norm{ z }_{\Z(t)}
    \qquad \mbox{ for all } z \in \Z(t).
  \end{equation}
\end{lemma}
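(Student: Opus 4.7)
The plan is to treat the three assertions in order of increasing difficulty: first existence, uniqueness and stability via a Lax--Milgram argument; then the energy error bound by coercivity, a perturbed Galerkin-style orthogonality and the interpolation estimates; and finally the $\H(t)$-norm bound by an Aubin--Nitsche duality argument built on the dual problem \eqref{eq:dual-problem}.

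For existence and uniqueness, I would observe that by \eqref{eq:ahkappa-coercive} and \eqref{eq:ahkappa-bounded} the form $a_h^\kappa(t; \cdot, \cdot)$ is coercive and continuous on the finite-dimensional space $\V_h(t)$, while the functional $\phi_h \mapsto a^\kappa(t; z, \phi_h^\ell)$ is bounded on $\V_h(t)$ by \eqref{eq:akappa-bounded} together with the lift equivalence \eqref{eq:lift-stab-V}. Lax--Milgram then produces a unique $\Pi_h z \in \V_h(t)$. Testing \eqref{eq:ritz} with $\phi_h = \Pi_h z$ and using the lift equivalence once more yields $\norm{\Pi_h z}_{\V_h(t)} \le c \norm{z}_{\V(t)}$.

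For the $\V$-norm error, set $\theta_h = \Pi_h z - (I_h z)^{-\ell} \in \V_h(t)$ with lift $\theta = \pi_h z - I_h z$. From \eqref{eq:ritz} and coercivity I get
\begin{equation*}
  c\, \norm{\theta_h}_{\V_h(t)}^2 \le a_h^\kappa(t; \theta_h, \theta_h) = a^\kappa(t; z - I_h z, \theta) + \bigl[a^\kappa(t; I_h z, \theta) - a_h^\kappa(t; (I_h z)^{-\ell}, \theta_h)\bigr].
\end{equation*}
The first summand is handled by continuity of $a^\kappa$ and the interpolation bound \eqref{eq:interp-Z}, while the bracket is controlled by the geometric perturbation \eqref{eq:akappa-error}, using also $\norm{I_h z}_{\V(t)} \le c \norm{z}_{\Z(t)}$. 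Both give $O(h^k) \norm{z}_{\Z(t)} \norm{\theta_h}_{\V_h(t)}$. Combining with $\norm{z - I_h z}_{\V(t)} \le c h^k \norm{z}_{\Z(t)}$ and the triangle inequality delivers the $\V$-bound.

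The $\H(t)$-norm error bound is where the real work lies. With $e = z - \pi_h z$, taking $\xi = e$ in \eqref{eq:dual-problem} and testing with $\chi = e$ gives $m(t; e, e) = a^\kappa(t; e, \zeta)$, where \eqref{eq:dual-reg} provides $\norm{\zeta}_{\Z_0(t)} \le c \norm{e}_{\H(t)}$. I would split $\zeta = (\zeta - I_h \zeta) + I_h \zeta$. On the interpolation remainder, continuity of $a^\kappa$ together with the $\V$-bound on $e$ already proved and \eqref{eq:interp-Z0} supplies an extra power of $h$, yielding an $O(h^{k+1}) \norm{z}_{\Z(t)} \norm{\zeta}_{\Z_0(t)}$ contribution. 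On $I_h \zeta$ I use the perturbed Galerkin identity that follows from \eqref{eq:ritz}, namely $a^\kappa(t; e, I_h \zeta) = a^\kappa(t; \pi_h z, I_h \zeta) - a_h^\kappa(t; \Pi_h z, (I_h \zeta)^{-\ell})$. The main subtlety is that applying \eqref{eq:akappa-error} directly to this consistency error only gives $O(h^k)$, so I would further decompose $\pi_h z = z - e$ and $I_h \zeta = \zeta - (\zeta - I_h \zeta)$: the leading piece $a^\kappa(t; z, \zeta) - a_h^\kappa(t; z^{-\ell}, \zeta^{-\ell})$ is then bounded by the sharper estimate \eqref{eq:akappa-error2}, since both $z \in \Z(t) \subset \Z_0(t)$ and $\zeta \in \Z_0(t)$, which supplies the full $h^{k+1}$. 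The remaining cross terms each carry either the factor $\norm{e}_{\V(t)} \le c h^k \norm{z}_{\Z(t)}$ from the first part of the proof or $\norm{\zeta - I_h \zeta}_{\V(t)} \le c h \norm{\zeta}_{\Z_0(t)}$, and when paired with the rougher estimate \eqref{eq:akappa-error} these yield the needed additional $h$. Dividing through by the factor $\norm{e}_{\H(t)}$ inherited from \eqref{eq:dual-reg} then closes the argument.
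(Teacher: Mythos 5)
Your proof is correct and follows essentially the same route as the paper: Lax--Milgram gives existence and stability, coercivity with a Galerkin-type orthogonality and the perturbation estimate \eqref{eq:akappa-error} gives the $\V(t)$-norm bound, and an Aubin--Nitsche duality built on \eqref{eq:dual-problem} with the three-way splitting of the consistency term (pulling out the $z,\zeta$ leading piece and hitting it with the sharper estimate \eqref{eq:akappa-error2}) gives the $\H(t)$-norm bound. The only deviation is organisational: the paper packages the two key estimates as bounds on the named functional $F_h(\varphi) = a^\kappa(t; z - \pi_h z, \varphi)$, once tested against $\V_h^\ell(t)$ and once against $\Z_0(t)$, and then reuses these bounds in both the $\V$- and $\H$-norm arguments, whereas you carry out the corresponding estimates inline and use a $\theta + \rho$ decomposition for the $\V$-norm step; the underlying algebra and the appeal to the various perturbation, interpolation and regularity assumptions are the same.
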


\begin{proof}
  Since $a_h^\kappa$ is uniformly coercive \cref{eq:ahkappa-coercive} and bounded (\cref{eq:ah-bounded,eq:mh-bounded}) and
  $a^\kappa$ is bounded (\cref{eq:a-bounded,eq:m-bounded}), standard Lax-Milgram theory gives that there exists a unique solution that satisfies the stability bound \cref{eq:ritz-stab}.

  To show the error bound, we consider the functional $F_h \colon \V(t) \to \R$ given by
  \begin{equation*}
    F_h( \eta ) = a^\kappa( t; z - \pi_h z, \eta ).
  \end{equation*}

  First, note that for $\eta = \chi_h^\ell \in \S_h^\ell(t)$, we can use the definition of $\Pi_h z$ \cref{eq:ritz} to see that
  \begin{equation*}
    F_h( \chi_h^\ell )
    = a^\kappa( t; z - \pi_h z, \chi_h^\ell )
    = a_h^\kappa( t; \Pi_h z, \chi_h ) - a^\kappa( t; \pi_h z, \chi_h^\ell ).
  \end{equation*}
  Then the perturbation estimates \cref{eq:m-error,eq:a-error} and the stability bound \cref{eq:ritz-stab} imply that
  \begin{equation}
    \label{eq:ritz-Fh-1}
    \abs{ F_h( \chi_h^\ell ) }
    \le c h^k \norm{ \Pi_h z }_{\V_h(t)} \norm{ \chi_h^\ell }_{\V(t)}
    \le c h^k \norm{ z }_{\V(t)} \norm{ \chi_h^\ell }_{\V(t)}.
  \end{equation}

  Next, we consider $F_h( \eta )$ for $\eta \in \Z_0$. Then, again using \cref{eq:ritz} we have
  \begin{align*}
    F_h( \eta )
    & = a^\kappa( t; z - \pi_h z, \eta ) \\
    & = a^\kappa( t; z - \pi_h z, \eta - I_h \eta )
      + a^\kappa( t; z - \pi_h z, I_h \eta ) \\
    & = a^\kappa( t; z - \pi_h z, \eta - I_h \eta )
      + \big( a_h^\kappa( t; \Pi_h z, ( I_h \eta )^{-\ell} )
      - a^\kappa( t; \pi_h z, I_h \eta ) \big)
      =: I_1 + I_2.
  \end{align*}
  Using the boundedness of $a^\kappa$ (\cref{eq:a-bounded,eq:m-bounded}) and the interpolation bounds \cref{eq:interp-Z0}, we have
  \begin{align*}
    \abs{ I_1 }
    \le c \norm{ z - \pi_h z }_{\V(t)} \norm{ \eta - I_h \eta }_{\V(t)}
    \le c h \norm{ z - \pi_h z }_{\V(t)} \norm{ \eta }_{\Z_0(t)}.
  \end{align*}
  We split $I_2$ so that together with the perturbation estimates \cref{eq:m-error}, \cref{eq:a-error} and \cref{eq:a-error2} and the interpolation result  we have
  \begin{align*}
    \abs{ I_2 }
    & \le \abs{ a_h^\kappa( t; \Pi_h z, ( I_h \eta - \eta )^{-\ell} )
      - a^\kappa( t; \pi_h z, I_h \eta - \eta ) } \\
    & \qquad + \abs{ a_h^\kappa( t; \Pi_h z - z^{-\ell}, ( \eta )^{-\ell} )
      - a^\kappa( t; \pi_h z - z, \eta ) } \\
    & \qquad + \abs{ a_h^\kappa( t; z^{-\ell}, ( \eta )^{-\ell} )
      - a^\kappa( t; z, \eta ) } \\
    & \le c h^{k+1} \norm{ \Pi_h z }_{\V_h(t)} \norm{ \eta }_{\Z_0(t)}
      + c h^k \norm{ \pi_h z - z }_{\V(t)} \norm{ \eta }_{\V(t)} \\
    & \qquad + c h^{k+1} \norm{ z }_{\Z_0(t)} \norm{ \eta }_{\Z_0(t)}.
  \end{align*}
  Then combining the above estimates with the stability bound \cref{eq:ritz-stab}, we see that
  \begin{equation}
    \label{eq:ritz-Fh-2}
    \begin{aligned}
      \abs{ F_h( \eta ) }
      & \le c \big( h \norm{ z - \pi_h z }_{\V(t)} + h^{k+1} \norm{ z }_{\Z(t)} \big)
      \norm{ \eta }_{\Z_0(t)}.
    \end{aligned}
  \end{equation}

  To show the $\V(t)$-norm error bound, we have
  \begin{equation*}
    a^\kappa( t; z - \pi_h z, z - \pi_h z )
    = a^\kappa( t; z - \pi_h z, z - I_h z ) + F_h( I_h z - \pi_h z ).
  \end{equation*}
  Applying the boundedness (\cref{eq:a-bounded,eq:m-bounded}) and coercivity \cref{eq:akappa-coercive} of $a^\kappa$, the interpolation bound \cref{eq:interp-Z} and the first bound on $F_h$ \cref{eq:ritz-Fh-1} gives
  \begin{align*}
    \norm{ z - \pi_h z }_{\V(t)}^2
     & \le c h^{k} \norm{ z - \pi_h z }_{\V(t)} \norm{ z }_{\Z(t)}
       + c h^k \norm{ z }_{\V(t)} \norm{ I_h z - \pi_h z }_{\V(t)} \\
    & \le c h^{k} \norm{ z - \pi_h z }_{\V(t)} \norm{ z }_{\Z(t)}
       + c h^k \norm{ z }_{\V(t)} \left( \norm{ I_h z - z }_{\V(t)} + \norm{ z - \pi_h z }_{\V(t)} \right).
  \end{align*}
  Using the interpolation bound \cref{eq:interp-Z} and rearranging using a Young's inequality gives
  \begin{equation}
    \label{eq:ritz-V-error}
    \norm{ z - \pi_h z }_{\V(t)} \le c h^k \norm{ z }_{\Z(t)}.
  \end{equation}

  For the $\H(t)$-norm bound, we consider the dual problem \cref{eq:dual-problem} with $\xi = z - \pi_h z \in \H(t)$. Then there exists a unique $\dual{\xi} \in \V(t)$ such that
  \begin{equation*}
    a^\kappa( t; \eta, \dual{\xi} ) = m( t; \xi, \eta )
    \qquad \mbox{ for all } \eta \in \V(t).
  \end{equation*}
  Furthermore, $\dual{\xi} \in \Z_0(t)$ and satisfies \cref{eq:dual-reg}
  \begin{equation}
    \label{eq:ritz-dual-reg}
    \norm{ \dual{\xi} }_{\Z_0(t)} \le c \norm{ z - \pi_h z }_{\H(t)}.
  \end{equation}
  Then we have from \cref{eq:m-bounded} that
  \begin{equation*}
    \norm{ z - \pi_h z }_{\H(t)}^2
    \le c_2 m( t; z - \pi_h z, z - \pi_h z )
    = c_2 a^\kappa( t; z - \pi_h z, \dual{\xi} )
    = c_2 F_h( \dual{\xi} ).
  \end{equation*}
  Then the second bound on $F_h$ \cref{eq:ritz-Fh-2} together with the $\V(t)$-norm bound \cref{eq:ritz-V-error} and the dual regularity estimate \cref{eq:ritz-dual-reg} imply that
  \begin{align*}
    \norm{ z - \pi_h z }_{\H(t)}^2
    & \le \big( c h \norm{ z - \pi_h z }_{\V(t)} + c h^{k+1} \norm{ z }_{\Z(t)} \big)
    \norm{ \dual{\xi} }_{\Z_0(t)} \\
    & \le c h^{k+1} \norm{ z }_{\Z(t)} \norm{ z - \pi_h z }_{\H(t)}.
  \end{align*}
  Rearranging this inequality provides the $\H(t)$-norm bound.
\end{proof}

\subsubsection{Time derivative of Ritz projection}
Since in general the material derivative and Ritz projection do not commute, we must provide a further estimate for this material derivative of the error $z - \pi_h z$. First, we derive an equation for $\mdh \Pi_h z$.

\begin{lemma}
  Let $z \in C^1_{\V}$ then, for each $t \in [0,T]$, we have the equation:
  \begin{multline}
  \label{eq:ritz-md}
  a_{h}^\kappa( t; \mdh \Pi_h z, \chi_h )
  = a^\kappa( t; \mdell z, \chi_h^\ell )
  + b_\ell^\kappa( t; z, \chi_h^\ell ) - b_h^\kappa( t; \Pi_h z, \chi_h ) \\
  \mbox{ for all } \chi_h \in \S_h(t).
  \end{multline}
\end{lemma}

\begin{proof}
  First consider $\chi_h \in C^1_{\S_h}$ and take the time derivative of \cref{eq:ritz}.
  We apply the discrete transport formulae \cref{eq:ch-formula,eq:bh-formula} on the left hand side and the lifted transport formulae \cref{eq:btilde-formula,eq:ctilde-formula} on the right hand side and rearrange to see:
  \begin{multline*}
    a_h^\kappa( t; \mdh \Pi_h z, \chi_h ) \\
    = a^\kappa( t; \mdell z, \chi_h^\ell )
    + a^\kappa( t; z, \mdell \chi_h^\ell ) - a_h^\kappa( t; \Pi_h z, \mdh \chi_h )
    + b_\ell^\kappa( t; z, \chi_h^\ell ) - b_h^\kappa( t; \Pi_h z, \chi_h ).
  \end{multline*}
  Using the fact that for $\chi_h \in C^1_{\S_h}$, we have $\mdh \chi_h \in \S_h(t)$ and $( \mdh \chi_h )^\ell = \mdell \chi_h^\ell$, we can apply \cref{eq:ritz} once more to see that
  \begin{equation*}
    a_h^\kappa( t; \mdh \Pi_h z, \chi_h )
    = a^\kappa( t; \mdell z, \chi_h^\ell )
    + b_\ell^\kappa( t; z, \chi_h^\ell ) - b_h^\kappa( t; \Pi_h z, \chi_h ).
  \end{equation*}

  We can expand this result to arbitrary $\chi_h \in \S_h(t)$ by considering the function $\chi_h^*$ given by $s \mapsto \phi^h_s ( \phi^h_{-t} \chi_h )$ which satisfies $\chi_h^* \in C^1_{\S_h}$ and $\chi_h^*|_{s=t} = \chi_h$.
\end{proof}

\begin{lemma}
  \label{lem:ritz-md}
  For $z \in C^1_{\Z_0}$, we have that $\Pi_h z \in C^1_{\S_h}$ and
  there exists a constants $c > 0$ such that for all $t \in [0,T]$, $h \in (0,h_0)$
  \begin{equation}
    \label{eq:ritz-md-stab}
    \norm{ \mdh \Pi_h z }_{\V_h(t)}
    \le c \big( \norm{ z }_{\Z_0(t)} + \norm{ \md z }_{\V(t)} \big)
    \qquad \mbox{ for all } z \in \Z_0(t).
  \end{equation}
  Furthermore, if $z \in C^1_{\Z}$, then
  \begin{equation}
    \label{eq:ritz-md-error}
    \norm{ \mdell (z - \pi_h z) }_{\H(t)} + h \norm{ \mdell (z - \pi_h z) }_{\V(t)}
    \le c h^{k+1} \big( \norm{ z }_{\Z(t)} + \norm{ \md z }_{\Z(t)} \big).
  \end{equation}
\end{lemma}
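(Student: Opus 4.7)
The plan is to treat the two bounds separately: the stability estimate is a direct energy argument, while the error estimate follows from a careful decomposition combined with a duality argument analogous to the one in Lemma~\ref{lem:ritz}.

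For the stability bound, I would test the characterising identity \eqref{eq:ritz-md0} with $\phi_h = \mdh \Pi_h z$. Using the uniform coercivity \eqref{eq:ahkappa-coercive} on the left-hand side and the boundedness \eqref{eq:akappa-bounded} of $a^\kappa$, \eqref{eq:bkappa-bounded} of $b^\kappa$, \eqref{eq:bhkappa-bounded} of $b_h^\kappa$, and the Ritz stability \eqref{eq:ritz-stab} on the right-hand side, one obtains
\[
  c \norm{\mdh\Pi_h z}_{\V_h(t)}^2 \le c'\big(\norm{\md z}_{\V(t)} + \norm{z}_{\V(t)}\big)\norm{\mdh\Pi_h z}_{\V_h(t)},
\]
from which \eqref{eq:ritz-md-stab} follows.

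For the error bound, my plan is to use the triangle decomposition
\[
  \mdell(z - \pi_h z) = (\mdell z - \md z) + (\md z - \pi_h(\md z)) + (\pi_h(\md z) - \mdell\pi_h z),
\]
noting that $\mdell\pi_h z = (\mdh\Pi_h z)^\ell$ by compatibility of the lifted push-forward. The first term is controlled by \eqref{eq:md-error} and \eqref{eq:mdV-error}; the second by applying Lemma~\ref{lem:ritz} to $\md z \in \Z(t)$. For the third, set $\tilde e := \Pi_h(\md z) - \mdh\Pi_h z \in \V_h(t)$ and subtract the defining Ritz equation for $\Pi_h(\md z)$ from \eqref{eq:ritz-md0} to obtain
\[
  a_h^\kappa(t;\tilde e,\phi_h) = -\bigl(b^\kappa(t; z,\varphi_h) - b_h^\kappa(t;\Pi_h z,\phi_h)\bigr),\qquad \varphi_h = \phi_h^\ell.
\]
Splitting $b^\kappa(t;z,\varphi_h) - b_h^\kappa(t;\Pi_h z,\phi_h) = b^\kappa(t;z-\pi_h z,\varphi_h) + \bigl[b^\kappa(t;\pi_h z,\varphi_h) - b_h^\kappa(t;\Pi_h z,\phi_h)\bigr]$ and using the Ritz error bound \eqref{eq:ritz-bound} together with the perturbation estimates \eqref{eq:bkappa-error}, \eqref{eq:bkappat-error}, an energy estimate with $\phi_h = \tilde e$ yields the $\V$-norm bound $\norm{\tilde e^\ell}_{\V(t)} \le c h^k \norm{z}_{\Z(t)}$.

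The main obstacle is the $\H$-norm bound on $\tilde e^\ell$, where I need to gain an extra power of $h$. Here I would run the duality argument in the style of Lemma~\ref{lem:ritz}: pick $\xi = \tilde e^\ell$ and solve the dual problem \eqref{eq:dual-problem} for $\zeta \in \Z_0(t)$ with $\norm{\zeta}_{\Z_0(t)} \le c\norm{\tilde e^\ell}_{\H(t)}$, so that $\norm{\tilde e^\ell}_{\H(t)}^2 \lesssim a^\kappa(t;\tilde e^\ell,\zeta)$. Splitting $\zeta = (\zeta - I_h\zeta) + I_h\zeta$, the first piece is handled by \eqref{eq:interp-Z0} and the $\V$-norm bound on $\tilde e^\ell$ just obtained, yielding $O(h^{k+1})$. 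For the second piece I insert the Galerkin equation with $\phi_h = (I_h\zeta)^{-\ell}$, producing the terms from the $\V$-estimate but now tested against $I_h\zeta$; the key point is that each summand can now be made $O(h^{k+1})$ by peeling off $\zeta$ from $I_h\zeta$ and using the $\H$-norm Ritz bound \eqref{eq:ritz-bound}, the sharper perturbation estimate \eqref{eq:akappa-error2}, and the dual identity $a^\kappa(t;\chi,\zeta) = m(t;\tilde e^\ell,\chi)$ (which upgrades $\V$-norm bounds of $\chi$ into $\H$-norm bounds against $\tilde e^\ell$). Collecting these estimates and absorbing one factor of $\norm{\tilde e^\ell}_{\H(t)}$ gives the desired $O(h^{k+1})$ bound, completing \eqref{eq:ritz-md-error}.
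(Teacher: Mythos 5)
Your stability argument is essentially identical to the paper's: testing \eqref{eq:ritz-md0} with $\phi_h = \mdh\Pi_h z$, using coercivity \eqref{eq:ahkappa-coercive} on the left and the boundedness of $a^\kappa$, $b^\kappa$, $b_h^\kappa$ together with \eqref{eq:ritz-stab} on the right.

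For the error bound your route differs genuinely from the paper's. The paper works directly with the functional $T_h(\varphi) = a^\kappa(t;\mdell(z-\pi_h z),\varphi)$, proving two abstract bounds for $T_h$ (on test functions in $\V_h^\ell$ and in $\Z_0$) and then deducing the $\V$- and $\H$-norm errors from them; the decomposition is carried out inside $T_h$ by repeatedly subtracting and adding $\varphi$ and $z$. You instead decompose the error itself into $(\mdell z - \md z) + (\md z - \pi_h(\md z)) + \tilde e^\ell$ with $\tilde e := \Pi_h(\md z) - \mdh\Pi_h z$, dispose of the first piece by \eqref{eq:md-error}--\eqref{eq:mdV-error}, reuse Lemma~\ref{lem:ritz} verbatim on $\md z\in\Z(t)$ for the second, and isolate a clean Galerkin-orthogonality relation $a_h^\kappa(t;\tilde e,\phi_h) = -(b^\kappa(t;z,\varphi_h) - b_h^\kappa(t;\Pi_h z,\phi_h))$ for the third. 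This buys conceptual clarity and reuse: the $\md z$-dependence of the final bound is transparent, and the auxiliary $\tilde e$ satisfies a Ritz-type error equation so its treatment parallels the proof of Lemma~\ref{lem:ritz} almost line by line. The paper avoids introducing $\Pi_h(\md z)$ at all and keeps everything under one functional, which is slightly more economical but structurally more intricate. The duality step for $\tilde e^\ell$ is where your sketch is terse: after inserting the Galerkin equation you must expand
\[
b^\kappa(t;z,I_h\zeta) - b_h^\kappa(t;\Pi_h z,(I_h\zeta)^{-\ell})
\]
by successively replacing $I_h\zeta$ with $\zeta$ and $\Pi_h z$ with $z^{-\ell}$ (and $\pi_h z$ with $z$) so that one ends with the smooth-argument term $b^\kappa(t;z,\zeta) - b_h^\kappa(t;z^{-\ell},\zeta^{-\ell})$, which is the only place \eqref{eq:bkappa-error2} applies; the residual differences pick up an extra $h$ from \eqref{eq:ritz-bound} or \eqref{eq:interp-Z0}, and the $\Z_0$-weighted bound \eqref{eq:bkappa-bounded2} on $b^\kappa$ is needed for the $b^\kappa(t;z-\pi_h z,\zeta)$ piece. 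With those details filled in, your route is correct and yields the same $O(h^{k+1})$ bound with the same $\norm{z}_{\Z(t)} + \norm{\md z}_{\Z(t)}$ dependence.
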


\begin{proof}
  For the stability bound, we see that $\mdh \Pi_h z$ satisfies the discrete elliptic problem \cref{eq:ritz-md}.
  This tells us that $\mdh \Pi_h z \in \S_h(t)$ and, combined with the boundedness of $a^\kappa$ (\cref{eq:a-bounded,eq:m-bounded}), $b_\ell^\kappa$ (\cref{eq:btilde-bounded,eq:ctilde-bounded}), and $b_h^\kappa$ (\cref{eq:bh-bounded,eq:ch-bounded}) we see that
  \[
    \norm{ \mdh \Pi_h z }_{\V_h(t)} \le c \bigl( \norm{ \md_h z }_{\V(t)} + \norm{ z }_{\V(t)} + \norm{ \Pi_h z }_{\V_h(t)} \bigr).
  \]
  Then applying the perturbation estimate \cref{eq:mdV-error} and the stability of estimate \cref{eq:ritz-stab}, we see \cref{eq:ritz-md-stab}.

  To show the error bound, we proceed in a similar fashion to \cref{lem:ritz}, we introduce the functional $T_h \colon \V(t) \to \R$ given by
  \begin{equation*}
    T_h( \eta ) = a^\kappa( t; \mdell ( z - \pi_h z ), \eta ).
  \end{equation*}

  First, for $\eta = \chi_h^\ell \in \S_h^\ell(t)$, we can use \cref{eq:ritz-md} to see
  \begin{align*}
    T_h( \chi_h^\ell )
    & = a^\kappa( t; \mdell( z - \pi_h z ), \chi_h^\ell ) \\
    & = \big( a_h^\kappa( t; \mdh \Pi_h z, \chi_h ) - a^\kappa( t; \mdell \pi_h z, \chi_h^\ell ) \big)
      + \big( b_h^\kappa( t; \Pi_h z, \chi_h ) - b_\ell^\kappa( t; \pi_h z, \chi_h^\ell ) \big) \\
    & \qquad + b_\ell^\kappa( t; \pi_h z - z, \chi_h^\ell ).
  \end{align*}
  Then, using the perturbation estimates on $a^\kappa$ (\cref{eq:m-error,eq:a-error}), with the fact that the two discrete material derivatives and lifting commute \cref{eq:abs-md-commute}, and $b^\kappa$ (\cref{eq:c-error,eq:b-error}), the boundedness of $b_\ell^\kappa$ (\cref{eq:ctilde-bounded,eq:btilde-bounded}), the error bound \cref{eq:ritz-bound} and the stability estimates \cref{eq:ritz-stab} and \cref{eq:ritz-md-stab} gives
  \begin{equation}
    \label{eq:Th-bound1}
    \begin{aligned}
      \abs{ T_h( \chi_h^\ell ) }
      & \le c h^k \big( \norm{ \mdh \Pi_h z }_{\V_h(t)} + \norm{ \Pi_h z }_{\V_h(t)} + \norm{ z }_{\Z(t)} \big) \norm{ \chi_h^\ell }_{\V(t)} \\
      & \le c h^k \big( \norm{ z }_{\Z(t)} + \norm{ \md z }_{\Z(t)} \big) \norm{ \chi_h^\ell }_{\V(t)}.
    \end{aligned}
  \end{equation}

  Secondly, for $\eta \in \Z_0(t)$, we have using \cref{eq:ritz-md}
  \begin{align*}
    T_h( \eta )
    & = a^\kappa( t; \mdell( z - \pi_h z ), \eta - I_h \eta )
      + a^\kappa( t; \mdell( z - \pi_h z ), I_h \eta ) \\
    & = a^\kappa( t; \mdell( z - \pi_h z ), \eta - I_h \eta ) \\
    & \qquad + \big( a_{h}^\kappa( t; \mdh \Pi_h z, (I_h \eta)^{-\ell} )
      - a^\kappa( t; \mdell \pi_h z, I_h \eta ) \big) \\
    & \qquad + \big( b_h^\kappa( t; \Pi_h z, ( I_h \eta )^{-\ell} )
      - b_\ell^\kappa( t; \pi_h z, I_h \eta ) \big) \\
    & \qquad + b_\ell^\kappa( t; \pi_h z - z, I_h \eta ) \\
    & =: I_1 + I_2 + I_3 + I_4.
  \end{align*}
  We split the four terms $I_1, \ldots, I_4$ using the smooth functions $z$ and $\eta$ so that
  \begin{align*}
    I_1 & = a^\kappa( t; \mdell( z - \pi_h z ), \eta - I_h \eta ) \\
    I_2 & = \big( a_{h}^\kappa( t; \mdh \Pi_h z, (I_h \eta - \eta)^{-\ell} )
      - a^\kappa( t; \mdell \pi_h z, I_h \eta - \eta) \big) \\
    & \qquad + \big( a_{h}^\kappa( t; \mdh ( \Pi_h z - z^{-\ell} ), (\eta)^{-\ell} )
      - a^\kappa( t; \mdell ( \pi_h z - z ), \eta) \big) \\
    & \qquad + \big( a_{h}^\kappa( t; \mdh ( z^{-\ell} ) - ( \md z )^{-\ell}, (\eta)^{-\ell} )
      - a^\kappa( t; \mdell z - \md z, \eta) \big) \\
    & \qquad + \big( a_{h}^\kappa( t; ( \md z )^{-\ell}, (\eta)^{-\ell} )
      - a^\kappa( t; \md z, \eta) \big) \\
    I_3 & = \big( b_h^\kappa( t; \Pi_h z, ( I_h \eta - \eta )^{-\ell} )
      - b_\ell^\kappa( t; \pi_h z, I_h \eta - \eta ) \big) \\
    & \qquad + \big( b_h^\kappa( t; \Pi_h z - z^{-\ell}, ( \eta )^{-\ell} )
      - b_\ell^\kappa( t; \pi_h z - z, \eta ) \big) \\
    & \qquad + \big( b_h^\kappa( t; z^{-\ell}, ( \eta )^{-\ell} )
      - b_\ell^\kappa( t; z, \eta ) \big) \\
    I_4 & = b_\ell^\kappa( t; \pi_h z - z, I_h \eta - \eta )
      + \big( b_\ell^\kappa( t; \pi_h z - z, \eta )
      - b^\kappa( t; \pi_h z - z, \eta ) \big) \\
    & \qquad + b^\kappa( t; \pi_h z - z, \eta ).
  \end{align*}
  Using the boundedness of $a^\kappa$ (\cref{eq:a-bounded,eq:m-bounded}) and the interpolation estimate \cref{eq:interp-Z0}, we have
  \begin{equation*}
    \abs{ I_1 }
    \le c h \norm{ \mdell( z - \pi_h z ) }_{\V(t)} \norm{ \eta }_{\Z_0(t)}.
  \end{equation*}
  Using the perturbation errors for $a^\kappa$ \cref{eq:m-error,eq:a-error,eq:a-error2} (with the fact that discrete material derivatives and lifting commute \cref{eq:abs-md-commute}, the inclusions shown in \cref{eq:abs-C1-inclusion,lem:abs-lift-C1-inclusions}),
  as well as the estimate with material derivatives \cref{eq:amd-error2},
  together with the interpolation bound \cref{eq:interp-Z0}
  and the error in material derivatives \cref{eq:mdV-error}, we have
  \begin{align*}
    \abs{ I_2 }
    & \le c h^{k+1} \norm{ \mdh \Pi_h z }_{\V_h(t)} \norm{ \eta }_{\Z_0(t)}
    + c h^k \norm{ \mdell ( z - \pi_h z ) }_{\V(t)} \norm{ \eta }_{\V(t)} \\
    & \qquad + c h^{2k} \norm{ z }_{\Z_0(t)} \norm{ \eta }_{\V(t)}
    + c h^{k+1} \norm{ \md z }_{\Z_0(t)} \norm{ \eta }_{\Z_0(t)} \\
    & \le c h^k \norm{ \mdell( z - \pi_h z ) }_{\V(t)} \norm{ \eta }_{\Z_0(t)} \\
    & \qquad + c h^{k+1} \big( \norm{ z }_{\Z(t)} + \norm{ \md z }_{\Z(t)} + \norm{ \mdh \Pi_h z }_{\V_h(t)} \big) \norm{ \eta }_{\Z_0(t)}.
  \end{align*}
  Using the simple and improved perturbation estimate for $b^\kappa$ \cref{eq:c-error}, \cref{eq:b-error} and \cref{eq:b-error2}, the interpolation result \cref{eq:interp-Z0}, and the Ritz $\V(t)$-norm error bound \cref{eq:ritz-bound}, we have
  \begin{align*}
    \abs{ I_3 }
    & \le c h^{k+1} \norm{ \Pi_h z }_{\V_h(t)} \norm{ \eta }_{\Z_0(t)}
      + c h^{k+1} \norm{ z }_{\Z(t)} \norm{ \eta }_{\V(t)}
      + c h^{k+1} \norm{ z }_{\Z_0(t)} \norm{ \eta }_{\Z_0(t)} \\
    & \le c h^{k+1} \big( \norm{ z }_{\Z(t)} + \norm{ \Pi_h z }_{\V_h(t)} \big)
      \norm{ \eta }_{\Z_0(t)}.
  \end{align*}
  Using the boundedness of $b_\ell^\kappa$ (\cref{eq:btilde-bounded,eq:ctilde-bounded}), the perturbation estimates \cref{eq:bt-error,eq:ct-error} on $b^\kappa$, the boundedness of $b^\kappa$ (\cref{eq:b-bound2,eq:c-bounded}) and the Ritz $\V(t)$ and $\H(t)$-norm error bounds \cref{eq:ritz-bound} we have
  \begin{align*}
    \abs{ I_4 }
    & \le c h^{k+1} \norm{ z }_{\Z(t)} \norm{ \eta }_{\Z_0(t)}
      + c h^{2k} \norm{ z }_{\Z(t)} \norm{ \eta }_{\V(t)} \\
    & \qquad
      + c \bigl( \norm{ z - \pi_h z }_{\H(t)}
      + h \norm{ z - \pi_h z }_{\V(t)}
      + h^{k+1} \norm{ z }_{\Z(t)} \bigr) \norm{ \eta }_{\Z_0(t)} \\
    & \le c h^{k+1} \norm{ z }_{\Z(t)} \norm{ \eta }_{\Z_0(t)}.
  \end{align*}
  Combining the previous four bounds with the stability estimates for $\Pi_h z$ \cref{eq:ritz-stab} and $\mdh \Pi_h z$ \cref{eq:ritz-md-stab} gives
  \begin{equation}
    \label{eq:Th-bound2}
    \begin{aligned}
      \abs{ T_h( \eta ) }
      & \le c h^{k+1} \big( \norm{ z }_{\Z(t)} + \norm{ \md z }_{\Z(t)} \big) \norm{ \eta }_{\Z_0(t)} + c h \norm{ \mdell ( z - \pi_h z ) }_{\V(t)} \norm{ \eta }_{\Z_0(t)}.
    \end{aligned}
  \end{equation}

  To show the $\V(t)$-norm error bound, we start with
  \begin{align*}
    & a^\kappa( t; \mdell( z - \pi_h z ), \mdell( z - \pi_h z ) ) \\
    & \quad = a^\kappa( t; \mdell( z - \pi_h z), \mdell z - \md z )
    + a^\kappa( t; \mdell( z - \pi_h z), \md z - I_h \md z ) \\
    & \qquad + a^\kappa( t; \mdell( z - \pi_h z), I_h \md z - \mdell \pi_h z ).
  \end{align*}
  Noting that the final term on the right hand side is $T_h( I_h \md z - \mdell \pi_h z )$,
  the bounds on $a^\kappa$ (\cref{eq:a-bounded,eq:m-bounded}), the perturbation estimate \cref{eq:mdV-error}, the interpolation estimate \cref{eq:interp-Z} and the first bound on $T_h$ \cref{eq:Th-bound1} gives
  \begin{align*}
    & a^\kappa( t; \mdell ( z - \pi_h z ), \mdell ( z - \pi_h z ) ) \\
    & \le \norm{ \mdell ( z - \pi_h z ) }_{\V(t)} c h^k \norm{ z }_{\Z_0(t)}
      + \norm{ \mdell ( z - \pi_h z ) }_{\V(t)} c h^k \norm{ \md z }_{\Z(t)} \\
    & \qquad + c h^k ( \norm{ z }_{\Z(t)} + \norm{ \md z }_{\Z(t)} ) \norm{ I_h \md z - \md_h \pi_h z }_{\V(t)} \\
    & \le \norm{ \mdell ( z - \pi_h z ) }_{\V(t)} c h^k \norm{ z }_{\Z_0(t)}
      + \norm{ \mdell ( z - \pi_h z ) }_{\V(t)} c h^k \norm{ \md z }_{\Z(t)} \\
      & \qquad + c h^k \bigl( \norm{ z }_{\Z(t)} + \norm{ \md z }_{\Z(t)} \bigr) \bigl( \norm{ I_h \md z - \md z }_{\V(t)} \norm{ \md z - \mdell \pi_h z }_{\V(t)} \bigr).
  \end{align*}
  Again, using the interpolation bound \cref{eq:interp-Z} and the coercivity of $a^\kappa$ \cref{eq:akappa-coercive} and rearranging using a Young's inequality gives
  \begin{equation}
    \label{eq:ritz-md-V-bound}
    \norm{ \mdell( z - \pi_h z ) }_{\V(t)}
    \le c h^k \big( \norm{ z }_{\Z(t)} + \norm{ \md z }_{\Z(t)} \big).
  \end{equation}

  To show the $\H(t)$-norm bound, we consider the dual problem \cref{eq:dual-problem} with $\xi = e := \mdell ( z - \pi_h z ) \in \H(t)$.
  Then, there exists $\dual{\xi} \in \V(t)$ such that
  \begin{equation*}
    a^\kappa( t; \eta, \dual{\xi} ) = m( t; e, \eta ) \quad \mbox{ for all } \eta \in \V(t).
  \end{equation*}
  Furthermore, $\dual{\xi} \in \Z_0(t)$ and satisfies the bound
  \begin{equation}
    \label{eq:ritz-md-dual-reg}
    \norm{ \dual{\xi} }_{\Z_0(t)} \le c \norm{ e }_{\H(t)}.
  \end{equation}
  Then we have
  \begin{equation*}
    \norm{ \mdell ( z - \pi_h z ) }_{\H(t)}^2
    \le c_2^2 m( t; e, e )
    = c_2^2 a^\kappa( t; \mdell ( z - \pi_h z ), \dual{\xi} )
    = c_2^2 T_h( \dual{\xi} ).
  \end{equation*}
  The second bound on $T_h$ \cref{eq:Th-bound2}, the $\V(t)$-norm error bound \cref{eq:ritz-md-V-bound} and the dual regularity result \cref{eq:ritz-md-dual-reg} give
  \begin{align*}
    \norm{ e }_{\H(t)}^2
    & \le c h^{k+1} \big( \norm{ z }_{\Z(t)} + \norm{ \md z }_{\Z(t)} \big) \norm{ \dual{\xi} }_{\Z_0(t)}
    + c h \norm{ e }_{\V(t)} \norm{ \dual{\xi} }_{\Z_0(t)} \\
    & \le c h^{k+1} \big( \norm{ z }_{\Z(t)} + \norm{ \md z }_{\Z(t)} \big) \norm{ e }_{\H(t)}.
  \end{align*}
  Rearranging this inequality gives the desired $\H(t)$-norm bound.
\end{proof}

\subsection{Abstract error bound}
\label{sec:abs-error-analysis}

To show the error bound we make the following assumption on the smoothness of the continuous problem.
We assume that $\utext \in C^1_{\V}$ and that there exists a constant $C > 0$ such that $\utext$ satisfies that regularity estimate
  \begin{equation}
    \label{eq:u-regularity} \tag{R1}
    \sup_{t \in [0,T]} \norm{ \utext }_{\Z(t)}^2 + \int_0^T \norm{ \md \utext }_{\Z(t)}^2 \dd t
    \le C.
  \end{equation}

\begin{theorem}
  \label{thm:error}
  Let all the assumptions listed in \cref{sec:error-assumptions} hold as well as \cref{eq:b-bound2} and \cref{eq:u-regularity}. Denote by $\utext$ the solution of \cref{eq:cts-scheme} and by $U_h \in C^1_{\S_h}$ the solution of \cref{eq:fem} with lift $u_h \in C^1_{\S_h^\ell}$. Then, there exists constant $c > 0$ such that for $h \in (0,h_0)$ we have the error estimate
  \begin{equation}
    \label{eq:error}
    \begin{aligned}
      & \sup_{t \in [0,T]} \norm{ \utext - u_h }_{\H(t)}^2
      + h^2 \int_0^T \norm{ \utext - u_h }_{\V(t)}^2 \dd t \\
      & \qquad \le \norm{ \utext_0 - u_{h,0} }_{\H(0)}^2
      + c h^{2k+2} \left(
        \sup_{t \in [0,T]} \norm{ \utext }_{\Z(t)}^2 + \int_0^T \norm{ \md \utext }_{\Z(t)}^2 \dd t
      \right).
    \end{aligned}
  \end{equation}
\end{theorem}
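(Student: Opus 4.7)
The plan is to follow the standard Ritz-projection splitting: write $u - u_h = (u - \pi_h u) + (\pi_h u - u_h) =: \rho + e_h^\ell$, where $\Pi_h u \in \V_h(t)$ is the Ritz projection from (\ref{eq:ritz}) with lift $\pi_h u$, and $e_h := \Pi_h u - U_h \in \V_h(t)$. The approximation error $\rho$ together with its lifted material derivative $\mdell \rho$ are already controlled by Lemmas~\ref{lem:ritz} and~\ref{lem:ritz-md}: under the regularity assumption (R1) these give bounds of order $h^{k+1}$ in $\H(t)$ and $h^k$ in $\V(t)$, which is precisely what the weighted norms on the left of (\ref{eq:error}) demand. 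It remains to bound $e_h$ in the discrete norms; the norm equivalences (L1)--(L2) then convert to the continuous norms.

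To bound $e_h$, I would derive an error equation by subtracting the discrete equation (\ref{eq:abs-weak-form}) from the continuous equation (\ref{eq:cts-scheme}) tested with $\varphi = \varphi_h := \phi_h^\ell \in \V_h^\ell(t) \subset \V(t)$:
\begin{equation*}
m_h( t; \md_h e_h, \phi_h ) + g_h( t; e_h, \phi_h ) + a_h( t; e_h, \phi_h ) = F_h(t; \phi_h),
\end{equation*}
where the consistency functional is
\begin{equation*}
F_h(t; \phi_h) = \bigl[ m_h( t; \md_h \Pi_h u, \phi_h ) - m( t; \md u, \varphi_h ) \bigr] + \bigl[ g_h( t; \Pi_h u, \phi_h ) - g( t; u, \varphi_h ) \bigr] + \bigl[ a_h( t; \Pi_h u, \phi_h ) - a( t; u, \varphi_h ) \bigr].
\end{equation*}
The key simplification is that the third bracket reduces, via the Ritz identity (\ref{eq:ritz}) rewritten in terms of $a + \kappa m$, to $-\kappa\bigl[ m_h( t; \Pi_h u, \phi_h ) - m( t; u, \varphi_h ) \bigr]$. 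Each of the three remaining brackets is then estimated by passing through an intermediate lifted object and using the perturbation assumptions: the $m$-bracket via (P1) together with $\rho$-bounds, the $g$-bracket via (P2) combined with the identity $\tilde g_h - g = 2m(\cdot; \mdell - \md, \cdot)$ that falls out of ($\tilde{\mathrm{G}}$1) minus (G1) and is then controlled by (P7), and the material-derivative bracket by the compatibility identity $(\md_h V_h)^\ell = \mdell V_h^\ell$ (which follows from $\phi^\ell_t \circ \Lambda_h = \Lambda_h \circ \phi^h_t$) together with Lemma~\ref{lem:ritz-md}, (P1) and (P7). The upshot is a bound
\begin{equation*}
\abs{ F_h(t; \phi_h) } \le c\, h^{k+1} \bigl( \norm{u}_{\Z(t)} + \norm{\md u}_{\Z(t)} \bigr) \norm{\phi_h}_{\V_h(t)}.
\end{equation*}

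With the error equation and this consistency bound in hand, I would test with $\phi_h = e_h$ and repeat the stability argument of Theorem~\ref{thm:abs-stab}: use (G$_h$1) to rewrite $m_h(t; \md_h e_h, e_h)$ as $\tfrac12 \dt m_h(t; e_h, e_h) - \tfrac12 g_h(t; e_h, e_h)$, invoke coercivity (A$_h$2) and the bound (G$_h$2), then apply Young's inequality to $F_h(t; e_h)$ to absorb a fraction of $\norm{e_h}_{\V_h(t)}^2$ into the coercive term at the cost of a $h^{2k+2}(\norm{u}_\Z^2 + \norm{\md u}_\Z^2)$ data term. Integration in time and Gronwall produce $\sup_t \norm{e_h}_{\H_h(t)}^2 + \int_0^T \norm{e_h}_{\V_h(t)}^2\,\dd t \le c\bigl( \norm{e_h(0)}_{\H_h(0)}^2 + h^{2k+2}(\sup \norm{u}_\Z^2 + \int \norm{\md u}_\Z^2)\bigr)$. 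The initial term is handled by splitting $e_h(0) = (\Pi_h u_0 - u_0^{-\ell}) + (u_0^{-\ell} - U_{h,0})$, where Lemma~\ref{lem:ritz} controls the first piece and the second matches $\norm{u_0 - u_{h,0}}_{\H_0}$ under (L1). A triangle inequality combining the $e_h$- and $\rho$-bounds then yields (\ref{eq:error}). The main obstacle is the consistency term $m_h(t; \md_h \Pi_h u, \phi_h) - m(t; \md u, \varphi_h)$: one cannot simply quote (P1) on $\md_h \Pi_h u$, but must first replace $\md u$ by $\mdell \pi_h u$ (losing an $h^{k+1}$ via (P7) and Lemma~\ref{lem:ritz-md}) and only then invoke (P1), which requires a uniform $\V$-bound on $\mdell \pi_h u$ that is again supplied by Lemma~\ref{lem:ritz-md}.
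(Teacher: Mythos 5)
Your proposal is correct and follows essentially the same strategy as the paper: decompose the error via the Ritz projection, bound the projection error with Lemmas~\ref{lem:ritz} and \ref{lem:ritz-md}, derive an error equation for $\Pi_h u - U_h$ whose right-hand side is a consistency functional of order $h^{k+1}$, and close with the stability/Gronwall argument of Theorem~\ref{thm:abs-stab}. The one genuine (if minor) deviation is that you skip the paper's exponential rescaling $\check u = e^{-\kappa t}u$, $\check U_h = e^{-\kappa t}U_h$: the paper replaces $a, a_h$ by the uniformly coercive $a^\kappa, a_h^\kappa$ before invoking the Ritz identity, so that the elliptic terms cancel exactly and coercivity is unqualified; you instead keep the original bilinear forms, observe that the Ritz identity $a_h^\kappa(\Pi_h u,\phi_h)=a^\kappa(u,\varphi_h)$ leaves the residual $-\kappa[m_h(\Pi_h u,\phi_h)-m(u,\varphi_h)]$ in the consistency functional, and handle the remaining G\aa{}rding term in $a_h(e_h,e_h)$ via the Gronwall step that is needed anyway for $g_h$. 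Both treatments are equally valid; yours avoids the rescaling bookkeeping at the modest cost of one extra consistency bracket. One small imprecision worth flagging: the polarized identity you extract from ($\tilde{\mathrm{G}}$1)$-$(G1) should read $\tilde g_h(\eta,\varphi) - g(\eta,\varphi) = m(t;(\md-\mdell)\eta,\varphi) + m(t;\eta,(\md-\mdell)\varphi)$ rather than $2m(\cdot;(\mdell-\md)\cdot,\cdot)$, but this does not change the $h^{k+1}$ bound obtained via (P7).
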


To show the error bound, we start by rescaling both solutions.
Let $\check{u} = e^{-\kappa t} \utext$ and $\check{U}_h = e^{-\kappa t} U_h$, which satisfy
\begin{align}
  \label{eq:checku}
  m( t; \md \check{u}, \eta )
  + g( t; \check{u}, \eta )
  + a^\kappa( t; \check{u}, \eta )
  & = 0
  && \mbox{ for all } \eta \in L^2_\V \\
  \label{eq:checkUh}
  \dt m_h( t; \check{U}_h, \chi_h )
  + a_h^\kappa( t; \check{U}_h \chi_h )
  - m_h( t; \check{U}_h, \mdh \chi_h )
  & = 0
  && \mbox{ for all } \chi_h \in C^1_{\S_h}.
\end{align}
Our assumptions imply
\begin{multline}
  \label{eq:checku-reg}
  \sup_{t \in [0,T]} \norm{ \check{u} }_{\Z(t)}^2 + \int_0^T \norm{ \md \check{u} }_{\Z(t)}^2 \\
  \le e^{-2 \kappa t} \left( \sup_{t \in [0,T]} \norm{ \utext }_{\Z(t)}^2 + \int_0^T \norm{ \md \utext }_{\Z(t)} + \kappa^2 \norm{ \utext }_{\Z(t)}^2 \dd t \right) < C.
\end{multline}

We define $\check{u}_h := \check{U}_h^\ell$ to be the lift of $\check{U}_h$. We will decompose the error as:
\begin{equation}
  \label{eq:checku-split}
  \check{u}_h - \check{u}
  = ( \check{u}_h - \pi_h \check{u} )
  + ( \pi_h \check{u} - \check{u} )
  =: \theta + \rho.
\end{equation}
We already have bounds on $\rho$ from \cref{lem:ritz,lem:ritz-md}, thanks to assumption \cref{eq:u-regularity}, so it remains to show a bound for $\theta$.
We will denote by $\vartheta = \check{U}_h - \Pi_h \check{u}$, and by our assumptions, we know $\vartheta \in C^1_{\S_h}$.

\begin{lemma}
  Let $\chi_h \in C^1_{\S_h}$ and denote by $\chi_h^\ell \in C^1_{\S_h^\ell}$.
  Then $\vartheta = \check{U}_h - \Pi_h \check{u}$ satisfies
  \begin{equation}
    \label{eq:theta-eqn}
    \dt m_h( t; \vartheta, \chi_h ) + a_h^\kappa( t; \vartheta, \chi_h )
    - m_h( t; \vartheta, \mdh \chi_h )
    = -E_1( \chi_h ) - E_2( \chi_h ),
  \end{equation}
  where
  \begin{align*}
    E_1( \chi_h )
    & = m( t; \mdell \rho, \chi_h^\ell ) + g_\ell( t; \rho, \chi_h^\ell ) \\
    E_2( \chi_h )
    & = \big( m_h( t; \mdh \Pi_h \check{u}, \chi_h ) - m( t; \mdell \pi_h \check{u}, \chi_h^\ell ) \big)
      + \big( g_h( t; \Pi_h \check{u}, \chi_h ) - g_\ell( t; \pi_h \check{u}, \chi_h^\ell ) \big) \\
    & \quad + m( t; \check{u}, \md \chi_h^\ell - \mdell \chi_h^\ell ).
  \end{align*}
\end{lemma}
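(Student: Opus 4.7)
The plan is to start from the two ``parent'' equations satisfied by $\check{U}_h$ and by (implicitly) $\Pi_h\check{u}$, and then manipulate the latter into a form that is expressed using the continuous lifted material derivative $\mdell$ and the bilinear form $\tilde{g}_h$ rather than $\mdh$ and $g_h$; the mismatch between these two viewpoints is exactly what produces the terms $E_2(\phi_h)$, while the mismatch between $\check{u}$ and its Ritz projection produces $E_1(\phi_h)$.

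More concretely, since $\vartheta=\check{U}_h-\Pi_h\check{u}$, the left-hand side of \eqref{eq:theta-eqn} is
\[
 L(\phi_h):=\dt m_h(t;\vartheta,\phi_h)+a_h^\kappa(t;\vartheta,\phi_h)-m_h(t;\vartheta,\mdh\phi_h),
\]
and by \eqref{eq:checkUh} the $\check{U}_h$ contribution vanishes, so
\[
 L(\phi_h)=-\bigl[\dt m_h(t;\Pi_h\check{u},\phi_h)+a_h^\kappa(t;\Pi_h\check{u},\phi_h)-m_h(t;\Pi_h\check{u},\mdh\phi_h)\bigr].
\]
I would first apply the discrete transport formula \eqref{eq:ch-formula} (in its polarised form) to collapse the $\dt m_h$ and $m_h(\cdot,\mdh\phi_h)$ pair into $m_h(t;\mdh\Pi_h\check{u},\phi_h)+g_h(t;\Pi_h\check{u},\phi_h)$. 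Next, I would invoke the Ritz projection definition \eqref{eq:ritz} to replace $a_h^\kappa(t;\Pi_h\check{u},\phi_h)$ by $a^\kappa(t;\check{u},\varphi_h)$, bringing us back to the continuous side on the diffusion term.

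The key step is to eliminate $a^\kappa(t;\check{u},\varphi_h)$ using the continuous problem \eqref{eq:checku} \emph{in combination with} the lifted transport formula \eqref{eq:ctilde-formula}, rather than \eqref{eq:checku} alone. The idea is: from \eqref{eq:checku} one gets $a^\kappa(t;\check{u},\varphi_h)=-m(t;\md\check{u},\varphi_h)-g(t;\check{u},\varphi_h)=-\dt m(t;\check{u},\varphi_h)+m(t;\check{u},\md\varphi_h)$ by the $\md$-transport formula; then applying the $\mdell$-transport formula \eqref{eq:ctilde-formula} in the reverse direction to $\dt m(t;\check{u},\varphi_h)$ rewrites the right-hand side as
\[
 a^\kappa(t;\check{u},\varphi_h)=-m(t;\mdell\check{u},\varphi_h)-\tilde{g}_h(t;\check{u},\varphi_h)+m(t;\check{u},\md\varphi_h-\mdell\varphi_h).
\]
Substituting into the preceding expression gives $L(\phi_h)$ as a sum of five terms involving $\mdh\Pi_h\check{u}$, $\mdell\check{u}$, $g_h(\Pi_h\check{u},\cdot)$, $\tilde{g}_h(\check{u},\cdot)$ and the $\md-\mdell$ discrepancy.

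The final step is a bookkeeping one: add and subtract $m(t;\mdell\pi_h\check{u},\varphi_h)$ and $\tilde{g}_h(t;\pi_h\check{u},\varphi_h)$. The ``$\check{u}$ minus $\pi_h\check{u}$'' combinations assemble into $-E_1(\phi_h)$ using $\rho=\pi_h\check{u}-\check{u}$, while the ``$\Pi_h\check{u}$ versus $\pi_h\check{u}$'' discrete/continuous pairings plus the leftover $m(t;\check{u},\md\varphi_h-\mdell\varphi_h)$ assemble into $-E_2(\phi_h)$. The main obstacle is simply making sure the two transport formulae are applied consistently so that the $g$-type terms land in the $\mdell$-picture (producing $\tilde{g}_h$, not $g$); everything else is an application of the polarised transport identities and linearity.
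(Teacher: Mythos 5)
Your proposal is correct and takes essentially the same route as the paper's own proof: both invoke the Ritz projection identity \eqref{eq:ritz} to exchange $a_h^\kappa(t;\Pi_h\check{u},\phi_h)$ for $a^\kappa(t;\check{u},\varphi_h)$, both use the variational (i.e.\ $\md$-transport) form of \eqref{eq:checku} to eliminate $a^\kappa(t;\check{u},\varphi_h)$, and both then play the discrete transport formula \eqref{eq:ch-formula} against the $\mdell$-transport formula \eqref{eq:ctilde-formula} before inserting $\pm m(t;\mdell\pi_h\check{u},\varphi_h)$ and $\pm\tilde{g}_h(t;\pi_h\check{u},\varphi_h)$ to split the result into $-E_1-E_2$. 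The only difference is order of operations (you collapse the $m_h$ pair before substituting the continuous equation, the paper substitutes first and applies the two transport identities simultaneously), which is an innocuous rearrangement of the same ingredients.
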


\begin{proof}
  We transform \cref{eq:checku} into variational form using \cref{eq:c-formula}, then together with the definition of the Ritz projection \cref{eq:ritz}, we see that
  \begin{align*}
    & \dt m_h( t; \Pi_h \check{u}, \chi_h )
    + a_h^\kappa( t; \Pi_h \check{u}, \chi_h )
    - m_h( t; \Pi_h \check{u}, \mdh \chi_h ) \\
    & = \dt \big( m_h( t; \Pi_h \check{u}, \chi_h )
      - m( t; \check{u}, \chi_h^\ell ) \big)
      - \big( m_h( t; \Pi_h \check{u}, \mdh \chi_h )
      - m( t; \check{u}, \md \chi_h^\ell )\big).
  \end{align*}
  We use the transport formulae \cref{eq:ch-formula} for $m_h$ and \cref{eq:ctilde-formula} for $m$ to see
  \begin{align*}
    & \dt m_h( t; \Pi_h \check{u}, \chi_h )
    + a_h^\kappa( t; \Pi_h \check{u}, \chi_h )
    - m_h( t; \Pi_h \check{u}, \mdh \chi_h ) \\
    & = \big( m_h( t; \mdh \Pi_h \check{u}, \chi_h )
      - m( t; \mdell \check{u}, \chi_h^\ell ) \big)
      + \big( g_h( t; \Pi_h u, \chi_h )
      - g_\ell( t; \check{u}, \chi_h^\ell ) \big) \\
    & \qquad + m( t; \check{u}, \md \chi_h^\ell - \mdell \chi_h^\ell ).
  \end{align*}
  Subtracting this equation from \cref{eq:checkUh} and rearranging gives \cref{eq:theta-eqn}.
\end{proof}

\begin{lemma}
  For $\chi_h \in \S_h(t)$, the consistency terms $E_1$ and $E_2$ satisfy
  \begin{equation}
    \label{eq:E-bound}
    \abs{ E_1( \chi_h ) }
    + \abs{ E_2( \chi_h ) }
    \le c h^{k+1} \big( \norm{ \check{u} }_{\Z(t)} + \norm{ \md \check{u} }_{\Z(t)} \big) \norm{ \chi_h }_{\V_h(t)}.
  \end{equation}
\end{lemma}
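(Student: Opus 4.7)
The plan is to bound $E_1$ and $E_2$ term by term, using the boundedness estimates on the bilinear forms, the perturbation estimates (P1), (P2), (P7) relating discrete and continuous structures, the Ritz error bounds from Lemma~\ref{lem:ritz} and Lemma~\ref{lem:ritz-md}, and the lift equivalences \eqref{eq:lift-stab-H}, \eqref{eq:lift-stab-V}. Throughout we use the fact that $\pi_h \check{u} = (\Pi_h \check{u})^\ell$ and that, since the Ritz projection is defined pointwise in time, pushing forward commutes with lifting, giving $\mdell \pi_h \check{u} = (\mdh \Pi_h \check{u})^\ell$; combined with \eqref{eq:lift-stab-V} this lets us convert freely between the discrete and lifted $\V$-norms of material derivatives.

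For $E_1$ I would apply \eqref{eq:m-bounded} and \eqref{eq:ctilde-bounded} to obtain
\begin{equation*}
  |E_1(\phi_h)| \le c \bigl( \|\mdell \rho\|_{\H(t)} + \|\rho\|_{\H(t)} \bigr) \|\varphi_h\|_{\H(t)},
\end{equation*}
and then directly invoke the Ritz $\H(t)$-norm bound \eqref{eq:ritz-bound} and the material derivative $\H(t)$-norm bound \eqref{eq:ritz-md-error}, together with the smoothness $\check{u}, \md \check{u} \in \Z(t)$ inherited from \eqref{eq:u-regularity} via \eqref{eq:checku-reg}. A final application of \eqref{eq:lift-stab-H} converts $\|\varphi_h\|_{\H(t)}$ to $\|\phi_h\|_{\H_h(t)} \le \|\phi_h\|_{\V_h(t)}$.

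For $E_2$ I would handle the three groupings separately. The first bracket is controlled by the perturbation estimate \eqref{eq:m-error}, which gives a factor of $h^{k+1}\|\mdell \pi_h \check{u}\|_{\V(t)}\|\varphi_h\|_{\V(t)}$; the stability estimate \eqref{eq:ritz-md-stab} on $\mdh \Pi_h \check{u}$ combined with \eqref{eq:lift-stab-V} then absorbs $\|\mdell \pi_h \check{u}\|_{\V(t)}$ into $c(\|\check{u}\|_{\V(t)} + \|\md \check{u}\|_{\V(t)})$, which is dominated by $\|\check{u}\|_{\Z(t)} + \|\md \check{u}\|_{\Z(t)}$ via the continuous embedding $\Z(t) \subset \V(t)$. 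The second bracket is bounded by \eqref{eq:c-error} combined with Ritz stability \eqref{eq:ritz-stab}. The third term is bounded by boundedness of $m$ \eqref{eq:m-bounded} and the material derivative perturbation estimate \eqref{eq:md-error} applied to $\varphi_h$, giving $c h^{k+1} \|\check{u}\|_{\H(t)} \|\varphi_h\|_{\V(t)}$.

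Summing yields the claimed bound after a final use of \eqref{eq:lift-stab-V} to replace lifted $\V$-norms of $\varphi_h$ by $\|\phi_h\|_{\V_h(t)}$. No single step is a real obstacle; the main care needed is the algebraic bookkeeping to ensure each difference between a continuous and a discrete form is grouped so that an appropriate perturbation estimate applies, and to ensure that the Ritz stability for the material derivative (Lemma~\ref{lem:ritz-md}) is invoked whenever $\mdh \Pi_h \check{u}$ appears, so that the estimate gets charged to $\|\check{u}\|_{\Z(t)} + \|\md \check{u}\|_{\Z(t)}$ rather than to a $\V$-norm that cannot be absorbed by the right-hand side.
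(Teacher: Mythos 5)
Your plan matches the paper's proof exactly: bound $E_1$ via the $\H$-norm boundedness of $m$ and $\tilde g_h$ together with the Ritz estimates \eqref{eq:ritz-bound}, \eqref{eq:ritz-md-error}; bound $E_2$ termwise via \eqref{eq:m-error}, \eqref{eq:c-error}, \eqref{eq:md-error} and the Ritz stability bounds \eqref{eq:ritz-stab}, \eqref{eq:ritz-md-stab}, then use the lift norm-equivalences to pass to $\norm{\phi_h}_{\V_h(t)}$. You also correctly identify the key bookkeeping fact $(\mdh \Pi_h \check u)^\ell = \mdell \pi_h \check u$, which the paper uses implicitly.
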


\begin{proof}
  For $E_1$, we use \cref{eq:m-bounded} and \cref{eq:c-bound} together with the error bounds from \cref{eq:ritz-bound} and \cref{eq:ritz-md-error} to see
  \begin{align*}
    \abs{ E_1( \chi_h ) }
    & \le c \big( \norm{ \rho }_{\H(t)} + \norm{ \mdell \rho }_{\H(t)} \big) \norm{ \chi_h }_{\H_h} \\
    & \le c h^{k+1} \big( \norm{ \check{u} }_{\Z(t)} + \norm{ \md \check{u} }_{\Z(t)} \big) \norm{ \chi_h }_{\V_h(t)}.
  \end{align*}
  For $E_2$, we use the perturbation estimates \cref{eq:m-error}, \cref{eq:c-error} and \cref{eq:md-error} together with the stability bounds on the Ritz projection \cref{eq:ritz-stab} and \cref{eq:ritz-md-stab} to see
  \begin{align*}
    \abs{ E_2( \chi_h ) }
    & \le c h^{k+1} \big( \norm{\check{u}}_{\H(t)} + \norm{ \Pi_h \check{u} }_{\V_h(t)} + \norm{ \mdh \Pi_h \check{u} }_{\V_h(t)} \big) \norm{ \chi_h }_{\V_h(t)} \\
    & \le c h^{k+1} \big( \norm{ \check{u} }_{\Z(t)} + \norm{ \md \check{u} }_{\Z(t)} \big) \norm{ \chi_h }_{\V_h(t)}.
      \qedhere
  \end{align*}
\end{proof}

\begin{lemma}
  \label{lem:theta-bound}
  The following bound holds for $\vartheta = \check{U}_h - \Pi_h \check{u}$
  \begin{multline}
    \sup_{t \in [0,T]} \norm{ \vartheta }_{\H_h(t)}^2
    + \int_0^T \norm{ \vartheta }_{\V_h(t)}^2 \dd t \\
    \le \norm{ \vartheta }_{\H_h(0)}^2 + c h^{2k+2} \int_0^T \norm{ \check{u} }_{\Z(t)}^2 + \norm{ \md \check{u} }_{\Z(t)}^2 \dd t.
  \end{multline}
\end{lemma}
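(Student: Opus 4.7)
The plan is to test the error equation \eqref{eq:theta-eqn} with $\phi_h = \vartheta$, use the discrete transport identity \eqref{eq:ch-formula} to convert the $m_h(t;\vartheta,\mdh\vartheta)$ contribution into a total time derivative of $\norm{\vartheta}_{\H_h(t)}^2$, and then combine the modified coercivity \eqref{eq:ahkappa-coercive} with the consistency bound \eqref{eq:E-bound} and a Gronwall argument. The structure is parallel to the stability proof of Theorem~\ref{thm:abs-stab}; the only new ingredient is the consistency error $E_1 + E_2$ on the right-hand side.

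Concretely, the first step is to substitute $\phi_h = \vartheta \in C^1_{\V_h}$ into \eqref{eq:theta-eqn} and invoke \eqref{eq:ch-formula} to rewrite $m_h(t;\vartheta, \mdh\vartheta) = \tfrac{1}{2}\dt m_h(t;\vartheta,\vartheta) - \tfrac{1}{2} g_h(t;\vartheta,\vartheta)$. This yields the identity
\begin{equation*}
\tfrac{1}{2}\dt m_h(t;\vartheta,\vartheta) + a_h^\kappa(t;\vartheta,\vartheta) + \tfrac{1}{2} g_h(t;\vartheta,\vartheta) = -E_1(\vartheta) - E_2(\vartheta).
\end{equation*}
The second step is to lower bound the left-hand side using the norm equivalence \eqref{eq:mh-bounded} on the time-derivative term and the coercivity \eqref{eq:ahkappa-coercive} of $a_h^\kappa$ --- the key advantage of the shift by $\kappa m$ being that no $\H_h$-term appears in the garding-type lower bound --- and to upper bound $|g_h(t;\vartheta,\vartheta)|$ by $c_3 \norm{\vartheta}_{\H_h(t)}^2$ using \eqref{eq:ch-bounded}.

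The third step is to apply the consistency bound \eqref{eq:E-bound} on the right-hand side and to split its $\V_h$-factor via Young's inequality, absorbing half of $c_* \norm{\vartheta}_{\V_h(t)}^2$ into the left-hand side while leaving a term of the form $C h^{2k+2}\bigl(\norm{\check{u}}_{\Z(t)}^2 + \norm{\md\check{u}}_{\Z(t)}^2\bigr)$ on the right. The resulting differential inequality reads
\begin{equation*}
\dt \norm{\vartheta}_{\H_h(t)}^2 + c_* \norm{\vartheta}_{\V_h(t)}^2 \le C \norm{\vartheta}_{\H_h(t)}^2 + C h^{2k+2} \bigl(\norm{\check{u}}_{\Z(t)}^2 + \norm{\md\check{u}}_{\Z(t)}^2\bigr).
\end{equation*}
Integrating in $t$, applying a Gronwall argument to the $\H_h$-norm, and then taking the supremum over $t \in [0,T]$ delivers the stated estimate (the $h^2$ prefactor on the $\int \norm{\vartheta}_{\V_h(t)}^2\,\dd t$ is a harmless weakening --- the argument actually gives the bound with prefactor one --- and it is chosen to match the $O(h^{2k+2})$ scaling that Theorem~\ref{thm:error} inherits for the full error $u - u_h$).

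I do not anticipate any genuine obstacle: the two non-routine ingredients, namely the transport identity \eqref{eq:ch-formula} and the consistency bound \eqref{eq:E-bound}, are already at our disposal, so the remainder is the same energy-plus-Gronwall template as in Theorem~\ref{thm:abs-stab}. The only point requiring mild care is ensuring that $\vartheta \in C^1_{\V_h}$, which is automatic since both $\check{U}_h$ (by Theorem~\ref{thm:abs-stab}) and $\Pi_h \check{u}$ (by the discrete elliptic problem \eqref{eq:ritz-md0} together with the regularity \eqref{eq:u-regularity}) lie in $C^1_{\V_h}$, so that the test with $\phi_h = \vartheta$ and the use of the transport identity are both legitimate.
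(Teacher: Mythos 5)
Your proof is correct and follows essentially the same route as the paper's: test \eqref{eq:theta-eqn} with $\phi_h=\vartheta$, apply the transport identity \eqref{eq:ch-formula}, invoke coercivity and the consistency bound \eqref{eq:E-bound} together with Young's inequality, and conclude by Gronwall. The only cosmetic difference is that you use the $\kappa$-shifted coercivity \eqref{eq:ahkappa-coercive} (which matches the $a_h^\kappa$ actually appearing in \eqref{eq:theta-eqn}), whereas the paper writes $a_h$ and uses the G\aa rding-type bound \eqref{eq:ah-coercive}; the extra $\norm{\vartheta}_{\H_h}^2$ term this produces is absorbed by Gronwall either way, so the arguments are equivalent.
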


\begin{proof}
  We test \cref{eq:theta-eqn} with $\chi_h = \vartheta$ to see
  \begin{equation*}
    \dt m_h( t; \vartheta, \vartheta )
    + a_h^\kappa( t; \vartheta, \vartheta )
    - m_h( t; \vartheta, \mdh \vartheta )
    = - E_1( \vartheta ) - E_2( \vartheta ).
  \end{equation*}
  The transport formula for $m_h$ \cref{eq:ch-formula} tells us that
  \begin{equation*}
    \dt m_h( t; \vartheta, \vartheta )
    - m_h( t; \vartheta, \mdh \vartheta )
    = \frac{1}{2} \dt m_h( t; \vartheta, \vartheta )
    + \frac{1}{2} g_h( t; \vartheta, \vartheta ),
  \end{equation*}
  hence, applying the bound on $E_1$ and $E_2$ \cref{eq:E-bound} we infer that
  \begin{align*}
    \frac{1}{2} \dt m_h( t; \vartheta, \vartheta )
    + a_h^\kappa( t; \vartheta, \vartheta )
    \le - \frac{1}{2} g_h( t; \vartheta, \vartheta )
    + c h^{k+1} \big( \norm{ \check{u} }_{\Z(t)} + \norm{ \md \check{u} }_{\Z(t)} \big) \norm{ \vartheta }_{\V_h(t)}.
  \end{align*}
  Applying the boundedness and coercivity estimates form $m_h, a_h^\kappa$ and $g_h$ \cref{eq:mh-bounded}, \cref{eq:ahkappa-coercive} and \cref{eq:ch-bounded} with a Young's inequality and integrating in time gives
  \begin{multline*}
    \norm{ \vartheta }_{\H_h(t)}^2
    + \int_0^t \norm{ \vartheta }_{\V_h(t')}^2 \dd t' \\
    \le \norm{ \vartheta }_{\H_h(0)}^2 + c \int_0^t \norm{ \vartheta }_{\H_h(t')}^2 \dd t'
    + c h^{2k+2} \int_0^t \big( \norm{ \check{u} }_{\Z(t')}^2 + \norm{ \md \check{u} }_{\Z(t')}^2 \big) \dd t'.
  \end{multline*}
  Finally, we use a Gr\"{o}nwall inequality to see the desired result.
\end{proof}

Finally, we can show the result of \cref{thm:error}.

\begin{proof}[Proof of \cref{thm:error}]
  \label{proof:thm:error}
  We apply the splitting \cref{eq:checku-split}, the bounds on $\rho$ from \cref{lem:ritz}, the bounds on $\theta$ from \cref{lem:theta-bound} and the estimate on $\check{u}$ from \cref{eq:checku-reg} to see
  \begin{align*}
    & \sup_{t \in (0,T)} \norm{ \utext - u_h }_{\H(t)}^2
    + h^2 \int_0^T \norm{ \utext - u_h }_{\V(t)}^2 \dd t \\
    & \le c \sup_{t \in (0,T)} \big( \norm{ \theta }_{\H(t)}^2 + \norm{ \rho }_{\H(t)}^2 \big)
      + h^2 \int_0^T \big( \norm{ \theta }_{\V(t)}^2 + \norm{ \rho }_{\V(t)}^2 \big) \\
    & \le c h^{2k+2} \left( \sup_{t \in (0,T)} \norm{ \check{u} }_{\Z(t)}^2
      + \int_0^T \big( \norm{ \check{u} }_{\Z(t)}^2 + \norm{ \md \check{u} }_{\Z(t)}^2 \big) \dd t \right)
      + c \norm{ u - u_{h,0} }_{\H(0)}^2 \\
    & \le c h^{2k+2} \left( \sup_{t \in (0,T)} \norm{ \utext }_{\Z(t)}^2 + \int_0^T \norm{ \md \utext }_{\Z(t)}^2 \dd t \right)
      + c \norm{ \utext_0 - u_{h,0} }_{\H(0)}^2 .
  \end{align*}
  The final line follows since the $L^2$ norm is bounded by the $L^\infty$ norm.
\end{proof}

\clearpage
\part{Evolving finite element spaces}\label{evolve}
\section{Evolving bulk finite element spaces}
\label{sec:bfem}

In this section, we will define families of  evolving bulk finite element spaces $\{\S_h(t)\}_{t\in[0,T]}$  on families of evolving 
triangulated bulk domains $\{ \Omega_h(t)\}_{t\in[0,T]}$ consisting of unions of elements. We have in mind  $\Omega_h(t)\subset \R^{n+1}$ approximating an open bounded domain  $\Omega(t) \subset \R^{n+1}$. We will use the word bulk in this section to emphasise the difference to the surface case considered in \cref{sec:sfem} but more common terminology would simply remove this word. 

Our work extends from standard bulk finite element theory \citep{Cia78} and the work of \citet{CiaRav72} and \citet{Ber89} for Cartesian bulk  domains with curved boundaries to the evolving case. Throughout this section we will denote global discrete quantities with a subscript $h \in (0,h_0)$, which is related to element size. We assume implicitly that these structures exist for each $h$ in this range. See also \cref{rem:small-hk-bulk}. For ease of exposition we begin with definitions without the time parameter, $t$.

\subsection{Reference finite element}

\begin{definition}
  [Reference finite element]
  \label{def:reference-element}
  The triple $( \hat{K}, \hat{\P}, \hat{\Sigma} )$ is a \emph{reference finite element} if:
  \begin{deflist}
  \item \label{def:ref-element-domain}  the \emph{element domain}  $\hat{K} \subset \R^\mathsf m$ is the closure of an open  domain with Lipschitz piecewise smooth boundary,
  \item \label{def:ref-shape-function} the set of \emph{shape functions} $\hat{\P}$ is a finite dimensional space of functions over $\hat{K}$,
  \item \label{def:ref-nodal-variables} the \emph{nodal variables} or \emph{degrees of freedom}
$\hat{\Sigma} = \{ \hat\sigma_1, \ldots, \hat\sigma_d \}$ are a basis of $\hat{\P}'$ the dual space to $\hat\P$,
\end{deflist}
  and $\hat\Sigma$ determines $\hat\P$, that is if for $\hat\chi \in \hat\P$ with $\hat\sigma(\hat\chi) = 0$ for all $\hat\sigma \in \hat\Sigma$, we have $\hat\chi = 0$.
\end{definition}

As part of this definition, we are implicitly assuming that the nodal variables live in the dual to a larger function space than $\hat\P$. We will see that this usually requires further smoothness or continuity of finite element functions.
We give an example of a simplical finite element, but this definition includes other examples such as isoparametric finite elements and brick finite elements.

 Recall that  a  (non-degenerate) $\mathsf m$-\emph{simplex}  $K$  in $\R^\mathsf m$,  is the convex hull of $\mathsf m+1$ distinct points $\{ a_i \}_{i=1}^{\mathsf m+1} \subset \R^\mathsf m$, called the \emph{vertices} of the $\mathsf m$-simplex, which are not contained in a common $(\mathsf m-1)$-dimension hyperplane. More precisely, we have
  \begin{equation*}
    K = \left\{
      x = \sum_{i=1}^{\mathsf m+1} \mu_i a_0 : 0 \le \mu_i \le 1, 1 \le i \le \mathsf m+1, \sum_{i=1}^{\mathsf m+1} \mu_i = 1
    \right\}.
  \end{equation*}
 For each $x \in K$, we call $\{ \mu_i \}_{i=1}^{\mathsf m+1}$ \emph{barycentric coordinates}.
 For any integer $l$ with $0 \le l \le \mathsf m$, an $l$-facet of an $\mathsf m$-simplex $K$ is any $l$-simplex whose $(l+1)$ vertices are also vertices of $K$. We call an $(\mathsf m-1)$-facet a \emph{boundary facet}.
  We will also use the term boundary facet for any boundary polytopes (union of simplicies) of a polytope $K$.
  For each $k \ge 0$, we shall denote by $\P_k$ the space of all polynomials of degree $k$ in the variables $x_1, \ldots, x_\mathsf m$ in $\R^\mathsf m$. For any set $A \subset \R^\mathsf m$, we let
  \begin{equation*}
    \P_k( A ) = \{ \chi |_A : \chi \in P_k \}.
  \end{equation*}

\begin{example}[Example of reference finite element]
  \label{ex:standard-fem}
  The standard piecewise linear finite element $(K, \P_1(K), \Sigma^K )$  is obtained by choosing $K$ to be a non-degenerate $\mathsf m$-simplex   in $\R^\mathsf m$ and $\Sigma^K = \{ \chi \mapsto \chi(a_i) : 1 \le i \le \mathsf m+1 \}$. We can also define higher order spaces $( K, \P_k( K ), \Sigma^K )$, for $k \ge 2$, by including extra evaluation points in $\Sigma^K$ (see, for example, \citet[Section 2.2]{Cia78}).
  The key property of the extra evaluation points is that they determine the particular function in $\P_k( K )$. It is also true that the restriction of $\Sigma^K$ to any facet determines the restriction of functions in $P_k( K )$ on that facet.
\end{example}

Given the reference element $(\hat{K}, \hat{P}, \hat{\Sigma})$ and a function $\hat{\eta} \colon \hat{K} \to \R$ for which the nodal variables $\sigma_i( \hat{\eta} )$ can be computed (e.g. in the case of Lagrange element we require $\hat{\eta}$ to be continuous), we define the nodal interpolation of $\hat{\eta}$, written $\hat{I} \hat{\eta}$, as the unique function in $\hat{P}$ which has the same nodal values as $\hat{\eta}$. Let $\{ \hat\chi_i : 1 \le i \le d \} \subset P$ be the basis of $P$ dual to $\Sigma$ then we can characterise $\hat{I} \hat{\eta}$ as
\[
  \hat{I} \hat{\eta} := \sum_{i=1}^d \sigma_i( \hat{\eta} ) \chi_i.
\]

\begin{lemma}[Bramble-Hilbert Lemma, \citealt{Cia78}, Thm.~3.1.5]
  \label{lem:bramble-hilbert}
  Let the following inclusions hold for $m, k > 0$, and $p,q \in [1,\infty]$,
  \begin{align*}
    & W^{k+1,p}( \hat{K} ) \hookrightarrow C( \hat{K} ) \\
    & W^{k+1,p}( \hat{K} ) \hookrightarrow W^{m,q}( \hat{K} ) \\
    & \P_k( \hat{K} ) \subset \hat{P} \subset W^{m,q}(\hat{K}).
  \end{align*}
  Under the above assumptions on the reference finite element we have that there exists a constant $C = C( \hat{K}, \hat{\P}, \hat{\Sigma} ) > 0$ such that for all functions $\hat\eta \in W^{k+1,p}(\hat{K})$,
  \begin{equation}
    \label{eq:bramble-hilbert}
    \abs{ \hat{\eta} - \hat{I} \hat{\eta} }_{W^{m,q}(\hat{K})}
    \le C \abs{ \hat{\eta} }_{W^{k+1,p}(\hat{K})}.
  \end{equation}
\end{lemma}

\subsection{Bulk finite element}

We start by defining a single bulk finite element in $\mathbb R^{n+1}$.
Our definition of bulk finite element combines Def.~2.1 and 2.3 from \citep{Ber89}.
\begin{definition}
  [Bulk element reference map and bulk finite element]
  \label{def:bfe}
  Let $(\hat{K}, \hat{\P}, \hat{\Sigma} )$ be a reference finite element (\cref{def:reference-element}) with $\hat{K} \subset \R^{n+1}$.
  \begin{deflist}
    \item Let $F_K \colon \hat{K} \to \R^{n+1}$ satisfy
  \begin{enumerate}
    \item
    \begin{enumerate}
    \item $F_K \in C^1( \hat{K}, \R^{n+1} )$;
    \item $\rank \nabla F_K = n+1$;
    \item $F_K$ is a bijection onto its image;
    \end{enumerate}
  \item $F_K$ can be decomposed into an affine part and smooth part
    \begin{equation*}
      F_K( \hat{x} ) = A_K \hat{x} + b_K + D_K( \hat{x} )
    \end{equation*}
    such that $A_K$ is an invertible $(n+1) \times (n+1)$ matrix, $b_K \in \R^{n+1}$, and  $D_K \in C^1(\hat{K})$
    \begin{equation}
      \label{eq:bulk-CK-defn}
      C_K := \sup_{\hat{x} \in \hat{K}} \norm{ \nabla D_K( \hat{x} ) A_K^{-1} } < 1,
    \end{equation}
    where $\norm{ \cdot }$ denotes the two-norm of the matrix.
  \end{enumerate}
  In this situation we call $F_K$ a \emph{bulk element reference map}.
  \item Let $F_K$ be a bulk element reference map and $( K, \P, \Sigma )$ be the triple given by
  \begin{subequations}
    \label{eq:bfe}
    \begin{align}
      \label{eq:bulk-element-domain}
      K & := F_K( \hat{K} ) && \mbox{(the \emph{element domain})} \\
      \label{eq:bulk-shape-functions}
      \P & := \{ \hat{\chi} \circ F_K^{-1} : \hat{\chi} \in \hat{P} \} && \mbox{(the \emph{shape functions})}\\
      \label{eq:bulk-nodal-variables}
      \Sigma & := \{ \chi \mapsto \hat{\sigma}( \chi \circ F_K ) : \hat{\sigma} \in \hat{\Sigma} \} && \mbox{(the \emph{nodal variables})}.
    \end{align}
  \end{subequations}
  Under the above assumptions, we call $( K, \P, \Sigma )$ a \emph{bulk finite element}, $( \hat{K}, \hat\P, \hat\Sigma )$ the associated \emph{reference finite element}.
\end{deflist}
\end{definition}

With the bulk reference element map $F_K$ we can compute integrals and derivatives over the reference element using the transformation identity:
\[
  \int_{K} \chi(x) \dd x = \int_{\hat{K}} \hat{\chi}(\hat{x}) \abs{ \det \nabla F_K( \hat{x} ) } \dd \hat{x}, \qquad
  \nabla \chi( x ) = \nabla F_K^{-t} ( \hat{x} ) \nabla \hat{\chi}( \hat{x} ).
\]
We denote by $\nu_K$ the outward pointing normal to $K$.

\begin{definition}[$\Theta$-bulk finite element, Def.~2.4 \citealt{Ber89}]
  \label{def:Theta-bfe}
  Let $\Theta \in \Nbb$ and $F_K$ be the bulk element reference map for a bulk finite element $(K, P, \Sigma)$.
  \begin{deflist}
    \item We say that $F_K$ is a \emph{$\Theta$-bulk finite element reference map} if
    \begin{subdeflist}
    \item the bulk element reference map  $F_K \in C^{\Theta+1}(\hat{K}; \R^{n+1})$;
    \item for $1 \le m \le \Theta+1$, there exists constants $C_m(K) > 0$ such that
      \begin{equation}
        \label{eq:bulk-CK-theta-defn}
        \sup_{\hat{x} \in \hat{K}} \abs{ \nabla^m F_K( \hat{x} ) } \norm{ A_K }^{-m}
        \le C_m(K).
      \end{equation}
    \end{subdeflist}
  \item  \label{def:Theta-bfe-funcs}
    We say that $(K, \P, \Sigma)$ is a $\Theta$-bulk finite element if $F_K$ is a $\Theta$-bulk finite element reference map and
  \begin{subdeflist}
  \item the space $\P$ contains the functions $\hat{\chi} \circ F_K^{-1}$ for all $\hat\chi \in P_\Theta( \hat{K} )$;
    \label{def:Theta-bfe-functions-smooth}
  \item the space $\P$ is contained in $C^{\Theta+1}(K)$.
  \end{subdeflist}
\end{deflist}
\end{definition}

\begin{remark}
  \label{rem:bulk-P-closed}
  The properties of $K$ allow us to define the Sobolev spaces $W^{m,p}( K )$ for $1 \le m \le \Theta+1$, $1 \le p \le \infty$.
  Since $P \subset C^{\Theta+1}(K)$ and is finite dimensional, we clearly see that $P$ is a closed subspace of $W^{m,p}( K )$ for $1 \le m \le \Theta+1$, $1 \le p \le \infty$.
\end{remark}

\begin{example}
  [Bulk finite elements]
  \label{ex:bulk-fem}

  We are thinking of two particular examples. The first is a standard Lagrange finite element and the second is an isoparametric finite element. Examples of each of these cases are shown in \cref{fig:bfem-examples}.

  \begin{exlist}
  \item \label{ex:standard-bfe}
    Let $( \hat{K}, \hat{P}, \hat\Sigma )$ be a reference finite element. Consider the affine map $F_K \colon \hat{K} \to \R^{n+1}$ given by $F_K( \hat{x} )  = A_K \hat{x} + b_K$. If $A_K$ is non-singular, then this defines a bulk finite element $(K, P, \Sigma )$. In the standard way, the element domain $K$ is defined by the location of its vertices.
    For a simplex reference element domain $\hat{K}$, we are thinking of line segments in $\R$, triangles in $\R^2$ and tetrahedra in $\R^3$.

  \item \label{ex:isoparametric-bfe}
    Let $( \hat{K}, \hat\P, \hat\Sigma )$ be a reference finite element. Let $( K, \P, \Sigma )$ be a bulk finite element which is the image of $( \hat{K}, \hat\P, \hat\Sigma )$ under a map $F_K$ which satisfies $F_K \in ( \hat{P} )^{n+1}$. We call $(K, \P, \Sigma)$ an isoparametric bulk finite element.
    We note that the functions in $\P$ will not necessarily consist of polynomials over $K$ even if $\hat{P}$ consists of polynomials over $\hat{K}$,
    however this leads to a practical scheme where integrals are computed over reference elements.
    This example is the basis for the method in \cref{BULKPDE}.
  \end{exlist}
\end{example}

\begin{figure}
  \centering

  \includegraphics{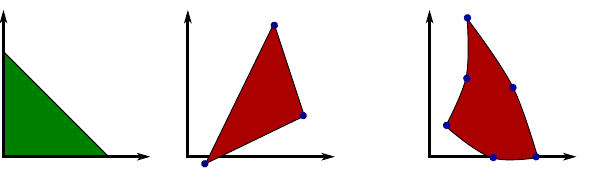}

  \caption{Examples of different bulk finite elements in the case $n=2$. Left shows a reference finite element (in green), centre shows a standard finite element (\cref{ex:standard-bfe}) and right shows an isoparametric bulk finite element (\cref{ex:isoparametric-sfe}) with quadratic $F_K$. The plot shows the element domains in red and the location of the nodes in blue.}
  \label{fig:bfem-examples}
\end{figure}

The definition of bulk finite element (\cref{def:bfe}), in particular \cref{eq:bulk-CK-defn}, is constructed to allow the following result:

\begin{lemma}[Lem.~2.1, \citealt{Ber89}]
  \label{lem:bulk-FK-AK-scale}
  Let $F_K \colon \hat{K} \to K$ be a bulk finite element reference map then $F_K$ is a $C^1$-diffeomorphism and satisfies
  \begin{align}
    \label{eq:bulk-FK-AK-scale1}
    \sup_{\hat{x} \in \hat{K}} \norm{ \nabla F_K( \hat{x} ) }
    & \le ( 1 + C_K ) \norm{ A_K } \\
    \label{eq:bulk-FK-AK-scale2}
    \sup_{\hat{x} \in \hat{K}} \norm{ (\nabla F_K (\hat{x}) )^{-1}}
    & \le ( 1 - C_K ) \norm{ A_K^{-1} },
  \end{align}
  and also for all $\hat{x} \in \hat{K}$
  \begin{equation}
    \label{eq:bulk-FK-AK-scale3}
    ( 1 - C_K )^{n+1} \abs{ \det( A_K ) }
    \le \abs{ \det( \nabla F_K(\hat{x}) ) }
    \le ( 1 + C_K )^{n+1} \abs{ \det ( A_K ) }.
  \end{equation}
\end{lemma}

To help us understand the geometry of the new element domains $K$, we introduce a new element domain $\tilde{K}$ defined by the affine part of the parametrisation: $\tilde{K} := \{ A_K \hat{x} + b_K : \hat{x} \in \hat{K} \}$.

\begin{lemma}
  \label{lem:bulk-element-geometry}
  Let $K$ be an element domain \cref{eq:bulk-element-domain} parametrised by a bulk element reference map $F_K$ over $\hat{K}$.
  Denote by
  \begin{subequations}
    \label{eq:bulk-element-geometry}
    \begin{align}
      \label{eq:bulk-hK}
      & h_K := \diam(\tilde{K}) \\
      \label{eq:bulk-rhoK}
      & \rho_K := \sup \{ \diam(B) : B \text{ is a $n$-dimensional ball contained in } \tilde{K} \}.
    \end{align}
  \end{subequations}
  We will also write $\hat{h}$ and $\hat{\rho}$ for the diameter of $\hat{K}$ and diameter of the maximum inscribed ball in $\hat{K}$.
  Then we have that
  \begin{subequations}
    \label{eq:bulk-AK-h-relations}
    \begin{align}
      \label{eq:bulk-AK-h-relation-a}
      \norm{ A_K } & \le \frac{h_K}{ \hat\rho } \\
      \label{eq:bulk-AK-h-relation-b}
      \norm{ A_K^{-1} } & \le \frac{ \hat{h} }{ \rho_K } \\
      \label{eq:bulk-AK-h-relation-c}
      \sup_{\hat{x} \in \hat{K}} \abs{ \det( \nabla F_K(\hat{x}) ) }
      & \le \frac{1}{\meas{\hat{K}}} \left( \frac{1+C_K}{1-C_K } \right)^{n+1} \meas(K).
    \end{align}
  \end{subequations}
\end{lemma}

\begin{proof}
  See \cite[Thm.~3.1.3]{Cia78}.
\end{proof}

\begin{remark}
  \label{rem:bulk-AK-h-meas}
  We note that the volume of an element $\meas(K)$ can be estimated by $h_K$ and $\rho_K$ by
  \begin{align*}
    c_1 \rho_K^{n+1} \le \meas{K} \le c_2 h_K^{n+1}.
  \end{align*}
  Here the positive constants $c_1, c_2$ depend on the volume of the unit ball in $\R^{n+1}$ and the constant $C_K$.
\end{remark}

\begin{remark}
  \label{rem:small-hk-bulk}
  In the sequel, we will assume implicitly that results hold for $h_K$ sufficiently small ($h_K < h_0$) for some particular value of $h_0$.
  In general this is always possible by subdividing a particular element using a refinement procedure and applying the result to the subdivided, smaller elements.
\end{remark}

The choice of mapping allows us to relate functions defined on $K$ to functions on $\hat{K}$.

\begin{lemma}[Lem.~2.3, \citealt{Ber89}]
  \label{lem:bulk-norm-scaling}
  Let $F_K \colon \hat{K} \to K$ be a $\Theta$-bulk finite element reference map (\cref{def:Theta-bfe}).
  Let $0 \le m \le \Theta+1$ and $p \in [1,\infty]$, then $\chi \in W^{m,p}(K)$ implies $\hat{\chi} = \chi \circ F_K$ belongs to $W^{m,p}(\hat{K})$. We have for any $\chi \in W^{m,p}(K)$ that
  \begin{equation}
    \label{eq:bulk-norm-scale-a}
    \abs{ \hat{\chi} }_{W^{m,p}(\hat{K})}
    \le c \abs{ \det A_K }^{-\frac{1}{p}}
    \norm{ A_K }^m \sum_{r=0}^m \abs{ \chi }_{W^{r,p}(K)},
  \end{equation}
  for a constant which depends on $C_K, C_2(K), \ldots, C_m(K)$.
  We also have for any $\hat{\chi} \in W^{m,p}(\hat{K})$ that $\chi = \hat{\chi} \circ F_K^{-1} \in W^{m,p}(K)$ and
  \begin{align}
    \label{eq:bulk-norm-scale-b}
    \abs{ \chi }_{W^{m,p}(K)} \le c \abs{ \det A_K }^{\frac{1}{p}}
    \sum_{r=0}^m \norm{ A_K^{-1} }^r \abs{ \hat{\chi} }_{W^{r,p}(\hat{K})},
  \end{align}
  where the constant here depends on $C_K, C_2(K), \ldots, C_m(K)$ and the product $\norm{ A_K } \norm{ A_K^{-1} }$.
\end{lemma}

Given a bulk finite element $(K, \P, \Sigma)$ (\cref{def:bfe}), let $\{ \chi_i : 1 \le i \le d \} \subset \P$ be the basis dual to $\Sigma$. This is the set of \emph{basis functions} of the finite element. If $\eta$ is a function for which all $\sigma_i( \eta )$, $1 \le i \le d$ is well defined, then we define the \emph{local interpolant} by
\begin{equation}
  \label{eq:bulk-IK}
  I_K \eta := \sum_{i=1}^d \sigma_i( \eta ) \chi_i.
\end{equation}
We can think of $I_K \eta$ as the unique shape function that has the same nodal values as $\eta$ so that, in particular, $I_K \chi = \chi$ for $\chi \in \P$.

\begin{theorem}
  [Local interpolation estimate]
  \label{thm:bulk-IK-bound}
  Let $( K, \P, \Sigma )$ be a $\Theta$-bulk finite element (\cref{def:Theta-bfe}) with reference element $(\hat{K}, \hat\P, \hat\Sigma )$ by $F_K$ which satisfies the assumptions of \cref{lem:bramble-hilbert}
    for some $0 < k,m \le \Theta$, $p,q \in [1,\infty]$.
  Then there exists a constant $C = C(\hat{K}, \hat{P}, \hat\Sigma )$ such that for all functions $\chi \in W^{k+1,p}(K)$
  \begin{equation}
    \label{eq:bulk-IK-estimate}
    \abs{ \chi - I_K \chi }_{W^{m,q}(K)}
    \le C \meas(K)^{1/q - 1/p}
    \frac{ h_K^{k+1} }{ \rho^m_K } \abs{ \chi }_{W^{k+1,p}(K)}.
  \end{equation}
\end{theorem}

\begin{proof}
  We re-scale \cref{eq:bramble-hilbert} using \cref{lem:bulk-norm-scaling} and the estimates from \cref{eq:bulk-AK-h-relations}:
  \begin{align*}
    \abs{ \chi - I_K \chi }_{W^{m,q}(K)}
    & \le c \left( \sup_{\hat{x} \in \hat{K}} \abs{ \det \nabla F_K( \hat{x} )} \right)^{1/q}
      \sum_{r=0}^m \norm{ A_K^{-1} }^r
      \abs{ \hat{\chi} - I_{\hat{K}} \hat{\chi} }_{W^{r,q}(\hat{K})} \\
    & \le c \left( \sup_{\hat{x} \in \hat{K}} \abs{ \det \nabla F_K( \hat{x} )} \right)^{1/q}
      \sum_{r=0}^m \norm{ A_K^{-1} }^r
      \abs{ \hat{\chi} }_{W^{k+1,p}(\hat{K})} \\
    & \le c \left( \sup_{\hat{x} \in \hat{K}} \abs{ \det \nabla F_K( \hat{x} )} \right)^{1/q - 1/p}
      \sum_{r=0}^m \norm{ A_K^{-1} }^r \norm{ A_K }^{k+1}
      \abs{ \chi }_{W^{k+1,p}(K)} \\
    & \le c \meas(K)^{1/q - 1/p}
      \frac{ h^{k+1}_K }{ \rho_K^m }
      \norm{ \chi }_{W^{k+1,p}(K)}.
  \end{align*}
  The last line holds if $\rho_K < 1$ (note that $\rho_K \ge h_K$ so this statement is true for $h_K$ small enough).
\end{proof}

\subsection{Triangulated bulk domain and spaces}
\label{sec:stationary-bfem}

Next, we bring together a finite family of bulk finite elements in order to create a bulk finite element space.

\begin{definition}
 \begin{deflist}
 \item \label{def:triangulated-domain}
  A \emph{triangulated (bulk) domain} is a set $\Omega_h$ equipped with an \emph{admissible subdivision} $\T_h$ consisting of bulk finite element domains such that $\bigcup_{K \in \T_h} K = {\Omega}_h$, $\mathring{K}_1 \cap \mathring{K}_2 = \emptyset$ for $K_1, K_2 \in \T_h$ with $K_1 \neq K_2$.
   \item \label{def:bulk-h}
The maximum subdivision diameter $h$ is defined by:
\begin{equation}
  \label{eq:bulk-h-defn}
  h := \max_{K \in \T_h} h_K.
\end{equation}

  \item \label{def:bulk-facet}
  Let $\Omega_h$ be a discrete bulk domain equipped with an admissible subdivision $\T_h$ such that each set $K \in \T_h$ is an element domain for a bulk finite element $(K, \P^K, \Sigma^K)$ parametrised over the same polygonal reference finite element $( \hat{K}, \hat{P}, \hat{\Sigma} )$. We say that $E \subset K$ is a \emph{facet} if $E$ is the image of a boundary facet of $\hat{K}$.
  \item \label{def:bulk-conforming-subdivision}
   We say that $\T_h$ is a \emph{conforming} subdivision of $\Omega_h$ if any facet of an element domain $K$ is either a facet of another element domain $K' \in \T_h$, in which case we say $K$ and $K'$ are \emph{adjacent}, or a portion of the boundary $\partial \Omega_h$.
 \item
   For a conforming subdivision, we denote by $\F_h$ the set of facets between adjacent elements and by $\partial \T_h$ the set of boundary facets.
   For a internal facet $F \in \F_h$ between adjacent elements $K, K' \in \T_h$, we make a choice of the two elements and denote by $\nu_F^+$ the outward normal to $K$ and by $\nu_F^-$ the outward normal to $K'$. The particular choice will not affect our calculations.
    \end{deflist}
\end{definition}

\begin{remark}
  \begin{remlist}
  \item  We recall that our elements domains (\cref{def:ref-element-domain}) are closed so that $\Omega_h$ is also a closed set.
  \item  When putting together bulk finite elements in order to form a discrete domain $\Omega_h$, we will be generally thinking of the case that only elements with more than one vertex on the boundary are curved ($D_K \neq0$).
  See \cref{BULKPDE} for more details.
  We allow for the more general case here.
\end{remlist}
\end{remark}

\begin{definition}[Broken Sobolev spaces and norms]
  \label{def:bulk-broken-sobolev-norm}

Let $\T_h$ be a subdivision of $\Omega_h$ consisting of $\Theta$-bulk finite elements.
Then for $0 \le m \le \Theta+1$, $p \in [1,\infty]$, we define the broken Sobolev space $W^{m,p}(\T_h)$ by
\begin{equation}
  \label{eq:bulk-broken-sobolev-space}
  W^{m,p}(\T_h) := \left\{
    \eta_h \in L^1( \Omega_h ) : \eta_h |_{K} \in W^{m,p}(K) \mbox{ for all } K \in \T_h
    \right\},
  \end{equation}
  with norm
\begin{equation}
  \label{eq:bulk-discrete-norm}
  \norm{ \eta_h }_{W^{m,p}(\T_h)}
  :=
  \begin{cases}
    \displaystyle
    \left(
      \sum_{K \in \T_h} \norm{ \eta_h }_{W^{m,p}(K)}^p
    \right)^{1/p}
    & p < \infty \\
    \displaystyle
    \max_{K \in \T_h} \norm{ \eta_h }_{W^{m,\infty}(K)}
    & p = \infty.
  \end{cases}
\end{equation}
\end{definition}

\begin{remark}
  This space is often used in the context of discontinuous Galerkin finite element methods. See, for example, \citet{ArnBreCoc02}.
\end{remark}

{
\begin{lemma}
  The space $W^{m,p}( \T_h )$ is complete.
\end{lemma}

\begin{proof}
  Consider a Cauchy sequence $\{ \eta_j \} \in W^{m,p}( \T_h )$. This implies
  \begin{itemize}
  \item $\eta_j$ is Cauchy in $L^p(\Omega_h)$ so there exists $\xi$ such that $\eta_j \to \xi$ in $L^p(\Omega_h)$.
  \item $\eta_j|_K$ is Cauchy for all $K$ so there exists $\xi_K$ such that $\eta_j|_{K} \to \xi_K$ in $W^{m,p}(K)$.
  \end{itemize}
  It is clear from the triangle inequality that $\xi|_K = \xi_K$:
  \[
    \norm{ \xi|_{K} - \xi_K }_{W^{m,p}(K)} \le \norm{ \xi|_{K} - \eta_j|_{K} }_{L^{m,p}(K)} + \norm{ \eta_j|_{K} - \xi_K }_{W^{m,p}(K)},
  \]
  since the right hand-side converges to $0$ as $j \to \infty$.
  Hence, we have shown that $\eta_j$ converges to a function $\xi \in W^{m,p}( \T_h )$ in the $W^{m,p}( \T_h )$ norm.
\end{proof}

Let $\Omega_h$ be a triangulated domain with conforming subdivision $\T_h$.
For $K \in \T_h$, we denote the trace of a function $\eta \in W^{1,p}(K)$ by $T_K \eta \in L^p( \partial K )$ and recall that there exists a constant $c_{T_K} > 0$ such that
\begin{equation}
  \label{eq:bulk-element-trace}
  \norm{ T_K \eta }_{L^p( \partial K )} \le c_{T_K} \norm{ \eta }_{W^{1,p}( K )} \qquad \mbox{ for all } \eta \in W^{1,p}(K).
\end{equation}
We define the space $W^{1,p}_T( \T_h )$ by
\begin{multline}
  \label{eq:bulk-trace-broken-sobolev-space}
  W^{1,p}_T( \T_h ) :=
  \Bigl\{
  \eta_h \in L^p( \Gamma_h ) : \eta_h |_K \in W^{1,p}(K) \mbox{ for all } K \in \T_h, \mbox{ and } \\
  T_K( \eta_h |_K ) = T_{K'}( \eta_h |_{K'} ) \mbox{ a.e. in } K \cap K' \mbox{ for adjacent } K, K' \in \T_h
  \Bigr\}.
\end{multline}
We equip this space with the broken norm $\norm{ \cdot }_{W^{1,p}(\T_h)}$.

\begin{lemma}
  \label{lem:bulk-W1pT-closed}
  The space $W^{1,p}_T( \T_h )$ is a closed subspace of $W^{1,p}( \T_h )$ so is complete.
\end{lemma}

\begin{proof}
  Take a sequence $\{ \eta_j \}$ which converges to $\eta_h \in W^{1,p}( \T_h )$. Then for any pair of adjacent elements $K, K' \in \T_h$ we have
  \begin{multline*}
    \norm{ \eta_h|_{K} - \eta_h|_{K'} }_{L^p( K \cap K' )}
    \le \norm{ \eta_h - \eta_j }_{L^p( \partial K )} + \norm{ \eta_h - \eta_j }_{L^p( \partial K' )} \\
    \le c_{T_K} \norm{ \eta_h - \eta_j }_{W^{1,p}(K)} + c_{T_{K'}} \norm{ \eta_h - \eta_j }_{W^{1,p}(K')}
    \le ( c_{T_K} + c_{T_{K'}} ) \norm{ \eta_h - \eta_j }_{W^{1,p}( \T_h )}.
  \end{multline*}
  Clearly the right hand side converges to $0$ as $j \to \infty$ so we have the traces of $\eta_h$ from adjacent elements coincide and $\eta_h \in W^{1,p}( \T_h )$.
\end{proof}

We will use the notation for $H^1_T( \T_h ) := W^{1,2}_T( \T_h )$ which is a Hilbert space when equipped with the obvious broken inner product.

\begin{lemma}
  \label{lem:bulk-W1pT-h-equal}
  Let $\T_h$ be a conforming subdivision of $\Omega_h$ then $W^{1,p}_T( \T_h ) = W^{1,p}( \Omega_h )$.
\end{lemma}

\begin{proof}
  First, let $\eta_h \in W^{1,p}_T( \T_h )$. Then we have that $\eta_h \in L^p( \Omega_h )$ and it is left to show that $\eta_h$ has a weak derivative in $L^p( \Omega_h )$.
  We have a candidate $\xi$ given element-wise by $\xi|_{K} = \nabla (\eta_h|_{K})$.
  It is clear that $\xi \in L^p( \Omega_h )$ and for $\varphi \in C^1_c( \Omega_h )$ and $i = 1, \ldots, n+1$, we have
  \begin{align*}
    \int_{\Omega_h} \eta_h \partial_i \varphi
    & = \sum_{K \in \T_h} \int_{K} \eta_h \partial_i \varphi \\
    & = \sum_{K \in \T_h} \left( -\int_{K} \xi_i \varphi
      + \int_{\partial K} \eta_h \varphi \nu_i^K  \right) \\
    & = - \int_{\Omega_h} \xi_i \varphi
      + \sum_{K \in \T_h} \int_{\partial K} \eta_h \varphi \nu_i^K.
  \end{align*}
  We note that we can write
  \[
    \sum_{K \in \T_h} \int_{\partial K} \eta_h \varphi \nu_i^K
    = \sum_{F \in \mathcal{F}_h} \int_{F} \eta_h \varphi ( \nu_F^{+} + \nu_F^{-} )_i
    + \sum_{F \in \partial \T_h} \int_{F} \eta_h \varphi \nu_F = 0,
  \]
  where $\mathcal{F}_h$ is the set of facets between adjacent elements in $\T_h$, since the traces of $\eta_h |_{K}$ and $\eta_h|_{K'}$ to $K \cap K'$ coincide for any adjacent pair $K, K'$.
  We note that the sum over edges is zero since the normals from adjacent elements are equal and opposite in a conforming triangulation and the integral over boundary facets is zero since $\varphi \in C^1_c( \Omega_h )$ is zero here.
  Thus, we see that $\xi$ is the weak derivative of $\eta_h$.

  Second, let $\eta_h \in W^{1,p}( \Omega_h )$ then it is clear that $\eta_h \in L^p( \Omega_h )$, $\eta_h |_{K} \in W^{1,p}( K )$ and by the trace theorem $\eta$ has common trace between adjacent elements.
\end{proof}

\begin{remark}
  The proof is based on ideas from \citet[Thm.~2.1.1]{Cia78} which showed that appropriate finite element spaces defined in that work are contained in $H^1( \Omega_h )$ (in our notation).
\end{remark}

}

\subsubsection{Bulk finite element space}
We restrict to Lagrangian finite elements over a polygonal reference finite element.
More precisely, we assume that the degrees of freedom for each element $(K, P, \Sigma)$ are given by
\[
  \Sigma = \{ \chi \mapsto \chi( a ) : a \in \N^K \},
\]
where $\N^K$ is a finite set of nodes in $K$.
We call $\N^K$ the set of \emph{Lagrange nodes} of $K$.

The set of degrees of freedom of adjacent bulk finite elements will be related as follows.
Let $( K, \P, \Sigma )$ and $( K', \P', \Sigma')$ be two bulk finite elements such that $K$ and $K'$ are adjacent with $\Sigma = \{ \chi \mapsto \chi( a ), a \in \N^K \}$ and $\Sigma' = \{ \chi \mapsto \chi( a' ), a' \in \N^{K'} \}$.
Then, we have
\begin{equation}
  \label{eq:bulk-node-agree}
  \left( \bigcup_{a \in \N^K} a \right) \cap K'
  =
  \left( \bigcup_{a' \in \N^{K'}} a' \right) \cap K.
\end{equation}

We denote the \emph{global set of Lagrange nodes} by
\begin{equation}
  \label{eq:bulk-Nh}
  \N_h = \bigcup_{K \in \T_h} \N^K.
\end{equation}
For each $a \in \N_h$, let $\T( a ) \subset \T_h$ be the local neighbourhood of elements for which $a \in \N^K$.

\begin{definition}
  [Bulk finite element space]
  \begin{deflist}
  \item \label{def:bfe-space}
    Let $\Omega_h$ be a discrete bulk domain equipped with a conforming subdivision $\T_h$ with each domain $K$ equipped with a bulk finite element $( K, \P^K, \Sigma^K )$ (\cref{def:bfe}) which satisfy \cref{eq:bulk-node-agree}.
    A \emph{bulk finite element space} is a (generally proper) subset of the product space $\prod_{K \in \T_h} \P^K$ given by
    \begin{multline*}
      \S_h :=
      \bigg\{ \chi_h = ( \chi_K )_{K \in \T_h} \in \prod_{K \in \T_h} \P^K : \\
              \chi_{K}( a ) = \chi_{K'}( a ), \mbox{ for all } K, K' \in \T( a ),
                \mbox{ for all } a \in \N_h \bigg\}.
    \end{multline*}
  \item \label{def:bulk-global-degrees-of-freedom}
    The bulk finite element space is determined by the \emph{global degrees of freedom}
    \begin{equation*}
      \Sigma_h = \left\{
        \chi_h \mapsto \chi_h( a ) : a \in \N_h
      \right\}.
    \end{equation*}
  \end{deflist}
\end{definition}

In this definition, an element $\chi_h \in \S_h$ is not, in general a ``function'' defined over $\bar\Omega_h$, since we do not necessarily have a good definition of $\chi_h$ over element boundaries: The ``function'' may be double-valued.

If it happens, however, that for each element $\chi_h \in \S_h$, the restrictions $\chi_K$ and $\chi_{K'}$ coincide along the common face of any adjacent elements $K$ and $K'$, then the function $\chi_h$ can be identified with a function defined over the set $\bar\Gamma_h$.
In this case, we call the elements $\chi_h \in \S_h$ \emph{bulk finite element functions}.
Examples of bulk finite element functions are shown in \cref{fig:bfem-func-ex}.

\begin{figure}[tb]
  \centering

  \includegraphics[width=0.3\textwidth]{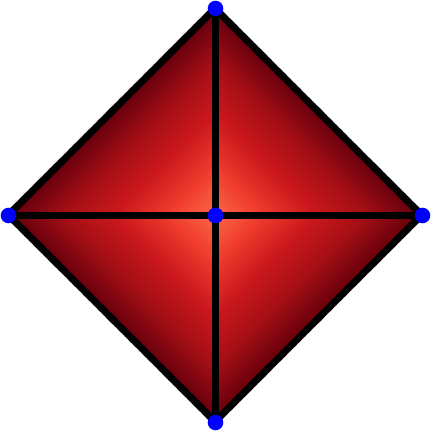}
  \hfill
  \includegraphics[width=0.3\textwidth]{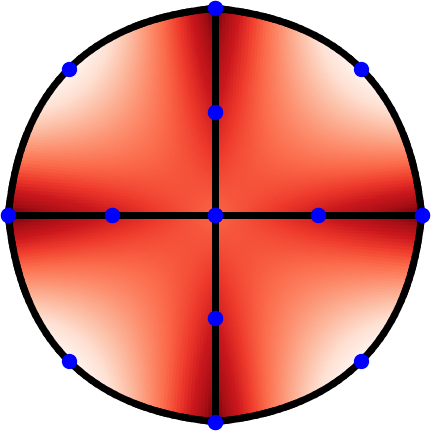}
  \hfill
  \includegraphics[width=0.3\textwidth]{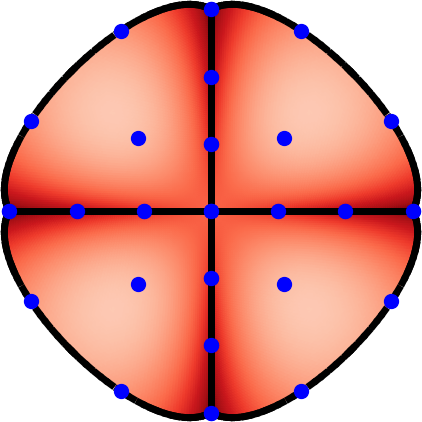}

  \caption{Examples of finite element functions.
    Left shows a piecewise linear function over a collection of standard finite elements (c.f. \cref{ex:standard-bfe}), centre and right shows a piecewise quadratic and cubic function over a collection of isoparametric (quadratic, respectively cubic) finite elements (c.f. \cref{ex:isoparametric-bfe}).
    The functions shown are interpolations into the appropriate finite element spaces of the function $x \mapsto \abs{x} \cos( 4 \arctan(x_2/x_1) )$.
    The distribution of Lagrange points is shown in blue}
  \label{fig:bfem-func-ex}
\end{figure}

We enumerate the nodes so that $\N_h = \{ a_i \}_{i=1}^N$ and take $\{ \chi_i \}_{i=1}^N$ to be the basis of $\S_h$ dual to $\Sigma_h$. Since, we have a finite basis of $\S_h$ we note that we can identify any $\chi_h \in \S_h$ with a vector $\gamma \in \R^N$ so that
\begin{equation*}
  \chi_h( x ) = \sum_{i=1}^N \gamma_i \chi_i( x ) \qquad \mbox{ for } x \in \Omega_h.
\end{equation*}

{
\begin{lemma}
  \label{lem:bulk-Sh-closed}
  Let $\S_h$ be a bulk finite element space consisting of bulk finite elements over a conforming subdivision $\T_h$ of $\Omega_h$.
  Assume further that for each $K \in \T_h$, the corresponding reference finite element is a Lagrange element of order $k \ge 1$ (\cref{ex:standard-fem}).
  Then we can identify elements of $\S_h$ as functions in $C(\Omega_h)$.
  Furthermore $\S_h$ is a closed subspace in $H^1_{T}( \T_h )$.
\end{lemma}

\begin{proof}
  Consider two adjacent elements $K, K' \in \T_h$ and $\chi_h \in \S_h$.
  The functions $\chi_K \circ F_K$ and $\chi_{K'} \circ F_{K'}$ when restricted to the appropriate edges in $\hat{K}$ are polynomials of degree $k$ which agree at the Lagrange points on this edge from \cref{eq:node-agree} and the definition of $\S_h$.
  The Lagrange points in the reference element determine polynomials of degree $k$ so we have that $\chi_{K} = \chi_{K'}$ on $K \cap K'$.
  Since $\T_h$ is a conforming subdivision we can define a global function $\chi_h \colon \bar\Omega_h \to \R$ such that $\chi_h|_{K} = \chi_K$ for each $K \in \T_h$ which is globally continuous.
  Indeed $\chi_h$ restricted to each element is continuous, as the composition of a polynomial (element of $\hat{P} = P_k( \hat{K} )$) and a smooth surface finite element reference map $F_K$, and is single valued on the facets where any two elements meet.

  In fact the restriction $\chi_h|_K$ to each element $K \in \T_h$ is a $C^1$-function hence $\chi_h|_K \in H^1(K)$ so it is clear that $\S_h \subset H^1_T( \T_h )$.
  The space $\S_h$ is closed since it is finite dimensional.
\end{proof}

\begin{remark}
  The proof is based on ideas from \citet[Thm.~2.2.3]{Cia78} which show that appropriate finite element spaces defined in that work are contained in $C^0( \bar{\Omega}_h ) \cap H^1( \Omega_h )$ (in our notation).
\end{remark}

}

The approximation property of the finite element space will be defined through an interpolation operator:
\begin{definition}[Interpolation]
  \label{def:bulk-global-interpolant}
If $\eta$ is a function on $\Omega_h$ for which all $\sigma_i( \eta )$, $1 \le i \le N$, is well defined (in case of Lagrangian finite elements, $\eta \in C(\Omega_h)$ suffices), then we can define a \emph{global interpolant} $I_h \eta$ by
\begin{equation*}
  I_h \eta := \sum_{i=1}^N \sigma_i( \eta ) \chi_i.
\end{equation*}
Note that our construction implies that
\begin{equation*}
  ( I_h \eta )|_{K} = I_K \eta |_K \mbox{ for all } K \in \T_h,
\end{equation*}
and $I_h \chi_h = \chi_h$ for all $\chi_h \in \S_h$.
\end{definition}
In order to prove estimates on the global interpolant, we will first define three further properties of our subdivision $\T_h$.

\begin{definition}
  [Regular and quasi-uniform subdivisions]
  For $h \in (0,h_0)$, let $\Omega_h$ be a triangulated bulk domain (\cref{def:triangulated-domain}) equipped with a conforming subdivision $\T_h$ (\cref{def:bulk-conforming-subdivision}).
  \begin{deflist}
  \item \label{def:bulk-regular} [Def.~3.1, \citealt{Ber89}]
    The family is said to be non-degenerate or \emph{regular} if there exists $\rho_\reg > 0$ such that for all $K \in \T_h$ and all $h \in (0,h_0)$,
    \begin{equation*}
      \rho_K \ge \rho_\reg h_K,
    \end{equation*}
    and there exists a constant $C > 0$ such that
    \[
      \sup_{h \in (0,h_0)} \max_{K \in \T_h} C_K \le C < 1.
    \]
  \item \label{def:bulk-Theta-regular} [Def.~3.2, \citealt{Ber89}]
    The family is said to be \emph{$\Theta$-regular} if it is regular, if for all $h \in (0,h_0)$ and all $K \in \T_h$, $F_K \in C^{\Theta+1}(K; \R^{n+1})$, and if, there exists a constant $C > 0$ such that
    \[
      \sup_{h \in (0,h_0)} \max_{K \in \T_h} C_m( K ) \le C < + \infty \qquad \mbox{ for } 2 \le m \le \Theta+1.
    \]
  \item \label{def:bulk-quasi-uniform}
    A regular family is said to be \emph{quasi-uniform} if there exists $\rho > 0$ such that
    \begin{equation*}
      \min \{ \rho_K : K \in \T_h \} \ge \rho h \quad \mbox{ for all } h \in (0,h_0).
    \end{equation*}
  \end{deflist}
\end{definition}
\begin{remark}
  We note that:
  \begin{itemize}
  \item for a regular subdivision there exists a constant $c > 0$ depending on the global quantities $\hat{\rho}, \hat{h}$ and $\rho_\reg$
    \begin{equation*}
      \norm{ A_K } \le c h_K \le c h
      \qquad \mbox{ and } \qquad
      \norm{ A_K^{-1} } \le c h_K^{-1},
    \end{equation*}
  \item for a quasi-uniform subdivision there exists a constant $c > 0$ depending on the global quantities $\hat{\rho}, \hat{h}$ and $\rho$
    \begin{equation*}
      \norm{ A_K } \le c h
      \qquad \mbox{ and } \qquad
      \norm{ A_K^{-1} } \le c h^{-1}.
    \end{equation*}
  \end{itemize}
\end{remark}

\begin{theorem}
  [Global interpolation estimates, c.f. Cor.~4.1, \citealt{Ber89}]
  \label{thm:bulk-Ih-bound}
  For $h \in (0,h_0)$, let $\Omega_h$ be a triangulated bulk domain (\cref{def:triangulated-domain}) equipped with a $\Theta$-regular (\cref{def:bulk-Theta-regular}), quasi-uniform (\cref{def:bulk-quasi-uniform}), conforming (\cref{def:bulk-conforming-subdivision}) subdivision $\T_h$. Let each $K \in \T_h$ be equipped with a $\Theta$-bulk finite element $( K, \P^K, \Sigma^K )$ (\cref{def:Theta-sfe}) parametrised over a reference finite element $(\hat{K}, \hat{\P}, \hat{\Sigma})$ which satisfies the assumptions of \cref{lem:bramble-hilbert}
    for some $0 < k,m \le \Theta$, $p,q \in [1,\infty]$.
Then there exists a constant $C = C( \hat{K}, \hat{\P}, \hat{\Sigma}, \rho )$ such that for all functions $\eta \in W^{k+1,p}( \T_h ) \cap C^0( \Omega_h )$,
  \begin{equation}
    \label{eq:bulk-Ih-bound}
    \norm{ \eta - I_h \eta }_{W^{m,q}(\T_h)}
    \le
    C h^{k+1-m}
    \norm{ \eta }_{W^{k+1,p}(\T_h)}.
  \end{equation}
\end{theorem}

\begin{proof}
  The proof follows by piecing together \cref{thm:bulk-IK-bound} using the fact that $\T_h$ is quasi-uniform.
\end{proof}

\begin{remark}
  The approximation property shown in \citep[Cor.~4.1]{Ber89} is a result for an $L^2$ projection for a more general class of finite element spaces.
\end{remark}

\subsection{Evolving bulk finite elements}
Let $ t \in [0,T]$ denote time. We consider families of bulk finite elements, spaces and triangulated domains parametrised by $t$.

\begin{definition}
  [Evolving bulk finite element]
  
 \begin{deflist}
 \item \label{def:ebfe}
   Let $(K(t), P(t), \Sigma(t))_{t \in [0,T]}$ be a time dependent family of bulk finite elements (\cref{def:bfe}) parametrised over a common reference element  $(\hat{K}, \hat{P}, \hat\Sigma)$.  If the constant $C_K = \sup_t C_{K(t)}$ is uniformly bounded away from 1,
  \begin{equation*}
    C_K := \max_{t \in [0,T]} \sup_{\hat{x} \in \hat{K}} \norm{ \nabla D_K( \hat{x}, t ) A_K^{-1}(t) }
    < c < 1,
  \end{equation*}
  we say that $(K(t), P(t), \Sigma(t))_{t\in[0,T]}$ is an \emph{evolving bulk finite element}.
 \item \label{def:ebfe-reference-map}
 Let  $\Phi_{(\cdot)}^K \in C^2([0,T], C^1(K_0))$ where $K_0:=K(0)$. We say that if $\Phi_t^K \colon K_0 := K(0) \to K(t)$ is such that
\begin{equation}
  \label{eq:ebfe-reference-map}
  F_{K(t)}( \hat{x} ) = \Phi_t^K( F_{K_0}(\hat{x} ) ) \qquad \mbox{ for } \hat{x} \in \hat{K}
\end{equation}
 then  $\Phi_t^K$ is the {\it flow}  defining the  evolution of the element domain and that $F_{K(t)}$ is the {\it evolving bulk element reference map}.
\item \label{def:bulk-element-velocity}
 The \emph{element velocity} $W_K$ of $K(t)$ is defined  by
\begin{equation*}
  W_K( \Phi_t^K( x ), t ) = \dt \Phi_t^K( x ) \qquad \mbox{ for } x \in K_0, t \in [0,T].
\end{equation*}

\item \label{def:e-Theta-bfe}
  If each $(K(t), \P(t), \Sigma(t))$ is a $\Theta$-bulk finite element for each $t \in [0,T]$ and the constants $C_m(K(t))$ are uniformly bounded:
  \[
    \sup_{t \in [0,T]} C_m( K(t) ) \le c < \infty \qquad \mbox{ for } 2 \le m \le \Theta+1
  \]
  then we say that $(K(t), P(t), \Sigma(t))_{t\in[0,T]}$ is an \emph{evolving $\Theta$-bulk finite element} and $F_{K(t)}$ is an \emph{evolving $\Theta$-bulk element reference map}.
  \item \label{def:bulk-temporally-quasi-uniform}
  We say that an evolving bulk finite element is \emph{temporally quasi-uniform}, if there exists $\rho_K > 0$ such that
\begin{equation*}
  \inf \{ \rho_{K(t)} : t \in [0,T] \} \ge \rho_K \sup \{ h_{K(t)} : t \in [0,T] \}.
\end{equation*}
\item \label{def:bulk-element-push-forward-map}
The family of {\it element push forward maps} denoted by $\phi_t^K( \chi ) \colon K(t) \to \R$ for $\chi \colon K_0 \to \R$, indexed by $t \in [0,T]$, is  defined to be the linear bijections defined by:
\begin{align*}
  \phi_t^K( \chi )( x ) = \chi( \Phi_{-t}^K( x ) ) \qquad \mbox{ for } x \in K(t).
\end{align*}

  \end{deflist}
\end{definition}

\begin{lemma}
  \label{lem:bulk-Kt-norm-equiv}
  Let $F_{K(t)} \colon \hat{K} \to K(t)$, for $t \in [0,T]$ be an evolving $\Theta$-bulk finite element reference map (\cref{def:e-Theta-bfe}) for a temporally quasi-uniform element domain $K(t)$ (\cref{def:bulk-temporally-quasi-uniform}) and $\phi^K_t$ the family of element push forward maps (\cref{def:bulk-element-push-forward-map}).
  Then there exists constants $c_1, c_2 > 0$, which depend only on the reference element $(\hat{K}, \hat{\P}, \hat{\Sigma})$ and the constants $C_K$ and $\rho_K$, such that for all $t \in [0,T]$,  $\chi \in W^{m,p}(K_0)$ if and only if $\phi^K_t \chi \in W^{m,p}(K(t))$ and
  \begin{align}
    \label{eq:bulk-Kt-norm-equiv}
    c_1 \norm{ \chi }_{W^{m,p}(K_0)}
    & \le \norm{ \phi^K_t \chi }_{W^{m,p}(K(t))}
      \le c_2 \norm{ \chi }_{W^{m,p}(K_0)}
    && \mbox{ for all } \chi \in W^{m,p}( K_0 ).
  \end{align}
\end{lemma}

\begin{proof}
  From \cref{lem:bulk-norm-scaling} and \cref{eq:bulk-AK-h-relations}, we have
  \begin{align*}
    \abs{ \chi }_{W^{m,p}(K_0)}
    \le c \left( \frac{ \meas(K(t)) }{ \meas (K_0) } \right)^{1/p}
    \norm{ \phi^K_t \chi }_{W^{m,p}(K(t))}
    \sum_{r=1}^m \left( \frac{ h_{K(t)} }{ \rho_{K_0} } \right)^r
  \end{align*}
  and
  \begin{align*}
    \abs{ \phi^K_t\chi }_{W^{m,p}(K(t))}
    \le c \left( \frac{ \meas(K_0) }{ \meas( K(t) )} \right)^{1/p}
    \norm{ \chi }_{W^{m,p}(K_0)}
    \sum_{r=1}^m \left( \frac{ h_{K_0} }{ \rho_{K(t)} } \right)^r.
  \end{align*}
  It can be easily seen that for a quasi-uniform evolving surface finite element that these constants only depend on allowed quantities.
\end{proof}

This result implies that $( W^{m,p}( K(t) ), \phi^K_t|_{W^{m,p}(K_0)} )_{t \in [0,T]}$ is a compatible pair (\cref{def:compatibility}).
Furthermore $( P(t), \phi^K_t|_{P(0)} )_{t \in [0,T]}$, equipped with the $W^{m,p}( K(t) )$ norm is also a compatible pair since $P(t)$ is a closed subspace of $W^{m,p}(K(t))$ (\cref{rem:compatible-subsace,rem:bulk-P-closed}).

\subsection{Evolving bulk triangulations and spaces}

We now derive definitions of an evolving bulk finite element space which is part of a compatible pair (in the sense of \cref{sec:abstract-formulation}, \cref{def:compatibility}).
For each $h \in (0,h_0)$, we are given a family of discrete bulk domains $\{ \Omega_h(t) \}_{t \in [0,T]}$ and each equipped with a bulk finite element space $\{ \S_h(t) \}_{t \in [0,T]}$.
Furthermore, we are interested in under what assumptions does the compatibility hold independently of the element diameter $h$.

\begin{definition}[Evolving bulk domain]
  \label{def:evolving-bulk-domain}
  For $t \in [0,T]$, let $\Omega_h(t)$ be a family of triangulated bulk domains (\cref{def:triangulated-domain}) each equipped with a conforming subdivision $\T_h(t)$ (\cref{def:bulk-conforming-subdivision}) such that each element domain $K(t) \in \T_h(t)$ is equipped with an element flow map $\Phi_{(\cdot)}^K \in C^2( [0, T]; C^1( K_0 ))$ (\cref{def:ebfe-reference-map}).

\begin{deflist}

\item \label{def:bulk-evolving-conforming-subdivision}
  We call $\{ \T_h(t) \}_{t \in [0,T]}$ an \emph{evolving conforming subdivision} if
  for each element $K(0) \in \T_h(0)$ and each facet $E(0)$ of $K(0)$
  either $E(0)$ is a facet of another element $K'(0) \in \T_h(0)$, in which case $E(t)$ is a common facet between $K(t)$ and $K'(t)$ for all $t \in [0,T]$ or
  $E(0)$ is a portion of the boundary $\partial \Omega(0)$, in which case $E(t)$ is a portion of the boundary $\partial \Omega(t)$ for all $t \in [0,T]$.
\item \label{def:evolving-triangulated-domain}
An \emph{evolving triangulated (bulk) domain} is  defined to be a family of triangulated bulk domains $\{ \Omega_h(t) \}_{t \in [0,T]}$ equipped with an evolving conforming subdivision. In this case, we define the mesh parameter $h$ to be
\begin{equation}
  \label{eq:bulk-def-h}
  h := \sup_{t \in [0,T]} \max_{K(t) \in \T_h(t)} h_{K(t)}.
\end{equation}
\item \label{def:bulk-global-discrete-flow}
We define a \emph{global discrete flow} $\Phi^h_{(\cdot)}(\cdot) \colon [0,T] \times \Omega_{h,0} \to \R^{n+1}$ element-wise by
\begin{equation*}
  \Phi^h_t|_{K_0} := \Phi^K_t \qquad \mbox{ for } K_0 \in \T_h(0).
\end{equation*}
Our assumptions imply that $\Phi^h_t$ is piecewise smooth and $\Phi^h_t \colon \Omega_{h,0} \to \Omega_h(t)$.

\item \label{def:bulk-global-discrete-velocity}
  We define a \emph{global discrete velocity}  ${W}_h$ given by
\begin{equation*}
  {W}_h|_{K(t)} = {W}_K.
\end{equation*}
\item \label{def:bulk-global-push-forward-map}
 The  family of linear bijections induced by  the flow $\Phi^h_t$ and called the \emph{global push forward map}
 is denoted by $\phi^h_t( \eta_h ) \colon \Omega_h(t) \to \R$ for $\eta_h \colon \Omega_{h,0} \to \R$ and defined  by
\begin{equation*}
  ( \phi^h_t \eta_h )|_{K(t)} := \phi^K_t( \eta_h|_{K_0} ) \qquad \mbox{ for all } K(t) \in \T_h(t).
\end{equation*}
\end{deflist}
\end{definition}

In order to bring together a collection of elements, we again restrict to Lagrangian finite elements over a polygonal reference finite element.
We note that our construction implies that for each element $K(t)$ each node $a(t) \in \N^{K(t)}$ is the trajectory of a point $a_0 \in \N^{K(0)}$ under the flow $\Phi_t^K$ - that is $a(t) = \Phi_t^K( a_0 )$.

We make the following extra requirement on how adjacent elements will be related.
For each $t \in [0,T]$, we denote by $\N_h(t)$ the global set of Lagrange nodes of $\Omega_h(t)$ \cref{eq:bulk-Nh} and for any $a \in \N_h(t)$, $\T(a)$ is the set of elements $K(t)$ such that $a$ is a node of $K(t)$.
We make the restriction that the global flow is single-valued at each Lagrange point:
for all $a_0 \in \N_h(0)$ we have
\begin{equation}
  \label{eq:bulk-evolve-node-agree}
  \Phi_t^K( a_0 ) = \Phi_t^{K'}( a_0 ) \qquad \mbox{ for all } K, K' \in \T( a_0 ).
\end{equation}

\begin{remark}
  We note that our construction does not imply that the global flow map is indeed a function: Its restrictions to a facet from adjacent elements may not coincide.
  The assumption \cref{eq:bulk-evolve-node-agree} imposes that the global flow map should coincide at Lagrange points along element boundaries.
\end{remark}

\begin{definition}[Evolving bulk finite element space]
  \begin{deflist}
    \item \label{def:evolving-bfe-space}
  Let $\{ \Omega_h(t) \}_{t \in [0,T]}$ be an evolving triangulated bulk domain (\cref{def:evolving-triangulated-domain}) equipped with an evolving conforming subdivision $\{ \T_h(t) \}_{t \in [0,T]}$ (\cref{def:bulk-evolving-conforming-subdivision}).
  For $t \in [0,T]$, let $\S_h(t)$ be a bulk finite element space (\cref{def:bfe-space}) over $\Omega_h(t)$.
  If each $K(t) \in \T_h(t)$ is equipped with an evolving bulk finite element $(K(t), \P(t), \Sigma(t) )_{t \in [0,T]}$ (\cref{def:ebfe}) which together satisfy \cref{eq:bulk-evolve-node-agree} then we say $\{ \S_h(t) \}_{t \in [0,T]}$ is an \emph{evolving bulk finite element space}.
\item \label{def:bulk-evolving-basis-function}
  For each $t \in [0,T]$, we will write $\Sigma_h(t)$ for the set of global nodal variables (\cref{def:bulk-global-degrees-of-freedom}). We will use the convention that
  \begin{equation*}
    \Sigma_h(t) = \{ \chi_h \mapsto \chi_h( a_i(t) ) : 1 \le i \le N \},
  \end{equation*}
  where $a_i(t)$ is the trajectory of a Lagrange point under the global flow $\Phi^h_t$. We will denote by $\{ \chi_i(\cdot,t) : 1 \le i \le N \}$ the global basis of finite element functions such that $\chi_i ( a_j(t), t ) = \delta_{ij}$ for $t \in [0,T]$ and all $i,j = 1, \ldots, N$. This implies that $\chi_i( \cdot, t ) = \phi^h_t ( \chi_i( \cdot, 0) )$.
\end{deflist}
\end{definition}

\begin{definition}
  [Uniformly regular and uniformly quasi-uniform evolving subdivisions]
  For $h \in (0,h_0)$, let $\{ \T_h(t) \}_{t \in [0,T]}$ be a family of evolving conforming subdivisions (\cref{def:bulk-evolving-conforming-subdivision}).
  \begin{deflist}
  \item \label{def:bulk-uniformly-regular}
    We say that the family is \emph{uniformly regular} if there exists $\rho > 0$ such that for all $h \in (0, h_0)$ and all times $t \in [0,T]$, we have
    \begin{equation*}
      \rho_{K(t)} \ge \rho h_{K(t)} \qquad \mbox{ for all } K(t) \in \T_h(t),
    \end{equation*}
    and there exists $C > 0$ such that
    \[
      \sup_{h \in (0,h_0)} \sup_{t \in [0,T]} \max_{K(t) \in \T_h(t)} C_{K(t)} \le C < 1.
    \]
  \item \label{def:bulk-uniformly-Theta-regular}
    We say that the family is \emph{uniformly $\Theta$-regular} if it is uniformly regular, if for each time $t \in [0,T]$, the family $\{ \T_h(t) \}_{h \in (0,h_0)}$ is $\Theta$-regular and if there exists a constant such that
    \[
      \sup_{h \in (0,h_0)} \sup_{t \in (0,T)} \max_{K(t) \in \T_h(t)} C_m(K(t)) \le C \le + \infty \qquad
      \mbox{ for } 2 \le m \le \Theta+1.
    \]
  \item \label{def:bulk-uniformly-quasi-uniform}
    We say that the family is \emph{uniformly quasi-uniform} if there exists $\rho > 0$ such that for all $h \in (0,h_0)$ and all times $t \in [0,T]$, we have
    \begin{equation*}
      \min \{ \rho_{K(t)} : K(t) \in \T_h(t) \}
      \ge \rho h.
    \end{equation*}
  \end{deflist}
\end{definition}

Note that a uniformly quasi-uniform subdivision consists of element domains for temporally quasi-uniform evolving bulk finite elements.

\begin{lemma}
  \label{lem:bulk-Vh-compatible}
  For $h \in (0,h_0)$,
  let $\{ \T_h(t) \}_{t \in [0,T]}$ be a uniformly $\Theta$-regular (\cref{def:bulk-uniformly-Theta-regular}), uniformly quasi-uniform (\cref{def:bulk-uniformly-quasi-uniform}), evolving, conforming subdivision (\cref{def:bulk-evolving-conforming-subdivision}) and let $\phi^h_t$ be the global push-forward map (\cref{def:bulk-global-push-forward-map}).
  Let $0 \le k \le \Theta+1$, $p \in [0,\infty]$.
  Then, $\eta_h \in W^{m,p}( \T_h(0) )$ if and only if $\phi^h_t \eta_h \in W^{m,p}( \T_h(t) )$ for all $t \in [0,T]$.
  Furthermore, there exists $c_1, c_2 > 0$ independent of $h \in (0,h_0)$ and $t \in [0,T]$ such that for all $\eta_h^\ell \in \S_h(t)$
  \begin{align}
    c_1 \norm{ \eta_h }_{W^{m,p}(\T_h(0))}
    \le \norm{ \phi^h_t \eta_h }_{W^{m,p}( \T_h(t) )}
    \le c_2 \norm{ \eta_h }_{W^{m,p}(\T_h(0))}.
  \end{align}

\end{lemma}

\begin{proof}
  We simply sum the element-wise result from \cref{lem:bulk-Kt-norm-equiv}. The constants are independent of $h_K$ and $\rho_K$ due to the uniform quasi-uniformity of $\{ \T_h(t) \}$.
\end{proof}

\begin{remark}
  \label{rem:bulk-Sh-compatible}
  In particular,
  the pair $( W^{m,p}( \Omega_h(t) ), \phi^h_t|_{W^{m,p}(\Omega_{h,0})} )_{t \in [0,T]}$ is compatible with respect to the broken Sobolev norm $\norm{ \cdot }_{W^{m,p}(\T_h(t))}$ (\cref{def:bulk-broken-sobolev-norm}).
  Furthermore, the pairs $( \S_h(t), \phi^h_t|_{\S_{h,0}} )_{t \in [0,T]}$, equipped with the $W^{m,p}( \T_h(t) )$ norm, and $( W^{1,p}_T( \T_h(t) ), \phi^h_t |_{W^{1,p}_T( \T_{h,0})} )_{t \in [0,T]}$ are also both compatible (\cref{rem:compatible-subsace,lem:bulk-Sh-closed,lem:bulk-W1pT-closed}).
  Note that this result implies that the spaces $L^2_{\S_h}$ and $C^1_{\S_h}$ are well defined when equipped with the appropriate norms (c.f. \cref{eq:L2X} and \cref{eq:CkX}).
\end{remark}

\section{Lifted bulk finite element spaces}
\label{sec:bulk-lift}
This section sets out a  procedure for relating functions on the 
discrete bulk  domain $\Omega_h$ to the smooth domain $\Omega$ via the construction of $\S_h^\ell(t)$ from the space $\S_h(t)$ defined in the previous section.
We start by defining a lifted bulk finite element $(K^\ell, \P^\ell, \Sigma^\ell)$ in $\Omega$ using a mapping $\Lambda_K \colon K \to K^\ell \subset \Omega$.
This process is called \emph{lifting}.
In the following we will provide the appropriate assumptions on $\Lambda_K$ to allow us to relate structures on $\Omega_h$ to their lifted counterparts on $\Omega$.

\subsection{Lifted bulk finite element}

Consider  a single bulk finite element $(K, P , \Sigma)$ (\cref{def:bfe}), with element reference map $F_K \colon \hat{K} \to K$, where the element domain approximates a portion of a domain $\Omega$ with smooth boundary in $\R^{n+1}$.
Let $\Lambda_K \colon K \to \Omega$ be a $C^1$-map which is a diffeomorphism onto its image.
We define $K^\ell := \Lambda_K( K ) \subset \Omega$.
For a function $\chi \colon K \to \R$, we call $\chi^\ell \colon K^\ell \to \R$ the \emph{lift} of $\chi$ which is given by
\[
  \chi^\ell( \Lambda_K( x ) ) = \chi( x ) \qquad \mbox{ for } x \in K.
\]
We assume that we can decompose $\Lambda_K$ into
\[
  \Lambda_K( x ) = A_\Lambda x + b_\Lambda + \tilde{\Lambda}_K( x ) \qquad \mbox{ for } x \in K,
\]
where $A_{\Lambda}$ is an invertible $(n+1) \times (n+1)$ matrix, $b_\Lambda \in \R^{n+1}$ and $\tilde{\Lambda}_K \in C^1( K, \R^{n+1} )$.
We will assume that $\tilde{\Lambda}_K$ does not affect the affine part of the parametrisation: $\tilde{\Lambda}_K( a ) = 0$ for each vertex $a \in K$.

\begin{definition}[Lifted bulk finite element]
  \label{def:lifted-bfe}
We call the triple $(K^\ell, \P^\ell, \Sigma^\ell)$ defined by
\begin{align*}
  & K^\ell := \Lambda_K( K ) \subset \Omega \\
  & \P^\ell := \{ \chi^\ell( \Lambda_K( \cdot ) ) := \chi( \cdot ) : \chi \in \P \} \\
  & \Sigma^\ell := \{ \sigma^\ell := \chi^\ell \mapsto \sigma( \chi ) : \sigma \in \Sigma \},
\end{align*}
the \emph{lift} of $(K, \P, \Sigma)$ and $\Lambda_K$ the \emph{lifting map}.
If $(K^\ell, \P^\ell, \Sigma^\ell)$ forms a bulk finite element over $(\hat{K}, \hat{P}, \hat{\Sigma})$ then we say that $(K^\ell, \P^\ell, \Sigma^\ell)$ is the \emph{lifted bulk finite element associated with} $(K, \P, \Sigma)$.
In this case, we call $F_{K^\ell}( \cdot) := \Lambda_K (F_K( \cdot ) )$ the lifted bulk finite element reference map.
\end{definition}

The next two results show under what assumptions on $\Lambda_K$ is $(K^\ell, \P^\ell, \Sigma^\ell)$ a bulk finite element (\cref{def:bfe}) or a $\Theta$-bulk finite element (\cref{def:Theta-bfe}).

\begin{lemma}
  \label{lem:bulk-LambdaK-equiv-1}
  If $\Lambda_K$ satisfies that
  \begin{equation}
    \label{eq:bulk-LambdaK-ass1}
    \sup_{x \in K} \norm{ \nabla_K \tilde{\Lambda}(x) } \le
    \frac{ \norm{ A_{\Lambda}^{-1} }^{-1} - C_K \norm{ A_{\Lambda} } }{ 1 + C_K },
  \end{equation}
  then $(K^\ell, \P^\ell, \Sigma^\ell)$ is a bulk finite element.
  Furthermore,
  for $p \in [1,\infty]$, we have that there exists $c_1, c_2 > 0$ such that
  \begin{subequations}
    \label{eq:bulk-lift-equiv}
    \begin{align}
      \label{eq:bulk-lift-equiv-Lp}
      c_1 \norm{ \chi }_{L^p(K)}
      & \le \norm{ \chi^\ell }_{L^p(K^\ell)}
      \le c_2 \norm{ \chi }_{L^p(K)}
      && \mbox{ for all } \chi \in L^p(K) \\
      \label{eq:bulk-lift-equiv-W1p}
      c_1 \norm{ \chi }_{W^{1,p}(K)}
      & \le \norm{ \chi^\ell }_{W^{1,p}(K^\ell)}
      \le c_2 \norm{ \chi }_{W^{1,p}(K)}
      && \mbox{ for all } \chi \in W^{1,p}(K),
    \end{align}
  \end{subequations}
  where the constants $c_1, c_2$ depend on $C_K$, $\norm{ A_{\Lambda}^{-1} }$ and the ratio $h_K / \rho_K$.
\end{lemma}

\begin{proof}
  To show that $( K^\ell, P^\ell, \Sigma^\ell )$ is a bulk finite element (\cref{def:bfe}),
  the conditions on the element reference map $F_{K^\ell}(\hat{x}) = \Lambda_K( F_K(\hat{x}) )$ are clear and we are left to check the curvedness condition \cref{eq:bulk-CK-defn}.
  Using the expansion of $\Lambda_K$, we see
  \begin{multline*}
    A_{K^\ell} = A_{\Lambda} A_K, \quad A_{K^\ell}^{-1} = A_K^{-1} A_{\Lambda}^{-1},
    \quad b_{K^\ell} = A_{\Lambda} b_K + b_{\Lambda} \\
    \quad \mbox{ and } \quad
    D_{K^\ell}(\hat{x}) = A_\Lambda D_K(\hat{x}) + \tilde{\Lambda}_K( F_K(\hat{x}) ).
  \end{multline*}
  So that
  \begin{align*}
    \nabla D_{K^\ell}(\hat{x}) A_{K^\ell}^{-1}
    = A_{\Lambda} \nabla D_K( \hat{x} ) A_K^{-1} A_{\Lambda}^{-1}
    + \nabla_K \tilde{\Lambda}_K( F_K(\hat{x}) ) ( \id + \nabla D_K(\hat{x}) A_K^{-1} ) A_\Lambda^{-1}.
  \end{align*}
  Applying the curvedness condition for $K$ we see
  \[
    \norm{\nabla D_{K^\ell}(\hat{x}) A_{K^\ell}^{-1}}
    \le C_K \norm{ A_{\Lambda} } \norm{ A_{\Lambda}^{-1} }
    + (1 + C_K ) \norm{ \nabla_K \tilde{\Lambda}_K( F_K(\hat{x})) } \norm{ A_{\Lambda}^{-1} }.
  \]
  The curvedness condition is shown by applying \cref{eq:bulk-LambdaK-ass1}.

  To show \cref{eq:bulk-lift-equiv-Lp} the result is clear for $p = \infty$.
  For $p < \infty$, we will apply \cref{lem:norm-scaling}. Then we see
  \begin{align*}
    c \left(\frac{\meas(K)}{\meas(K^\ell)}\right)^{1/p} \norm{ \chi }_{L^p(K)}
    \le \norm{ \chi^\ell }_{L^p(K^\ell)}
    \le c \left(\frac{\meas(K^\ell)}{\meas(K)}\right)^{1/p} \norm{ \chi }_{L^p(K)},
  \end{align*}
  where $c$ depends on $C_K$ and the bound \cref{eq:bulk-LambdaK-ass1}.
  For the $W^{1,p}$ bound \cref{eq:bulk-lift-equiv-W1p}, we note that
  \[
    \norm{ A_K } = \norm{ A_{\Lambda}^{-1} A_{\Lambda} A_K }
    \le \norm{ A_{\Lambda}^{-1} } \norm{ A_{\Lambda} A_K } = \norm{ A_{\Lambda}^{-1} } \norm{ A_{K^\ell} },
    \]
  so that we infer
  \begin{equation}
    \label{eq:bulk-AK-AKell-bound}
    \norm{ A_{K^\ell} } \ge \frac{ \norm{ A_K } }{ \norm{ A_{\Lambda^{-1}} } }.
  \end{equation}
  Then applying \cref{lem:bulk-norm-scaling} once more we see
  \begin{multline*}
    c \left(\frac{\meas(K)}{\meas(K^\ell)}\right)^{1/p}
    \frac{ \norm{ A_\Lambda^{-1} } }{ \norm{ A_K } \norm{ A_K^{-1} } }
    \abs{ \chi }_{W^{1,p}(K)}
    \le \norm{ \chi^\ell }_{W^{1,p}(K^\ell)} \\
    \le c \left(\frac{\meas(K^\ell)}{\meas(K)}\right)^{1/p}
    \norm{ A_\Lambda^{-1} } \norm{ A_K } \norm{ A_K^{-1} }
    \abs{ \chi }_{W^{1,p}(K)}.
  \end{multline*}
  The final result is given by applying \cref{eq:bulk-AK-h-relation-a,eq:bulk-AK-h-relation-b}.
\end{proof}

\begin{lemma}
  \label{lem:bulk-LambdaK-equiv-theta}
  If $(K, \P, \Sigma)$ is a $\Theta$-bulk finite element and $\Lambda_K$ satisfies $\Lambda_K \in C^{\Theta+1}( K; \R^{n} )$ and $\norm{ \Lambda_K }_{W^{\Theta+1,\infty}(K)} \le c$,
  then $(K^\ell, \P^\ell, \Sigma^\ell)$ is also a $\Theta$-bulk finite element.
  Furthermore, for $0 \le m \le \Theta+1$ and $p \in [1,\infty]$, we have that there exists $c_1, c_2 > 0$ such that
  \begin{align*}
    c_1 \norm{ \chi }_{W^{m,p}(K)}
    & \le \norm{ \chi^\ell }_{W^{m,p}(K^\ell)}
      \le c_2 \norm{ \chi }_{W^{m,p}(K)}
    && \mbox{ for all } \chi \in W^{m,p}(K),
  \end{align*}
  where the constants $c_1, c_2$ depend on $C_1(K), \ldots, C_m(K)$, $\norm{ A_{\Lambda}^{-1} }$ and the ratio $h_K / \rho_K$.
\end{lemma}

\begin{proof}
  To see that $(K^\ell, \P^\ell, \Sigma^\ell )$ is a $\Theta$-bulk finite element (\cref{def:Theta-bfe}),
  the first three conditions are clear from the smoothness assumption on $\Lambda_K$.
  We must show \cref{eq:bulk-CK-theta-defn} holds: for $2 \le m \le \Theta+1$, there exists a constant $C_m(K^\ell)$ such that
  \[
    \norm{ \nabla^m F_{K^\ell}(\hat{x}) } \le C_m(K^\ell) \norm{ A_{K^\ell} }^m.
  \]
  Computing directly using \citep[Eq. (2.9)]{Ber89}, we have
  \begin{align*}
    \nabla^m F_{K^\ell}(\hat{x})
    = \sum_{r=1}^{m} (\nabla^r \Lambda_K)( F_K( \hat{x} ) ) \left( \sum_{\vec{i} \in E(m,r)} c_{\vec{i}} \prod_{q=1}^m ( \nabla^q F_K(\hat{x}) )^{i_q} \right)
  \end{align*}
  where $c_{\vec{i}}$ are constants and
  \[
    E( m, r ) = \Bigl\{ \vec{i} \in \Nbb^m ; \sum_{q=1}^m i_q = r \mbox{ and } \sum_{q=1}^m q i_q = m \Bigr\}.
  \]
  But applying the fact that $(K, \P, \Sigma)$ is a $\Theta$-bulk finite element, the definition of $E(m,r)$, \cref{lem:bulk-norm-scaling} and the smoothness assumption on $\Lambda_K$, we see that
  \begin{align*}
    \abs{ \nabla^{m} F_{K^\ell}( \hat{x} ) }
    \le c( C_1(K), \ldots, C_m(K) ) \sum_{r=1}^{m} \abs{ \Lambda_K }_{W^{r,\infty}(K)} \norm{ A_K }^{m}.
  \end{align*}
  Finally, applying \cref{eq:bulk-AK-AKell-bound}, we have that
  \begin{align*}
    \norm{ \nabla^{m} F_{K^\ell}( \hat{x} ) }
    \le c \norm{ \Lambda_K }_{W^{m,\infty}(K)} \norm{ A_{K^\ell} }^{m} \norm{ A_{\Lambda}^{-1} }^{m}.
  \end{align*}
  This shows that $(K^\ell, \P^\ell, \Sigma^\ell)$ is a $\Theta$-surface finite element.

  To show the norm equivalence we again apply \cref{lem:bulk-norm-scaling}, recognising the geometric progression, to see
  \begin{multline*}
    c \left(
    \frac{ \meas(K) }{ \meas(K^\ell) }
    \right)
    \frac{ A_1^{m+1} - 1}{ A_1 - 1 }
    \norm{ \chi }_{W^{m,p}(K)}
    \le
    \norm{ \chi^\ell }_{W^{m,p}(K^\ell)} \\
    \le
    c \left(
    \frac{ \meas(K^\ell) }{ \meas(K) }
    \right)
    \frac{ A_2^{m+1} - 1}{ A_2 - 1 }
    \norm{ \chi }_{W^{m,p}(K)},
  \end{multline*}
  where
  \begin{equation*}
    A_1 = \norm{ A_K } \norm{ A_K^\dagger } \norm{ A_\Lambda } \quad \mbox{ and } \quad
    A_2 = \norm{ A_K } \norm{ A_K^\dagger } \norm{ A_\Lambda^{-1} }.
  \end{equation*}
  The final result is given by applying \cref{eq:bulk-AK-h-relation-a,eq:bulk-AK-h-relation-b,eq:bulk-lift-meas-scale}.
\end{proof}

\begin{remark}
  \label{rem:bulk-id-lift-ass}
  Considering the case that $\Lambda_K$ fixes the vertices of $K$ then we can write $\Lambda_K$ as
  \[
    \Lambda_K( x ) = x + \tilde{\Lambda}(x) \quad \mbox{ for } x \in K.
  \]
  That is that $A_\Lambda = \id$ and $b_{\Lambda} = 0$.
  Then the assumptions of \cref{lem:bulk-LambdaK-equiv-1} can be replaced by
  \[
    \sup_{x \in K} \norm{ \nabla_K \tilde\Lambda(x) }
    \le \frac{1 - C_K}{1 + C_K},
  \]
  and the assumptions of \cref{lem:bulk-LambdaK-equiv-theta} can be replaced by the assumption that $\norm{ \Lambda_K }_{W^{\Theta,\infty}(K)}$ is bounded.
\end{remark}

We can also use $\Lambda_K$ to define an inverse lift.
Given an element domain $K$, lifted element domain $K^\ell$ and lifting map $\Lambda_K$.
We know that $\Lambda_K$ is invertible onto its image, namely $K^\ell$.
So for $\eta \colon K^\ell \to \R$, we denote its inverse lift by $\eta^{-\ell} \colon K \to \R$ defined by
\[
  \eta^{-\ell}( x ) := \eta( \Lambda_K( x ) ) \quad \mbox{ for } x \in K.
\]
\begin{lemma}
  If the assumptions of \cref{lem:bulk-LambdaK-equiv-1,lem:bulk-LambdaK-equiv-theta} hold then,
  for $0 \le m \le \Theta$ and $p \in [1,\infty]$, we have that there exists $c_1, c_2 > 0$ such that for all $\eta \in C(K^\ell) \cap W^{m,p}(K^\ell)$, we have
  \begin{align*}
    c_1 \norm{ \eta^{-\ell} }_{W^{m,p}(K)}
    & \le \norm{ \eta }_{W^{m,p}(K^\ell)}
      \le c_2 \norm{ \eta^{-\ell} }_{W^{m,p}(K)}.
  \end{align*}
\end{lemma}

\begin{proof}
  The same proof can be applied to the case of the inverse lift as well as the lift.
\end{proof}

We next relate the geometry of the base and lifted element domains.
\begin{lemma}
  \label{lem:bulk-lift-geom}
  Using the decomposition of $\Lambda_K$, we have
  \begin{subequations}
    \begin{align}
      \label{eq:bulk-lift-h-scale}
      \norm{ A_{\Lambda}^{-1} }^{-1} h_K \le h_{K^\ell}
      & \le \norm{ A_\Lambda } h_K \\
      \label{eq:bulk-lift-rho-scale}
      \norm{ A_{\Lambda}^{-1} }^{-1} \rho_K \le \rho_{K^\ell}
      & \le \norm{ A_{\Lambda} } \rho_K \\
      \label{eq:bulk-lift-meas-scale}
      c_1 \left( \frac{\rho_K}{h_K} \right)^{n+1} \norm{ A_{\Lambda}^{-1} }^{-{n+1}} \meas(K)
      \le \meas(K^\ell)
      & \le c_2 \left( \frac{h_K}{\rho_K } \right)^{n+1} \norm{ A_\Lambda^{-1} }^{-{n+1}} \meas(K).
    \end{align}
  \end{subequations}
\end{lemma}

\begin{proof}
  For \cref{eq:bulk-lift-h-scale}, we show the second inequality. The first follows by the same reasoning applied to the inverse of $A_{\Lambda}$.
  Since $\tilde{K}^\ell$ is compact, there exists $x^\ell, y^\ell \in \tilde{K}^\ell$ such that
  \[
    h_{K^\ell} = \abs{ y^\ell - x^\ell }.
  \]
  But since $A_\Lambda$ is invertible, there exists $x, y \in \tilde{K}$ such that
  \[
    A_{\Lambda} x + b_\Lambda = x^\ell \qquad \mbox{ and } \qquad
    A_{\Lambda} y + b_{\Lambda} = y^\ell.
  \]
  Then, we can compute that
  \[
    h_{K^\ell} = \abs{ x^\ell - y^\ell }
    = \abs{ A_\Lambda ( x - y ) }
    \le \norm{ A_{\Lambda} } \abs{ x - y }
    \le \norm{ A_{\Lambda} } h_K.
  \]

   Similarly, for \cref{eq:bulk-lift-rho-scale}, we only show the first inequality.
  For each $\eps > 0$, there exists $B_\eps$ a ball in $\tilde{K}$ such that
  \[
    \rho_K - \diam B_\eps \le \eps.
  \]
  Denote by $x_\eps$ and $r_\eps$ the centre and radius of $B_\eps$ respectively. Consider the affine map $\Xi_\eps \colon \R^{n+1} \to \R^{n+1}$ given by
  \[
    \Xi_\eps(x) = \frac{1}{\norm{ A_{\Lambda}^{-1} } } ( x - x_\eps ) + b_\Lambda + A_\Lambda x_\eps.
  \]
  It is clear that $\Xi_\eps$ maps balls to balls and $B_\eps$ is mapped to a ball centred at $A_{\Lambda} x_\eps + b_\Lambda$ with radius $r_\eps / \norm{ A_{\Lambda}^{-1} }$.
  We claim $\Xi_\eps( B_\eps )$ is contained in $\tilde{K}^\ell$.
  Indeed, take $x^\ell \in \Xi_\eps( B_\eps )$, then there exists $x \in B_\eps$ such that $\Xi_\eps( x ) = x^\ell$. Denote by
  \[
    x' = x_\eps + \frac{1}{\norm{ A_{\Lambda}^{-1} } } A_\Lambda^{-1} ( x - x_\eps ),
  \]
  then
  \[
    \abs{ x' - x_\eps }
    = \frac{1}{\norm{ A_{\Lambda}^{-1} } } \abs{ A_\Lambda^{-1} ( x - x_\eps ) }
    \le \abs{ x - x_\eps } \le r_\eps,
  \]
  so $x' \in B_\eps \subset \tilde{K}$ and
  \[
    A_\Lambda x' + b_\Lambda
    = \frac{1}{\norm{ A_{\Lambda}^{-1} } } ( x - x_\eps ) + A_\Lambda x_\eps + b_\Lambda
    = x^\ell,
  \]
  so that $x^\ell \in \tilde{K}^\ell = A_\Lambda \tilde{K} + b_K$.

  Therefore, we have found a ball $\Xi_\eps( B_\eps ) \subset \tilde{K}^\ell$ with radius $r_\eps / ( \norm{ A_{\Lambda}^{-1} } )$, thus we can infer that
  \[
    \rho_{K^\ell} \ge \frac{1}{2} \norm{ A_{\Lambda}^{-1} }^{-1} \diam B_\eps
    \ge \norm{ A_{\Lambda}^{-1} }^{-1} ( \rho_K - \eps ).
  \]
  Since the proof holds for all $\eps > 0$, we see the desired result.

  Given \cref{eq:bulk-lift-h-scale} and \cref{eq:bulk-lift-rho-scale}, the final result \cref{eq:bulk-lift-meas-scale} follows directly from \cref{rem:bulk-AK-h-meas}.
\end{proof}

\subsection{Lifted bulk triangulations and spaces}

Let $\Omega$ be a smooth bulk domain and for $h \in (0,h_0)$ and let  $\Omega_h$ be a triangulated bulk domain (\cref{def:triangulated-domain}) equipped with a conforming subdivision $\T_h$ (\cref{def:bulk-conforming-subdivision}) and a bulk finite element space $\S_h$ (\cref{def:bfe-space}). Let each $K \in \T_h$ be associated with a lifted finite element $K^{\ell}$ with lifted map $\Lambda_K$.

\begin{definition}
\begin{deflist}
\item
  \label{def:bulk-exact-decomposition}
 We denote by $\T_h^\ell$ the set of all lifted element domains
\[
  \T_h^\ell := \{ K^\ell : K \in \T_h \}.
\]
If  the global map $\Lambda_h$ is single valued and $\T_h^\ell$ forms a conforming subdivision of $\bar\Omega$, we say that $\T_h^\ell$ is an \emph{exact} subdivision of the domain $\Omega$.
\item
  \label{def:bulk-global-lifting-map}
We  define a \emph{global lifting map} $\Lambda_h \colon \Omega_h \to \bar\Omega$ by $\Lambda_h|_{K} = \Lambda_K$.
We define  the inverse lift $\Lambda_h^{-1} \colon \bar\Omega \to \Omega_h$ in a similar element-wise fashion 
 by  $\Lambda^{-1}_h|_{K^\ell} = \Lambda_K^{-1}$.
\item
  We denote by $\lambda_h( \eta_h ) \colon \bar\Omega \to \R$, for $\eta_h \colon \Omega_h \to \R$, the \emph{global lift} given by
  \[
    \lambda_h( \eta_h )( \Lambda_h( x  ) ) =\eta_h( x ) \mbox{ for } x \in \Omega_h,
  \]
  and by $\varsigma_h( \eta ) \colon \Omega_h \to \R$, for $\eta \colon \bar\Omega \to \R$,, the \emph{global inverse lift} given by
  \[
    \varsigma_h( \eta )( x ) = \eta( \Lambda_h( x ) ) \mbox{ for } x \in \Omega_h.
  \]
  We will also use the notations $\eta_h^\ell = \lambda_h( \eta_h )$ and $\eta^{-\ell} = \varsigma_h( \eta )$.
\item
  \label{def:lifted-bfe-space}
  Let $\S_h$ be a bulk finite element space. If for each bulk finite element $( K, P, \Sigma )$ there is an associated lifted bulk finite element $(K^\ell, P^\ell, \Sigma^\ell)$, then, we define a \emph{lifted bulk finite element space} (c.f.\ \cref{eq:abs-Vhell}) by
\[
  \S_h^\ell := \left\{ \chi_h^\ell : \chi_h \in \S_h \right\}.
\]
\end{deflist}
\end{definition}

\begin{proposition}
  \label{prop:bulk-lift-Vh}
  Assume additionally that the family of triangulations $\{ \T_h \}_{h \in (0,h_0)}$ is regular (\cref{def:bulk-regular}),
  $\Lambda_K$ satisfies the assumptions of \cref{lem:bulk-LambdaK-equiv-1} and there exists $C_1, C_2 > 0$ such that
  \begin{equation}
    \label{eq:bulk-as-lift-linear-bounded}
    \norm{ A_\Lambda } \le C_1 \quad \mbox{ and } \norm{ A_{\Lambda}^{-1} } \le C_2
    \quad \mbox{ for all } K \in \T_h \mbox{ for all } h \in (0,h_0),
  \end{equation}
  for all $K \in T_h$ for all $h \in (0,h_0)$.
  Then $\{ \T_h^\ell \}_{h \in (0,h_0)}$ is regular and $\S_h^\ell$ is a bulk finite element space and there exists constants $c_1, c_2 > 0$, which are independent of $h \in (0,h_0)$ such that
  \begin{equation}
    \begin{aligned}
      c_1 \norm{ \eta_h }_{L^p(\T_h)} & \le
      \norm{ \eta_h^\ell }_{L^p(\T_h^\ell)}
      \le c_2 \norm{ \eta_h }_{L^p(\T_h)}
      && \mbox{ for all } \eta_h \in L^p( \T_h ) \\
      c_1 \norm{ \eta_h }_{W^{1,p}(\T_h)} & \le
      \norm{ \eta_h^\ell }_{W^{1,p}(\T_h^\ell)}
      \le c_2 \norm{ \eta_h }_{W^{1,p}(\T_h)}
      && \mbox{ for all } \eta_h \in W^{1,p}( \T_h ).
    \end{aligned}
\end{equation}
  Furthermore, if $\Lambda_K$ satisfies the assumptions of \cref{lem:bulk-LambdaK-equiv-theta} for all $K \in \T_h$ and all $h \in (0,h_0)$,
  then for $0 \le m \le \Theta$, there exists $c_1, c_2 > 0$ independent of $h \in (0,h_0)$ such that
  \begin{equation}
    c_1 \norm{ \eta_h }_{W^{m,p}(\T_h)} \le
    \norm{ \eta_h^\ell }_{W^{m,p}(\T_h^\ell)}
    \le c_2 \norm{ \eta_h }_{W^{m,p}(\T_h)}
    \quad \mbox{ for all } \eta_h \in W^{m,p}( \T_h ).
  \end{equation}
\end{proposition}

\begin{proof}
  The regularity of the family of triangulations $\{ \T_h \}_{h \in (0,h_0)}$ follows from \cref{eq:bulk-lift-h-scale} and \cref{eq:bulk-lift-rho-scale} from assumption \cref{eq:bulk-as-lift-linear-bounded}.

  The results follow by combining the previous results for each element in $\T_h$. We achieve bounds independently of $h \in (0,h_0)$ since the regularity of the subdivisions implies that $h_K / \rho_K$ is bounded independently of $h \in (0,h_0)$.
\end{proof}

We next define interpolation estimate which interpolates smooth functions over the continuous surface into the lifted surface finite element space.
We denote by $I_h$ an interpolation operator $I_h \colon C(\bar\Omega) \to \S_h^\ell$ defined by
\begin{equation}
  \label{eq:bulk-lift-Ih}
  I_h \eta |_{K^\ell} := I_{K^\ell} \eta.
\end{equation}

\begin{theorem}
  [Global lifted interpolation theorem]
  \label{thm:bulk-global-lift-interp}
  For $h \in (0,h_0)$, let $\{ \T_h \}_{h \in (0,h_0)}$ be a $\Theta$-regular (\cref{def:bulk-Theta-regular}), quasi-uniform (\cref{def:bulk-quasi-uniform}) family of subdivisions of triangulated bulk domains $\Omega_h$ equipped with a bulk finite element space $\S_h$ (\cref{def:bfe-space}) consisting of $\Theta$-bulk finite elements (\cref{def:Theta-bfe}) over a reference element which satisfies \cref{lem:bramble-hilbert} for some $0 < k,m \le \Theta$, $p,q \in [1,\infty]$.
  Let each element $K \in \T_h$ be equipped with a lifting map $\Lambda_K \in C^{\Theta+1}(K)$ such that $\norm{ \Lambda_K }_{W^{\Theta+1,\infty}(K)} \le C$ and \cref{eq:bulk-as-lift-linear-bounded} and \cref{eq:bulk-LambdaK-ass1} hold each uniformly for $h \in (0,h_0)$.
  Then $\{ \T_h^\ell \}_{h \in (0,h_0)}$ is a $\Theta$-regular, quasi-uniform family of subdivisions of $\Omega$ and $\S_h^\ell$ is a bulk finite element space consisting of $\Theta$-bulk finite elements.
  Let $\eta \in C(\bar\Omega)$ be a continuous function, then $I_h \eta \in \S_h^\ell$ is well defined.
  Furthermore, if the assumptions of \cref{thm:bulk-IK-bound} hold for the reference element $(\hat{K}, \hat{P}, \hat{\Sigma})$, there exists a constant $C = C(\hat{K}, \hat{P}, \hat{\Sigma}, \rho)$ such that for all functions $\eta \in W^{k+1,p}(\T_h^\ell) \cap C( \bar\Omega )$,
  \begin{equation}
    \label{eq:bulk-global-lift-interp}
    \norm{ \eta - I_h \eta }_{W^{m,q}(\T_h^\ell)} \le C h^{k+1-m} \norm{ \eta }_{W^{k+1,p}(\T_h^\ell)}.
  \end{equation}
\end{theorem}

\begin{proof}
  The result follows by combining previous lemmas in the appropriate way.
  We see the lifted triangulation is quasi-uniform by applying the results in \cref{lem:bulk-lift-geom} and the assumption \cref{eq:bulk-as-lift-linear-bounded}.
  The interpolation result then follows by applying \cref{thm:bulk-Ih-bound}.
\end{proof}

\begin{corollary}
  \label{cor:bulk-global-inv-lift-interp}
  Let $\eta \in C(\bar\Omega)$, then the interpolant of $\eta$ into $\S_h$, denoted by $\tilde{I}_h \eta$, given by
  \begin{equation}
    \tilde{I}_h \eta := ( I_h \eta )^{-\ell}.
  \end{equation}
  Furthermore, there exists a constant $C$ such that for all functions $\eta \in W^{k+1,p}(\T_h^\ell) \cap C( \bar\Omega )$, and all $h \in (0,h_0)$, we have
  \begin{equation}
    \label{eq:bulk-global-inv-lift-interp}
    \norm{ \eta^{-\ell} - \tilde{I}_h \eta }_{W^{m,q}(\T_h)} \le C h^{k+1-m} \norm{ \eta }_{W^{k+1,p}(\T_h^\ell)}.
  \end{equation}
\end{corollary}

\begin{proof}
  We apply the inverse lift result to the estimates in the theorem.
\end{proof}

\subsection{Evolving lifted bulk triangulations}

For $t \in [0,T]$, let $\Omega(t)$ be a smoothly evolving bulk domain with flow map $\Phi_t$ defined for the closure $\bar\Omega(t)$:
i.e.\ $\Phi_{(\cdot)} \in C^2( [0,T], C^1( \bar\Omega(0) ) )$, $\Phi_t( \Omega(0) ) = \Omega(t)$ and $\Phi_t( \partial \Omega(0) ) = \partial \Omega(t)$.
For $h \in (0,h_0)$, let $\Omega_h(t)$ be an evolving triangulated bulk domain (\cref{def:evolving-triangulated-domain}) with global discrete flow  $\Phi^h_t$ (\cref{def:bulk-global-discrete-flow}) and equipped with an evolving conforming subdivision $\T_h(t)$ (\cref{def:bulk-evolving-conforming-subdivision}) and an evolving bulk finite element space $\S_h(t)$ (\cref{def:evolving-bfe-space}).
We assume that we are given a global lifting map $\Lambda_h( \cdot, t )$ (\cref{def:bulk-global-lifting-map}) which gives an exact subdivision $\T_h^\ell(t)$ of $\Omega(t)$ (\cref{def:bulk-exact-decomposition}).

~
\begin{definition}[Lifted discrete flow map, material velocity and pushed forward map]
~
\begin{deflist}
\item \label{def:bulk-lifted-flow-map}
 The \emph{lifted flow map} $\Phi^\ell_{(\cdot)}(\cdot) \colon [0,T] \times \bar\Omega_0 \to \R^{n+1}$ of the smooth bulk domain is defined by
\begin{equation*}
  \Phi^\ell_t( \Lambda_h( x, t ) ) = \Lambda_h ( \Phi^h_t ( x ), t ) \qquad \mbox{ for } x \in \bar\Omega_0.
\end{equation*}
We note that $\Phi^\ell_t \colon \bar\Omega_0 \to \bar\Omega(t)$.

\item \label{def:bulk-lifted-discrete-material-velocity}
The \emph{lifted discrete material velocity}   ${w}_h$ on $\{ \bar\Omega(t) \}_{t \in [0,T]}$ is defined by
\begin{equation*}
  \dt \Phi^\ell_t( \cdot ) = {w}_h( \Phi^\ell_t( \cdot ), t ).
\end{equation*}
\item \label{def:bulk-lifted-push-forward-map}
The family of \emph{lifted push forward maps} denoted by $\phi^\ell_t( \eta ) \colon \Omega(t) \to \R$, for $\eta \colon \Omega_0 \to \R$, indexed by $t \in [0,T]$, are the  linear bijections defined by
\begin{equation*}
  \phi^\ell_t( \eta )( x ) = \eta( \Phi^\ell_{-t}( x ) ) \qquad \mbox{ for } x \in \bar\Omega(t).
\end{equation*}

\end{deflist}
\end{definition}
\begin{remark}
  Note that in general   $\Phi^\ell_t$ is different to $\Phi_t$, but each describes a different parametrisation of the same evolving domain.
  Also ${w}$, the material velocity of $\bar\Omega(t)$, and ${w}_h$ define the same domain $\bar\Omega(t)$ evolving from $\bar\Omega(0)$, so have the same normal components on the boundary $\partial \Omega(t)$.
\end{remark}

\begin{proposition}
  \label{prop:bulk-lift-Vht}
  For $h \in (0,h_0)$,
  let $\{ \S_h(t) \}_{t \in [0,T]}$ be an evolving bulk finite element space over a $\Theta$-regular (\cref{def:bulk-Theta-regular}), uniformly quasi-uniform (\cref{def:bulk-uniformly-quasi-uniform}), evolving conforming subdivision $\{ \T_h(t) \}_{t \in [0,T]}$ consisting of $\Theta$-evolving bulk finite elements (\cref{def:e-Theta-bfe}) and ${\phi}^h_t$ the global push-forward map (\cref{def:bulk-global-push-forward-map}).
  For each $t \in [0,T]$ and each $h \in (0,h_0)$, let each element $K(t) \in \T_h(t)$ be equipped with a lifting map $\Lambda_K( \cdot, t ) \in C^{\Theta+1}(K(t))$ such that $\norm{ \Lambda_K( \cdot, t )}_{W^{\Theta+1,\infty}(K)} \le C$ and \cref{eq:bulk-as-lift-linear-bounded} and \cref{eq:bulk-LambdaK-ass1} hold each uniformly for $h \in (0,h_0)$ and $t \in [0,T]$.
  Then $\{ \S_h^\ell(t) \}_{t \in [0,T]}$ is also an evolving bulk finite element space over a $\Theta$-regular, uniformly quasi-uniform, evolving conforming subdivision $\{ \T_h^\ell(t) \}_{t \in [0,T]}$ consisting of $\Theta$-evolving bulk finite elements.
  Furthermore, for $0 \le k \le \Theta+1$, $p \in [0,\infty]$, there exists $c_1, c_2 > 0$ independent of $h \in (0,h_0)$ and $t \in [0,T]$ such that
  \begin{align}
    \label{eq:bulk-lift-compat}
    c_1 \norm{ \eta }_{W^{k,p}(\T_h^\ell(0))}
    & \le \norm{ {\phi}^\ell_t \eta }_{W^{k,p}(\T_h^\ell(t))}
    \le c_2 \norm{ \eta }_{W^{k,p}(\T_h^\ell(0))}
    && \mbox{ for all } \eta \in W^{k,p}( \T_h^\ell(0) ).
  \end{align}
  In particular,
  the pair $( W^{k,p}( \T_h^\ell(t) ), {\phi}^\ell_t )_{t \in [0,T]}$ is compatible (\cref{def:compatibility}).
  Furthermore, $( \S_h(t), \phi^\ell_t|_{\S_{h,0}} )_{t \in [0,T]}$, equipped with the norm $\norm{ \cdot }_{W^{k,p}(\T_h^\ell(t))}$, and $( W^{1,p}( \Omega(t) ), \phi^\ell_t |_{W^{1,p}( \Omega_0 )} )_{t \in [0,T]}$ are also compatible pairs (\cref{rem:compatible-subsace,lem:bulk-Sh-closed,lem:bulk-W1pT-closed}).
\end{proposition}

\begin{proof}
  The properties of $\S_h^\ell(t)$ follow for the same reasoning as \cref{prop:bulk-lift-Vh} since the constants are bounded uniformly in time.
The bounds \cref{eq:bulk-lift-compat} and the compatibility of $~~~$
$( W^{k,p}( \T_h^\ell(t) ), \phi_h^\ell )_{t \in [0,T]}$ follow since the assumptions imply that $( W^{k,p}( \T_h(t) ), \phi^h_t )_{t \in [0,T]}$ are compatible and we can simply lift this result with uniform bounds.
\end{proof}

\section{Evolving surface finite element spaces}
\label{sec:sfem}

In this section, we will give precise definitions concerning the evolving surface finite element spaces we use.
The ideas follow in a similar manner to \cref{sec:bfem}.
The upshot will be a notion of a discrete surface $\Gamma_h(t)$ consisting of a union of elements and a finite element space $\S_h(t)$ defined on this discrete domain.
Our extensions from  standard bulk finite element theory, presented in \cref{sec:bfem}, to surface finite elements build on the work of \citet{Ned76}, \citet{Dzi88} and \citet{Hei05} for surfaces.
The subscript $h$  parametrises the constructions and will be related to the size of elements used in our computational domain.
Implicitly it is assumed that these structures exist for all $h \in (0,h_0)$ for some fixed value of $h_0$.
See also \cref{rem:small-hk-surf}.

\subsection{Surface finite elements}
\label{sec:stationary-sfem}

We next define a surface finite element which takes its inspiration from the notion of curved finite elements studied by \citet{CiaRav72} and \citet{Ber89}.
The key idea here is that a surface finite element is an $n$-dimensional parametrised surface with boundary embedded in $\R^{n+1}$.
In the following for a matrix $A$, we use  $A^\dagger$ to denote  the pseudo-inverse. For any matrix $A$ of full column rank the pseudo-inverse is given by
\begin{equation*}
  A^\dagger = ( A^t A )^{-1} A^t.
\end{equation*}
In this case $A^\dagger$ is the left inverse of $A$: $A^\dagger A = \id$.

\begin{definition}
  [Surface finite element and surface element reference map]
  \label{def:sfe}
  Let $(\hat{K}, \hat{\P}, \hat{\Sigma} )$ be a reference finite element with $\hat{K} \subset \R^n$.
  \begin{deflist}
    \item Let $F_K \colon \hat{K} \to \R^{n+1}$ satisfy
  \begin{enumerate}
    \item
    \begin{enumerate}
    \item $F_K \in C^1( \hat{K}, \R^{n+1} )$;
    \item $\rank \nabla F_K = n$;
    \item $F_K$ is a bijection onto its image;
    \end{enumerate}
  \item $F_K$ can be decomposed into an affine part, and smooth part
    \begin{equation*}
      F_K( \hat{x} ) = A_K \hat{x} + b_K + D_K( \hat{x} )
    \end{equation*}
    such that $A_K$ has full column rank,  $D_K \in C^1(\hat{K})$ and
    \begin{equation}
      \label{eq:CK-defn}
      C_K := \sup_{\hat{x} \in \hat{K}} \norm{ \nabla D_K( \hat{x} ) A_K^\dagger } < 1,
    \end{equation}
    where %
    $\norm{ \cdot }$ denotes the two-norm in this context. %
  \end{enumerate}
  In this case, we call $F_K$ a \emph{surface element reference map}.
  \item Let $F_K$ be a surface element reference map and $( K, \P, \Sigma )$ be the triple given by
  \begin{subequations}
    \label{eq:sfe}
    \begin{align}
      \label{eq:element-domain}
      K & := F_K( \hat{K} ) && \mbox{(the \emph{element domain})} \\
      \label{eq:shape-functions}
      \P & := \{ \hat{\chi} \circ F_K^{-1} : \hat{\chi} \in \hat{P} \} && \mbox{(the \emph{shape functions})}\\
      \label{eq:nodal-variables}
      \Sigma & := \{ \chi \mapsto \hat{\sigma}( \chi \circ F_K ) : \hat{\sigma} \in \hat{\Sigma} \} && \mbox{(the \emph{nodal variables})}.
    \end{align}
  \end{subequations}
  Under the above assumptions, we call $( K, \P, \Sigma )$ a \emph{surface finite element}, and $( \hat{K}, \hat\P, \hat\Sigma )$
  the associated \emph{reference finite element}.
\end{deflist}
\end{definition}

\begin{remark}
\begin{remlist}
\item
 We note that our assumptions imply that both $A_K$ and $\nabla F_K(\cdot)$ are full column rank.

\item
The first three assumptions in the definition of surface finite element imply that $K$ is a parametrised surface and the fourth \cref{eq:CK-defn} that $K$ is not too curved.
The final assumption allows the case that $\hat{K}$ is a flat simplical domain and $K$ is curved.
\end{remlist}
\end{remark}
\begin{remark}
  \begin{remlist}
  \item \label{rem:nuK}
  We denote by $\nu_K$ the unit normal vector field to $K$. It is the unique (up to sign) unit vector orthogonal to the $\hat{x}_i$ partial derivatives of $F_K$ for $i = 1, \ldots, n$ and is given by
  \begin{equation*}
    \nu_K :=
    \begin{cases}
      \frac{ \left( \frac{ \partial F_K }{ \partial \hat{x}_1 } \right)^\perp }%
      { \abs{ \left( \frac{ \partial F_K }{ \partial \hat{x}_1 } \right)^\perp } }
      & \quad \mbox{ for } n = 1 \\
      \frac{ \left( \frac{\partial F_K}{\partial \hat{x}_1} \wedge \ldots \wedge \frac{\partial F_K}{\partial \hat{x}_n} \right) }%
      { \abs{ \left( \frac{\partial F_K}{\partial \hat{x}_1} \wedge \ldots \wedge \frac{\partial F_K}{\partial \hat{x}_n} \right) } }
      & \quad \mbox{ for } n > 1.
    \end{cases}
  \end{equation*}
Here $\wedge$ denotes the wedge product.  The sign of the normal vector field is chosen by fixing a permutation of the barycentric coordinates $\hat{x}_1, \ldots \hat{x}_n$ of the reference element.
  By swapping any two elements, we reverse the sign of $\nu_K$.
  For a simplex reference element, the orientation can be fixed by ordering the labels of vertices so that $a_i = F_K(\hat{a}_i)$ where $\{ \hat{a}_i \}$ are the vertices of the reference element domain.
\item \label{rem:muK}
  We also define the outward pointing unit \emph{conormal} $\mu_K$ on the boundary of the element domain $\partial K$. This is the unique (up to sign) vector which is orthogonal to the boundary $\partial K$ and the normal $\nu_K$.
\item \label{rem:right-inv-proj}
  Our assumptions imply that $\nabla F_K(\hat{x})^\dagger$ (or $A_K^\dagger$) is a left inverse but not a right inverse of $\nabla F_K(\hat{x})$ (respectively,  $A_K$). We can compute that
  \[
    \nabla F_K( \hat{x} ) \nabla F_K( \hat{x} )^\dagger = \id - \nu_K( F_K( \hat{x}  ) ) \otimes \nu_K( F_K( \hat{x} ) ) =: P_K( F_K( \hat{x}) ),
  \]
  where $P_K$ denotes projection onto the tangent plane to $K$.
  One way to interpret this result is to note that $P_K$ is the identity operator when restricted to the tangent plane so that in some sense $\nabla F_K( \hat{x} )^\dagger$ is a right inverse of $\nabla F_K( \hat{x} )$ when we restrict to the tangent plane of $K$.
  \end{remlist}
\end{remark}

\begin{definition}[$\Theta$-surface finite element and $\Theta$-surface element reference map]
  \label{def:Theta-sfe}
  Let $\Theta \in \Nbb$ and $F_K$ a surface element reference map for a surface finite element $(K, \P, \Sigma)$.
  \begin{deflist}
  \item \label{def:Theta-sfe-map}%
    We say that $F_K$ is a \emph{$\Theta$-surface finite element reference map} if
    \begin{subdeflist}
    \item \label{def:Theta-sfe-smooth-FK}
      the surface element reference map  $F_K \in C^{\Theta+1}(\hat{K}; \R^{n+1})$ (i.e.\ $K$ is a $C^{\Theta+1}$-hypersurface);
    \item \label{def:Theta-sfe-curved}
      for $1 \le m \le \Theta+1$, there exists constants $C_m(K) > 0$ such that
      \begin{equation}
        \label{eq:CK-theta-defn}
        \sup_{\hat{x} \in \hat{K}} \abs{ \nabla^m F_K( \hat{x} ) } \norm{ A_K }^{-m}
        \le C_m(K).
      \end{equation}
    \end{subdeflist}
  \item%
    We say that $(K, \P, \Sigma)$ is a $\Theta$-surface finite element if $F_K$ is a $\Theta$-surface finite element reference map and
    \begin{subdeflist}
    \item \label{def:Theta-sfe-polynomial}
      the space $\P$ contains the functions $\hat{\chi} \circ F_K^{-1}$ for all $\hat\chi \in P_\Theta( \hat{K} )$;
    \item \label{def:Theta-sfe-smooth-func}
      the space $\P$ is contained in $C^{\Theta+1}(K)$.
    \end{subdeflist}
  \end{deflist}
\end{definition}
\begin{remark}
\begin{remlist}

\item
  The $\Theta$-surface finite element is a generalisation of a curved finite element of order $\Theta$ given by \cite{Ber89}. See also \cref{def:Theta-bfe}.
\item \label{rem:Sobolev-K}
  For a $\Theta$-surface finite element $(K, \P, \Sigma)$, we have that the Sobolev space $W^{m,p}( K )$ is well defined for $0 \le m \le \Theta+1$ and $1 \le p \le \infty$ \citep{Heb00,DziEll13a}.
\item \label{rem:surf-P-closed}
  Since $P \subset C^{\Theta+1}( K )$ and $P$ is finite dimensional, $P$ is a closed subspace of $W^{m,p}( K )$ for $0 \le m \le \Theta+1$ and $p \in [1,\infty]$.
\item
  We note that $(K, \P, \Sigma)$ is a $1$-surface finite element if $(K, \P, \Sigma)$ is a surface finite element (\cref{def:sfe}), the map $F_K \in C^2(\hat{K}, \R^{n+1})$ and $\P$ contains all affine functions on $K$. We see that the constant $C_1(K) = 1 + C_K$ (see \cref{lem:FK-AK-scale}).

  \item Our applications (\cref{sec:application1,sec:application3}) will use $F_K \in P_\theta( \hat{K} )$ in the computational method but we allow the more general case here.
  \end{remlist}
\end{remark}

Using transformation formulae, we have for $\chi \colon K \to \R$
\begin{align*}
  \int_{K} \chi \dd \sigma = \int_{\hat{K}} \hat{\chi}( \hat{x} ) \sqrt{ g( \hat{x} ) } \dd \hat{x},
  \qquad
  \nabla_K \chi ( x ) = \sum_{i,j=1}^{n} g^{ij}( \hat{x} ) \frac{\partial \hat{\chi}( \hat{x} ) }{\partial \hat{x}_j} \frac{\partial F_K(\hat{x})}{\partial \hat{x}_i},
\end{align*}
where $F_K( \hat{x} ) = x$ and $\hat\chi( \hat{x} ) = \chi( x )$, and
\begin{align*}
  G( \hat{x} ) = \big( \nabla F_K( \hat{x} ) \big)^t \big( \nabla F_K( \hat{x} ) \big),
  \qquad
  g( \hat{x} ) = \det G( \hat{x} ),
\end{align*}
and finally, $(g^{ij})$ are the components of the inverse $G^{-1}$.
Note that the surface gradient on $K$ can also be written as:
\[
  \nabla_K \chi(x) = \nabla \tilde{\chi}( x ) - ( \nabla \tilde{\chi}( x ) \cdot \nu_K( x ) ) \nu_K( x )
  = ( \id - \nu_K( x ) \otimes \nu_K( x ) ) \nabla \tilde{\chi}(x),
\]
where $\nabla \tilde{\chi}$ is the gradient of an arbitrary extension of $\chi$ away from $K$.

\begin{remark}
  If the surface finite element map $F_K$ is a $C^2$ function then we can define the extended Weingarten map $\mathbb{H}_K \colon K \to \R^{(n+1) \times (n+1)}$ by
  \begin{equation}
    \label{eq:element-Hk}
    ( \mathbb{H}_K )_{ij} = ( \nabla_K )_{i} ( \nu_K )_j \mbox{ for } i,j=1,\ldots,n+1.
  \end{equation}
\end{remark}

\begin{example}[Surface finite elements]
  \label{ex:surface-fem}
  We are thinking of three particular examples. The first is due to \citet{Dzi88} and the second due to \citet{Hei05}.  Examples of each of these first two cases are shown in \cref{fig:sfem-examples}.

  \begin{exlist}
  \item \label{ex:affine-sfe} Let $( \hat{K}, \P_1( \hat{K} ), \hat\Sigma )$ be a reference Lagrangian finite element. Consider the affine map $F_K \colon \hat{K} \to \R^{n+1}$ given by $F_K ( \hat{x} ) = A_K \hat{x} + b_K$. If $A_K$ is non-degenerate, then this defines a surface finite element $(K, \P, \Sigma)$. The element domain $K$ is determined by its vertices and $\P$ consists of affine functions over $K$. This is the surface finite element introduced by \citet{Dzi88} which we will call an \emph{affine finite element}. We think of a simplex for $\hat{K}$ so that the domains $K$ are either line segments embedded in $\R^2$, triangles embedded in $\R^3$, and tetrahedra embedded in $\R^4$.
  \item \label{ex:isoparametric-sfe} Let $( \hat{K}, \hat\P, \hat\Sigma )$ be a reference finite element. Let $( K, \P, \Sigma )$ be a surface finite element which the image of $( \hat{K}, \hat\P, \hat\Sigma )$ under a map $F_K$ which satisfies $F_K \in ( \hat{P} )^{n+1}$. We call $(K, \P, \Sigma)$ an isoparametric (surface) finite element. This construction is a generalisation of an affine finite element and was introduced by \citet{Hei05}.
    We note that the functions in $\P$ will not necessarily consist of polynomials over $K$ even if $\hat{P}$ consists of polynomials over $\hat{K}$,
    however this leads to a practical scheme where integrals are computed over reference elements.
    This example is the basis for the method in \cref{sec:application1}.
  \item \label{ex:bulk-embed-sfe} Let $( \hat{K}, \hat\P, \hat\Sigma )$ be a reference finite element. Then $( \hat{K}, \hat{\P}, \hat{\Sigma} )$ can be thought of a surface finite element $( K, \P, \Sigma )$ by defining the parametrisation $F_K$ by
    \begin{equation*}
      F_K( \hat{x} ) = ( \hat{x}_1, \ldots, \hat{x}_n, 0 ).
    \end{equation*}
    Note that $\hat{K} \subset \R^n$ but $K \subset \R^{n+1}$.
    In general, we could consider flat surface finite elements to be surface finite elements to have parametrisation $F_K$ such that $( F_K )_{n+1} \equiv 0$.
  \end{exlist}
\end{example}

\begin{figure}
  \centering
  \includegraphics{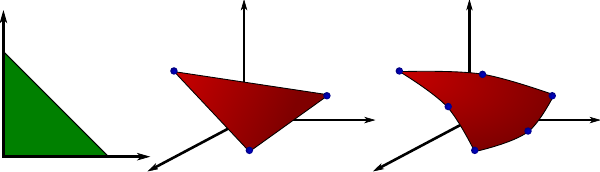}
  \caption{Examples of different surface finite elements in the case $n=2$. Left shows a reference finite element (in green), centre shows an affine finite element (\cref{ex:affine-sfe}) and right shows an isoparametric surface finite element (\cref{ex:isoparametric-sfe}) with a quadratic $F_K$. The plot shows the element domains in red and the location of nodes in blue.}
  \label{fig:sfem-examples}
\end{figure}

\begin{lemma}
  \label{lem:FK-AK-scale}
  Let $F_K \colon \hat{K} \to K$ be a surface finite element reference map then $F_K$ is a $C^1$-diffeomorphism and satisfies
  \begin{align}
    \label{eq:FK-AK-scale1}
    \sup_{\hat{x} \in \hat{K}} \norm{ \nabla F_K( \hat{x} ) }
    & \le ( 1 + C_K ) \norm{ A_K } \\
    \label{eq:FK-AK-scale2}
    \sup_{\hat{x} \in \hat{K}} \norm{ (\nabla F_K (\hat{x}) )^\dagger}
    & \le ( 1 - C_K ) \norm{ A_K^\dagger },
  \end{align}
  and also for all $\hat{x} \in \hat{K}$
  \begin{equation}
    \label{eq:FK-AK-scale3}
    ( 1 - C_K )^{2n} \det( A_K^t A_K )
    \le g( \hat{x} )^2
    \le ( 1 + C_K )^{2n} \det ( A_K^t A_K ).
  \end{equation}
\end{lemma}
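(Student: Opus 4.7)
The plan is to exploit the decomposition $\nabla F_K(\hat x) = A_K + \nabla \Phi_K(\hat x)$ together with the contraction bound $C_K<1$ to reduce everything to standard Neumann-series/perturbation estimates for the $(n+1)\times(n+1)$ matrix $M(\hat x) := I + \nabla\Phi_K(\hat x)\, A_K^{\dagger}$. Observe first that since $A_K^{\dagger}A_K = I$ (because $A_K$ has full column rank $n$), one has the factorisation
\begin{equation*}
  \nabla F_K(\hat x) = M(\hat x)\, A_K,
  \qquad \norm{M(\hat x) - I} \le C_K < 1.
\end{equation*}
Thus $M(\hat x)$ is invertible with $\norm{M(\hat x)} \le 1 + C_K$ and $\norm{M(\hat x)^{-1}} \le (1 - C_K)^{-1}$ by the standard Neumann series argument.

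For (\ref{eq:FK-AK-scale1}) I would simply apply submultiplicativity: $\norm{\nabla F_K} \le \norm{M}\norm{A_K} \le (1+C_K)\norm{A_K}$. For the pseudoinverse bound (\ref{eq:FK-AK-scale2}), I would estimate the smallest singular value of $\nabla F_K$: for any $v\in\R^n$,
\begin{equation*}
  \norm{\nabla F_K\, v} = \norm{M A_K v} \ge \norm{M^{-1}}^{-1}\norm{A_K v} \ge (1-C_K)\,\sigma_{\min}(A_K)\,\norm{v},
\end{equation*}
so $\sigma_{\min}(\nabla F_K) \ge (1-C_K)\,\sigma_{\min}(A_K)$ and hence $\norm{(\nabla F_K)^{\dagger}} = \sigma_{\min}(\nabla F_K)^{-1} \le (1-C_K)^{-1}\norm{A_K^{\dagger}}$. (The statement as written has a factor $(1-C_K)$ rather than $(1-C_K)^{-1}$, which I would flag as a typo, since the pseudoinverse has to blow up, not shrink, as $C_K \to 1$.) The $C^1$-diffeomorphism claim then follows because $\nabla F_K$ has full column rank uniformly, so each local inverse (existing by the inverse function theorem for immersions) is $C^1$ on the image, and these local inverses glue into a global $C^1$ inverse thanks to the assumed injectivity of $F_K$.

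The determinant estimate (\ref{eq:FK-AK-scale3}) is the one point requiring a little care, since $\det\nabla F_K$ makes no sense for non-square $\nabla F_K$ and one must unpack $g(\hat x) = \det\bigl(\nabla F_K^{t}\nabla F_K\bigr)$. My plan is to use the thin SVD $A_K = U\Sigma V^{t}$ with $U\in\R^{(n+1)\times n}$, $U^{t}U = I_n$, and write
\begin{equation*}
  \nabla F_K^{t}\nabla F_K = A_K^{t} M^{t} M\, A_K = V\Sigma\,(U^{t} M^{t} M U)\,\Sigma V^{t},
\end{equation*}
so that $g(\hat x) = \det(\Sigma)^2\,\det(U^{t}M^{t}MU) = \det(A_K^{t}A_K)\,\det(U^{t}M^{t}MU)$. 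For any unit $w\in\R^n$, $\norm{Uw} = 1$, so $(1-C_K)^2 \le w^{t}U^{t}M^{t}MU w = \norm{MUw}^2 \le (1+C_K)^2$ from the bounds on $\norm{M}$ and $\norm{M^{-1}}$. Since $U^{t}M^{t}MU$ is symmetric positive definite of size $n$ with eigenvalues confined to $[(1-C_K)^2,(1+C_K)^2]$, its determinant lies in $[(1-C_K)^{2n},(1+C_K)^{2n}]$, yielding (\ref{eq:FK-AK-scale3}). The main subtlety throughout is keeping straight that $A_K$ is rectangular so the product $\nabla\Phi_K A_K^{\dagger}$, rather than a direct left-multiplication by $A_K^{-1}$, is the correct object on which the smallness hypothesis is imposed.
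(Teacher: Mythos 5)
Your proposal is correct and takes the same route as the paper, which simply factorises $\nabla F_K(\hat x) = (\id + \nabla\Phi_K(\hat x)\,A_K^\dagger)\,A_K$ and says the three bounds follow immediately; you have filled in the singular-value and determinant bookkeeping that the paper leaves implicit. Your flag on \eqref{eq:FK-AK-scale2} is a genuine catch: the correct factor is $(1-C_K)^{-1}$, not $(1-C_K)$, as one sees by taking $\nabla\Phi_K$ small but nonzero (so $C_K>0$ while $\norm{(\nabla F_K)^\dagger}\approx\norm{A_K^\dagger}$), and the downstream uses of this lemma, e.g.\ \eqref{eq:AK-h-relation-b} and \eqref{eq:norm-scale-b}, are consistent with the corrected $(1-C_K)^{-1}$ bound.
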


\begin{proof}
  The proof of \cref{eq:FK-AK-scale1}, \cref{eq:FK-AK-scale2} and \cref{eq:FK-AK-scale3} follows immediately from \cref{eq:CK-defn} by writing $\nabla F_K$ as
  \begin{equation*}
    \nabla F_K( \hat{x} ) = ( \id + \nabla D_K( \hat{x} ) A_K^\dagger ) A_K.
  \end{equation*}
  For \cref{eq:FK-AK-scale3}, we use the fact that the determinant is an $n$-linear continuous form.
\end{proof}

To help us understand the geometry of the new elements, given an element domain $K$, we introduce a new affine element domain $\tilde{K}$ defined by the affine part of the parametrisation: $\tilde{K} := \{ A_K \hat{x} + b_K : \hat{x} \in \hat{K} \}$.

\begin{lemma}
  \label{lem:element-geometry}
  Let $K$ be an element domain \cref{eq:element-domain} parametrised by a surface element reference map $F_K$ over $\hat{K}$.
  Denote by
  \begin{subequations}
    \label{eq:element-geometry}
    \begin{align}
      \label{eq:hK}
      & h_K := \diam(\tilde{K}) \\
      \label{eq:rhoK}
      & \rho_K := \sup \{ \diam(B) : B \text{ is a $n$-dimensional ball contained in } \tilde{K} \}.
    \end{align}
  \end{subequations}
  We will also write $\hat{h}$ and $\hat{\rho}$ for the diameter of $\hat{K}$ and diameter of the maximum inscribed ball in $\hat{K}$.
  Then we have that
  \begin{subequations}
    \label{eq:AK-h-relations}
    \begin{align}
      \label{eq:AK-h-relation-a}
      \norm{ A_K } & \le \frac{h_K}{ \hat\rho } \\
      \label{eq:AK-h-relation-b}
      \norm{ A_K^\dagger } & \le \frac{ \hat{h} }{ \rho_K } \\
      \label{eq:AK-h-relation-c}
      \sup_{\hat{x} \in \hat{K}} \sqrt{g(\hat{x})}
      & \le \frac{1}{\meas{\hat{K}}} \left( \frac{1+C_K}{1-C_K } \right)^{n} \meas(K).
    \end{align}
  \end{subequations}
\end{lemma}

\begin{proof}
  To show \cref{eq:AK-h-relation-a}, we start by noticing that
  \begin{equation*}
    \norm{ A_K } = \frac{1}{\hat{\rho}} \sup \left\{ \abs{ A_K \xi } : \xi \in \R^n, \abs{ \xi } = \hat\rho \right\}.
  \end{equation*}
  From the definition of $\hat{\rho}$ we know that for all $\xi \in \R^n$, $\abs{\xi} = \hat\rho$, there exists $\hat{y}, \hat{z} \in \hat{K}$ such that $\hat{y} - \hat{z} = \xi$. Then noting that $A_K \hat{y}, A_K \hat{z} \in \tilde{K}$, we have that
  \[
    \abs{ A_K \xi } = \abs{ A_K (\hat{y} - \hat{z} ) }
    = \abs{ A_K \hat{y} - A_K \hat{z} }
    \le h_K.
  \]
  Since the choice of $\xi$ was arbitrary, we have shown \cref{eq:AK-h-relation-a}.

  For \cref{eq:AK-h-relation-b}, we proceed in a similar fashion with
  \[
    \norm{ A_K^\dagger }
    = \frac{1}{\rho_K} \sup \left\{ \abs{ A_K^\dagger \xi } : \xi \in \R^{n+1}, \abs{\xi} = \rho_K \right\}.
  \]
  Let $\xi \in \R^{n+1}$. We note that $A_K^\dagger$ has a non-trivial kernel so we decompose $\xi = \xi_1 + \xi_2$ where $A_K^\dagger \xi_2 = 0$ and $\xi_1$ is in the tangent plane to $\tilde{K}$. Then, we see that there exists $y,z \in \tilde{K}$ such that $y - z = \xi$, and noting that $A_K^\dagger ( y - b_K ), A_K^\dagger ( z - b_K ) \in \hat{K}$ (we see this using the definition of $\tilde{K}$ and that $A_K^\dagger$ is a left inverse of $A_K$), we see that
  \[
    \abs{ A_K^\dagger \xi }
    = \abs{ A_K^\dagger ( y - z ) }
    = \abs{ A_K^\dagger ( y - b_K ) - A_K^\dagger ( z - b_K ) }
    \le \hat{h}.
  \]
  Again, since the choice of $\xi$ was arbitrary, we have shown \cref{eq:AK-h-relation-b}.

  To see \cref{eq:AK-h-relation-c}
  we apply each of the previous two bounds with the result of \cref{lem:FK-AK-scale} to see
  \begin{align*}
    \sup_{\hat{x} \in \hat{K}} \sqrt{g(\hat{x})}
    & \le ( 1 + C_K )^n \sqrt{ \det ( A_K^t A_K ) } \\
    & \le \frac{1}{\meas \hat{K}} ( 1 + C_K )^{n} \int_{\hat{K}} \sqrt{ \det ( A_K^t A_K ) } \dd \hat{x} \\
    & \le \frac{1}{\meas \hat{K}} ( 1 + C_K )^n / ( 1 - C_K )^n \int_{\hat{K}} \sqrt{ g(\hat{x}) } \dd \hat{x} \\
    & \le \frac{1}{\meas \hat{K}} \left( \frac{ 1 + C_K }{ 1 - C_K } \right)^n \meas (K) \\
    & \le C(\hat{K}) \meas(K). \qedhere
  \end{align*}
\end{proof}

\begin{remark}
  \label{rem:AK-h-meas}
  We note that the volume of an element $\meas(K)$ can be estimated by $h_K$ and $\rho_K$ by
  \begin{align*}
    c_1 \rho_K^n \le \meas{K} \le c_2 h_K^n.
  \end{align*}
  Here the positive constants $c_1, c_2$ depend on the volume of the unit ball in $\R^n$ and the constant $C_K$.
\end{remark}

\begin{remark}
  \label{rem:small-hk-surf}
  In the sequel, we will assume implicitly that results hold for $h_K$ sufficiently small ($h_K < h_0$) for some particular value of $h_0$.
  In general this is always possible by subdividing a particular element using a refinement procedure and applying the result to the subdivided, smaller elements.
\end{remark}

This scaling property allows us to characterise Sobolev spaces over a surface finite element $K$ and calculate norms over $\hat{K}$ (see \cref{rem:Sobolev-K}).
\begin{lemma}
  \label{lem:norm-scaling}
  Let $F_K \colon \hat{K} \to K$ be a $\Theta$-reference finite element map \cref{def:Theta-sfe-map}.
  Let $0 \le m \le \Theta+1$ and $p \in [1,\infty]$, then $\chi \in W^{m,p}(K)$ implies $\hat{\chi} = \chi \circ F_K$ belongs to $W^{m,p}(\hat{K})$.
  We have for any $\chi \in W^{m,p}(K)$ that
  \begin{equation}
    \label{eq:norm-scale-a}
    \abs{ \hat{\chi} }_{W^{m,p}(\hat{K})}
    \le c \left( \sup_{ \hat{x} \in \hat{K} } \sqrt{ g(\hat{x} ) } \right)^{-\frac{1}{p}}
    \norm{ A_K }^m \sum_{r=1}^m \abs{ \chi }_{W^{r,p}(K)},
  \end{equation}
  for a constant which depends on $C_K, C_2(K), \ldots, C_m(K)$.
  We also have for any $\hat{\chi} \in W^{m,p}(\hat{K})$ that $\chi = \hat{\chi} \circ F_K^{-1} \in W^{m,p}(K)$ and
  \begin{align}
    \label{eq:norm-scale-b}
    \abs{ \chi }_{W^{m,p}(K)} \le c \left( \sup_{ \hat{x} \in \hat{K} } \sqrt{ g(\hat{x} ) } \right)^{\frac{1}{p}}
    \sum_{r=1}^m \norm{ A_K^\dagger }^r \abs{ \hat{\chi} }_{W^{r,p}(\hat{K})},
  \end{align}
  where the constant here depends on $C_K, C_2(K), \ldots, C_m(K)$ and the product $\norm{ A_K } \norm{ A_K^\dagger }$.
\end{lemma}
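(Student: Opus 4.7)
The plan is to reduce both inequalities to pointwise estimates on derivatives of $\hat{\chi}$ or $\chi$ obtained from the chain rule applied to $F_K$, followed by a change of variables using $\dd \sigma = \sqrt{g(\hat{x})} \dd \hat{x}$. The bounds from Lemma~\ref{lem:FK-AK-scale} --- namely \eqref{eq:FK-AK-scale1}, \eqref{eq:FK-AK-scale2} and \eqref{eq:FK-AK-scale3} --- then supply the scaling in terms of $\norm{A_K}$, $\norm{A_K^\dagger}$ and $\sqrt{g}$.

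For $m = 0$, both inequalities reduce immediately to the change of variables formula combined with the two-sided pointwise bound on $\sqrt{g}$ in \eqref{eq:FK-AK-scale3}. For $m = 1$, the chain rule gives
\[
\nabla \hat{\chi}(\hat{x}) = (\nabla F_K(\hat{x}))^t (\nabla_K \chi)(F_K(\hat{x})),
\]
while, since $\nabla_K \chi$ is tangent to $K$ and since $(\nabla F_K)^\dagger$ is a left inverse of $\nabla F_K$, one also has
\[
(\nabla_K \chi)(F_K(\hat{x})) = \bigl((\nabla F_K)^\dagger(\hat{x})\bigr)^t \nabla \hat{\chi}(\hat{x}).
\]
Taking norms pointwise, applying \eqref{eq:FK-AK-scale1} or \eqref{eq:FK-AK-scale2}, and then changing variables using \eqref{eq:FK-AK-scale3} yields \eqref{eq:norm-scale-a} and \eqref{eq:norm-scale-b} for $m = 1$.

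For $m \ge 2$, I would expand the higher derivatives of $\hat{\chi} = \chi \circ F_K$ via Fa\`a di Bruno's formula. Each resulting term is a $j$-th derivative of $\chi$ (with $1 \le j \le m$) composed with $F_K$, multiplied by a product of derivatives of $F_K$ whose orders sum to $m$. Since $F_K = A_K \hat{x} + b_K + \Phi_K$, every second and higher derivative of $F_K$ equals the corresponding derivative of $\Phi_K$ and is uniformly bounded on $\hat{K}$ by the $C^\theta$ assumption on a $\theta$-surface finite element. Taking $L^p$-norms in $\hat{x}$, changing variables term-by-term, and absorbing the lower-order seminorms of $\chi$ into $\abs{\chi}_{W^{m,p}(K)}$ gives \eqref{eq:norm-scale-a}; \eqref{eq:norm-scale-b} follows by the symmetric argument applied to $F_K^{-1}$.

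The main obstacle is the bookkeeping for $m \ge 2$: the factor $\norm{A_K}^m$ should arise precisely from the product of $m$ copies of the first-order factor $\nabla F_K$ via Lemma~\ref{lem:FK-AK-scale}, while every term containing at least one higher-order derivative of $F_K$ must be shown to be strictly lower order in $\norm{A_K}$ and hence absorbable. This is the surface analog of the classical bulk scaling estimate \citep[Theorem~15.2]{Cia78} and \citep{CiaRav72}, and the argument transfers essentially verbatim once the reference-element scaling of $\nabla F_K$ by $A_K$ provided by Lemma~\ref{lem:FK-AK-scale} is in hand.
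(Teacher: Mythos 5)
Your proof for $m = 0, 1$ follows the paper's argument and the two first-order identities you state are exactly what the paper uses. For $m \ge 2$, however, you take a genuinely different route from the paper, and the route you take has a gap.

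You propose to expand $D^\alpha(\chi \circ F_K)$ via Fa\`a di Bruno and then to handle the terms containing higher derivatives of $F_K$ by ``absorbing the lower-order seminorms of $\chi$ into $\abs{\chi}_{W^{m,p}(K)}$.'' That absorption is not valid: Fa\`a di Bruno produces terms of the form $(D^j_K\chi \circ F_K)$ paired with products of derivatives of $F_K$ for $1 \le j < m$, and when you take $L^p$-norms these contribute lower-order seminorms $\abs{\chi}_{W^{j,p}(K)}$ which are \emph{not} bounded by $\abs{\chi}_{W^{m,p}(K)}$. Your observation that such terms carry a smaller power of $\norm{A_K}$ does not help, because the issue is the wrong seminorm on the $\chi$ side, not the constant. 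This is precisely why Ciarlet's isoparametric scaling estimate (\citep[Thm.~4.3.2--4.3.3]{Cia78}) bounds $\abs{\hat\chi}_{W^{m,p}(\hat{K})}$ by a \emph{sum} over seminorms $\abs{\chi}_{W^{j,p}(K)}$, $1 \le j \le m$, not by $\abs{\chi}_{W^{m,p}(K)}$ alone.

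The paper avoids Fa\`a di Bruno altogether. It asserts an exact $m$-linear identity
\[
D^{\alpha} \hat{\chi}( \hat{x} )
= D_K^m \chi( x ) \bigl( D F_K( \hat{x} ) \hat{e}_{\alpha_1}, \ldots, D F_K( \hat{x} ) \hat{e}_{\alpha_m} \bigr),
\]
proved by induction from the parametrised-gradient identity, in which the $m$-th surface derivative $D^m_K\chi$ is paired with exactly $m$ factors of $D F_K$ and \emph{no} lower-order correction term appears. The seminorm bound with the clean factor $\norm{A_K}^m$ then falls out term-by-term after changing variables, followed by the density argument from $C^m(\bar K)$ to $W^{m,p}(K)$. (Whether this identity is itself correct at the stated level of generality is a separate question, but it is the device the paper relies on; you would need either to prove it or to replace seminorms by full norms if you insist on the Fa\`a di Bruno route.) In short: for $m \ge 2$ your approach is not the paper's, and as written it does not close, because you cannot discard the lower-order seminorms of $\chi$ the way you propose.
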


\begin{proof}
  In this proof we use the results and notation for the Fa\'a di Bruno result presented in \cref{sec:faa-di-bruno}.

  For \cref{eq:norm-scale-a} using \cref{eq:faa-di-bruno},
we see that
\begin{align*}
 \frac{\partial^m}{\partial \hat{x}_{j_m} \ldots \partial \hat{x}_{j_1}} \hat\chi ({\hat{x}})
    = \sum_{r=1}^m \sum_{\lambda_1=1}^{n+1} \cdots \sum_{\lambda_r=1}^{n+1}
    (\nabla_K)_{\lambda_r} \cdots (\nabla_K)_{\lambda_1} \chi(x)
    \sum_{\Pp_{m,r}} \prod_{s=1}^r \frac{\partial^{\abs{\sigma_s}}}{\partial \hat{x}_{j_{\sigma_s}}} (F_K(\hat{x}))_{\lambda_s}.
\end{align*}
Then
\begin{align*}
  \abs{  \frac{\partial^m}{\partial \hat{x}_{j_m} \ldots \partial \hat{x}_{j_1}} \hat\chi ({\hat{x}}) }
  & \le \sum_{r=1}^m \sum_{\lambda_1=1}^{n+1} \cdots \sum_{\lambda_r=1}^{n+1}
    \abs{ (\nabla_K)_{\lambda_r} \cdots (\nabla_K)_{\lambda_1} \chi(x) }
    \sum_{\Pp_{m,r}} \prod_{s=1}^r
    \abs{ \frac{\partial^{\abs{\sigma_s}}}{\partial \hat{x}_{j_{\sigma_s}}} (F_K(\hat{x}))_{\lambda_s} }.
\end{align*}
We see that from assumption \cref{eq:CK-theta-defn}
\begin{multline}
  \label{eq:dsigma_FK}
    \sum_{\Pp_{m,r}} \prod_{s=1}^r
    \abs{ \frac{\partial^{\abs{\sigma_s}}}{\partial \hat{x}_{j_{\sigma_s}}} (F_K(\hat{x}))_{\lambda_s} }
    \le \sum_{\Pp_{m,r}} \prod_{s=1}^r
    c(c_{\abs{\sigma_s}}(K)) \norm{ A_K }^{\abs{\sigma_s}} \\
    \le c(C_1(K), \ldots, C_{m}(K)) \sum_{\Pp_{m,r}} \norm{ A_K }^{\sum_{s=1}^r \abs{\sigma_s}}
    \le c(C_1(K), \ldots, C_{m}(K)) \norm{ A_K }^m.
\end{multline}
Then, we have that
\begin{equation*}
  \abs{ \frac{\partial^m}{\partial \hat{x}_{j_m} \ldots \partial \hat{x}_{j_1}} \hat\chi (\hat{x}) }
  \le c(C_1(K), \ldots, C_{m}(K)) \norm{ A_K }^m \sum_{r=1}^m \sum_{\lambda_1=1}^{n+1} \cdots \sum_{\lambda_r=1}^{n+1}
  \abs{ (\nabla_K)_{\lambda_r} \cdots (\nabla_K)_{\lambda_1} \chi(x) }.
\end{equation*}
The inequality \cref{eq:norm-scale-a} then follows from integration and a Minkowski inequality.

For \cref{eq:norm-scale-b}, from \cref{eq:faa-bi-bruno-inverse}, we have
\begin{align*}
  & \abs{ ( \nabla_K )_{i_m} \cdot ( \nabla_K )_{i_1} \chi( x ) } \\
  & \le \abs{ \sum_{j_m, \ldots, j_1=1}^n \frac{ \partial^m }{ \partial \hat{x}_{j_m} \cdots \partial \hat{x}_{j+1} } \hat{\chi}
    \left( \prod_{s=1}^m ( \nabla F_K( \hat{x} ) )^\dagger_{j_s, i_s} \right) } \\
  & \qquad +
    \abs{ \sum_{r=1}^{n-1} \sum_{\lambda_1, \ldots, \lambda_r=1}^{n+1} ( \nabla_\Gamma )_{\lambda_r} \cdots ( \nabla_\Gamma )_{\lambda_1} \chi( x )
    \sum_{j_m, \ldots, j_1=1}^n \sum_{\Pp_{m,r}} \prod_{s=1}^r \frac{\partial^{\abs{\sigma_s}}}{ \partial \theta_{j_{\sigma_s}} } ( F_K(\hat{x}) )_{\lambda_s} \prod_{s=1}^m ( \nabla F_K(\hat{x} ) )^\dagger_{j_s, i_s} } \\
  & =: A_1 + A_2.
\end{align*}
We bound each of $A_1$ and $A_2$ in turn. We see that for $A_1$, applying \cref{eq:FK-AK-scale2}
\begin{align*}
  A_1 & \le ( 1 - C_K )^m \norm{ A_K^\dagger }^m \sum_{j_m=1}^n \cdots \sum_{j_1=1}^n
        \abs{ \frac{\partial^m}{\partial x_{j_m} \ldots \partial x_{j_1}} \hat\chi (\hat{x}) }.
\end{align*}
For $A_2$, applying \cref{eq:FK-AK-scale2} and \cref{eq:dsigma_FK}:
\begin{align*}
  A_2 & \le
        c(C_1(K), \ldots, C_{m}(K)) ( 1 - C_K )^m \norm{ A_K }^m \norm{ A_K^\dagger }^m
        \sum_{r=1}^{m-1} \sum_{\lambda_1=1}^{n+1} \cdots \sum_{\lambda_r=1}^{n+1}
        \abs{ (\nabla_K)_{\lambda_r} \cdots (\nabla_K)_{\lambda_1} \chi(x) }.
\end{align*}
Combining the above estimates and integrating over the domain, we see
\begin{align*}
  \abs{ \chi }_{W^{m,p}(K)}
  & \le \left( \sup_{\hat{x} \in \hat{K}} \sqrt{g(\hat{x})} \right)^{1/p} ( 1 - C_K )^m \norm{ A_K^\dagger }^{m} \abs{ \hat\chi }_{W^{m,p}(\hat{K})} \\
  & \qquad + ( c(C_1(K), \ldots, C_{m}(K)) ( 1 - C_K )^m \norm{ A_K }^m \norm{ A_K^\dagger }^m ) \sum_{r=1}^{m-1} \abs{ \chi }_{W^{m-1,p}(K)}.
\end{align*}

Define $\beta_m = ( c(C_1(K), \ldots, C_{m}(K)) ( 1 - C_K )^m \norm{ A_K }^m \norm{ A_K^\dagger }^m )$. Then, a simple induction argument shows that
\begin{align*}
  \abs{ \chi }_{W^{m,p}(K)}
  \le c \left( \sup_{\hat{x} \in \hat{K}} \sqrt{g(\hat{x})} \right)^{1/p} \sum_{r=1}^m c_{m,r} \norm{A_K^\dagger}^{r} \abs{ \hat\chi }_{W^{r,p}(\hat{K})}
\end{align*}
where $c_{m,r}$ satisfies
\[
  c_{m,m} = 1 \qquad c_{m,r} = \sum_{s=r}^{m-1} \beta_m c_{s,r}. \qedhere
\]

\end{proof}

Given a surface finite element $(K, \P, \Sigma)$ (\cref{def:sfe}), let $\{ \chi_i : 1 \le i \le d \} \subset \P$ be the basis dual to $\Sigma$. This is the set of \emph{basis functions} of the finite element. If $\eta$ is a function for which all $\sigma_i( \eta )$, $1 \le i \le d$ is well defined, then we define the \emph{local interpolant} by
\begin{equation}
  \label{eq:3}
  I_K \eta := \sum_{i=1}^m \sigma_i( \eta ) \chi_i.
\end{equation}
We can think of $I_K \eta$ as the unique shape function that has the same nodal values as $\eta$ so that, in particular, $I_K \chi = \chi$ for $\chi \in \P$.

\begin{theorem}
  [Local interpolation estimate]
  \label{thm:IK-bound}
  Let $( K, \P, \Sigma )$ be a $\Theta$-surface finite element (\cref{def:Theta-sfe}) with reference element $(\hat{K}, \hat\P, \hat\Sigma )$ which satisfies the assumptions of \cref{lem:bramble-hilbert} for some $0 < k,m \le \Theta$, $p,q \in [1,\infty]$.
  Then there exists a constant $C = C(\hat{K}, \hat{P}, \hat\Sigma )$ such that for all functions $\chi \in W^{k+1,p}(K)$,
  \begin{equation}
    \label{eq:IK-estimate}
    \abs{ \chi - I_K \chi }_{W^{m,q}(K)}
    \le C \meas(K)^{1/q - 1/p}
    \frac{ h_K^{k+1} }{ \rho^m_K } \abs{ \chi }_{W^{k+1,p}(K)}.
  \end{equation}
\end{theorem}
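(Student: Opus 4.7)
The plan is to follow the classical Ciarlet pullback argument, adapted to the surface setting via the scaling lemma already established. The strategy is: pull back to the reference element, apply a Bramble--Hilbert style estimate there, and then push forward, converting the abstract constants $\norm{A_K}$, $\norm{A_K^\dagger}$, $\sup_{\hat K}\sqrt{g}$ into the geometric quantities $h_K$, $\rho_K$, $\meas(K)$ via the bounds in \eqref{eq:AK-h-relations}.

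First, I would observe that the interpolation operator commutes with pullback by $F_K$: by the very definition of $\Sigma$ and $\P$ for a surface finite element, the nodal variables and shape functions on $K$ are obtained from those on $\hat K$ by composition with $F_K$. Consequently, writing $\hat\chi := \chi\circ F_K$ and $\hat I$ for the interpolation operator on $\hat K$, one has $(I_K\chi)\circ F_K = \hat I\hat\chi$, so that
\begin{equation*}
  \chi - I_K\chi = (\hat\chi - \hat I \hat\chi)\circ F_K^{-1}.
\end{equation*}
The embedding $W^{k+1,p}(\hat K)\hookrightarrow C(\hat K)$ makes $\hat I$ well defined on $W^{k+1,p}(\hat K)$, and the embedding $W^{k+1,p}(\hat K)\hookrightarrow W^{m,q}(\hat K)$, together with $\hat P\subset W^{m,q}(\hat K)$, ensures the operator $I-\hat I$ is bounded from $W^{k+1,p}(\hat K)$ to $W^{m,q}(\hat K)$, with operator norm depending only on $(\hat K,\hat P,\hat\Sigma)$.

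Second, since $\P_k(\hat K)\subset\hat P$, the operator $\hat I$ fixes every polynomial of degree $\le k$. Hence for any $\hat p\in\P_k(\hat K)$,
\begin{equation*}
  \hat\chi - \hat I\hat\chi = (\hat\chi+\hat p) - \hat I(\hat\chi+\hat p),
\end{equation*}
so that $\abs{\hat\chi - \hat I\hat\chi}_{W^{m,q}(\hat K)} \le \norm{I-\hat I}\,\inf_{\hat p\in\P_k}\norm{\hat\chi+\hat p}_{W^{k+1,p}(\hat K)}$. The Bramble--Hilbert lemma then yields
\begin{equation*}
  \abs{\hat\chi - \hat I\hat\chi}_{W^{m,q}(\hat K)} \le C(\hat K,\hat P,\hat\Sigma)\,\abs{\hat\chi}_{W^{k+1,p}(\hat K)}.
\end{equation*}

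Third, I invoke Lemma~\ref{lem:norm-scaling} in both directions: once to push $\hat\chi - \hat I\hat\chi$ forward to $K$, and once to pull $\chi$ back to $\hat K$. Chaining \eqref{eq:norm-scale-b} on the left with \eqref{eq:norm-scale-a} on the right gives
\begin{equation*}
  \abs{\chi - I_K\chi}_{W^{m,q}(K)}
  \le C\Big(\sup_{\hat K}\sqrt{g}\Big)^{1/q-1/p}\norm{A_K^\dagger}^m\norm{A_K}^{k+1}\abs{\chi}_{W^{k+1,p}(K)}.
\end{equation*}
Finally, \eqref{eq:AK-h-relation-a}, \eqref{eq:AK-h-relation-b} and \eqref{eq:AK-h-relation-c} give $\norm{A_K}\le C h_K$, $\norm{A_K^\dagger}\le C/\rho_K$ and $\sup_{\hat K}\sqrt{g}\le C\meas(K)$, with constants depending only on the reference element (and on $C_K<1$, which is part of the definition of a surface finite element). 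Substituting yields the claimed bound.

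The only conceptually nontrivial step is the Bramble--Hilbert estimate on $\hat K$; everything else is bookkeeping using the surface scaling already packaged in Lemma~\ref{lem:norm-scaling} and the geometric bounds in \eqref{eq:AK-h-relations}. The potential subtlety is checking that $C_K$ enters only through the reference constants and does not degrade the $h_K$ and $\rho_K$ scaling, which is guaranteed because $C_K<1$ uniformly on the element by assumption \eqref{eq:CK-defn}.
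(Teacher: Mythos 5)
Your proposal is correct and follows essentially the same route as the paper: pull back to the reference element, apply the Bramble--Hilbert estimate there, and re-scale via Lemma~\ref{lem:norm-scaling} together with the bounds \eqref{eq:AK-h-relations}. The paper simply cites \citet[Theorem~3.1.4]{Cia78} for the reference-element estimate where you unpack it (commutation of $I_K$ with pullback, boundedness of $I-\hat I$, polynomial invariance); the content is the same.
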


\begin{proof}
  We re-scale \cref{eq:bramble-hilbert} using \cref{lem:norm-scaling} and the estimates from \cref{eq:AK-h-relations}:
  \begin{align*}
    \abs{ \chi - I_K \chi }_{W^{m,q}(K)}
    & \le c \left( \sup_{\hat{x} \in \hat{K}} \sqrt{g(\hat{x})} \right)^{1/q}
      \sum_{r=1}^m \norm{ A_K^\dagger }^r
      \abs{ \hat{\chi} - I_{\hat{K}} \hat{\chi} }_{W^{r,q}(\hat{K})} \\
    & \le c \left( \sup_{\hat{x} \in \hat{K}} \sqrt{g(\hat{x})} \right)^{1/q}
      \sum_{r=1}^m \norm{ A_K^\dagger }^r
      \abs{ \hat{\chi} }_{W^{k+1,p}(\hat{K})} \\
    & \le c \left( \sup_{\hat{x} \in \hat{K}} \sqrt{g(\hat{x})} \right)^{1/q - 1/p}
      \sum_{r=1}^m \norm{ A_K^\dagger }^r \norm{ A_K }^{k+1}
      \abs{ \chi }_{W^{k+1,p}(K)} \\
    & \le c \meas(K)^{1/q - 1/p}
      \frac{ h^{k+1}_K }{ \rho_K^m }
      \norm{ \chi }_{W^{k+1,p}(K)}.
  \end{align*}
  The last line holds if $\rho_K < 1$ (note that $\rho_K \ge h_K$ so this statement is true for $h_K$ small enough).
\end{proof}

\subsection{Triangulated hypersurface and surface finite element spaces}
We will next bring together several surface finite elements in order to define $\Gamma_h$ as a collection of finite element domains.

\begin{definition}
 \begin{deflist}
 \item \label{def:triangulaed-hypersurface}
  A \emph{triangulated  hypersurface} is a set $\Gamma_h$ equipped with an \emph{admissible subdivision} $\T_h$ consisting of surface finite element domains \cref{eq:element-domain} such that $\bigcup_{K \in \T_h} K = {\Gamma}_h$, $\mathring{K}_1 \cap \mathring{K}_2 = \emptyset$ for $K_1, K_2 \in \T_h$ with $K_1 \neq K_2$.
   \item \label{def:h}
The maximum subdivision diameter $h$ is defined by:
\begin{equation}
  \label{eq:h-defn}
  h := \max_{K \in \T_h} h_K.
\end{equation}

  \item \label{def:facet}
  Let $\Gamma_h$ be a discrete hypersurface equipped with an admissible subdivision $\T_h$ such that each set $K \in \T_h$ is an element domain for a surface finite element $(K, \P^K, \Sigma^K)$ parametrised over the same polygonal reference finite element $( \hat{K}, \hat{P}, \hat{\Sigma} )$. We say that $E \subset K$ is a \emph{facet} if $E$ is the image of a boundary facet of $\hat{K}$.
  \item \label{def:conforming-subdivision}
   We say that $\T_h$ is a \emph{conforming} subdivision of $\Gamma_h$ if any facet of an element domain $K$ is either a facet of another element domain $K' \in \T_h$, in which case we say $K$ and $K'$ are \emph{adjacent}, or a portion of the boundary $\partial \Gamma_h$ (if such a boundary exists).
  \item For a conforming subdivision $\T_h$ of $\Gamma_h$, we denote by $\F_h$ the set of facets between adjacent elements and $\partial \T_h$ any boundary facets.
    For a common facet $F \in \F_h$ between elements $K, K' \in \T_h$, we make a fixed choice that the conormal $\mu_K$ on $F$ will be denoted $\mu_F^+$ and the conormal $\mu_{K'}$ on $F$ will be denoted $\mu_F^-$. The choice of which element is on which side of the facet is not important in our considerations.
  \end{deflist}
\end{definition}

\begin{remark}
\begin{itemize}
\item
In the above definition we do not impose any global assumptions on the connectivity or smoothness of $\Gamma_h$.
Thus there may not be an underling smooth surface.
  
  \item
We orient a discrete hypersurface which is equipped with a conforming subdivision by choosing a particular sign to the element-wise definition of normal.
We restrict that the induced orientation of the intersection of adjacent element domains are opposite.
For example, for a simplex reference element, the vertices in facets between two elements should be ordered oppositely in each element.
\end{itemize}
\end{remark}

{
For any triangulated hypersurface $\Gamma_h$, we may define spaces of Lebesgue integrable functions $L^p( \Gamma_h )$ with the usual norms $\norm{ \cdot }_{L^p(\Gamma_h)}$ for $p \in [1,\infty]$.

\begin{definition}[Broken Sobolev spaces and norms]
  \label{def:broken-sobolev-norm}
Let $\T_h$ be a subdivision of $\Gamma_h$ consisting of $\Theta$-surface finite elements.
Then for $0 \le m \le \Theta+1$, $p \in [1,\infty]$, we define the broken Sobolev space $W^{m,p}(\T_h)$ by
\begin{equation}
  \label{eq:broken-sobolev-space}
  W^{m,p}(\T_h) := \left\{
    \eta_h \in L^1( \Gamma_h ) : \eta_h |_{K} \in W^{m,p}(K) \mbox{ for all } K \in \T_h
    \right\},
  \end{equation}
  with norm
\begin{equation}
  \label{eq:discrete-norm}
  \norm{ \eta_h }_{W^{m,p}(\T_h)}
  :=
  \begin{cases}
    \displaystyle
    \left(
      \sum_{K \in \T_h} \norm{ \eta_h }_{W^{m,p}(K)}^p
    \right)^{1/p}
    & p < \infty \\
    \displaystyle
    \max_{K \in \T_h} \norm{ \eta_h }_{W^{m,\infty}(K)}
    & p = \infty.
  \end{cases}
\end{equation}
\end{definition}

\begin{lemma}
  The space $W^{m,p}( \T_h )$ is complete.
\end{lemma}

\begin{proof}
  Consider a Cauchy sequence $\{ \eta_j \} \in W^{m,p}( \T_h )$. This implies
  \begin{itemize}
  \item $\eta_j$ is Cauchy in $L^p(\Gamma_h)$ so there exists $\xi$ such that $\eta_j \to \xi$ in $L^p(\Gamma_h)$.
  \item $\eta_j|_K$ is Cauchy for all $K$ so there exists $\xi_K$ such that $\eta_j|_{K} \to \xi_K$ in $W^{m,p}(K)$.
  \end{itemize}
  It is clear from the triangle inequality that $\xi|_K = \xi_K$:
  \[
    \norm{ \xi|_{K} - \xi_K }_{W^{m,p}(K)} \le \norm{ \xi|_{K} - \eta_j|_{K} }_{L^{m,p}(K)} + \norm{ \eta_j|_{K} - \xi_K }_{W^{m,p}(K)},
  \]
  since the right hand-side converges to $0$ as $j \to \infty$.
  Hence, we have shown that $\eta_j$ converges to a function $\xi \in W^{m,p}( \T_h )$ in the $W^{m,p}( \T_h )$ norm.
\end{proof}

We also want more connectivity between elements. This is achieved in the following space.
{
  Let $\Gamma_h$ be a triangulated hypersurface with conforming subdivision $\T_h$.
  For $K \in \T_h$, we denote the trace of a function $\chi \in W^{1,p}(K)$ by $T_K \chi \in L^p(\partial K)$ and recall that there exists a constant $c_{T_K} > 0$ such that
  \begin{equation}
    \label{eq:element-trace}
    \norm{ T_K \chi }_{L^p( \partial K )} \le c_{T_K} \norm{ \chi }_{W^{1,p}(K)} \qquad \mbox{ for all } \chi \in W^{1,p}(K).
  \end{equation}
  We define the space $W^{1,p}_T(\T_h)$ by
  \begin{multline}
    \label{eq:trace-broken-sobolev-space}
      W^{1,p}_T( \T_h ) := \Bigl\{ \eta_h \in L^p( \Gamma_h ) :
      \eta_h |_{K} \in W^{1,p}( K ) \mbox{ for all } K \in \T_h, \mbox{ and } \\
      T_{K} ( \eta_h |_{K} ) = T_{K'} ( \eta_h |_{K'} ) \mbox{ a.e. in } K \cap K' \mbox{ for adjacent } K, K' \in \T_h \Bigr\}.
  \end{multline}
  We equip this space with the broken norm $\norm{ \cdot }_{W^{1,p}(\T_h)}$.
}

\begin{lemma}
  \label{lem:W1pT-closed-subspace}
  The space $W^{1,p}_T(\T_h)$ is a closed subspace of $W^{1,p}(\T_h)$ so is complete.
\end{lemma}

\begin{proof}
  Take a sequence $\{ \eta_j \} \subset W^{1,p}_T( \T_h )$ which converges to $\eta_h \in W^{1,p}( \T_h )$. Then for any pair of adjacent elements $K, K' \in \T_h$ we have
  \begin{multline*}
    \norm{ \eta_h|_{K} - \eta_h|_{K'} }_{L^p( K \cap K' )}
    \le \norm{ \eta_h - \eta_j }_{L^p( \partial K )} + \norm{ \eta_h - \eta_j }_{L^p( \partial K' )} \\
    \le c_{T_K} \norm{ \eta_h - \eta_j }_{W^{1,p}(K)} + c_{T_{K'}} \norm{ \eta_h - \eta_j }_{W^{1,p}(K')}
    \le ( c_{T_K} + c_{T_{K'}} ) \norm{ \eta_h - \eta_j }_{W^{1,p}( \T_h )}.
  \end{multline*}
  Clearly the right hand side converges to $0$ as $j \to \infty$ so we have the traces of $\eta_h$ from adjacent elements coincide and $\eta_h \in W^{1,p}( \T_h )$.
\end{proof}
We will use the notation for $H^1_T( \T_h ) := W^{1,2}_T( \T_h )$ which is a Hilbert space when equipped with the obvious broken inner product.

}

\subsubsection{Surface finite element space}
We now restrict to Lagrangian finite elements over a polygonal reference finite element.
Here we assume that the degrees of freedom for each element $( K, \P, \Sigma )$ are given by
\begin{equation*}
  \Sigma = \{ \chi \mapsto \chi( a ) : a \in \N^K \},
\end{equation*}
where $\N^K$ is a finite set of nodes in $K$.
We call $\N^K$ the set of \emph{Lagrange nodes} of $K$.
This restriction avoids difficulties in defining the edge of elements and how to effectively bring elements together to form a global finite element space.
Extensions to other element types such as Hermite elements are left to future work.

Finally, the set of degrees of freedom of adjacent surface finite elements will be related as follows.
Let $( K, \P, \Sigma )$ and $( K', \P', \Sigma')$ be two surface finite elements such that $K$ and $K'$ are adjacent with $\Sigma = \{ \chi \mapsto \chi( a ), a \in \N^K \}$ and $\Sigma' = \{ \chi \mapsto \chi( a' ), a' \in \N^{K'} \}$.
Then, we have
\begin{equation}
  \label{eq:node-agree}
  \left( \bigcup_{a \in \N^K} a \right) \cap K'
  =
  \left( \bigcup_{a' \in \N^{K'}} a' \right) \cap K.
\end{equation}

We denote the \emph{global set of Lagrange nodes} by
\begin{equation}
  \label{eq:Nh}
  \N_h = \bigcup_{K \in \T_h} \N^K.
\end{equation}
For each $a \in \N_h$, let $\T( a ) \subset \T_h$ be the local neighbourhood of elements for which $a \in \N^K$.

\begin{definition}
  [Surface finite element space]
  \begin{deflist}
  \item \label{def:sfe-space}
    Let $\Gamma_h$ be a discrete hypersurface equipped with a conforming subdivision $\T_h$ with each domain $K$ equipped with a surface finite element $( K, \P^K, \Sigma^K )$ (\cref{def:sfe}) which satisfy \cref{eq:node-agree}.
    A \emph{surface finite element space} is a (generally proper) subset of the product space $\prod_{K \in \T_h} \P^K$ given by
    \begin{multline*}
      \S_h :=
      \bigg\{ \chi_h = ( \chi_K )_{K \in \T_h} \in \prod_{K \in \T_h} \P^K : \\
              \chi_{K}( a ) = \chi_{K'}( a ), \mbox{ for all } K, K' \in \T( a ),
                \mbox{ for all } a \in \N_h \bigg\}.
    \end{multline*}
  \item \label{def:global-degrees-of-freedom}
    The surface finite element space is determined by the \emph{global degrees of freedom}
    \begin{equation*}
      \Sigma_h = \left\{
        \chi_h \mapsto \chi_h( a ) : a \in \N_h
      \right\}.
    \end{equation*}
  \end{deflist}
\end{definition}

In this definition, an element $\chi_h \in \S_h$ is not, in general a ``function'' defined over $\bar\Gamma_h$, since we do not necessarily have a good definition of $\chi_h$ over element boundaries: The ``function'' may be double-valued.

If it happens, however, that for each element $\chi_h \in \S_h$, the restrictions $\chi_K$ and $\chi_{K'}$ coincide along the common face of any adjacent elements $K$ and $K'$, then the function $\chi_h$ can be identified with a function defined over the set $\bar\Gamma_h$.
In this case, we call the elements $\chi_h \in \S_h$ \emph{surface finite element functions}.
Examples of surface finite element functions are shown in \cref{fig:fem-func-ex}.

{
\begin{lemma}
  \label{lem:Sh-closed-subspace}
  Let $\S_h$ be a surface finite element space consisting of surface element elements over a conforming subdivision $\T_h$ of $\Gamma_h$.
  Assume further that for each $K \in \T_h$, the corresponding reference finite element is a Lagrange element of order $k \ge 1$ (\cref{ex:standard-fem}).
  Then we can identify elements of $\S_h$ as functions in $C(\Gamma_h)$.
  Furthermore $\S_h$ is a closed subspace in $H^1_{T}( \T_h )$.
\end{lemma}

\begin{proof}
  Consider two adjacent elements $K, K' \in \T_h$ and a element $\chi_h \in \S_h$.
  The functions $\chi_K \circ F_K$ and $\chi_{K'} \circ F_{K'}$ when restricted to the appropriate edges in $\hat{K}$ are polynomials of degree $k$ which agree at the Lagrange points on this edge from \cref{eq:node-agree} and the definition of $\S_h$.
  The Lagrange points in the reference element determine polynomials of degree $k$ so we have that $\chi_{K} = \chi_{K'}$ on $K \cap K'$.
  Since $\T_h$ is a conforming subdivision we can define a global function $\chi_h \colon \bar\Gamma_h \to \R$ such that $\chi_h|_{K} = \chi_K$ for each $K \in \T_h$ which is globally continuous.
  Indeed $\chi_h$ restricted to each element is continuous, as the composition of a polynomial (element of $\hat{P} = P_k( \hat{K} )$ for some $k$) and a smooth surface finite element reference map $F_K$, and is single valued on the facets where any two elements meet.

  In fact the restriction $\chi_h|_K$ to each element $K \in \T_h$ is a $C^1$-function hence $\chi_h|_K \in H^1(K)$ so it is clear that $\S_h \subset H^1_T( \T_h )$.
  The space $\S_h$ is closed since it is finite dimensional.
\end{proof}
}

\begin{figure}[tb]
  \centering

  \includegraphics[width=0.45\textwidth]{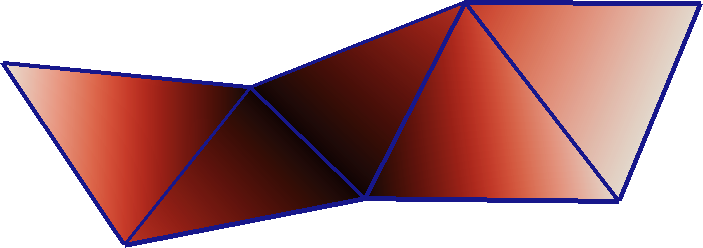}
  \includegraphics[width=0.45\textwidth]{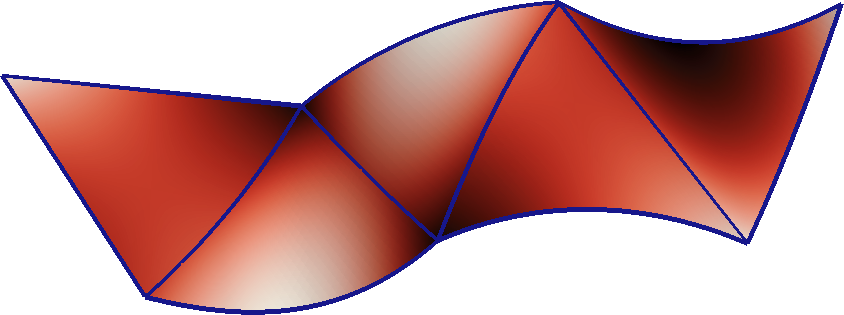}

  \caption{Examples of finite element functions. Left shows a piecewise linear function over a collection of affine finite elements (c.f. \cref{ex:affine-sfe}) and right shows a piecewise quadratic function over a collection of isoparametric (quadratic) finite elements (c.f. \cref{ex:isoparametric-sfe}).}
  \label{fig:fem-func-ex}
\end{figure}

We enumerate the nodes so that $\N_h = \{ a_i \}_{i=1}^N$ and take $\{ \chi_i \}_{i=1}^N$ to be the basis of $\S_h$ dual to $\Sigma_h$. Since, we have a finite basis of $\S_h$ we note that we can identify any $\chi_h \in \S_h$ with a vector $\gamma \in \R^N$ so that
\begin{equation*}
  \chi_h( x ) = \sum_{i=1}^N \gamma_i \chi_i( x ) \qquad \mbox{ for } x \in \Gamma_h.
\end{equation*}

\begin{definition}[Interpolation]
  \label{def:global-interpolant}
If $\eta$ is a function on $\Gamma_h$ for which all $\sigma_i( \eta )$, $1 \le i \le N$, is well defined (in case of Lagrangian finite elements, $\eta \in C(\Gamma_h)$ suffices), then we can define a \emph{global interpolant} $I_h \eta \in \S_h$ by
\begin{equation*}
  I_h \eta := \sum_{i=1}^N \sigma_i( \eta ) \chi_i.
\end{equation*}
Note that our construction implies that
\begin{equation*}
  ( I_h \eta )|_{K} = I_K \eta |_K \mbox{ for all } K \in \T_h,
\end{equation*}
and $I_h \chi_h = \chi_h$ for all $\chi_h \in \S_h$.
\end{definition}
In order to prove estimates on the global interpolant, we will first define three further properties of our subdivision $\T_h$.

\begin{definition}
  [Regular and quasi-uniform subdivisions]
  For $h \in (0,h_0)$, let $\Gamma_h$ be a triangulated hypersurface (\cref{def:triangulaed-hypersurface}) equipped with a conforming subdivision $\T_h$ (\cref{def:conforming-subdivision}).
  \begin{deflist}
  \item \label{def:regular}
    The family is said to be non-degenerate or \emph{regular} if there exists $\rho_\reg > 0$ such that for all $K \in \T_h$ and all $h \in (0,h_0)$,
    \begin{equation*}
      \rho_K \ge \rho_\reg h_K,
    \end{equation*}
    and there exists a constant $C > 0$ such that
    \[
      \sup_{h \in (0,h_0)} \max_{K \in \T_h} C_K \le C < 1.
    \]
  \item \label{def:Theta-regular}
    The family is said to be \emph{$\Theta$-regular} if it is regular, if for all $h \in (0,h_0)$ and all $K \in \T_h$, $F_K \in C^{\Theta+1}(K; \R^{n+1})$, and if, there exists a constant $C > 0$ such that
    \[
      \sup_{h \in (0,h_0)} \max_{K \in \T_h} C_m( K ) \le C < + \infty \qquad \mbox{ for } 2 \le m \le \Theta+1.
    \]
    This implies that $F_K$ is a $\Theta$-reference surface finite element map for each $K \in \T_h$.
  \item \label{def:quasi-uniform}
    A regular family is said to be \emph{quasi-uniform} if there exists $\rho > 0$ such that
    \begin{equation*}
      \min \{ \rho_K : K \in \T_h \} \ge \rho h \quad \mbox{ for all } h \in (0,h_0).
    \end{equation*}
  \end{deflist}
\end{definition}
\begin{remark}
  We note that:
  \begin{itemize}
  \item for a regular subdivision there exists a constant $c > 0$ depending on the global quantities $\hat{\rho}, \hat{h}$ and $\rho_\reg$
    \begin{equation*}
      \norm{ A_K } \le c h_K \le c h
      \qquad \mbox{ and } \qquad
      \norm{ A_K^\dagger } \le c h_K^{-1},
    \end{equation*}
  \item for a quasi-uniform subdivision there exists a constant $c > 0$ depending on the global quantities $\hat{\rho}, \hat{h}$ and $\rho$
    \begin{equation*}
      \norm{ A_K } \le c h
      \qquad \mbox{ and } \qquad
      \norm{ A_K^\dagger } \le c h^{-1}.
    \end{equation*}
  \end{itemize}
\end{remark}
\begin{theorem}
  [Global interpolation estimates]
  \label{thm:Ih-bound}
  For $h \in (0,h_0)$, let $\Gamma_h$ be a triangulated hypersurface (\cref{def:triangulaed-hypersurface}) equipped with a $\Theta$-regular (\cref{def:Theta-regular}), quasi-uniform (\cref{def:quasi-uniform}), conforming (\cref{def:conforming-subdivision}) subdivision $\T_h$. Let each $K \in \T_h$ be equipped with a $\Theta$-surface finite element $( K, \P^K, \Sigma^K )$ (\cref{def:Theta-sfe}) parametrised over a reference finite element $(\hat{K}, \hat{\P}, \hat{\Sigma})$ which satisfies the assumptions of \cref{thm:IK-bound} for some $0 < k,m \le \Theta$, $p,q \in [1,\infty]$. Then there exists a constant $C = C( \hat{K}, \hat{\P}, \hat{\Sigma}, \rho )$ such that for all functions $\eta \in W^{k+1,p}( \T_h ) \cap C( \Gamma_h )$,
  \begin{equation}
    \label{eq:Ih-bound}
    \norm{ \eta - I_h \eta }_{W^{m,q}(\T_h)}
    \le
    C h^{k+1-m}
    \norm{ \eta }_{W^{k+1,p}(\T_h)}.
  \end{equation}
\end{theorem}

\begin{proof}
  The proof follows by piecing together \cref{thm:IK-bound} using the fact that $\T_h$ is quasi-uniform.
\end{proof}

\subsection{Evolving surface finite elements} \label{sec:evolving-sfem}

Let $t\in [0,T]$. We consider  families of surface finite elements, spaces and triangulated hypersurfaces parametrised by $t$. 

\begin{definition}
  [Evolving surface finite element]
 \begin{deflist}
 \item \label{def:esfe}
   Let $(K(t), P(t), \Sigma(t))_{t \in [0,T]}$ be a time dependent family of surface finite elements (\cref{def:sfe}) parametrised over a common reference element  $(\hat{K}, \hat{P}, \hat\Sigma)$.  If the constant $C_K = \sup_t C_{K(t)}$ is uniformly bounded away from 1,
  \begin{equation*}
    C_K := \max_{t \in [0,T]} \sup_{\hat{x} \in \hat{K}} \norm{ \nabla D_K ( \hat{x}, t ) A_K^\dagger(t) }
    < c < 1,
  \end{equation*}
  we say that $(K(t), P(t), \Sigma(t))_{t\in[0,T]}$ is an \emph{evolving surface finite element}.
 \item \label{def:esfe-reference-map}
 Let  $\Phi_{(\cdot)}^K \in C^2([0,T], C^1(K_0))$ where $K_0:=K(0)$. We say that if $\Phi_t^K \colon K_0 := K(0) \to K(t)$ such that
\begin{equation}
  \label{eq:esfe-reference-map}
  F_{K(t)}( \hat{x} ) = \Phi_t^K( F_{K_0}(\hat{x} ) ) \qquad \mbox{ for } \hat{x} \in \hat{K}
\end{equation}
 then  $\Phi_t^K$ is the {\it flow}  defining the  evolution of the element domain and that $ F_{K(t)}$ is the {\it evolving surface element reference map}.
\item \label{def:element-velocity}
 The \emph{element velocity} $W_K$ of $K(t)$ is defined  by
\begin{equation*}
  W_K( \Phi_t^K( x ), t ) = \dt \Phi_t^K( x ) \qquad \mbox{ for } x \in K_0, t \in [0,T].
\end{equation*}

\item \label{def:e-Theta-sfe}
  If each $(K(t), \P(t), \Sigma(t))$ is a $\Theta$-surface finite element for each $t \in [0,T]$ and the constants $C_m(K(t))$ are uniformly bounded:
  \[
    \sup_{t \in [0,T]} C_m( K(t) ) \le c < \infty \qquad \mbox{ for } 2 \le m \le \Theta+1
  \]
  then we say that $(K(t), P(t), \Sigma(t))_{t\in[0,T]}$ is an \emph{evolving $\Theta$-surface finite element} and $F_{K(t)}$ is the evolving $\Theta$-surface element reference map.
  \item \label{def:temporally-quasi-uniform}
  We say that an evolving finite element domain is \emph{temporally quasi-uniform}, if there exists $\rho_K > 0$ such that
\begin{equation*}
  \inf \{ \rho_{K(t)} : t \in [0,T] \} \ge \rho_K \sup \{ h_{K(t)} : t \in [0,T] \}.
\end{equation*}
\item \label{def:element-push-forward-map}
The family of {\it element push forward maps} $\phi_t^K$, indexed by $t \in [0,T]$, is  defined to be the linear invertible map given for $\chi \colon K_0 \to \R$ by $\phi^K_t ( \chi ) \colon K(t) \to R$ where
\begin{align*}
  \phi_t^K( \chi )( x ) = \chi( \Phi_{-t}^K( x ) ) \qquad \mbox{ for } x \in K(t).
\end{align*}

  \end{deflist}
\end{definition}

\begin{lemma}
  \label{lem:Kt-norm-equiv}
  Let $F_{K(t)} \colon \hat{K} \to K(t)$, for $t \in [0,T]$, be an evolving $\Theta$-surface finite element reference map (\cref{def:e-Theta-sfe}) for a temporally quasi-uniform element domain $K(t)$ (\cref{def:temporally-quasi-uniform}) and $\phi^K_t$ the family of element push forward maps (\cref{def:element-push-forward-map}).
  Then there exists constant $c_1, c_2 > 0$ which depend only on the reference element domain $\hat{K}$, and the constant $C_K, C_1, \ldots, C_\Theta$ and $\rho_K$, such that for $0 \le m \le \Theta$ and all $t \in [0,T]$, $\chi \in W^{m,p}( K_0 )$ if, and only if, $\phi^K_t \chi \in W^{m,p}( K(t) )$ and
  \begin{align}
    \label{eq:Kt-norm-equiv}
    c_1 \norm{ \chi }_{W^{m,p}(K_0)}
    & \le \norm{ \phi^K_t \chi }_{W^{m,p}(K(t))}
      \le c_2 \norm{ \chi }_{W^{m,p}(K_0)}
    && \mbox{ for all } \chi \in W^{m,p}(K_0).
  \end{align}
\end{lemma}

\begin{proof}
  From \cref{lem:element-geometry,lem:norm-scaling}, we have
  \begin{align*}
    \abs{ \chi }_{W^{m,p}(K_0)}
    \le c \left( \frac{ \meas(K(t)) }{ \meas (K_0) } \right)^{1/p}
    \norm{ \phi^K_t \chi }_{W^{m,p}(K(t))}
    \sum_{r=1}^m \left( \frac{ h_{K(t)} }{ \rho_{K_0} } \right)^r
  \end{align*}
  and
  \begin{align*}
    \abs{ \phi^K_t\chi }_{W^{m,p}(K(t))}
    \le c \left( \frac{ \meas(K_0) }{ \meas( K(t) )} \right)^{1/p}
    \norm{ \chi }_{W^{m,p}(K_0)}
    \sum_{r=1}^m \left( \frac{ h_{K_0} }{ \rho_{K(t)} } \right)^r.
  \end{align*}
  It can be easily seen that for a quasi-uniform evolving surface finite element that these constants only depend on allowed quantities.
\end{proof}

  This result implies that $( W^{m,p}( K(t) ), \phi^K_t|_{W^{m,p}(K_0)} )_{t \in [0,T]}$ is a compatible pair (\cref{def:compatibility}) and in particular $( P(t), \phi^K_t|_{P(0)} )$ is a compatible pair when equipped with the $W^{m,p}( K(t) )$-norm (as $P(t)$ is a closed subspace of $W^{m,p}(K(t))$, \cref{rem:compatible-subsace,rem:surf-P-closed}).

  \subsection{Evolving surface finite element spaces}
We now formulate an  evolving surface finite element space forming  part of a compatible pair (in the sense of \cref{sec:abstract-formulation}, \cref{def:compatibility}).
For each $h \in (0,h_0)$, we are given a family of discrete hypersurfaces $\{ \Gamma_h(t) \}_{t \in [0,T]}$ and each equipped with a surface finite element space $\{ \S_h(t) \}_{t \in [0,T]}$.
Furthermore, we are interested in under what assumptions does the compatibility hold independently of the mesh size $h$.

\begin{definition}[Evolving triangulated hypersurface]
  For $t \in [0,T]$, let $\Gamma_h(t)$ be a family of triangulated hypersurfaces (\cref{def:triangulaed-hypersurface}) each equipped with a conforming subdivision $\T_h(t)$ (\cref{def:conforming-subdivision}) such that each element domain $K(t) \in \T_h(t)$ is equipped with an element flow map $\Phi_{(\cdot)}^K \in C^2( [0, T]; C^1( K_0 ))$ (\cref{def:esfe-reference-map}).

\begin{deflist}

\item \label{def:evolving-conforming-subdivision}
  We call $\{ \T_h(t) \}_{t \in [0,t]}$ an \emph{evolving conforming subdivision} if
  for each element $K(0) \in \T_h(0)$ and each facet $E(0)$ of $K(0)$
  either $E(0)$ is a facet of another element $K'(0) \in \T_h(0)$, in which case $E(t)$ is a common facet between $K(t)$ and $K'(t)$ for all $t \in [0,T]$ or
  $E(0)$ is a portion of the boundary $\partial \Omega(0)$, in which case $E(t)$ is a portion of the boundary $\partial \Omega(t)$ for all $t \in [0,T]$.
\item \label{def:evolving-triangulated-hypersurface}
An \emph{evolving triangulated hypersurface} is  defined to be a family of triangulated hypersurfaces $\{ \Gamma_h(t) \}_{t \in [0,T]}$ equipped with an evolving conforming subdivision. In this case, we define the mesh parameter $h$ to be
\begin{equation}
  \label{eq:def-h}
  h := \sup_{t \in [0,T]} \max_{K(t) \in \T_h(t)} h_{K(t)}.
\end{equation}
\item \label{def:global-discrete-flow}
We define a \emph{global discrete flow} $\Phi^h_{(\cdot)}(\cdot) \colon [0,T] \times \Gamma_{h,0} \to \R^{n+1}$ element-wise by
\begin{equation*}
  \Phi^h_t|_{K_0} := \Phi^K_t \qquad \mbox{ for } K_0 \in \T_h(0).
\end{equation*}
Our assumptions imply that $\Phi^h_t$ is piecewise smooth and $\Phi^h_t \colon \Gamma_{h,0} \to \Gamma_h(t)$.

\item \label{def:global-discrete-velocity}
  We define a \emph{global discrete velocity}  ${W}_h$ given by
\begin{equation*}
  {W}_h|_{K(t)} = {W}_K.
\end{equation*}
\item \label{def:global-push-forward-map}
 The  family of linear homeomorphisms  induced by  the flow $\Phi^h_t$ and called the \emph{global push forward map}  
 is denoted by $\phi^h_t( \eta_h ) \colon \Gamma_h(t) \to \R$ for $\eta_h \colon \Gamma_{h,0} \to \R$ and defined by
\begin{equation*}
  ( \phi^h_t \eta_h )|_{K(t)} := \phi^K_t( \eta_h|_{K_0} ) \quad \mbox{ for all } K(t) \in \T_h(t).
\end{equation*}
\end{deflist}
\end{definition}

Again, in order to bring together a collection of evolving surface finite elements we restrict to Lagrangian surface finite elements.
For each $t \in [0,T]$, we denote by $\N_h(t)$ the global set of Lagrange nodes of $\Gamma_h(t)$ \cref{eq:Nh} and for any $a \in \N_h(t)$, $\T(a)$ is the set of elements $K(t)$ such that $a$ is a node of $K(t)$.
We make the further restriction that the global flow is single-valued at each Lagrange point:
for all $a_0 \in \N_h(0)$ we have
\begin{equation}
  \label{eq:surf-evolve-node-agree}
  \Phi_t^K( a_0 ) = \Phi_t^{K'}( a_0 ) \qquad \mbox{ for all } K, K' \in \T( a_0 ).
\end{equation}

\begin{definition}[Evolving surface finite element space]
  \begin{deflist}
    \item \label{def:evolving-sfe-space}
  Let $\{ \Gamma_h(t) \}_{t \in [0,T]}$ be an evolving triangulated hypersurface (\cref{def:evolving-triangulated-hypersurface}) equipped with an evolving conforming subdivision $\{ \T_h(t) \}_{t \in [0,T]}$ (\cref{def:evolving-conforming-subdivision}).
  For $t \in [0,T]$, let $\S_h(t)$ be a surface finite element space (\cref{def:sfe-space}) over $\Gamma_h(t)$.
  If each $K(t) \in \T_h(t)$ is equipped with an evolving surface finite element $(K(t), \P(t), \Sigma(t) )_{t \in [0,T]}$ (\cref{def:esfe}) then we say $\{ \S_h(t) \}_{t \in [0,T]}$ is an \emph{evolving surface finite element space}.
\item \label{def:evolving-basis-function}
  For each $t \in [0,T]$, we will write $\Sigma_h(t)$ for the set of global nodal variables (\cref{def:global-degrees-of-freedom}). We will use the convention that
  \begin{equation*}
    \Sigma_h(t) = \{ \chi_h \mapsto \chi_h( a_i(t) ) : 1 \le i \le N \},
  \end{equation*}
  where $a_i(t)$ is the trajectory of a Lagrange point under the global flow $\Phi^h_t$. We will denote by $\{ \chi_i(\cdot,t) : 1 \le i \le N \}$ the global basis of finite element functions such that $\chi_i ( a_j(t), t ) = \delta_{ij}$ for $t \in [0,T]$ and all $i,j = 1, \ldots, N$. This implies that $\chi_i( \cdot, t ) = \phi^h_t ( \chi_i( \cdot, 0) )$. See also \cref{eq:abs-basis-def}.
\end{deflist}
\end{definition}

\begin{definition}
  [Uniformly regular and uniformly quasi-uniform evolving subdivisions]
  For $h \in (0,h_0)$, let $\{ \T_h(t) \}_{t \in [0,T]}$ be a family of evolving conforming subdivisions (\cref{def:evolving-conforming-subdivision}).
  \begin{deflist}
  \item \label{def:uniformly-regular}
    We say that the family is \emph{uniformly regular} if there exists $\rho > 0$ such that for all $h \in (0, h_0)$ and all times $t \in [0,T]$, we have
    \begin{equation*}
      \rho_{K(t)} \ge \rho h_{K(t)} \qquad \mbox{ for all } K(t) \in \T_h(t),
    \end{equation*}
    and there exists $C > 0$ such that
    \[
      \sup_{h \in (0,h_0)} \sup_{t \in [0,T]} \max_{K(t) \in \T_h(t)} C_{K(t)} \le C < 1.
    \]
  \item \label{def:uniformly-Theta-regular}
    We say that the family is \emph{uniformly $\Theta$-regular} if it is uniformly regular, if for each time $t \in [0,T]$, the family $\{ \T_h(t) \}_{h \in (0,h_0)}$ is $\Theta$-regular and if there exists a constant such that
    \[
      \sup_{h \in (0,h_0)} \sup_{t \in (0,T)} \max_{K(t) \in \T_h(t)} C_m(K(t)) \le C \le + \infty \qquad
      \mbox{ for } 2 \le m \le \Theta+1.
    \]
    This implies that $F_{K(t)}$ is an evolving $\Theta$-reference finite element map for each $K(t) \in \T_h(t)$.
  \item \label{def:uniformly-quasi-uniform}
    We say that the family is \emph{uniformly quasi-uniform} if there exists $\rho > 0$ such that for all $h \in (0,h_0)$ and all times $t \in [0,T]$, we have
    \begin{equation*}
      \min \{ \rho_{K(t)} : K(t) \in \T_h(t) \}
      \ge \rho h.
    \end{equation*}
  \end{deflist}
\end{definition}

Note that a uniformly quasi-uniform subdivision consists of element domains for temporally quasi-uniform evolving surface finite element domains.

\begin{lemma}
  \label{lem:abs-Vh-compatible}
  For $h \in (0,h_0)$,
  let $\{ \T_h(t) \}_{t \in [0,T]}$ be a uniformly $\Theta$-regular (\cref{def:uniformly-Theta-regular}), uniformly quasi-uniform (\cref{def:uniformly-quasi-uniform}), evolving, conforming subdivision (\cref{def:evolving-conforming-subdivision}) and let $\phi^h_t$ be the global push-forward map (\cref{def:global-push-forward-map}).
  Let $0 \le k \le \Theta+1$, $p \in [0,\infty]$.
  Then  $\eta_h \in W^{m,p}( \T_h(0) )$ if and only if $\phi^h_t \eta_h \in W^{m,p}( \T_h(t) )$ for all $t \in [0,T]$.
  Furthermore, there exists $c_1, c_2 > 0$ independent of $h \in (0,h_0)$ and $t \in [0,T]$ such that for all $\eta_h \in W^{m,p}( \T_h(t) )$
  \begin{align}
    c_1 \norm{ \eta_h }_{W^{m,p}(\T_h(0))}
    \le \norm{ \phi^h_t \eta_h }_{W^{k,p}( \T_h(t) )}
    \le c_2 \norm{ \eta_h }_{W^{m,p}(\T_h(0))}.
  \end{align}

\end{lemma}

{
  \begin{remark}
    \label{rem:surf-Wmp-etc-compat}
  This implies that $( W^{m,p}( \T_h(t) ), \phi^h_t|_{W^{m,p}(\T_{h}(0))} )_{t \in [0,T]}$ is a compatible pair.
  In particular the pairs $( \S_h(t), \phi^h_t )_{t \in [0,T]}$, equipped with the broken Sobolev norm $\norm{ \cdot }_{W^{m,p}(\T_h(t))}$ (\cref{def:broken-sobolev-norm}), and $( W^{1,p}_T( \T_h(t) ), \phi^h_t )_{t \in [0,T]}$ are compatible since each is a closed subspace (\cref{rem:compatible-subsace,lem:Sh-closed-subspace,lem:W1pT-closed-subspace}).
  \end{remark}
}

\begin{proof}
  We simply sum the element-wise result from \cref{lem:Kt-norm-equiv}. The constants are independent of $h_K$ and $\rho_K$ due to the uniform quasi-uniformity of $\{ \T_h(t) \}$.
\end{proof}

Note that this result implies that the spaces $L^2_{\S_h}$ and $C^1_{\S_h}$ are well defined when equipped with the appropriate norms (c.f. \cref{eq:L2X} and \cref{eq:CkX}).

\section{Lifted surface finite element spaces}
\label{sec:lift-fem}

So far we have only defined  surface finite elements without relation to approximation of a smooth hypersurface $\Gamma$ 
and function spaces on $\Gamma$ through the definition of a lifted finite element space $\S_h^\ell(t)$. In general, due to the curvature of the surface, the computational domain $\Gamma_h$ will be an approximation  to $\Gamma$. We will identify  surface finite elements  $( K, \P, \Sigma )$ on $\Gamma_h$ with a corresponding curved  surface finite elements  $( K^\ell, \P^\ell, \Sigma^\ell )$ on $\Gamma$ using a mapping  $\Lambda_K \colon K \to \Gamma$. We call this process \emph{lifting}. This provides a convenient way to  compare the smooth functions (solutions to PDEs) on $\Gamma$  with the lift of discrete functions on $\Gamma_h$.
We will also require an inverse lift that maps functions on the smooth domain to the computational domain.

In the following, we will answer the question what assumptions on $\Lambda_K$ must be made so that $( K^\ell, \P^\ell, \Sigma^\ell )$ is a $\Theta$-surface finite element and how we can put several such \emph{lifted} surface finite elements together in order to recover  $\Gamma$ and 
finite element spaces on $\Gamma$.
We will also explore some interpolation results when these assumptions hold.
Finally, we explore how we can lift the discrete push forward map.

\subsection{Lifted surface finite element}

We consider the situation of a surface finite element $( K, P, \Sigma )$ (\cref{def:sfe}), with element reference map $F_K \colon \hat{K} \to K$, where the element domain approximates a portion of $\Gamma$.

Let $\Lambda_K \colon K \to \Gamma$ be a $C^1$-map which is a diffeomorphism onto its image.
For a function $\chi \colon K \to \R$, we call $\chi^\ell$ defined by $\chi^\ell( \Lambda_K( \cdot ) ) = \chi( \cdot )$ the \emph{lift} of $\chi$.
We will assume that we can decompose $\Lambda_K$ into
\[
  \Lambda_K( x )  = A_{\Lambda} x + b_\Lambda + \tilde{\Lambda}_K(x) \qquad \mbox{ for } x \in K,
\]
where $A_{\Lambda}$ is an invertible $(n+1) \times (n+1)$ matrix, $b_\Lambda \in \R^{n+1}$ and $\tilde{\Lambda}_K \in C^1( K, \R^{n+1} )$.
We will assume that $\tilde{\Lambda}_K$ does affect the affine part of the parametrisations:
\[
  A_{K^\ell} \hat{x} + b_{K^\ell} = A_\Lambda ( A_K \hat{x} + b_K ) + b_{\Lambda} \quad \mbox{ for all } \hat{x} \in \hat{K}.
\]

\begin{definition}[Lifted surface finite element]
  \label{def:lifted-sfe}
We call the triple $(K^\ell, \P^\ell, \Sigma^\ell)$ defined by
\begin{align*}
  & K^\ell := \Lambda_K( K ) \subset \Gamma \\
  & \P^\ell := \{ \chi^\ell( \Lambda_K( \cdot ) ) := \chi( \cdot ) : \chi \in \P \} \\
  & \Sigma^\ell := \{ \sigma^\ell := \chi^\ell \mapsto \sigma( \chi ) : \sigma \in \Sigma \},
\end{align*}
the \emph{lift} of $(K, \P, \Sigma)$ and $\Lambda_K$ the \emph{lifting map}.
If $(K^\ell, \P^\ell, \Sigma^\ell)$ forms a surface finite element over $(\hat{K}, \hat{P}, \hat{\Sigma})$ then we say that $(K^\ell, \P^\ell, \Sigma^\ell)$ is the \emph{lifted surface finite element associated with} $(K, \P, \Sigma)$.
In this case we call $F_{K^\ell}( \cdot ) = \Lambda_K( F_K( \cdot ) )$ the \emph{lifted surface finite element reference map}.
\end{definition}

The next two results show under what assumptions on $\Lambda_K$ is $(K^\ell, \P^\ell, \Sigma^\ell)$ a surface finite element (\cref{def:sfe}) or a $\Theta$-surface finite element (\cref{def:Theta-sfe}).

\begin{lemma}
  \label{lem:LambdaK-equiv-1}
  If $\Lambda_K$ satisfies that
  \begin{equation}
    \label{eq:LambdaK-ass1}
    \sup_{x \in K} \norm{ \nabla_K \tilde{\Lambda}(x) } \le
    \frac{ \norm{ A_{\Lambda}^{-1} }^{-1} - C_K \norm{ A_{\Lambda} } }{ 1 + C_K },
  \end{equation}
  then $(K^\ell, \P^\ell, \Sigma^\ell)$ is a surface finite element.
  Furthermore,
  for $p \in [1,\infty]$, we have that there exists $c_1, c_2 > 0$ such that
  \begin{equation}
    \label{eq:lift-equiv}
    \begin{aligned}
      c_1 \norm{ \chi }_{L^p(K)}
      & \le \norm{ \chi^\ell }_{L^p(K^\ell)}
      \le c_2 \norm{ \chi }_{L^p(K)}
      && \mbox{ for all } \chi \in L^p(K) \\
      c_1 \norm{ \chi }_{W^{1,p}(K)}
      & \le \norm{ \chi^\ell }_{W^{1,p}(K^\ell)}
      \le c_2 \norm{ \chi }_{W^{1,p}(K)}
      && \mbox{ for all } \chi \in W^{1,p}(K),
    \end{aligned}
  \end{equation}
  where the constants $c_1, c_2$ depend on $C_K$, $\norm{ A_{\Lambda}^{-1} }$ and the ratio $h_K / \rho_K$.
\end{lemma}

\begin{proof}
  To show that $( K^\ell, P^\ell, \Sigma^\ell )$ is a surface finite element (\cref{def:sfe}),
  the conditions on the element reference map $F_{K^\ell}(\hat{x}) = \Lambda_K( F_K(\hat{x}) )$ are clear and we are left to check the curvedness condition \cref{eq:CK-defn}.
  Using the expansion of $\Lambda_K$, we see
  \begin{multline*}
    A_{K^\ell} = A_{\Lambda} A_K, \quad A_{K^\ell}^\dagger = A_K^\dagger A_{\Lambda}^{-1},
    \quad b_{K^\ell} = A_{\Lambda} b_K + b_{\Lambda} \\
    \quad \mbox{ and } \quad
    D_{K^\ell}(\hat{x}) = A_\Lambda D_K(\hat{x}) + \tilde{\Lambda}_K( F_K(\hat{x}) ).
  \end{multline*}
  So that
  \begin{align*}
    \nabla D_{K^\ell}(\hat{x}) A_{K^\ell}^\dagger
    = A_{\Lambda} \nabla D_K( \hat{x} ) A_K^\dagger A_{\Lambda}^{-1}
    + \nabla_K \tilde{\Lambda}_K( F_K(\hat{x}) ) ( \id + \nabla D_K(\hat{x}) A_K^\dagger ) A_K A_K^\dagger A_\Lambda^{-1}.
  \end{align*}
  Applying the curvedness condition for $K$ and the fact that $A_K A_K^\dagger$ is a projection (\cref{rem:right-inv-proj} applied to the flat element $\tilde{K}$) we see
  \[
    \norm{\nabla D_{K^\ell}(\hat{x}) A_{K^\ell}^\dagger}
    \le C_K \norm{ A_{\Lambda} } \norm{ A_{\Lambda}^{-1} }
    + (1 + C_K ) \norm{ \nabla_K \tilde{\Lambda}_K( F_K(\hat{x})) } \norm{ A_{\Lambda}^{-1} }.
  \]
  The curvedness condition is shown by applying \cref{eq:LambdaK-ass1}.

  To show \cref{eq:lift-equiv} the result is clear for $p = \infty$.
  For $p < \infty$, we will apply \cref{lem:norm-scaling}. Then we see
  \begin{align*}
    c \left(\frac{\meas(K)}{\meas(K^\ell)}\right)^{1/p} \norm{ \chi }_{L^p(K)}
    \le \norm{ \chi^\ell }_{L^p(K^\ell)}
    \le c \left(\frac{\meas(K^\ell)}{\meas(K)}\right)^{1/p} \norm{ \chi }_{L^p(K)},
  \end{align*}
  where $c$ depends on $C_K$ and the bound \cref{eq:LambdaK-ass1}.
  For the higher order bound, we note that
  \[
    \norm{ A_K } = \norm{ A_{\Lambda}^{-1} A_{\Lambda} A_K }
    \le \norm{ A_\Lambda^{-1} } \norm{ A_{\Lambda} A_K } = \norm{ A_{\Lambda^{-1}} } \norm{ A_{K^\ell} },
    \]
  so that we infer
  \begin{equation}
    \label{eq:AK-AKell-bound}
    \norm{ A_{K^\ell} } \ge \frac{ \norm{ A_K } }{ \norm{ A_{\Lambda^{-1}} } }.
  \end{equation}
  Then applying \cref{lem:norm-scaling} once more we see
  \begin{multline*}
    c \left(\frac{\meas(K)}{\meas(K^\ell)}\right)^{1/p}
    \frac{ \norm{ A_\Lambda^{-1} } }{ \norm{ A_K } \norm{ A_K^\dagger } }
    \abs{ \chi }_{W^{1,p}(K)}
    \le \norm{ \chi^\ell }_{W^{1,p}(K^\ell)} \\
    \le c \left(\frac{\meas(K^\ell)}{\meas(K)}\right)^{1/p}
    \norm{ A_\Lambda^{-1} } \norm{ A_K } \norm{ A_K^\dagger }
    \abs{ \chi }_{W^{1,p}(K)}.
  \end{multline*}
  The final result is given by applying \cref{eq:AK-h-relation-a,eq:AK-h-relation-b,eq:lift-meas-scale}.
\end{proof}

\begin{lemma}
  \label{lem:LambdaK-equiv-theta}
  If $(K, \P, \Sigma)$ is a $\Theta$-surface finite element and $\Lambda_K$ satisfies $\Lambda_K \in C^{\Theta+1}( K; \R^{n+1} )$ and $\norm{ \Lambda_K }_{W^{\Theta+1,\infty}(K)} \le c$,
  then $(K^\ell, \P^\ell, \Sigma^\ell)$ is also a $\Theta$-surface finite element.
  Furthermore, for $0 \le m \le \Theta+1$ and $p \in [1,\infty]$, we have that there exists $c_1, c_2 > 0$ such that
  \begin{align*}
    c_1 \norm{ \chi }_{W^{m,p}(K)}
    & \le \norm{ \chi^\ell }_{W^{m,p}(K^\ell)}
      \le c_2 \norm{ \chi }_{W^{m,p}(K)}
    && \mbox{ for all } \chi \in W^{m,p}(K),
  \end{align*}
  where the constants $c_1, c_2$ depend on $C_1(K), \ldots, C_m(K)$, $\norm{ A_{\Lambda}^{-1} }$ and the ratio $h_K / \rho_K$.
\end{lemma}

\begin{proof}
  To see that $(K^\ell, \P^\ell, \Sigma^\ell )$ is a $\Theta$-surface finite element (\cref{def:Theta-sfe}),
  the first three conditions are clear from the smoothness assumption on $\Lambda_K$.
  We must show \cref{eq:CK-theta-defn} holds: for $2 \le m \le \Theta+1$, there exists a constant $C_m(K^\ell)$ such that
  \[
    \norm{ \nabla^m F_{K^\ell}(\hat{x}) } \le C_m(K^\ell) \norm{ A_{K^\ell} }^m.
  \]
  Computing directly using the Fa\'a di Bruno formula \cref{eq:faa-di-bruno}, we have
  \begin{align*}
    \frac{\partial^{r'}}{\partial \hat{x}_{j_{m}} \ldots \partial \hat{x}_{j_1}} F_{K^\ell}(\hat{x})
    = \sum_{r=1}^{m} \sum_{\lambda_1, \ldots, \lambda_1=1}^{n+1} ( \nabla_K )_{\lambda_r} \cdots ( \nabla_K )_{\lambda_1} \Lambda_K( F_K(\hat{x}) )
    \sum_{\Pp( m, r ) } \prod_{s=1}^r \frac{\partial^{\abs{\sigma_s}}}{\partial \hat{x}_{j_{\sigma_s}}} (F_K)_{\lambda_s}( \hat{x} ).
  \end{align*}
  But applying the fact that $(K, \P, \Sigma)$ is a $\Theta$-surface finite element and the smoothness assumption on $\Lambda_K$, we see that
  \begin{align*}
    \abs{ \nabla^{m} F_{K^\ell}( \hat{x} ) }
    \le c( C_1(K), \ldots, C_m(K) ) \sum_{r=1}^{m} \abs{ \Lambda_K }_{W^{r,\infty}(K)} \norm{ A_K }^{m}.
  \end{align*}
  Finally, applying \cref{eq:AK-AKell-bound}, we have that
  \begin{align*}
    \norm{ \nabla^{m} F_{K^\ell}( \hat{x} ) }
    \le \norm{ \Lambda_K }_{W^{m,\infty}(K)} \norm{ A_{K^\ell} }^{m} \norm{ A_{\Lambda}^{-1} }^{m}.
  \end{align*}
  This shows that $(K^\ell, \P^\ell, \Sigma^\ell)$ is a $\Theta$-surface finite element.

  To show the norm equivalence we again apply \cref{lem:norm-scaling}, recognising the geometric progression, to see
  \begin{multline*}
    c \left(
    \frac{ \meas(K) }{ \meas(K^\ell) }
    \right)
    \frac{ A_1^{m+1} - 1 }{A_1 - 1}
    \norm{ \chi }_{W^{m,p}(K)}
    \le
    \norm{ \chi^\ell }_{W^{m,p}(K^\ell)} \\
    \le
    c \left(
    \frac{ \meas(K^\ell) }{ \meas(K) }
    \right)
    \frac{ A_2^{m+1} - 1 }{A_2 - 1}
    \norm{ \chi }_{W^{m,p}(K)},
  \end{multline*}
  where
  \begin{equation*}
    A_1 = \norm{ A_K } \norm{ A_K^\dagger } \norm{ A_\Lambda } \quad \mbox{ and } \quad
    A_2 = \norm{ A_K } \norm{ A_K^\dagger } \norm{ A_\Lambda^{-1} }.
  \end{equation*}
  The final result is given by applying \cref{eq:AK-h-relation-a,eq:AK-h-relation-b,eq:lift-meas-scale}.
\end{proof}

\begin{remark}
  \label{rem:id-lift-ass}
  Considering the case that $\Lambda_K$ fixes the vertices of $K$ then we can write $\Lambda_K$ as
  \[
    \Lambda_K( x ) = x + \tilde{\Lambda}(x) \quad \mbox{ for } x \in K.
  \]
  That is that $A_\Lambda = \id$ and $b_{\Lambda} = 0$.
  Then the assumptions of \cref{lem:LambdaK-equiv-1} can be replaced by
  \[
    \sup_{x \in K} \norm{ \nabla_K \tilde\Lambda(x) }
    \le \frac{1 - C_K}{1 + C_K},
  \]
  and the assumptions of \cref{lem:LambdaK-equiv-theta} can be replaced by the assumption that $\norm{ \Lambda_K }_{W^{\Theta,\infty}(K)}$ is uniformly bounded.
\end{remark}

We next relate the geometry of the base and lifted element domains.
\begin{lemma}
  \label{lem:lift-geom}
  Using the decomposition of $\Lambda_K$, we have
  \begin{subequations}
    \begin{align}
      \label{eq:lift-h-scale}
      \norm{ A_{\Lambda}^{-1} }^{-1} h_K \le h_{K^\ell}
      & \le \norm{ A_\Lambda } h_K \\
      \label{eq:lift-rho-scale}
      \norm{ A_{\Lambda}^{-1} }^{-1} \rho_K \le \rho_{K^\ell}
      & \le \norm{ A_{\Lambda} } \rho_K \\
      \label{eq:lift-meas-scale}
      c_1 \left( \frac{\rho_K}{h_K} \right)^n \norm{ A_{\Lambda}^{-1} }^{-n} \meas(K)
      \le \meas(K^\ell)
      & \le c_2 \left( \frac{h_K}{\rho_K } \right)^n \norm{ A_\Lambda^{-1} }^{-n} \meas(K).
    \end{align}
  \end{subequations}
\end{lemma}

\begin{proof}
  For \cref{eq:lift-h-scale}, we show the second inequality. The first follows by the same reasoning applied to the inverse of $A_{\Lambda}$.
  Since $\tilde{K}^\ell$ is compact, there exists $x^\ell, y^\ell \in \tilde{K}^\ell$ such that
  \[
    h_{K^\ell} = \abs{ y^\ell - x^\ell }.
  \]
  But since $A_\Lambda$ is invertible, there exists $x, y \in \tilde{K}$ such that
  \[
    A_{\Lambda} x + b_\Lambda = x^\ell \qquad \mbox{ and } \qquad
    A_{\Lambda} y + b_{\Lambda} = y^\ell.
  \]
  Then, we can compute that
  \[
    h_{K^\ell} = \abs{ x^\ell - y^\ell }
    = \abs{ A_\Lambda ( x - y ) }
    \le \norm{ A_{\Lambda} } \abs{ x - y }
    \le \norm{ A_{\Lambda} } h_K.
  \]

   Similarly, for \cref{eq:lift-rho-scale}, we only show the first inequality.
  For each $\eps > 0$, there exists $B_\eps$ a ball in $\tilde{K}$ such that
  \[
    \rho_K - \diam B_\eps \le \eps.
  \]
  Denote by $x_\eps$ and $r_\eps$ the centre and radius of $B_\eps$ respectively. Consider the affine map $\Xi_\eps \colon \R^{n+1} \to \R^{n+1}$ given by
  \[
    \Xi_\eps(x) = \frac{1}{\norm{ A_{\Lambda}^{-1} } } ( x - x_\eps ) + b_\Lambda + A_\Lambda x_\eps.
  \]
  It is clear that $\Xi_\eps$ maps balls to balls and $B_\eps$ is mapped to a ball centred at $A_{\Lambda} x_\eps + b_\Lambda$ with radius $r_\eps / \norm{ A_{\Lambda}^{-1} }$.
  We claim $\Xi_\eps( B_\eps )$ is contained in $\tilde{K}^\ell$.
  Indeed, take $x^\ell \in \Xi_\eps( B_\eps )$, then there exists $x \in B_\eps$ such that $\Xi_\eps( x ) = x^\ell$. Denote by
  \[
    x' = x_\eps + \frac{1}{\norm{ A_{\Lambda}^{-1} } } A_\Lambda^{-1} ( x - x_\eps ),
  \]
  then
  \[
    \abs{ x' - x_\eps }
    = \frac{1}{\norm{ A_{\Lambda}^{-1} } } \abs{ A_\Lambda^{-1} ( x - x_\eps ) }
    \le \abs{ x - x_\eps } \le r_\eps,
  \]
  so $x' \in B_\eps \subset \tilde{K}$ and
  \[
    A_\Lambda x' + b_\Lambda
    = \frac{1}{\norm{ A_{\Lambda}^{-1} } } ( x - x_\eps ) + A_\Lambda x_\eps + b_\Lambda
    = x^\ell,
  \]
  so that $x^\ell \in \tilde{K}^\ell = A_\Lambda \tilde{K} + b_K$.

  Therefore, we have found a ball $\Xi_\eps( B_\eps ) \subset \tilde{K}^\ell$ with radius $r_\eps / ( \norm{ A_{\Lambda}^{-1} } )$, thus we can infer that
  \[
    \rho_{K^\ell} \ge \frac{1}{2} \norm{ A_{\Lambda}^{-1} }^{-1} \diam B_\eps
    \ge \norm{ A_{\Lambda}^{-1} }^{-1} ( \rho_K - \eps ).
  \]
  Since the proof holds for all $\eps > 0$, we see the desired result.

  Given \cref{eq:lift-h-scale} and \cref{eq:lift-rho-scale}, the final result \cref{eq:lift-meas-scale} follows directly from \cref{rem:AK-h-meas}.
\end{proof}

We can also use $\Lambda_K$ to define an inverse lift.
Given an element domain $K$, lifted element domain $K^\ell$ and lifting map $\Lambda_K$.
We know that $\Lambda_K$ is invertible onto its image, namely $K^\ell$.
So for $\eta \colon K^\ell \to \R$, we denote its inverse lift by $\eta^{-\ell} \colon K \to \R$ defined by
\[
  \eta^{-\ell}( x ) := \eta( \Lambda_K( x ) ) \quad \mbox{ for } x \in K.
\]
\begin{lemma}
  If the assumptions of \cref{lem:LambdaK-equiv-1,lem:LambdaK-equiv-theta} hold then,
  for $0 \le m \le \Theta + 1$ and $p \in [1,\infty]$, we have that there exists $c_1, c_2 > 0$ such that for all $\eta \in W^{m,p}(K^\ell)$, we have
  \begin{align*}
    c_1 \norm{ \eta^{-\ell} }_{W^{m,p}(K)}
    & \le \norm{ \eta }_{W^{m,p}(K^\ell)}
      \le c_2 \norm{ \eta^{-\ell} }_{W^{m,p}(K)}.
  \end{align*}
\end{lemma}

\begin{proof}
  The same proof can be applied to the case of the inverse lift as well as the usual lift.
\end{proof}

\subsection{Lifted surface finite element space}

Let $\Gamma$ be a smooth hypersurface and for $h \in (0,h_0)$ and let  $\Gamma_h$ be a triangulated hypersurface (\cref{def:triangulaed-hypersurface}) equipped with a conforming subdivision $\T_h$ (\cref{def:conforming-subdivision}) and a surface finite element space $\S_h$ (\cref{def:sfe-space}). Let each $K \in \T_h$ be associated with a lifted finite element $K^{\ell}$ with lifted map $\Lambda_K$.

\begin{definition}
\begin{deflist}
\item
  \label{def:exact-decomposition}
 We denote by $\T_h^\ell$ the set of all lifted element domains
\[
  \T_h^\ell := \bigcup_{K \in \T_h} K^\ell.
\]
If  the global map $\Lambda_h$ is single valued and $\T_h^\ell$ forms a conforming subdivision of $\Gamma$, we say that $\T_h^\ell$ is an \emph{exact} subdivision of the surface $\Gamma$.
In this case we set $\F_h^\ell$ to be the set of facets between adjacent elements.
\item
  \label{def:global-lifting-map}
We  define a \emph{global lifting map} $\Lambda_h \colon \Gamma_h \to \Gamma$ by $\Lambda_h|_{K} = \Lambda_K$.
We define  the inverse lift $\Lambda_h^{-1} \colon \Gamma \to \Gamma_h$ in a similar element-wise fashion 
by  $\Lambda^{-1}_h|_{K^\ell} = \Lambda_K^{-1}$.
\item
  We denote by $\lambda_h$ the \emph{global lift} given for $\eta_h \colon \Gamma_h \to \R$ by
  \[
    \lambda_h( \eta_ h )( \Lambda_h( x  ) ) = \eta_h( x ) \mbox{ for } x \in \Gamma_h,
  \]
  and by $\varsigma_h$ the \emph{global inverse lift} given for $\eta \colon \Gamma \to \R$ by
  \[
    \varsigma_h( \eta )( x ) = \eta( \Lambda_h( x ) ) \mbox{ for } x \in \Gamma_h.
  \]
  We will also use the notation $\eta_h^\ell = \lambda_h( \eta_h )$ and $\eta^{-\ell} = \varsigma_h( \eta )$.
\item
  \label{def:lifted-sfe-space}
  If for each surface finite element $( K, P, \Sigma )$ there is an associated lifted surface finite element $(K^\ell, P^\ell, \Sigma^\ell)$, then, we define a \emph{lifted surface finite element space} (c.f.\ \cref{eq:abs-Vhell}) by
\[
  \S_h^\ell := \left\{ \chi_h^\ell : \chi_h \in \S_h \right\}.
\]
\end{deflist}
\end{definition}

\begin{proposition}
  \label{prop:lift-Vh}
  Assume additionally that the family of triangulations $\{ \T_h \}_{h \in (0,h_0)}$ is regular (\cref{def:regular}),
  $\Lambda_K$ satisfies the assumptions of \cref{lem:LambdaK-equiv-1} and there exists $C_1, C_2 > 0$ such that
  \begin{equation}
    \label{eq:as-lift-linear-bounded}
    \norm{ A_\Lambda } \le C_1 \quad \mbox{ and } \quad \norm{ A_{\Lambda}^{-1} } \le C_2
    \quad \mbox{ for all } K \in \T_h \mbox{ for all } h \in (0,h_0),
  \end{equation}
  for all $K \in T_h$ for all $h \in (0,h_0)$.
  Then $\{ \T_h^\ell \}_{h \in (0,h_0)}$ is regular and $\S_h^\ell$ is a surface finite element space and there exists constants $c_1, c_2 > 0$, which are independent of $h \in (0,h_0)$ such that
  \begin{equation}
    \begin{aligned}
      c_1 \norm{ \eta_h }_{L^p(\T_h)} & \le
      \norm{ \eta_h^\ell }_{L^p(\T_h^\ell)}
      \le c_2 \norm{ \eta_h }_{L^p(\T_h)}
      && \mbox{ for all } \eta_h \in L^p( \T_h )\\
      c_1 \norm{ \eta_h }_{W^{1,p}(\T_h)} & \le
      \norm{ \eta_h^\ell }_{W^{1,p}(\T_h^\ell)}
      \le c_2 \norm{ \eta_h }_{W^{1,p}(\T_h)}
      && \mbox{ for all } \eta_h \in W^{1,p}( \T_h ).
    \end{aligned}
\end{equation}
  Furthermore, if $\Lambda_K$ satisfies the assumptions of \cref{lem:LambdaK-equiv-theta} for all $K \in \T_h$ and all $h \in (0,h_0)$,
  then for $0 \le m \le \Theta$, there exists $c_1, c_2 > 0$ independent of $h \in (0,h_0)$ such that
  \begin{equation}
    c_1 \norm{ \eta_h }_{W^{m,p}(\T_h)} \le
    \norm{ \eta_h^\ell }_{W^{m,p}(\T_h^\ell)}
    \le c_2 \norm{ \eta_h }_{W^{m,p}(\T_h)}
    \quad \mbox{ for all } \eta_h \in W^{m,p}( \T_h ).
  \end{equation}
\end{proposition}

\begin{proof}
  The regularity of the family of triangulations $\{ \T_h \}_{h \in (0,h_0)}$ follows from \cref{eq:lift-h-scale} and \cref{eq:lift-rho-scale} from assumption \cref{eq:as-lift-linear-bounded}.

  The results follow by combining the previous results for each element in $\T_h$. We achieve bounds independently of $h \in (0,h_0)$ since the regularity of the subdivisions implies that $h_K / \rho_K$ is independent of $h \in (0,h_0)$.
\end{proof}

{

  \begin{lemma}
    \label{lem:W1pThell-exact}
    Let $\T_h^\ell$ be an exact decomposition of $\Gamma$ then $W^{1,p}_T( \T_h^\ell ) = W^{1,p}( \Gamma )$.
  \end{lemma}

  \begin{proof}
    First, let $\eta \in W^{1,p}_T( \T_h )$. Then we have that $\eta \in L^p( \Gamma )$ and it is left to show that $\eta$ has a weak derivative in $L^p( \Gamma )$ \citep[Def.~2.11]{DziEll13a}.
    We have a candidate $\xi$ given element-wise by $\xi|_{K^\ell} = \nabla_\Gamma (\eta)|_{K^\ell}$.
    It is clear that $\xi \in L^p( \Gamma )$ and for $\varphi \in C^1( \Gamma )$ with compact support and $i = 1, \ldots, n+1$, we have
    \begin{align*}
      \int_\Gamma \eta \underline{D}_i \varphi
      & = \sum_{K^\ell \in \T_h^\ell} \int_{K^\ell} \eta \underline{D}_i \varphi \\
      & = \sum_{K^\ell \in \T_h^\ell} \left( -\int_{K^\ell} \xi_i \varphi
        + \int_{K^\ell} \eta \varphi H \nu_i + \int_{\partial K^\ell} \eta \varphi \mu_i  \right) \\
      & = - \int_\Gamma \xi_i \varphi + \int_\Gamma \eta \varphi H \nu_i
        + \sum_{K^\ell \in \T_h^\ell} \int_{\partial K^\ell} \eta \varphi \mu_i.
    \end{align*}
    We note that we can write
    \[
      \sum_{K^\ell \in \T_h^\ell} \int_{\partial K^\ell} \eta \varphi \mu_i
      = \sum_{F^\ell \in \F_h^\ell} \int_{F^\ell} \eta \varphi ( \mu_{F^\ell}^+ + \mu_{\F^\ell}^-)_i = 0,
    \]
    where $\F_h^\ell$ is the set of facets between adjacent elements in $\T_h^\ell$, since the traces of $\eta |_{K'}$ and $\eta|_{(K^\ell)'}$ to $K^\ell \cap ( K^\ell )'$ coincide for any adjacent pair $K, K'$.
    We note that the sum over edges is zero since the conormals from adjacent elements are equal and opposite in the exact triangulation and we see that $\xi$ is the weak derivative of $\eta$.

    Second, let $\eta \in W^{1,p}( \Gamma )$ then it is clear that $\eta \in L^p( \Gamma )$, $\eta |_{K^\ell} \in W^{1,p}( K^\ell )$ and by the trace theorem $\eta$ has common trace between adjacent elements.
  \end{proof}
}

We can also show a special interpolation estimate which interpolates smooth functions over the continuous surface into the lifted surface finite element space.
We denote by $I_h$ an interpolation operator $I_h \colon C(\Gamma) \to \S_h^\ell$ defined by
\begin{equation}
  \label{eq:lift-Ih}
  I_h \eta |_{K^\ell} := I_{K^\ell} \eta.
\end{equation}

\begin{theorem}
  [Global lifted interpolation theorem]
  \label{thm:global-lift-interp}
  For $h \in (0,h_0)$, let $\{ \T_h \}_{h \in (0,h_0)}$ be a $\Theta$-regular (\cref{def:Theta-regular}), quasi-uniform (\cref{def:quasi-uniform}) family of triangulations of $\Gamma_h$ equipped with a surface finite element space $\S_h$ (\cref{def:sfe-space}) consisting of $\Theta$-surface finite elements (\cref{def:Theta-sfe}) over a reference element which satisfies the assumptions of \cref{lem:bramble-hilbert} for some $0 < k,m \le \Theta$, $p,q \in [1,\infty]$.
  Let each element $K \in \T_h$ be equipped with a lifting map $\Lambda_K \in C^{\Theta+1}(K)$ such that $\norm{ \Lambda_K }_{W^{\Theta+1,\infty}(K)} \le C$ and \cref{eq:as-lift-linear-bounded} and \cref{eq:LambdaK-ass1} hold each uniformly for $h \in (0,h_0)$,
  then $\{ \T_h^\ell \}_{h \in (0,h_0)}$ is a $\Theta$-regular, quasi-uniform family of triangulations of $\Gamma$ and $\S_h^\ell$ is a surface finite element space consisting of $\Theta$-surface finite elements.
  Let $\eta \in C(\Gamma)$ be a continuous function, then $I_h \eta \in \S_h^\ell$ is well defined.
  Furthermore, if the assumptions of \cref{thm:IK-bound} hold for the reference element $(\hat{K}, \hat{P}, \hat{\Sigma})$, there exists a constant $C = C(\hat{K}, \hat{P}, \hat{\Sigma}, \rho)$ such that for all functions $\eta \in W^{k+1,p}(\T_h^\ell) \cap C( \Gamma )$,
  \begin{equation}
    \label{eq:global-lift-interp}
    \norm{ \eta - I_h \eta }_{W^{m,q}(\T_h^\ell)} \le C h^{k+1-m} \norm{ \eta }_{W^{k+1,p}(\T_h^\ell)}.
  \end{equation}
\end{theorem}

\begin{proof}
  The result follows by combining previous lemmas in the appropriate way.
  We see the lifted triangulation is quasi-uniform by applying the results in \cref{lem:lift-geom} and the assumption \cref{eq:as-lift-linear-bounded}.
  The interpolation result then follows by applying \cref{thm:Ih-bound}.
\end{proof}

\Cref{thm:global-lift-interp} will be used to show the abstract approximation properties \cref{eq:interp-Z0,eq:interp-Z}.

\begin{corollary}
  \label{cor:global-inv-lift-interp}
  Let $\eta \in C(\Gamma)$, then the interpolant of $\eta$ into $\S_h$, denoted by $\tilde{I}_h \eta$, is given by
  \begin{equation}
    \tilde{I}_h \eta := ( I_h \eta )^{-\ell}.
  \end{equation}
  Furthermore, there exists a constant $C$ such that for all functions $\eta \in W^{k+1,p}(\T_h^\ell) \cap C( \Gamma )$, and all $h \in (0,h_0)$, we have
  \begin{equation}
    \label{eq:global-inv-lift-interp}
    \norm{ \eta^{-\ell} - \tilde{I}_h \eta }_{W^{m,q}(\T_h)} \le C h^{k+1-m} \norm{ \eta }_{W^{k+1,p}(\T_h^\ell)}.
  \end{equation}
\end{corollary}

\begin{proof}
  We apply the inverse lift result to the estimates in the theorem.
\end{proof}

\subsection{Evolving lifted finite element spaces}
For $t \in [0,T]$, let $\Gamma(t)$ be a smoothly evolving hypersurface with flow map  $\Phi_t$: i.e.\ $\Phi_{(\cdot)} \in C^2( [0,T], C^1( \Gamma(0) ) )$ and $\Phi_t( \Gamma(0) ) = \Gamma(t)$.
For $h \in (0,h_0)$, let $\Gamma_h(t)$ be an evolving triangulated hypersurface (\cref{def:evolving-triangulated-hypersurface}) with global discrete flow  $\Phi^h_t$ (\cref{def:global-discrete-flow}) and equipped with an evolving conforming subdivision $\T_h(t)$ (\cref{def:evolving-conforming-subdivision}) and an evolving surface finite element space $\S_h(t)$ (\cref{def:evolving-sfe-space}).
We assume that we are given a global lifting map $\Lambda_h( \cdot, t )$ (\cref{def:global-lifting-map}) which gives an exact subdivision $\T_h^\ell(t)$ of $\Gamma(t)$ (\cref{def:exact-decomposition}).

\begin{definition}[Lifted discrete flow map, material velocity and pushed forward map]
~

\begin{deflist}
\item \label{def:lifted-flow-map}
 The \emph{lifted flow map} $\Phi^\ell_{(\cdot)}(\cdot) \colon [0,T] \times \Gamma_0 \to \R^{n+1}$ of the smooth hypersurface is defined by
\begin{equation*}
  \Phi^\ell_t( \Lambda_h( x, t ) ) = \Lambda_h ( \Phi^h_t ( x ), t ) \qquad \mbox{ for } x \in \Gamma_0.
\end{equation*}
We note that $\Phi^\ell_{t} \colon \Gamma_0 \to \Gamma(t)$.

\item \label{def:lifted-discrete-material-velocity}
The \emph{lifted discrete material velocity}   ${w}_h$ on $\{ \Gamma(t) \}_{t \in [0,T]}$ is defined by
\begin{equation*}
  \dt \Phi^\ell_t( \cdot ) = {w}_h( \Phi^\ell_t( \cdot ), t ).
\end{equation*}
\item \label{def:lifted-push-forward-map}
  The family of \emph{lifted push forward maps} is denoted by $\phi^\ell_t( \eta ) \colon \Gamma(t) \to \R$ for $\eta \colon \Gamma_0 \to \R$ and is defined by
\begin{equation*}
  \phi^\ell_t( \eta )( x ) = \eta( \Phi^\ell_{-t}( x ) ) \qquad \mbox{ for } x \in \Gamma(t).
\end{equation*}
See also \cref{eq:phiellt}.
\end{deflist}
\end{definition}
\begin{remark}
  Note that in general   $\Phi^\ell_t$ is different to $\Phi_t$, but each describes a different parametrisation of the same evolving surface.
  Also ${w}$, the material velocity of $\Gamma(t)$, and ${w}_h$ define the same surface $\Gamma(t)$ evolving from $\Gamma(0)$, so have the same normal components whilst the tangential components may not agree.
\end{remark}

\begin{proposition}
  \label{prop:lift-Vht}
  For $h \in (0,h_0)$,
  let $\{ \S_h(t) \}_{t \in [0,T]}$ be an evolving surface finite element space over a $\Theta$-regular (\cref{def:Theta-regular}), uniformly quasi-uniform (\cref{def:uniformly-quasi-uniform}), evolving conforming subdivision $\{ \T_h(t) \}_{t \in [0,T]}$ consisting of $\Theta$-evolving surface finite elements (\cref{def:e-Theta-sfe}) and ${\phi}^h_t$ the global push-forward map (\cref{def:global-push-forward-map}).
  For each $t \in [0,T]$ and each $h \in (0,h_0)$, let each element $K \in \T_h$ be equipped with a lifting map $\Lambda_K( \cdot, t ) \in C^{\Theta+1}(K(t))$ such that $\norm{ \Lambda_K( \cdot, t )}_{W^{\Theta+1,\infty}(K)} \le C$ and \cref{eq:as-lift-linear-bounded} and \cref{eq:LambdaK-ass1} hold each uniformly for $h \in (0,h_0)$ and $t \in [0,T]$.
  Then $\{ \S_h^\ell(t) \}_{t \in [0,T]}$ is also an evolving surface finite element space over a $\Theta$-regular, uniformly quasi-uniform, evolving conforming subdivision $\{ \T_h^\ell(t) \}_{t \in [0,T]}$ consisting of $\Theta$-evolving surface finite elements.
  Furthermore, for $0 \le m \le \Theta+1$, $p \in [0,\infty]$, there exists $c_1, c_2 > 0$ independent of $h \in (0,h_0)$ and $t \in [0,T]$ such that
  \begin{align}
    \label{eq:lift-compat}
    c_1 \norm{ \eta }_{W^{m,p}(\T_h^\ell(0))}
    & \le \norm{ {\phi}^\ell_t \eta }_{W^{m,p}(\T_h^\ell(t))}
    \le c_2 \norm{ \eta }_{W^{m,p}(\T_h^\ell(0))}
    && \mbox{ for all } \eta \in W^{m,p}( \T_h^\ell(0) ).
  \end{align}
  In particular,
  the pair $( W^{m,p}( \T_h^\ell(t) ), {\phi}^\ell_t|_{W^{m,p}( \T_h^\ell(0) )} )_{t \in [0,T]}$ is compatible (\cref{def:compatibility}).
  Furthermore the pairs $( \S_h^\ell(t), \phi^\ell_t|_{\S_{h,0}^\ell} )_{t \in [0,T]}$,  equipped with the norm $\norm{ \cdot }_{W^{m,p}(\T_h^\ell(t))}$, and $( W^{1,p}( \Gamma), \phi^\ell_t|_{W^{1,p}( \Gamma_0 )} )_{t \in [0,T]}$ are also compatible.
\end{proposition}

\begin{proof}
  The initial properties of $\S_h(t)$ follow for the same reasoning as \cref{prop:lift-Vh} since the constants are bounded uniformly in time.
  The bounds \cref{eq:lift-compat} and the compatibility of $( W^{m,p}(\T_h^\ell), \phi_h^\ell )_{t \in [0,T]}$ follow since the assumptions imply that $( W^{m,p}( \T_h^\ell ), \phi^h_t )_{t \in [0,T]}$ are compatible and we can simply lift this result with uniform bounds.
  The compatibility of $( \S_h^\ell(t), \phi^\ell_t|_{\S_{h,0}^\ell} )_{t \in [0,T]}$ and $( W^{1,p}( \Gamma(t) ), \phi^\ell_t|_{W^{1,p}( \Gamma_0) } )_{t \in [0,T]}$ follow since they are closed subspaces of $W^{1,p}( \T_h^\ell(t) )$ (\cref{lem:Sh-closed-subspace,lem:W1pT-closed-subspace,lem:W1pThell-exact}) and \cref{rem:compatible-subsace}.
\end{proof}

\clearpage

\part{Application to parabolic equations on evolving domains}
\section{Application I: Parabolic equation on an evolving bulk domain}
\label{BULKPDE}

In this section, we will formulate and analyse a finite element method for a parabolic problem posed in an evolving bulk domain \cref{eq:bulk-eqn}.
We will begin with notation and definitions for the initial value problem.
The numerical method is based on the general theory of \cref{sec:bfem} applied here with isoparametric bulk finite element spaces of order $k$.
The analysis is based on the abstract theory presented in \cref{sec:abstract-formulation,AbstractNA}.

\subsection{The domain and function spaces}
We set $\H(t) = L^2(\Omega(t))$, $\V(t) = H^1(\Omega(t))$ and  $\V^*(t) = ( H^1(\Omega(t)) )^*$.
We will also make use of the spaces $\Z_0(t) = H^2(\Omega(t))$ and $\Z(t) = H^{k+1}( \Omega(t) )$.
We see that $Z(t) \subset \Z_0(t) \subset \V(t)$ for each $t \in [0,T]$ and the inclusions are uniformly continuous.
We define the push-forward operator $\phi_t$ by
\begin{equation}
  \label{eq:bulk-pushforward}
  (\phi_t \eta)( \cdot, t ) := \eta( \Phi_{-t}( \cdot) ) \qquad \mbox{ for } \eta \in \H_0.
\end{equation}
 $( L^2( \Omega(t) ), \phi_t )_{t \in [0,T]}$ and    $( H^1( \Omega(t) ), \phi_t )_{t \in [0,T]}$ 
are  compatible pairs (\cref{def:compatibility}), the spaces $L^2_\H$, $L^2_\V$ and  $L^2_{\V^*}$ are well defined (c.f. \cref{eq:L2X}) 
and we have a well defined strong material derivative, denoted $\md$ \cref{eq:strong-md}.
$( \V(t), \H(t), \V^*(t) )_{t \in [0,T]}$ form an evolving Hilbert triple (\cref{def:evolving-hilbert-triple}).
We also have that $( \Z_0(t), \phi_t|_{\Z_0(t)} )_{t \in [0,T]}$ and $( \Z(t), \phi_t|_{\Z_0(t)} )_{t \in [0,T]}$ are compatible pairs.
For details see \citet{AlpEllSti15b-pp}.

{
  \begin{remark}
    \label{rem:bulk-smoothness}
    For the well posedness of the partial differential equation \cref{Bulkprob} we require that the boundary 
    $\Gamma$ is a $C^{2}$-hypersurface and that the flow map $\Phi_{(\cdot)}( \cdot ) \in C^2( 0,T; C^2( \Omega_0 ) )$. 
    See \citet{AlpEllSti15b-pp} for more details.
    For the approximation properties we derive we require that $\Gamma$ is a $C^{k+2}$-hypersurface and that 
    $\Phi_{(\cdot)}( \cdot ) \in C^{k+2}( 0,T; C^{k+2}( \Omega_0) )$ with $\Phi_{t} \colon \Omega_0 \to \Omega(t)$ and $\Phi_{-t} \colon \Omega(t) \to \Omega$ both of class $C^{k+2}$.
  \end{remark}
}

{ %
  We introduce a signed distance function for the boundary surface $\Gamma(t) = \partial \Omega(t)$. The oriented signed distance function for $\Gamma(t)$ is given by
  \[
    d( x, t ) = \begin{cases}
      - \inf\{ \abs{ x - y } : y \in \Gamma(t) \} & \mbox{ for } x \in \bar\Omega(t) \\
      \inf\{ \abs{ x - y } : y \in \Gamma(t) \} & \mbox{ otherwise.}
    \end{cases}
  \]
  For each $t \in [0,T]$, we orient $\Gamma(t)$ by choosing the unit normal $\nu( x, t ) = \nabla d( x, t )$ for $x \in \Gamma(t)$. Our assumptions on $\Gamma(t)$ 
  imply that there exists a neighbourhood $\N(t)$ of $\Gamma(t)$ and normal projection operator $p( \cdot, t ) \colon \N(t) \to \Gamma(t)$ given as the unique solution of
  \begin{equation}
    \label{eq:bulk-cp}
    x = p(x,t) + d( x, t ) \nu( p( x, t ), t ).
  \end{equation}
  See \citet[Lem. 14.16]{GilTru01}; \citet{Foote1984} for more details.

The final result we show in this section will be useful in the error analysis of our methods.

\begin{lemma}
  [Narrow band trace inequality]
  \label{lem:narrow-band}
  For $t \in [0,T]$, let $B_\eps(t) \subset \N(t)$ be the band given by
  \begin{equation*}
    B_\eps(t) = \{ x \in \Omega(t) : - \eps < d( x, t ) \le 0 \}.
  \end{equation*}
  Then there exists a constant $c$ such that for all $t \in [0,T]$,
  \begin{equation}
    \label{eq:narrow-band}
    \norm{ \eta }_{L^2(B_\eps(t))} \le c \eps^{1/2} \norm{ \eta }_{H^1(\Omega(t))}
    \qquad
    \mbox{ for all } \eta \in H^1(\Omega(t)).
  \end{equation}
\end{lemma}

\begin{proof}
  The proof for stationary domains in given by \citet[Lem.~4.10]{EllRan13} which can be easily extended to the evolving case.
\end{proof}

}

\subsection{The initial value problem}
\label{sec:bulk-prob}

We assume that $\A_\Omega  \in C^1( \Omega_T; \R^{(n+1)\times(n+1)} )$ is an $(n+1) \times (n+1)$ symmetric diffusion tensor, 
$\b_\Omega \in C^1( \Omega_T ; \R^{n+1})$ a smooth  vector field  and  $\c_\Omega \in C^1( \Omega_T )$ is a smooth scalar field. 
We assume that for each $t \in [0,T]$, $\A_\Omega(\cdot, t)$  is uniformly positive definite: There exists $a_0 > 0$ such that for all $t \in [0,T]$
\begin{equation*}
  \A_\Omega(\cdot, t) \xi \cdot \xi \ge a_0 \abs{ \xi }^2
  \quad
  \mbox{ for all } \xi \in \R^{n+1}.
\end{equation*}
We consider the initial value  problem
\begin{problem}\label{Bulkprob} Find $\emph \utext$ such that
  \begin{subequations}
    \begin{align}
      \md \emph \utext   +\nabla \cdot (\b_\Omega  \emph \utext)  -\nabla \cdot (\A_\Omega\nabla \emph \utext) +\c_\Omega \emph \utext + (\nabla\cdot w)\utext &=0
      && \mbox{on} ~~\Omega(t)\\
       (-\b_\Omega  \emph \utext  +\A_\Omega\nabla \emph \utext)\cdot \nu &= 0
          && \mbox{on}~\Gamma(t)\\
       \emph \utext(0) &=  \emph \utext_0 &&\mbox{on}~~\Omega_0.
    \end{align}
\end{subequations}

\end{problem}
\begin{remark}
The problem  (\ref{eq:bulk-eqn}) is recovered by setting
$\A_\Omega=\la_\Omega, \b_\Omega=\lb_\Omega-w$ and $\c_\Omega=\lc_\Omega$.
\end{remark}
\subsubsection{Transport formulae}
  The following transport formulae hold on portions 
  $\{ \omega(t)\subseteq \Omega(t) \}_{t \in [0,T]}$ of the domain $\{ \Omega(t) \}_{t \in [0,T]}$, which follow the flow $\omega(t) = \Phi_t( \omega(0) )$ for $t \in [0,T]$, and have Lipschitz boundaries at each time.
:-
\begin{itemize}
\item
 For $\eta, \zeta \in C^1_\H$ by
\begin{equation}
  \label{eq:bulk-transport}
  \dt \int_{\omega(t)} \eta \zeta \dd x
  = \int_{\omega(t)} \md \eta \zeta + \md \eta \zeta \dd x
  +  \int_{\omega(t)} \eta \zeta \nabla \cdot {w} \dd x.
\end{equation}

\item
For $\eta, \zeta \in C^1_\V$, we have the identity
\begin{equation}
  \label{eq:bulk-transport-dirichlet}
  \begin{aligned}
  \dt \int_{\omega(t)} \A_\Omega \nabla \eta \cdot \nabla \zeta \dd x
  & = \int_{\omega(t)} \A_\Omega\nabla \md \eta \cdot \nabla \zeta
  + \A_\Omega \nabla \eta \cdot \nabla \md \zeta \dd x \\
  & \qquad + \int_{\omega(t)} \B( {w}, \A_\Omega ) \nabla \eta \cdot \nabla \zeta \dd x,
  \end{aligned}
\end{equation}
where $\B( {w}, \A_\Omega )$ is given by
\begin{equation}
  \label{eq:bulk-B-defn}
  \B( {w}, \A_\Omega ) = \md \A_\Omega + \nabla \cdot {w} \A_\Omega - 2 D( {w},\A_\Omega)
\end{equation}
and $D( {w},\A_\Omega)$ is the rate of deformation tensor
\begin{equation*}
  D( {w},\A_\Omega)_{ij} = \frac{1}{2} \sum_{k=1}^{n+1} \{(\A_\Omega)_{ik} (\nabla)_k {w}_j + (\A_\Omega)_{jk} (\nabla)_k {w}_i\}
  \qquad \mbox{ for } i,j = 1, \ldots, n+1.
\end{equation*}

\item
For $\eta \in C^1_\H$, $\zeta \in C^1_\V$, we have
\begin{equation}
  \label{eq:bulk-transport-advection}
  \begin{aligned}
    \dt
    \int_{\omega(t)} \b_\Omega \eta \cdot \nabla \zeta \dd x
    & = \int_{\omega(t)} \{\b_\Omega \md \eta \cdot \nabla \zeta
    + \b_\Omega \eta \cdot \nabla \md \zeta \} \dd x \\
    & \qquad + \int_{\omega(t)} \B_{\adv}( {w}, \b_\Omega ) \eta \cdot \nabla \zeta \dd x,
  \end{aligned}
\end{equation}
where $\B_{\adv}( {w}, \b_\Omega ) $ is given by
\begin{align*}
  \B_{\adv}( {w}, \b_\Omega )
  = \md \b_\Omega + \b_\Omega \nabla \cdot {w}
  - \sum_{j=1}^{n+1}  (\b_\Omega)_j (\nabla)_j {w}.
\end{align*}

\end{itemize}

\subsection{The bilinear forms and transport formulae}

We define
  \begin{align*}
    m( t; \eta, \zeta )
    & := \int_{\Omega(t)} \eta \zeta \dd x
    && \eta, \zeta \in \H(t) \\
    g( t; \eta, \zeta )
    & := \int_{\Omega(t)} \eta \zeta \nabla \cdot {w} \dd x
    && \eta, \zeta \in \H(t) \\
    a( t; \eta, \zeta )
    & := a_s( t; \eta, \zeta ) + a_n( t; \eta, \zeta )
    && \eta, \zeta \in \V(t) \\
    a_s( t; \eta, \zeta )
    & := \int_{\Omega(t)} \A_\Omega \nabla \eta \cdot \nabla \zeta + \c_\Omega \eta \zeta \dd x
    && \eta, \zeta \in \V(t) \\
    a_n( t; \eta, \zeta )
    & := \int_{\Omega(t)} \b_\Omega \eta \cdot \nabla \zeta \dd x
    && \eta \in \H(t), \zeta \in \V(t).
  \end{align*}

   We can apply \cref{eq:bulk-transport}, \cref{eq:bulk-transport-dirichlet} and \cref{eq:bulk-transport-advection} to see that we have the transport laws
\begin{align}
  \label{eq:bulk-m-transport}
  \dt m( t; \eta, \zeta )
  & = m( t; \md \eta, \zeta )
  + m( t; \eta, \md \zeta )
  + g( t; \eta, \zeta )
  && \mbox{ for all } \eta, \zeta \in C^1_{\H} \\
  \label{eq:bulk-as-transport}
  \dt a_s( t; \eta, \zeta )
  & = a_s( t; \md \eta, \zeta )
  + a_s( t; \eta, \md \zeta )
  + b_s( t; \eta, \zeta )
  && \mbox{ for all } \eta, \zeta \in C^1_{\V} \\
  \label{eq:bulk-an-transport}
  \dt a_n( t; \eta, \zeta )
  & = a_n( t; \md \eta, \zeta )
  + a_n( t; \eta, \md \zeta )
  + b_n( t; \eta, \zeta )
  && \mbox{ for all } \eta \in C^1_{\H}, \zeta \in C^1_{\V},
\end{align}
with the new forms
\begin{align*}
  b_s( t; \eta, \zeta )
  & = \int_{\Omega(t)} \B_\Omega( {w}, \A_\Omega ) \nabla \eta \cdot \nabla \zeta
    + ( \md \c + \c_\Omega \nabla \cdot {w} ) \eta \zeta \dd x
  && \mbox{ for } \eta, \zeta \in \V(t) \\
  b_n( t; \eta, \zeta )
  & = \int_{\Omega(t)}
    \B_{\adv}( {w}, \b_\Omega) \eta \cdot \nabla \zeta \dd x
  && \mbox{ for } \eta \in \H(t), \zeta \in \V(t),
\end{align*}
where
\begin{align*}
  \B_\Omega( w, \A_\Omega ) & :=
  \md \A_\Omega + \nabla \cdot w \A_\Omega - 2 D_h( w ) \\
  \B_{\adv}( w, \b_\Omega ) & :=
  \md \b_\Omega + \b \nabla \cdot w - \sum_{j=1}^{n+1} ( \b_\Omega )_j \partial_{x_j} w,
\end{align*}
and $D( w )$ is the rate of deformation tensor:
\begin{align*}
  D( w )_{ij}
  = \frac{1}{2} \sum_{l=1}^{n+1} ( \A_\Omega )_{il} \partial_{x_l} w_j + ( \A_\Omega )_{jl} \partial_{x_l} w_i
  \quad \mbox{ for } i,j = 1, \ldots, n+1.
\end{align*}

We define $b( t; \cdot, \cdot ) \colon \V(t) \times \V(t) \to \R$ to be
\begin{equation*}
  b( t; \eta, \zeta ) := b_s( t ; \eta, \zeta ) + b_n( t; \eta, \zeta ) \qquad \mbox{ for } \eta, \zeta \in \V(t).
\end{equation*}

 The smoothness assumptions on the coefficients and the velocity imply that  $g$, $b_s$ and $b_n$ are uniformly bounded. There exists a constant $c > 0$ such that
\begin{align}
  \label{eq:bulk-g-bounded}
  \abs{ g( t; \eta, \zeta ) }
  & \le c \norm{ \eta }_{\H(t)} \norm{ \zeta }_{\H(t)}
  && \mbox{ for all } \eta, \zeta \in \H(t) \\
  \label{eq:bulk-bs-bounded}
  \abs{ b_s( t; \eta, \zeta ) }
  & \le c \norm{ \eta }_{\V(t)} \norm{ \zeta }_{\V(t)}
  && \mbox{ for all } \eta, \zeta \in \V(t) \\
  \label{eq:bulk-bn-bounded}
  \abs{ b_n( t; \eta, \zeta ) }
  & \le c \norm{ \eta }_{\V(t)} \norm{ \zeta }_{\V(t)}
  && \mbox{ for all } \eta \in \H(t), \zeta \in \V(t).
\end{align}

\subsection{Variational formulation}
 We consider the following variational formulation of \cref{eq:bulk-eqn}:
\begin{problem}
  \label{pb:weak-bulk}
  Given $\emph \utext_0 \in L^2(\Omega_0)$, find $\emph \utext  \in \W(\V,\V^*)$%
  such that for almost every  $t \in (0,T)$ we have
  \begin{equation}
    \label{eq:weak-bulk}
    \begin{aligned}
      m( t; \md \emph \utext, \zeta ) + g( t; \emph \utext, \zeta ) + a( t; \emph \utext, \zeta )
      & = 0 && \mbox{ for all } \zeta \in H^1(\Omega(t)) \\
      \emph \utext( 0 ) & = \emph \utext_0.    \end{aligned}
  \end{equation}
 \end{problem}

\begin{theorem}
  \label{thm:bulk-exist}
  There exists a unique solution $\emph \utext$  to \cref{pb:weak-bulk} which satisfies the stability bound:
  \begin{align}
    \sup_{t\in[0,T]}\norm{ \emph \utext }_{L^2(\Omega(t))}^2 +   \int_0^T \norm{ \emph \utext }_{H^1(\Omega(t))}^2 \dd t
    & \le c(T) \norm{ \emph \utext_0 }_{L^2(\Omega_0)}^2 
    \end{align}
    and if $u_0\in H^1(\Omega_0)$
    \begin{align}
    \sup_{t\in[0,T]}\norm{ \emph \utext }_{H^1(\Omega(t))}^2 +   \int_0^T \norm{ \md \emph \utext }_{L^2(\Omega(t))}^2 \dd t
    & \le c(T) \norm{ \emph \utext_0 }_{H^1(\Omega_0)}^2.
  \end{align}
\end{theorem}

\begin{proof}
  We simply apply the abstract theory of \cref{thm:abs-exist}. For \cref{as:c-exists,ass:evolving-space-equivalence}    
  we refer to  \citet[Sec. 4 and Sec. 5]{AlpEllSti15b-pp}.  \cref{ass:u0-approx} is a consequence of \cref{rem:ass:u0-approx}. %
Also Assumptions \cref{eq:m-symmetric}, \cref{eq:m-bounded}, \cref{eq:c-formula}, 
  \cref{eq:c-bounded}, \cref{eq:a-meas}, \cref{eq:a-decomp}, \cref{eq:as-coercive}, \cref{eq:as-bounded}, \cref{eq:bs-exist}, \cref{eq:bn-exist}, \cref{eq:bs-bound} and  \cref{eq:bn-bound} hold.
   It is clear that \cref{eq:m-symmetric} and \cref{eq:m-bounded} hold since $m( t; \cdot, \cdot )$ is equal to the $\H(t)$-inner product.
 Similarly  assumptions \cref{eq:c-formula} and \cref{eq:c-bounded} follow from the transport formula  \cref{eq:bulk-m-transport} and the boundedness of the velocity. %
  We know that the map $t \mapsto a( t; \cdot, \cdot )$ is differentiable hence measurable which shows \cref{eq:a-meas}.
  The coercivity \cref{eq:as-coercive} and boundedness \cref{eq:as-bounded} of $a_s$ and the boundedness of \cref{eq:an-bounded} follow from standard arguments.
  The existence of the bilinear forms $b_s$ \cref{eq:bs-exist} and $b_n$ \cref{eq:bn-exist} has been shown in \cref{eq:bulk-as-transport} 
  and \cref{eq:bulk-an-transport} and the estimates \cref{eq:bs-bound} and \cref{eq:bn-bound} are shown in \cref{eq:bulk-bs-bounded} and \cref{eq:bulk-bn-bounded}.
\end{proof}

\subsection{Discretisation of the domain and finite element spaces}
\label{sec:fem-bulk}

The first stage in constructing  our finite element method is to define an approximate computation domain $\{ \Omega_h(t) \}$.
Our construction satisfies that the boundary Lagrange points of $\Omega_h(t)$ lie on the boundary of $\Omega(t)$ and all Lagrange points evolve with the prescribed velocity ${w}$.
We will consider $\Omega_h(t)$ as an interpolant of $\Omega(t)$.
We recall $k$ is the order of isoparametric bulk finite element spaces we wish to use.
Throughout the remainder of this section we will denote global discrete quantities with a subscript $h \in (0,h_0)$, which is related to element size. 
We assume implicitly that these structures exist for each $h$ in this range (see also \cref{rem:small-hk-bulk}).

We will use the simplical, Lagrangian reference element $(\hat{K}, \hat{P}, \hat{\Sigma})$ over order $k$ from \cref{ex:standard-fem}.
We start by constructing a family of time dependent element reference maps (\cref{def:ebfe-reference-map}) which will define an evolving conforming 
subdivision $\{ \T_h(t) \}$ (\cref{def:bulk-evolving-conforming-subdivision}) of an evolving triangulated open domain $\{ \Omega_h(t) \}$ (\cref{def:evolving-triangulated-domain}).

Let $\tilde\Omega_{h,0}$ be a polyhedral approximation of $\Omega_0$ equipped with a quasi-uniform, conforming subdivision into simplicies $\tilde{\T}_{h,0}$ (see \cref{sec:stationary-bfem} for details).
We denote by $\tilde{\Gamma}_{h,0} = \partial \tilde{\Omega}_{h,0}$.
We restrict that the vertices of $\tilde{\Gamma}_{h,0}$ lie on the surface $\Gamma_0$.
We assume that the normal projection operator \cref{eq:bulk-cp}, $p(\cdot, 0)$ is a homomorphism from $\tilde\Gamma_{h,0}$ onto $\Gamma_0$.
More precisely, for each $\tilde{K} \in \tilde\T_{h,0}$, there exists an affine map $\hat{K} \to \R^{n+1}$ which satisfies the assumption of \cref{def:bfe} so that we can define a bulk finite element $( \tilde{K}, \tilde{P}, \tilde{\Sigma} )$ using \cref{eq:bfe}.
We assume the Lagrange points satisfy \cref{eq:bulk-node-agree}.

We extend $p$ to construct a bijection $\Psi_h \colon \tilde\Omega_{h,0} \to \bar\Omega_0$ which we will define element-wise.
A similar construction is used by \citet{EllRan13}.
We first decompose $\tilde\T_{h,0}$ into boundary elements, which have more than one vertex on the boundary, and interior elements.
For an interior element $\tilde{K} \in \tilde{\T}_{h,0}$, we define
\begin{align*}
  \Psi_h( \tilde{x} ) = \tilde{x} \qquad \mbox{ for } \tilde{x} \in \tilde{K}.
\end{align*}
Otherwise, let $\tilde{K}$ be a boundary element and consider $\tilde{x} \in \tilde{K}$.
Denote by $\{ \tilde{a}_i \}_{i=1}^{n+2}$ the vertices of $\tilde{K}$ ordered so that $\{ \tilde{a}_i \}_{i=1}^L$ lie 
on $\Gamma_0$ (recall that $\Omega(t) \subset \R^{n+1}$).
First, decompose $\tilde{x}$ into barycentric coordinates:
\begin{equation*}
  \tilde{x} = \sum_{j=1}^{n+2} \mu_j( \tilde{x} ) \tilde{a}_j.
\end{equation*}
We introduce the function $\mu^*( \tilde{x} )$ and the singular set $\sigma$ by
\begin{equation*}
  \mu^*( \tilde{x} ) = \sum_{j=1}^L \mu_j( \tilde{x} ),
  \qquad
  \sigma = \{ \tilde{x} \in \tilde{K} : \mu^*( \tilde{x} ) = 0 \}.
\end{equation*}
The scalar $\mu^*$ represents how far we are from $\tilde\Gamma_{h,0}$ with $\mu^* = 1$ on $\Gamma_h$.
Note that we have $0 \le \mu^* \le 1$.
The set $\sigma$ is the set of points in $\tilde{K}$ furthest from $\tilde{\Gamma}_{h,0}$.
If $\tilde{x} \not\in \sigma$, we denote the projection onto $\tilde\Gamma_{h,0} \cap \tilde{K}$ by $y( \tilde{x} )$. We see that $y(\tilde{x})$ is given by
\begin{equation*}
  y( \tilde{x} ) = \sum_{j=1}^{L} \frac{\mu_j( \tilde{x} )}{ \mu^*( \tilde{x} ) } \tilde{a}_j.
\end{equation*}
Then, we define $\Psi_h|_{\tilde{K}}$ by
\begin{equation*}
  \Psi_h|_{\tilde{K}}( \tilde{x} )
  =
  \begin{cases}
    \tilde{x} + ( \mu^*( \tilde{x} ) )^{k+2} ( p( y( \tilde{x} ), 0 ) - y( \tilde{x} ) )
    & \mbox{ if } \tilde{x} \not\in \sigma \\
    \tilde{x} & \mbox{ otherwise}.
  \end{cases}
\end{equation*}
\begin{remark}
  \label{rem:why-Psih}
This mapping takes points on $\tilde\Gamma_{h,0}$ onto $\Gamma_0$, since here $\mu^* = 1$ and $y(\tilde{x}) = \tilde{x}$ and we recover $\Psi_h( \tilde{x} ) = p(\tilde{x},0)$.
Points in $\sigma$ remain unchanged by this mapping, since $\mu^* = 0$ here and furthermore the power $k+2$ on $\mu^*$ ensures that the mapping is $C^{k+1}$ on the closed domain $\tilde{K}$.
\end{remark}

We write $\tilde{I}$ for interpolation over $( \tilde{K}, \tilde{\P}, \tilde{\Sigma} )$ \cref{eq:bulk-IK} and define an initial element reference map $F_{K_0} \colon \hat{K} \to \R^{n+1}$ by
\[
  F_{K_0}( \hat{x} ) = [ \tilde{I} \Psi_h ]( F_{\tilde{K}}( \hat{x} ) )
  \qquad \mbox{ for } \hat{x} \in \hat{K}.
\]
We call the union of all domains $K$ constructed in this way $\T_{h,0}$, which is a conforming subdivision (\cref{def:bulk-conforming-subdivision}), 
and call the union of element domains $\Omega_{h,0}$, which is a triangulated bulk domain (\cref{def:triangulated-domain}).
Finally, we call $\{ a_i \}_{i=1}^N$ the Lagrange nodes of $\Omega_{h,0}$ which satisfy \cref{eq:bulk-node-agree} since Lagrange points 
on the boundaries of each element $\tilde{K}$ are not moved by $\Psi_h$.
An example of domains constructed using the above are shown in \cref{fig:bulk-ex-fem}.

\begin{figure}
  \centering
  \includegraphics[width=0.22\textwidth]{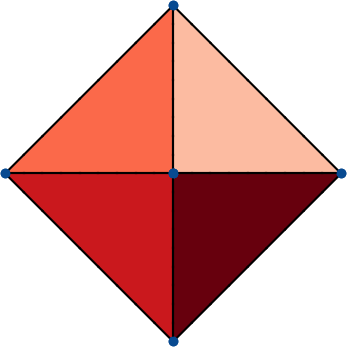}
  \includegraphics[width=0.22\textwidth]{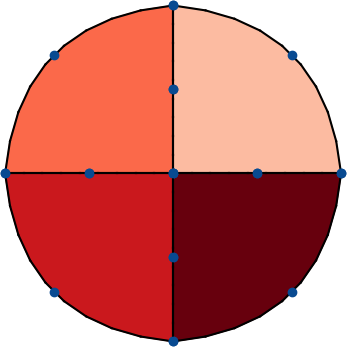}
  \includegraphics[width=0.22\textwidth]{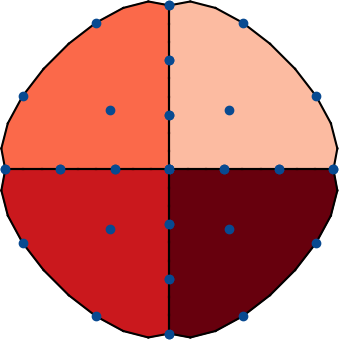}
  \includegraphics[width=0.22\textwidth]{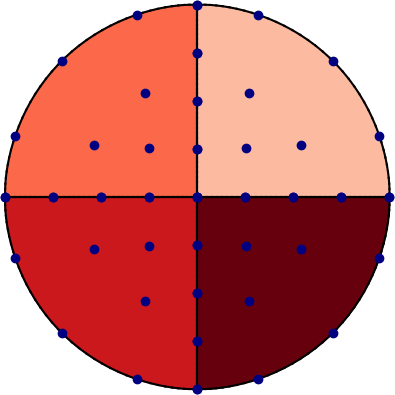}

  \caption{Examples of collections of isoparametric bulk finite elements. In each example we have four finite elements which approximate the unit disc in $\R^2$. 
  The element domains are shown in red and the Lagrange points in blue. From left to right we use $k=1, 2, 3, 4$.}
  \label{fig:bulk-ex-fem}
\end{figure}

To complete the construction, for $K_0 \in \T_{h,0}$, we consider the discrete domain $K(t)$ given by the 
discrete flow $\Phi_t^K \colon K_0 \to K(t)$ defined by
\begin{equation*}
  \Phi_t^K |_{K_0} = I_{K_0} [ \Phi_t ( \Psi_h( \cdot ) ) ],
\end{equation*}
which is a bijection onto its image for $h$ small enough. We denote its inverse by $\Phi_{-t}^K$.
Using \cref{eq:ebfe-reference-map} $\Phi^K_t$ defines an evolving reference element map and with \cref{eq:bfe} we have a bulk finite element $( K(t), P(t)^K, \Sigma(t)^K )$ (\cref{def:ebfe}).
We call the set of such domains at each time $\T_h(t)$ and we define $\Omega_h(t)$ by
\begin{align*}
  \Omega_h(t) := \bigcup_{K(t) \in \T_h(t)} K(t),
\end{align*}
and write a global discrete flow map $\Phi_t^h \colon \Omega_{h,0} \to \Omega_h(t)$ defined element-wise 
by $\Phi_t^h |_{K_0} := \Phi^K_t$ (\cref{def:bulk-global-discrete-flow}).
By $h$ we denote the maximum mesh diameter over time \cref{eq:bulk-def-h}:
\[
  h := \max_{t \in [0,T]} \max_{K(t) \in \T_h(t)} \diam( \tilde{K}(t) ).
\]
We introduce the Hilbert spaces $\H_h(t) := L^2( \Omega_h(t) )$ and $\V_h(t) := H^1_T( \T_h(t) )$ (c.f. \cref{lem:bulk-W1pT-h-equal}).
Since $\Phi^h_t$ is a globally continuous function, it is clear that the Lagrange points in each element satisfy \cref{eq:bulk-evolve-node-agree}.
We define a global discrete function space (\cref{def:evolving-bfe-space}) by
  \begin{multline}
    \label{eq:bulk-Sh}
  \S_h(t) := \Bigl\{
  \chi_h = ( \chi_h )_{K(t) \in \T_h(t)} \in \prod_{K \in \T_h(t)} \bigl\{ \hat\chi \circ F_{K(t)}^{-1} : \hat\chi \in P_k(\hat{K}) \bigr\} : \\
  \chi_K( a(t) ) = \chi_{K'(t)} \mbox{ for all } K(t), K'(t) \in \T(a(t)),
  \mbox{ for all } a(t) \in \N_h(t)
  \Bigr\}.
\end{multline}
Using \cref{lem:bulk-Sh-closed} we can identify elements in $\S_h(t)$ as continuous functions and $\S_h(t) \subset \V_h(t)$.

We will assume that this construction results in a uniformly quasi-uniform evolving subdivision 
$\{ \T_h(t) \}_{t \in [0,T]}$ (\cref{def:bulk-uniformly-quasi-uniform}) - that is that the velocity $w$ is such that the simplicies in $\T_h(t)$ do not become too distorted.
In order to show the result of this construction satisfies the other assumptions we require we first state a result shown by \citet{EllRan13}:
\begin{lemma}[Prop.~4.4, \citealt{EllRan13}]
  The mapping $\Psi_K = \Psi_h|_{\tilde{K}}$ is of class $C^{k+1}$ when restricted to each element $\tilde{K} \in \tilde{\T}_{h,0}$ and the composed map $\Psi_K \circ F_{\tilde{K}}$ satisfies
  \begin{equation}
    \label{eq:PsiK-bound}
    \norm{ \nabla^m (\Psi_K \circ F_{\tilde{K}} - F_{\tilde{K}} ) }_{L^\infty(\hat{K})} \le c h^{2} \qquad \mbox{ for } 0 \le m \le k+1.
  \end{equation}
\end{lemma}

\begin{proposition}
  \label{prop:bulk-fe}
  The space $\{ \S_h(t) \}_{t \in [0,T]}$ defined by the above construction is an evolving bulk finite element space consisting of $k$-surface finite elements over a uniformly $k$-regular evolving subdivision.
\end{proposition}

\begin{proof}
  Let $K(t)$ be a single element in $\T_h(t)$.
  We can write the element parametrisation $F_{K(t)}$ as
  \begin{align*}
    F_{K(t)}( \hat{x} )
    & = \Phi_t^K( F_{K_0}( \hat{x} ) ) \\
    & = \Phi_t^{\tilde{K}}( F_{\tilde{K}_0}( \hat{x} ) )
      + \big(
      \Phi_t^K( F_{K_0}( \hat{x} ) ) - \Phi^{\tilde{K}}_t( F_{\tilde{K}_0}( \hat{x} ) )
      \big),
  \end{align*}
  where $\Phi^{\tilde{K}}_t$ is the flow map from $\tilde{K}_0 = F_{\tilde{K}_0}( \hat{x} )$ defined as the piecewise linear interpolant of $\Phi^K_t$.
  We note that $\hat{x} \mapsto \Phi^{\tilde{K}}_t( F_{\tilde{K}_0}( \hat{x} ) )$ is linear so we define the splitting
  \begin{align*}
    F_{K(t)}( \hat{x} ) =
    \underbrace{\Phi^{\tilde{K}}_t( F_{\tilde{K}_0}( \hat{x} ) )}_{\displaystyle =: A_{K(t)} \hat{x} + b_{K(t)}}
    + \underbrace{\big(
    \Phi^K_t( F_{K_0}( \hat{x} ) ) - \Phi^{\tilde{K}}_t( F_{\tilde{K}_0}( \hat{x} ) )
    \big)}_{\displaystyle =: \Phi_{K(t)}( \hat{x} )}.
  \end{align*}
  Since $\Phi^K_t( F_{K_0}( \cdot ) )$ and $\Phi^K_t( F_{\tilde{K}}( F_{\tilde{K}_0}( \cdot )) )$ agree at the vertices of $\hat{K}$, 
  we have that $\Phi^K_t ( F_{\tilde{K}} ( \cdot ) )$ is a linear interpolant of $\Phi^K_t( F_{K_0}( \cdot ) )$ over $\hat{K}$. Thus, we infer that
  \begin{align*}
    \abs{ \nabla_{\hat{x}} \left(
        \Phi^K_t( F_{K_0}( \hat{x} ) ) - \Phi^K_t( F_{\tilde{K}_0}( \hat{x} ) )
      \right) }
    & = \abs{ \nabla_{\hat{x}} \left(
        \Phi^K_t( F_{K_0}( \hat{x} ) ) - \hat{I}_1 \Phi^K_t( F_{K_0}( \hat{x} ) )
      \right) } \\
    & \le c \norm{ \Phi^K_t ( F_{K_0}( \cdot ) ) }_{W^{2,\infty}(\hat{K})} \\
    & \le c h^{2}_K \norm{ \Phi^K_t }_{W^{2,\infty}(K_0)}.
  \end{align*}
  Here, we have used the notation $\hat{I}_1$ for piecewise linear interpolation over $\hat{K}$ and applied the 
  Bramble-Hilbert Lemma (\cref{lem:bramble-hilbert}) and the rescaling \cref{eq:bulk-norm-scale-a}.

  Hence we have
  \begin{align*}
    C_{K(t)} = \sup_{\hat{x} \in \hat{K}} \norm{ \nabla D_K(\hat{x},t) A_K^\dagger }
    \le c h_K^2 \norm{ \Phi^K_t }_{W^{2,\infty}(K_0)} \norm{ A_K^\dagger }
    \le c \frac{ h^2_{K} }{ \rho_K } \abs{\Phi^K_t }_{W^{2,\infty}(K_0)}.
  \end{align*}
  Since we have assumed that $\{ \T_h(t) \}_{t \in [0,T]}$ is uniformly quasi-uniform we see that $h_K^2 / \rho_K$ is uniformly 
  bounded, it remains to show that $\abs{\Phi^K_t }_{W^{2,\infty}(K_0)}$ is uniformly bounded.
  However, this follows from the definition of $\Phi_t^h$ and the smoothness of $\Psi_K$ \cref{eq:PsiK-bound} and  the smooth flow map $\Phi_t$.
  Therefore, for $h_K$ sufficiently small, $C_{K(t)} < 1$.

  To show the higher order bounds, we compute similarly that for $2 \le m \le k+1$
  \begin{align*}
    & \abs{ \nabla_{\hat{x}}^m F_K( \hat{x} ) } \\
    & \le \abs{ \nabla_{\hat{x}}^m ( A_{K(t)} \hat{x} + b_{K(t)} ) } +
    \abs{ \nabla_{\hat{x}}^m \left(
        \Phi^K_t( F_{K_0}( \hat{x} ) ) - \Phi^K_t( F_{\tilde{K}_0}( \hat{x} ) )
      \right) } \\
    & = \abs{ \nabla_{\hat{x}}^m \left(
        \Phi^K_t( F_{K_0}( \hat{x} ) ) - \hat{I}_1 \Phi^K_t( F_{K_0}( \hat{x} ) )
      \right) } \\
    & \le c \norm{ \Phi^K_t ( F_{K_0}( \cdot ) ) }_{W^{m,\infty}(\hat{K})} \\
    & \le c h^{m}_K \norm{ \Phi^K_t }_{W^{m,\infty}(K_0)}.
  \end{align*}
  Hence, we see that
  \[
    \sup_{\hat{x} \in \hat{K}} \abs{ \nabla^m F_K( \hat{x} ) } \norm{ A_K }^{-m}
    \le c h^{m}_K \norm{ A_K }^{-m} \norm{ \Phi^K_t }_{W^{m,\infty}(K_0)}
    \le c \left( \frac{h_K}{\rho_K} \right)^m \norm{ \Phi^K_t }_{W^{m,\infty}(K_0)}.
  \]
  In the final inequality, we use that $\norm{ A_K }^{-1} \le \norm{ A_K^{-1} } \le \rho_K^{-1}$.
  The bounds on $C_m(K(t))$ then follow since we have assumed that the mesh is uniformly quasi-uniform and that $\Phi^K_t$ is sufficiently smooth.
\end{proof}

The element flow map $\Phi_t^K$ defines a velocity on each element ${W}_K$ (\cref{def:bulk-element-velocity}) by
\begin{equation*}
  \dt \Phi_t^K( \cdot ) = {W}_K( \Phi_t^K( \cdot ), t ) \qquad \mbox{ for } t \in [0,T].
\end{equation*}
This can be combined into a global velocity ${W}_h$ (\ref{def:bulk-global-discrete-velocity}).
We note that the global velocity is determined purely by the velocity of the vertices $\{ a_i(t) \}_{i=1}^N$:
\begin{equation}
  \label{eq:bulk-defn-Wh}
  {W}_h( x, t ) = \sum_{i=1}^N {w}( a_i(t), t ) \chi_i( x, t ) \qquad
  \mbox{ for } x \in \Omega_h(t).
\end{equation}
We also have a discrete push forward map $\phi^h_t$ (\cref{def:bulk-global-push-forward-map}) for $\eta_h \colon \Omega_{h,0} \to \R$ given by
\[
  \phi^h_t( \eta_h )( x ) = \eta_h ( \Phi^h_{-t}( x ) ), \qquad
  \mbox{ for } x \in \Omega_h(t), t \in [0,T].
\]
Since we have constructed a uniformly $k$-regular mesh, we infer that $( \H_h(t), \phi^h_t )_{t \in [0,T]}$ is a compatible pair (\cref{lem:bulk-Vh-compatible}) so we may define the space $C^1_{\H_h}$ \cref{eq:CkX} and a discrete material derivative by  \cref{eq:abs-mdh}:%
\[
  \mdh \eta_h = \phi^h_t \left( \dt \left( \phi^h_{-t} \eta_h \right) \right) \qquad \mbox{ for } \eta_h \in C^1_{\H_h}.
\]
We note that the pairs $( \V_h(t), \phi^h_t|_{\V_h(t)} )_{t \in [0,T]}$ and $( \S_h(t), \phi^h_t|_{\S_h(t)} )_{t \in [0,T]}$ are also compatible (\cref{lem:bulk-Vh-compatible,rem:bulk-Sh-compatible}) so we may also define the spaces $C^1_{\V_h}$ and $C^1_{\S_h}$.

  \begin{lemma}
  \label{lem:omegah-transport}
  For $t \in [0,T]$ and for each $K(t) \in \T_h(t)$,
  let $\A_\Omega^K$ be a smooth, positive-definite, diffusion tensor on $K(t)$ and $\b_\Omega^K$ be a smooth vector field on $K(t)$ for each $K(t) \in \T_h(t)$ and $t \in [0,T]$.
  For $\eta_h \in C^1_{\H_h}$ we have the transport formula:
  \begin{align}
    \label{eq:omegah-transport}
    \dt \int_{\Omega_h(t)} \eta_h \dd x
    = \sum_{K(t) \in \T_h(t)} \int_{K(t)} \mdh \eta_h + \eta_h \nabla \cdot {W}_h \dd x.
  \end{align}
  Furthermore for $\eta_h, \zeta_h \in C^1_{\V_h}$ we have
  \begin{align}
    \label{eq:bulk-dirichleth-transport}
    & \dt \sum_{K(t) \in \T_h(t)} \int_{K(t)} \A_\Omega^K \nabla \eta_h \cdot \nabla \zeta_h \dd x \\
    \nonumber
    & = \sum_{K(t) \in \T_h(t)} \int_{K(t)} \A_\Omega^K \nabla \mdh \eta_h \cdot \nabla \zeta_h
      + \A_\Omega^K \nabla \eta_h \cdot \nabla \mdh \zeta_h  \dd x \\
    \nonumber
    & \qquad + \sum_{K(t) \in \T_h(t)} \int_{K(t)} \B_K( {W}_K, \A_\Omega^K ) \nabla \eta_h \cdot \nabla \eta_h \dd x,
  \end{align}
  and for $\eta_h \in C^1_{\V_h}, \zeta_h \in C^1_{\H_h}$
  \begin{align}
    \label{eq:bulk-advectionh-transport}
    & \dt \sum_{K(t) \in \T_h(t)} \int_{K(t)} \b_\Omega^K \eta_h \cdot \nabla \zeta_h \dd x \\
    \nonumber
    & = \sum_{K(t) \in \T_h(t)} \int_{K(t)} \b_\Omega^K ( \mdh \eta_h \cdot \nabla \zeta_h +  \eta_h \cdot \nabla \mdh \zeta_h ) \dd x \\
    \nonumber
    & \qquad + \sum_{K(t) \in \T_h(t)} \int_{K(t)} \B_{\adv,K}( {W}_K, \b_\Omega^K )
      \eta_h \cdot \nabla \zeta_h \dd x,
  \end{align}
  where $\B_K$ and $\B_{\adv,K}$ are given by
  \begin{align*}
    \B_K( {W}_K, \A_\Omega^K ) & = \md_K \A_\Omega^K + \nabla \cdot {W}_K \A_\Omega^K - 2 D_h( {W}_h ) \\
    \B_{\adv, K}( {W}_K, \b_\Omega^K )
    & = \mdK \b_\Omega^K + \b_\Omega^K \nabla \cdot {W}_K -
      \sum_{j=1}^{n+1} ( \b_\Omega^K )_j ( \nabla )_j {W}_K,
  \end{align*}
  and $D_h$ is the rate of deformation tensor
  \begin{equation*}
    D_h( {W_h} )_{ij} = \frac{1}{2}
    \sum_{k=1}^{n+1} (\A_\Omega^K)_{ik} ( \nabla )_k ( {W}_K )_j
    + (\A_\Omega^K)_{jk} ( \nabla )_k ( {W}_K )_i
    \quad \mbox{ for } i,j = 1, \ldots, n+1.
  \end{equation*}
\end{lemma}

\begin{proof}
  We note that the left hand side maybe decomposed into individual elements then apply \cref{eq:bulk-transport,eq:bulk-transport-dirichlet,eq:bulk-transport-advection} on each element.%
\end{proof}

\subsection{Construction of the lifted finite element space}
\label{sec:bulk-lift-prop}

We construct a bijection between the computation domain $\Omega_h(t)$ and the continuous problem domain $\Omega(t)$ 
which we will call the lifting operator.
We do this using a similar construction to $\Psi_h$ used to define $\Omega_h(t)$ at the start of \cref{sec:fem-bulk}.
It will again be based on using an extension of the normal projection operator used as a lifting operator in \cref{sec:surf-lift-prop}.

Fix $t \in [0,T]$.
We wish to construct a bijection $\Lambda_h( \cdot, t ) \colon \Omega_h(t) \to \bar\Omega(t)$ which we will define element-wise.
We decompose $\T_h(t)$ into boundary elements, which have more than one vertex on the boundary, 
and interior elements. For an interior element $K(t) \in \T_h(t)$, we define
\begin{equation*}
  \Lambda_h( x, t ) = x \qquad \mbox{ for } x \in K(t).
\end{equation*}
Otherwise, let $K(t)$ be a boundary element and consider $x \in K(t)$.
Denote by $\{ a_i(t) \}_{i=1}^{n+2}$ the vertices of $K(t)$ ordered so that $\{ a_i(t) \}_{i=1}^L$ lie on $\Gamma(t)$.
We recall that the element $K(t)$ is given by a parametrisation $F_K$ over a reference element $\hat{K}$ so that, fixing a point $x \in K$, we 
can define points $\hat{x}$ and vertices $\{ \hat{a}_i \}_{i=1}^{n+2}$ in $\hat{K}$ by
\begin{equation*}
  x = F_K( \hat{x}, t ) \quad \mbox{ and } \quad
  a_i(t) = F_K( \hat{a}_i, t ) \mbox{ for } 1 \le i \le n+2.
\end{equation*}
We decompose $\hat{x}$ into barycentric coordinates on $\hat{K}$:
\begin{equation*}
  \hat{x} = \sum_{i=1}^{n+2} \hat\mu_i( \hat{x} ) \hat{a}_i.
\end{equation*}
We introduce the function $\hat{\mu}^*( \hat{x} )$ and the singular set $\hat\sigma$ by
\begin{equation*}
  \hat\mu^*( \hat{x} ) = \sum_{i=1}^L \hat\mu_i( \hat{x} ),
  \qquad
  \hat\sigma = \{ \hat{x} \in \hat{K} : \hat\mu^*( \hat{x} ) = 0 \}.
\end{equation*}
Again, we note that $0 \le \hat{\mu}^* \le 1$, $\abs{ D_{\hat{x}} \hat{\mu}^* } \le 1$ and $D_{\hat{x}}^m \hat{\mu}^* = 0$ for $m \ge 2$.
If $x \not\in F_K( \hat{\sigma}, t )$, we denote the projection onto $\Gamma_h(t) \cap K(t)$ by $y( x, t  )$ given by
\begin{equation}
  \label{eq:Lambda-y-defn}
  y( \hat{x}, t ) = F_K( \hat{y}( \hat{x} ), t ),
  \qquad
  \hat{y}( \hat{x} ) = \sum_{i=1}^L \frac{\hat\mu_i(\hat{x})}{\hat{\mu}^*(\hat{x})} \hat{a}_i.
\end{equation}
We then define $\Lambda_h( \cdot, t )|_{K(t)}$ by
\begin{equation}
  \label{eq:Lambdah-defn}
  \Lambda_h( \cdot, t )|_{K(t)}( x )
  =
  \begin{cases}
    x + ( \hat\mu^*( \hat{x} ) )^{k+2} ( p( y( x, t ), t ) - y( x, t ) )
    & \mbox{ if } x \not\in F_K( \hat{\sigma}, t ) \\
    x & \mbox{ otherwise}.
  \end{cases}
\end{equation}
The lifting map $\Lambda_h$ induces a lifted subdivision $\{ \T_h^\ell(t) \}_{t \in [0,T]}$ given as the set of all elements
\[
  K^\ell(t) := \Lambda_K( K(t), t ) \quad \mbox{ for all } K(t) \in \T_h(t).
\]

\begin{remark}
  The ideas behind this construction are similar to the construction of $\Psi_h$.
  See \cref{rem:why-Psih} for more discussion and also \cref{lem:Lambdah-good}.
\end{remark}

We next follow a sequence of calculations to show the properties of $\Lambda_h$.
These estimates are based on previous work by \citet{Ber89} and \citet{EllRan13}.
It is useful to recall the following Fa\'a di Bruno formula \cite[Eq. 2.9]{Ber89} for two smooth functions $f,g$
\begin{equation}
  \label{eq:multi-chain}
  \nabla^m ( f \circ g )
  = \sum_{r=1}^m \nabla^r f \left(
    \sum_{\underline{i} \in E(m,r)} c_{\underline{i}} \prod_{q=1}^m ( \nabla^q g )^{i_q}
    \right),
\end{equation}
where $\{ c_{\underline{}{i}} \}$ are constants and $E( m, r )$ is the set given by
\begin{equation*}
  E( m, r ) = \left\{
    \underline{i} \in \Nbb^m : \sum_{q=1}^m i_q = r \mbox{ and } \sum_{q=1}^m q i_q = m
  \right\}.
\end{equation*}

We wish to calculate derivatives of $\Lambda_h$ of order $m$ for $m \le k+1$.
A direct calculation shows that
\begin{equation*}
  \norm{ D^m_{\hat{x}} \hat{y} }_{L^\infty(\hat{K} \setminus \hat\sigma)}
  \le \frac{c}{( \hat\mu^*(\hat{x}) )^m},
\end{equation*}
for a constant $c$ independent of $\hat{x}$ and $K(t)$.
Then, we have that
\begin{equation}
  \label{eq:y-bound}
  \begin{aligned}
    \norm{ \nabla^m_{\hat{x}} y( \cdot, t ) }_{L^\infty(\hat{K} \setminus \hat{\sigma})}
    & \le c \sum_{r=1}^m
    \norm{ \nabla^r_{\hat{x}} F_K( \cdot, t ) }_{L^\infty(\hat{K})}
    \left(
      \sum_{\underline{i} \in E(m,r)}
      \prod_{q=1}^m \norm{ \nabla_{\hat{x}}^q \hat{y} }_{L^\infty(\hat{K} \setminus \sigma)}^{i_q}
    \right) \\
    & \le \sum_{r=1}^m \frac{h_K^r}{(\hat\mu^*(\hat{x}))^m} \le c \frac{h_K}{( \hat\mu^*(\hat{x}))^m },
  \end{aligned}
\end{equation}
where in the second line, we have used that $K$ is a $k$-surface finite element so
\begin{equation*}
  \norm{ \nabla^r_{\hat{x}} F_K( \cdot, t ) }_{L^\infty(\hat{K})}
  \le c \norm{ A_K }^r \le c h_K^r.
\end{equation*}
Next, applying \cref{eq:multi-chain} and \cref{eq:y-bound}, we have
\begin{align*}
  \norm{ \nabla^m_{\hat{x}} \big( p(y(\cdot,t),t) - y(\cdot,t) \big) }_{L^\infty(\hat{K}\setminus \hat\sigma)}
  \le c \sum_{r=1}^m \norm{ \nabla_y^r \big( p(\cdot,t) - \id ) }_{L^\infty(K(t))} \frac{ h_K^r }{ ( \hat\mu^*(\hat{x} ) )^m }.
\end{align*}
Using a similar geometric construction to \cref{lem:surf-geom-est}, we have
\begin{equation*}
  \norm{ \nabla^r \big( p(\cdot,t) - \id ) }_{L^\infty(K(t))}
  = \norm{ \nabla^r d }_{L^\infty(K(t))} \le c h_K^{k+1-r}.
\end{equation*}
Hence, we infer that
\begin{align*}
  \norm{ \nabla^m_{\hat{x}} \big( p(y(\cdot,t),t) - y(\cdot,t) \big) }_{L^\infty(\hat{K}\setminus \hat\sigma)}
  \le c \frac{h_K^{k+1}}{( \hat\mu^*(\hat{x}) )^m}.
\end{align*}
Finally, using the Leibniz formula and the properties of $\hat{\mu}^*$, we have
\begin{align*}
  & \abs{ \nabla^{m}_{\hat{x}} \big( ( \hat\mu^*(\hat{x} ) )^{k+2} ( p(y({x},t),t) - y({x},t ) ) \big) } \\
  & \quad \le c \sum_{r=0}^m ( \hat\mu^*(\hat{x} ) )^{k+2-r} \norm{ \nabla_{\hat{x}}^{m-r}\big( p(y(\cdot),t) - y(\cdot) \big) }_{L^\infty(\hat{K} \setminus \hat\sigma)} \\
  & \quad \le c \sum_{r=0}^m ( \hat\mu^*(\hat{x} ) )^{k+2-r} \frac{ h_K^{k+1} }{ ( \hat\mu^*(\hat{x} ) )^{m-r} } \\
  & \quad \le c h_K^{k+1} ( \hat\mu^*(\hat{x} ) )^{k+2-m}.
\end{align*}

\begin{lemma}
  \label{lem:Lambdah-good}
  The lifting function $\Lambda_h( \cdot, t ) \colon \Omega_h(t) \to \bar\Omega(t)$ is an element-wise $C^{k+1}$-diffeomorphism and satisfies
  \begin{equation}
    \sup_{h \in (0,h_0)} \sup_{t \in [0,T]} \norm{ \Lambda_h( \cdot, t ) }_{W^{k+1,\infty}(\T_h(t))}
    \le C.
  \end{equation}
\end{lemma}

\begin{proof}
  The first result is clear for all internal elements.
  Consider a time $t \in [0,T]$ and a single boundary element $K(t) \in \T_h(t)$.
  The smoothness away from $\sigma(t) = F_K( \hat{\sigma}, t )$ follows from the fact that $\Lambda_h$ restricted to each element is the composition of smooth functions.
  The above calculations, combined with \cref{eq:bulk-norm-scale-b} and \cref{eq:bulk-AK-h-relation-b}, show that for $x = F_K( \hat{x}, t ) \in K(t) \setminus \sigma(t)$,
  \begin{multline}
    \label{eq:bulk-Lambdah-m-bound}
    \abs{ \nabla^m \Lambda_h|_{K(t)}( x, t ) - \id }
    \le \frac{c}{\rho_K^m} \abs{ \nabla^m \Lambda_h|_{K(t)}( F_K( \hat{x}, t ), t ) - \id }
    \le c \frac{ h_K^{k+1} }{ \rho_K^m } \abs{( \hat\mu^*(\hat{x}) )^m}.
  \end{multline}
  The mapping $\Lambda_h$ is of class $C^{k+1}$ on $K(t) \setminus \sigma(t)$ with derivatives of order less than or equal to $k+1$ tending 
  to zero when $x$ tends to a point in $\sigma(t)$. Hence, it can be extended to a $C^{k+1}$ mapping on $K(t)$.
\end{proof}

\begin{lemma}
  \label{lem:bulk-Thell-good}
  Furthermore, we have that the lifted triangulation $\{ \T_h^{\ell}(t) \}_{t \in [0,T]}$ is a uniformly $k$-regular evolving subdivision (\cref{def:bulk-uniformly-Theta-regular}).
\end{lemma}

\begin{proof}
  We use the splitting
  \[
    \Lambda_h( x,t ) = x + \tilde{\Lambda}_h(x,t),
  \]
  and write $\tilde{\Lambda}_K(\cdot,t) = \tilde{\Lambda}_h( \cdot , t )|_{K(t)} = ( \Lambda_K( x, t ) - x )$.
  We start by taking $m = 1$ in \cref{eq:bulk-Lambdah-m-bound} to give
  \[
    \abs{ \nabla^m \Lambda_h|_{K(t)}( x, t ) - \id }
    \le c \frac{h_K^{k+1}}{\rho_K} \abs{ (\hat\mu^*(\hat{x}) ) }.
  \]
  Clearly the right hand side is less than $(1-C_K)/(1+C_K)$ for $h_K$ small enough for a uniformly regular subdivision.
  This inequality can clearly be translated to the required estimate on $\Lambda_h$.
\end{proof}

For $t \in [0,T]$ and a function $\eta_h \colon \bar\Omega_h(t) \to \R$, we define its lift $\eta_h^\ell \colon \bar\Omega(t) \to \R$ by
\begin{equation*}
  \eta_h^\ell( \Lambda_h( x, t ) ) = \eta_h( x ) \qquad \mbox{ for } x \in \Omega_h(t).
\end{equation*}
We will also make use of an inverse lift for functions on $\bar\Omega(t)$. For $\eta \colon \bar\Omega(t) \to \R$, we define the inverse lift of $\eta$, denoted by $\eta^{-\ell} \colon \bar\Omega_h(t) \to \R$ by
\begin{equation*}
  \eta^{-\ell}( x ) = \eta( \Lambda_h( x, t ), t ) \qquad \mbox{ for } x \in \Omega_h(t).
\end{equation*}

\begin{lemma}
  \label{lem:bulk-lift-stab}
  Let $\eta_h \in \H_h(t)$ and denote its lift by $\eta_h^\ell$. Then there exists constants $c_1, c_2 > 0$ such that
  \begin{align}
    c_1 \norm{ \eta_h^\ell }_{\H(t)}
    \le \norm{ \eta_h }_{\H_h(t)}
    \le c_2 \norm{ \eta_h^\ell }_{\H(t)} \\
    c_1 \norm{ \eta_h^\ell }_{\V(t)}
    \le \norm{ \eta_h }_{\V_h(t)}
    \le c_2 \norm{ \eta_h^\ell }_{\V(t)} && \mbox{ if } \eta_h \in \V_h(t).
  \end{align}
\end{lemma}

\begin{proof}
  We apply \cref{prop:bulk-lift-Vh,lem:bulk-Thell-good}.
\end{proof}

For each $K(t) \in \T_h(t)$, we use \cref{def:lifted-bfe} to construct an associated lifted bulk finite element $(K^\ell(t), P^\ell(t), \Sigma^\ell(t) )$.
We assume that the domains $\{ \Omega_h(t) \}_{t \in [0,T]}$ are such that the set of lifted element domains 
$\T_h^\ell(t)$ defines an exact decomposition of $\Omega(t)$ (\cref{def:bulk-exact-decomposition}) for each $t \in [0,T]$.
For $t \in [0,T]$, we define the space of lifted functions to be $\S_h^\ell(t)$ given by (c.f.\ \cref{eq:abs-Vhell})
\[
  \S_h^\ell(t) := \{ \chi_h^\ell : \chi_h \in \S_h(t) \}.
\]

Using the lift $\Lambda_h$ and the discrete flow $\Phi^h_t$, we can define the lifted flow map $\Phi^\ell_t$ (\cref{def:bulk-lifted-flow-map}), 
lifted discrete velocity $w_h$ (\cref{def:bulk-lifted-discrete-material-velocity}) and lifted push forward 
maps $\phi^\ell_t$ (\cref{def:bulk-lifted-push-forward-map}).
The results of \cref{lem:bulk-Thell-good} imply with \cref{prop:bulk-lift-Vht} that $( \H(t), \phi^\ell_t )_{t \in [0,T]}$ 
and $( \V(t), \phi^\ell_t|_{\V_0} )_{t \in [0,T]}$ and $( \S_h^\ell(t), \phi^\ell_t|_{\S_{h,0}^\ell} )_{t \in [0,T]}$ form compatible pairs (\cref{def:compatibility}).
We will use the notations $C^1_{(\H,\phi^\ell)}$ and $C^1_{(\V,\phi^\ell)}$ for the spaces of functions smoothly evolving in 
time \cref{eq:CkX} with respect to the push-forward map $\phi^\ell_t$ in $\H(t)$ and $\V(t)$ respectively.
We may define a lifted strong material derivative \cref{eq:lift-md}:%
\[
  \mdell \eta = \phi_t^\ell \left( \dt \left( \phi^\ell_{-t} \eta \right) \right) \quad \mbox{ for } \eta \in C^1_{(\H,\phi^\ell)}.
\]

\begin{lemma}
  \label{lem:bulk-lifted-transport}
  The flow map $\Phi_t^\ell$ induces a new transport formula on $\{ \Omega(t) \}$. For $\eta \in C^1_{(\H, \phi^\ell)}$ we have
  \begin{equation}
    \dt \int_{\Omega(t)} \eta \dd x
    = \sum_{K^\ell(t) \in \T_h^\ell(t)} \int_{K^\ell(t)} \mdell \eta + \eta \nabla \cdot {w}_h \dd x.
  \end{equation}
  Let $\A_\Omega$ be a smooth, positive-definite, diffusion tensor on $\Omega(t)$ and $\b_\Omega$ be a smooth vector field on $\Omega(t)$ for all $t \in [0,T]$.  Furthermore, we have for $\eta, \zeta \in C^1_{(\V, \phi^\ell)}$
  \begin{multline}
    \dt \int_{\Omega(t)} \A_\Omega \nabla\eta \cdot \nabla\zeta \dd x \\
    = \sum_{K^\ell(t) \in \T_h^\ell(t)}
    \int_{K^\ell(t)}
    \A_\Omega \left( \nabla\mdell \eta \cdot \nabla\zeta + \nabla\eta \cdot \nabla\mdell \zeta \right)
    + \B_{K^\ell}( {w}_K, \A_\Omega ) \nabla\eta \cdot \nabla\zeta \dd x
  \end{multline}
  and for $\eta \in C^1_{(\V, \phi^\ell)}, \zeta \in C^1_{(\H, \phi^\ell)}$ we have
  \begin{multline}
    \dt \int_{\Omega(t)} \b_\Omega \eta \cdot \nabla \zeta \dd x \\
    = \sum_{K^\ell(t) \in \T_h^\ell(t)}
    \int_{K^\ell(t)}
    \b_\Omega \left( \mdell \eta \cdot \nabla\zeta + \eta \cdot \nabla\mdell \zeta \right)
    + \B_{K^\ell,\adv}( {w}_K, \b_\Omega ) \eta \cdot \nabla \zeta \dd \sigma,
  \end{multline}
  where $\B_{K^\ell}$ and $\B_{K^\ell,\adv}$ are defined as in \cref{lem:omegah-transport}.
\end{lemma}

\begin{proof}
  We note that the left hand side may be decomposed into individual elements then apply \cref{eq:bulk-transport,eq:bulk-transport-dirichlet,eq:bulk-transport-advection} on each element.%
\end{proof}

For the later analysis, we will also require bounds on the time derivative of $\Lambda_h$.
We consider an element $K(t) \in \T_h(t)$ and the trajectory of a point $X(t)$ which follows the velocity field ${W}_h$.
From the definition of ${W}_h$, we have that
\begin{equation*}
  X(t) = F_K( \hat{x}, t ), \qquad \hat{x} = \sum_{j=1}^{n+2} \hat\mu_j \hat{a}_i.
\end{equation*}
In particular, the barycentric coordinate representation of $X(t)$ does not depend on time.
Therefore, writing $x = X(t)$ and using $y$ from \cref{eq:Lambda-y-defn}, we have
\begin{equation*}
  \mdh y( x, t ) = \dt y( X(t), t )
  = \dt F_K( \hat{y}(\hat{x}), t ) = \frac{ \partial F_K }{ \partial t } ( \hat{y}(\hat{x}), t )
  = {W}_h( y( x, t ), t ).
\end{equation*}
Then we can compute that if $K(t)$ is a boundary element, recalling the definition of $\Lambda_h$  \cref{eq:Lambdah-defn}, we have
\begin{align*}
  & \mdh \Lambda_h|_{K(t)}( x, t ) = \dt \Lambda_h|_{K(t)}( X(t), t ) \\
  & =
  \begin{cases}
    {W}_h( x, t ) + ( \hat\mu^*(x) )^{k+2} \left( \frac{\partial p}{\partial t}( y, t ) + \nabla p ( y, t ) {W}_h( y( x, t ), t )  - {W}_h( y( x, t ), t ) \right)
    & \mbox{ if } x \not\in F_K( \hat{\sigma}, t ) \\
    0 & \mbox{ otherwise}
  \end{cases} \\
  & =
  \begin{cases}
    {W}_h( x, t ) +
    ( \hat\mu^*(x) )^{k+2}
    \left(
      \big( {W}_h( y, t ) - {w}( y, t ) \big) \cdot \nu( y, t ) \nu( y,t )
      - d(y,t) \nu_t( y,t )
 \right)
    & \mbox{ if } x \not\in F_K( \hat{\sigma}, t ) \\
    0 & \mbox{ otherwise}.
  \end{cases}
\end{align*}
A similar calculation to those prior to \cref{lem:Lambdah-good} show that for $m \le k+1$
\begin{equation}
  \label{eq:mdLambdah-estimate}
  \norm{ \mdh \Lambda_h( \cdot, t ) - {W}_h( \cdot, t ) }_{W^{m,\infty}(K(t))}
  \le c h_K^{k+1-m} ( \hat\mu^*( \hat{x} ) )^{k+1-m}
  \le c h_K^{k+1-m}.
\end{equation}
This follows by using the smoothness of the surface $\Gamma(t)$ along with the fact that ${W}_h$ is an interpolant of ${w}$ (\cref{cor:bulk-global-inv-lift-interp}).

We conclude this section by  showing some geometric estimates arising from the use of the lifting function $\Lambda_h$.
\begin{lemma}
  \label{lem:Lambdah-bound}
  Under the above assumptions, we have the estimates that
  \begin{align}
    \label{eq:DLambdah-bound}
    \sup_{t \in [0,T]} \norm{ \nabla \Lambda_h( \cdot, t ) - \id }_{L^\infty( \Omega_h(t) )}
    & \le c h^{k} \\
    \label{eq:mdDLambdah-bound}
    \sup_{t \in [0,T]} \norm{ \mdh \nabla \Lambda_h( \cdot, t ) }_{L^\infty( \Omega_h(t) )}
    & \le c h^{k}.
  \end{align}
  Additionally, writing $J_h(\cdot, t) = \sqrt{ \det\big( ( \nabla \Lambda_h( \cdot, t ) )^t ( \nabla \Lambda_h( \cdot, t ) ) \big) }$, we have
  \begin{align}
    \label{eq:detDLambdah-bound}
    \sup_{t \in [0,T]} \norm{ J_h(\cdot,t) - 1 }_{L^\infty(\Omega_h(t))}
    & \le c h^k \\
    \label{eq:mddetDLambdah-bound}
    \sup_{t \in [0,T]} \norm{ \mdh J_h(\cdot, t) }_{L^\infty(\Omega_h(t))}
    & \le c h^k.
  \end{align}
\end{lemma}

\begin{proof}
  We have already shown \cref{eq:DLambdah-bound} as part of the proof of \cref{lem:Lambdah-good}.
 Inequality  \cref{eq:detDLambdah-bound} then follows by a result of \citep[Cor. 2.11]{IpsReh08}.

 {
   To show the time derivative bounds, we note that for each $K(t) \in \T_h(t)$ we can write
   \[
     \mdK \eta_h( x, t ) = \partial_t \eta_h( x, t ) + W_K( x, t ) \cdot \nabla \eta_h( x, t ) \qquad \mbox{ for } x \in \interior{K}(t).
   \]
   Then we can compute, for $i = 1, \ldots, n+1$
 \begin{align*}
   \mdh \partial_{x_i} \Lambda_h
   & = ( \partial_{x_i} \Lambda_h )_t + \sum_{j=1}^{n+1} \partial_{x_j} \partial_{x_i} \Lambda_h ( W_h )_{j} \\
   & = \partial_{x_i} ( \partial_t \Lambda_h ) + \partial_{x_i} \left( \sum_{j=1}^{n+1} \partial_{x_j} \Lambda_h (W_h)_j \right)
     - \sum_{j=1}^{n+1} \partial_{x_j} \Lambda_h \partial_{x_i} ( W_h )_j \\
   & \qquad - \sum_{j=1}^{n+1} \partial_{x_i} \partial_{x_j} \Lambda_h ( W_h )_j
     + \sum_{j=1}^{n+1} \partial_{x_i} \partial_{x_j} \Lambda_h ( W_h )_j.
 \end{align*}
 Hence we have that (up to a set of measure zero)
  \begin{align*}
    \mdh \nabla \Lambda_h
    = \nabla \mdh \Lambda_h - ( \nabla {W}_h ) ( \nabla \Lambda_h ).
  \end{align*}
  }
  Then applying \cref{eq:mdLambdah-estimate}, \cref{eq:DLambdah-bound} together with \cref{lem:bulk-Wh-bounded}, we have
  \begin{align*}
    \norm{ \mdh \nabla \Lambda_h }_{L^\infty(\Omega_h(t))}
    & \le \norm{ \nabla ( \mdh \Lambda_h - {W}_h )}_{L^\infty(\Omega_h(t))}
    + \norm{ \nabla {W}_h ( \id - \nabla \Lambda_h )}_{L^\infty(\Omega_h(t))} \\
    & \le c h^{k} + \norm{ \nabla {W}_h }_{L^\infty(\Omega_h(t))} c h^k \le c h^k.
  \end{align*}
  This shows \cref{eq:mdDLambdah-bound}.

  For \cref{eq:mddetDLambdah-bound} we have, applying \cref{eq:DLambdah-bound} and \cref{eq:mdDLambdah-bound}, that
  \begin{align*}
    & \abs{ \mdh \sqrt{ \det\big( \nabla \Lambda_h( x, t )^t \nabla \Lambda_h( x, t ) \big) } } \\
    & = \abs{
      \frac{1}{2} \sqrt{
      \det \big( \nabla \Lambda_h( x, t )^t \nabla \Lambda_h( x, t ) \big)
      }
      \trace \left(
      \big( \nabla \Lambda_h( x, t )^t \nabla \Lambda_h( x, t ) \big)^{-1}
      \mdh \big( \nabla \Lambda_h( x, t )^t \nabla \Lambda_h( x, t ) \big)
      \right)
      } \\
    & \le c \norm{ \mdh \nabla \Lambda_h } \le c h^k.
      \qedhere
  \end{align*}
\end{proof}

\subsection{The discrete problem and stability}
\label{sec:bulk-discrete-pr}

The practical finite element method is based on the variational formulation \cref{eq:abs-var-form} of \cref{pb:weak-bulk}.
We introduce a element-wise smooth $(n+1)\times(n+1)$-diffusion tensor $\A^h_\Omega$, an element-wise smooth $(n+1)$ 
dimensional vector field $\b^h_\Omega$ and an element-wise smooth scalar field $\c^h_\Omega$.
We will use the notation $\A^K_\Omega := \A^h_\Omega|_{K(t)}, \b^K_\Omega := \b^h_\Omega|_{K(t)}$ and $\c^K_\Omega := \c^h_\Omega|_{K(t)}$, for all $K(t) \in \T_h(t)$, which we assume satisfy:
\begin{align*}
  \sup_{h \in (0,h_0)} \sup_{t \in [0,T]} \sup_{K(t) \in \T_h(t)} \left( \norm{ \A^K_\Omega }_{L^\infty(K(t))}
  + \norm{ \b^K_\Omega }_{L^\infty(K(t))}
  + \norm{ \c^K_\Omega }_{L^\infty(K(t))}
  \right)
  \le C.
\end{align*}
We assume that $\A^K_\Omega$ is uniformly positive definite: There exists $\bar{a}_0 > 0$ such that for all $h \in (0,h_0), t \in [0,T]$ and $K(t) \in \T_h(t)$
\[
  \A_\Omega^K(t, \cdot) \xi \cdot \xi \ge \bar{a}_0 \abs{ \xi }^2
  \qquad
   \mbox{ for all } \xi \in \R^{n+1}.
\]

\begin{problem}
  \label{pb:fem-bulk}
  Given $U_{h,0} \in \S_{h,0}$, find $U_h \in C^1_{\S_h}$ such that for every $t \in [0,T]$
  \begin{equation}
    \label{eq:fem-bulk}
    \begin{aligned}
      \dt m_h( t; U_h, \zeta_h )
      + a_h( t; U_h, \zeta_h )
      & = m_h( t; U_h, \mdh \zeta_h )
      && \mbox{ for all } \zeta_h \in C^1_{\S_{h}} \\
      U_h( \cdot ) & = U_{h,0},
    \end{aligned}
  \end{equation}
  where we have
  \begin{align*}
    m_h( t; \eta_h, \zeta_h )
    & = \int_{\Omega_h(t)} \eta_h \zeta_h \dd x
    && \mbox{ for } \eta_h, \zeta_h \in \H_h(t) \\
    a_h( t; \eta_h, \zeta_h )
    & = \sum_{K(t) \in \T_h(t)} \int_{K(t)} \A^K_\Omega \nabla \eta_h \cdot \nabla \zeta_h + \b^K_\Omega \eta_h \cdot \nabla \zeta_h + \c^K_\Omega \eta_h \zeta_h \dd x
    && \mbox{ for } \eta_h, \zeta_h \in \V_h(t).
  \end{align*}
\end{problem}

To show the properties of these bilinear forms we require one further lemma:
\begin{lemma}
  \label{lem:bulk-Wh-bounded}
  The discrete velocity ${W}_h$ of the discrete evolving domain $\{ \Omega_h(t) \}$ is uniformly bounded in $W^{1,\infty}(\Omega_h(t))$. 
  That is, there exists a constant $C > 0$ such that for all $h \in (0,h_0)$
  \begin{equation}
    \label{eq:bulk-Wh-bound}
    \sup_{t \in [0,T]} \norm{ {W}_h }_{W^{1,\infty}(\T_h(t))} \le C.
  \end{equation}
\end{lemma}
\begin{proof}
  The bound follows using the characterisation \cref{eq:bulk-defn-Wh} by using the interpolation bound 
  shown in \cref{cor:bulk-global-inv-lift-interp}.
\end{proof}

We have a transport formula for the domain $\{ \Omega_h(t) \}$.

\begin{lemma}
  \label{lem:bulk-fem-transport}
  There exists bilinear forms 
  $g_h( t; \cdot, \cdot ) \colon \H_h(t) \times \H_h(t) \to \R$, and
  $b_h( t; \cdot, \cdot ) \colon \V_h(t) \times \V_h(t) \to \R$ such that
  \begin{subequations}
    \begin{align}
      \label{eq:8}
      \dt m_h( t; \eta_h, \zeta_h )
      & = m_h( t; \mdh \eta_h, \zeta_h ) + m_h( t; \eta_h; \mdh \zeta_h )
      + g_h( t; \eta_h, \zeta_h )
      && \mbox{ for } \eta_h, \zeta_h \in C^1_{\H_h} \\
      \dt a_h( t; \eta_h, \zeta_h )
      & = a_h( t; \mdh \eta_h, \zeta_h ) + a_h( t; \eta_h; \mdh \zeta_h )
      + b_h( t; \eta_h, \zeta_h )
      && \mbox{ for } \eta_h, \zeta_h \in C^1_{\V_h},
    \end{align}
  \end{subequations}
  where
  \begin{equation*}
    g_h( t; \eta_h, \zeta_h )
    := \int_{\Omega_h(t)} \eta_h \zeta_h \nabla \cdot {W}_h \dd x,
  \end{equation*}
  and
  \begin{align*}
    b_h( t; \eta_h, \zeta_h )
    := \sum_{K(t) \in \T_h(t)}
    \int_{K(t)} & \B_h( {W}_h, \A_\Omega^K ) \nabla \eta_h \cdot \nabla \zeta_h
    + \B_{\adv,h}( {W}_h, \b_\Omega^K ) \eta_h \cdot \nabla \zeta_h \\
    & + ( \mdh \c_\Omega^K + \c_\Omega^K \nabla \cdot {W}_h ) \eta_h \zeta_h \dd x.
  \end{align*}

  Furthermore, there exists a constants $c > 0$ such that for all $t \in [0,T]$ and all $h \in (0,h_0)$ we have
  \begin{align*}
    \abs{ g_h( t; \eta_h, \zeta_h ) }
    & \le c \norm{ \eta_h }_{\H_h(t)} \norm{ \zeta_h }_{\H_h(t)}
    && \mbox{ for all } \eta_h, \zeta_h \in \H_h(t) \\
    \abs{ b_h( t; \eta_h, \zeta_h ) }
    & \le c \norm{ \eta_h }_{\V_h(t)} \norm{ \zeta_h }_{\V_h(t)}
    && \mbox{ for all } \eta_h, \zeta_h \in \V_h(t).
  \end{align*}
\end{lemma}

\begin{proof}
  The bilinear forms exist due to the more general \cref{lem:omegah-transport}.
  The estimates follow from \cref{lem:bulk-Wh-bounded}.
\end{proof}

\begin{theorem}
  \label{thm:bulk-fem-stability}
  There exists a unique solution of the finite element scheme \cref{eq:fem-bulk}. The solution $U_h$ satisfies the stability bound:
  \begin{equation}
    \sup_{t \in (0,T)} \norm{ U_h }_{\H_h(t)}^2 + \int_0^T \norm{ U_h }_{\V_h(t)}^2 \dd t
    \le c \norm{ U_{h,0} }_{\H_h(t)}^2.
  \end{equation}
\end{theorem}

\begin{proof}
  We apply the abstract result of \cref{thm:abs-stab}. It is left to check the required assumptions.
  The assumptions on $m_h$, \cref{eq:mh-symmetric} and \cref{eq:mh-bounded}, follow directly since $m_h$ is equal to the $\H_h(t)$ inner-product.
  That $t \mapsto a_h(t; \cdot, \cdot)$ is measurable \cref{eq:ah-meas} and the estimates on $a_h$, \cref{eq:ah-coercive} and \cref{eq:ah-bounded} follow in the same manner as \cref{thm:bulk-exist}.
  The transport formulae and estimates for $g_h$ and $b_h$, \cref{eq:ch-formula}, \cref{eq:ch-bounded} \cref{eq:bh-formula} and \cref{eq:bh-bounded}, 
  are shown in \cref{lem:bulk-fem-transport}.
\end{proof}

\subsection{Error analysis}
\label{sec:bulk-error}

The space $\S_h^\ell(t)$ is equipped with the following approximation property.
\begin{lemma}
  \label{lem:bulk-approx}
  For $\eta \in C(\Omega(t))$ there exists a Lagrangian interpolation operator $I_h \eta \in \S_h^\ell(t)$ that is well defined. 
  Furthermore, the following bounds hold for constants independent of $h$ and time:
  \begin{align}
    \label{eq:bulk-approx}
    \norm{ \eta - I_h \eta }_{L^2(\Omega(t))}
    + h \norm{ \nabla( \eta - I_h \eta )}_{L^2(\Omega(t))}
    & \le c h^{k+1} \norm{ \eta }_{\Z(t)}
    && \mbox{ for } \eta \in \Z(t) \\
    \label{eq:bulk-approx-0}
    \norm{ \eta - I_h \eta }_{L^2(\Omega(t))}
    + h \norm{ \nabla( \eta - I_h \eta )}_{L^2(\Omega(t))}
    & \le c h^{2} \norm{ \eta }_{\Z_0(t)}
    && \mbox{ for } \eta \in \Z_0(t).
  \end{align}
\end{lemma}

\begin{proof}
  We apply \cref{thm:bulk-global-lift-interp}.
  The second result applies the theorem in the obvious way. The first result applies the theorem with 
  $k=1$ noting that $P_1(\hat{K}) \subset P_k(\hat{K})$ and the inclusions for \cref{lem:bramble-hilbert} still hold.
\end{proof}

We can also use the lift to define an evolving lifted triangulation. For each $t \in [0,T]$ and $h \in (0,h_0)$, we define
\begin{equation*}
  \T_h^\ell(t) := \{ K^\ell(t) : K(t) \in \T_h(t) \}.
\end{equation*}
The edges of these curvilinear-simplicies evolve with a velocity ${w}_h$ which can be characterised as follows.
Let $X(t)$ be the trajectory of a point on $\Omega_h(t)$ according to the flow $\Phi^h_t$.
Then we have that
\begin{equation*}
  {W}_h( X(t), t ) = \dt X(t).
\end{equation*}
Now consider a point $Y(t) = \Lambda_h( X(t), t )$. The trajectory of $Y(t)$ defines the velocity field ${w}_h$ by
\begin{equation}
  \label{eq:bulk-wh}
  {w}_h( Y(t), t ) := \dt Y(t)
  = ( \partial_t \Lambda_h )( X(t), t ) + ( \nabla \Lambda_h )( X(t), t ) {W}_h( X(t), t ).
\end{equation}
Equivalently this means the flow $\Phi_t^\ell$ is given by
\begin{equation*}
  \Phi_{t^*}^\ell( y_0 ) = Y(t^*) \quad \mbox{ such that } \quad
  \dt Y(t) = {w}_h( Y(t), t ), \, Y(0) = y_0.
\end{equation*}

\begin{lemma}
  \label{lem:bulk-lift-transport}
  The pairs $\{ \H(t), \phi^\ell_t \}_{t \in [0,T]}$ and $\{ \V(t), \phi^\ell_t \}_{t \in [0,T]}$ are compatible and we may define a 
  material derivative $\mdell \eta$ for $\eta \in C^1_{(\H,\phi^\ell)}$ and transport formula: There exists a bilinear for $g_\ell( t; \cdot, \cdot ) \colon \H(t) \times \H(t) \to \R$ given by
  \begin{align*}
    g_\ell( t; \eta, \zeta )
    & := \int_{\Omega(t)} \eta \zeta \nabla \cdot w_h \dd x,
  \end{align*}
  such that
  \begin{equation}
    \dt m( t; \eta, \zeta )
    = m( t; \mdell \eta, \zeta ) + m( t; \eta, \mdell \zeta )
    + g_\ell( t; \eta, \zeta )
    \qquad
    \mbox{ for } \eta, \zeta \in C^1_{(\H,\phi^\ell)},
  \end{equation}
  and there exists a constant $c > 0$ such that for all $t \in [0,T]$ and $h \in (0,h_0)$ we have
  \begin{equation}
    \abs{ g_\ell( t; \eta, \zeta ) }
    \le c \norm{ \eta }_{\H(t)} \norm{ \zeta }_{\H(t)}
    \qquad
    \mbox{ for all } \eta, \zeta \in \H(t).
  \end{equation}
  Furthermore, we have a new transport formula for the $a$ bilinear form. There exists a bilinear form $b_\ell( t; \cdot, \cdot ) \colon \V(t) \times \V(t) \to \R$ given by
  \begin{align*}
    b_\ell( t; \eta, \zeta )
    & := \int_{\Omega(t)} \B( w_h, \A_\Omega ) \nabla \eta \cdot \nabla \zeta
      + \B_{\adv}( w_h, \b_\Omega ) \eta \cdot \nabla \zeta
      + ( \mdell \c_\Omega + \c_\Omega \nabla \cdot w_h ) \eta \zeta \dd x,
  \end{align*}
  such that
  \begin{equation}
    \dt a( t; \eta, \zeta )
    = a( t; \mdell \eta, \zeta ) + a( t; \eta, \mdell \zeta )
    + b_\ell( t; \eta, \zeta )
    \qquad
    \mbox{ for } \eta, \zeta \in C^1_{(\V,\phi^\ell)},
  \end{equation}
  and there exists a constant $c > 0$ such that for all $t \in [0,T]$ and $h \in (0,h_0)$ we have
  \begin{equation}
    \abs{ b_\ell( t; \eta, \zeta ) }
    \le c \norm{ \eta }_{\V(t)} \norm{ \zeta }_{\V(t)}
    \qquad
    \mbox{ for all } \eta, \zeta \in \V(t).
  \end{equation}
\end{lemma}

\begin{proof}
  We simply apply \cref{lem:bulk-lifted-transport}.
\end{proof}

The bounds in \cref{lem:Lambdah-bound} allow to show some of the abstract error bounds required use the result of \cref{thm:error}.

\begin{lemma}
  \label{lem:bulk-w-md-errors}
  We have the estimate:
  \begin{equation}
    \label{eq:bulk-w-error}
    \abs{ {w} - {w}_h }_{L^\infty(\Omega(t))}
    + h \abs{ \nabla ( {w} - {w}_h ) }_{L^\infty(\Omega(t))}
    \le c h^{k+1}.
  \end{equation}
  If $\eta \in C^1_\H \cap C^0_\V$ then $\eta \in C^1_{(\H,\phi^\ell)}$ and if $\eta \in C^1_\V \cap C^0_{\Z_0}$ then $\eta \in C^1_{(\V,\phi^\ell)}$. Moreover
  \begin{subequations}
    \label{eq:bulk-md-error}
    \begin{align}
      \norm{ \md \eta - \mdell \eta }_{L^2(\Omega(t))}
      & \le c h^{k+1} \norm{ \eta }_{H^1(\Omega(t))}
      && \mbox{ for } \eta \in C^1_{\V} \\
      \norm{ \nabla( \md \eta - \mdell \eta ) }_{L^2(\Omega(t))}
      & \le c h^{k} \norm{ \eta }_{H^2(\Omega(t))}
      && \mbox{ for } \eta \in C^1_{\Z_0}.
    \end{align}
  \end{subequations}
\end{lemma}

\begin{proof}
  We write for $x \in \Omega_h(t)$ that
  \begin{align*}
    & {w}( \Lambda_h( x, t), t )
    - {w}_h( \Lambda_h( x, t ) ) \\
    & = \big( {w}( \Lambda_h( x, t ), t )
    - {W}_h( x, t ) \big)
    + \big( {W}_h( x, t ) - {w}_h( x, t ) \big) \\
    & = \big( {w}( \Lambda_h( x, t ), t )
    - {W}_h( x, t ) \big)
    + \big( {W}_h( x, t ) - \mdh \Lambda_h( x,t ) \big).
  \end{align*}
  This allows us to apply the interpolation theorem (\cref{cor:bulk-global-inv-lift-interp}) to the embedded velocity with the 
  estimate \cref{eq:mdLambdah-estimate} to achieve the estimate \cref{eq:bulk-w-error}.
  The inclusions and bounds \cref{eq:bulk-md-error} follow from a simple calculation finding $\mdell \eta - \md \eta$ for appropriate $\eta$ and \cref{eq:bulk-w-error}.
\end{proof}

For the remainder of this section, we will take $\A_\Omega^h = \A_\Omega^{-\ell}$, $\b_\Omega^h = \b_\Omega^{-\ell}$ and $\c_\Omega^h = \c_\Omega^{-\ell}$.
and assume that $\A_\Omega$ is of class $C^2$ in space.

\begin{lemma}
  \label{lem:bulk-geom-pert}
  There exists a constant $c > 0$ such that for all $t \in [0,T]$ and all $h \in (0,h_0)$ the following holds for all $\eta_h, \zeta_h \in \V_h(t)$ with lifts $\eta_h^\ell, \zeta_h^\ell$:
  \begin{align}
    \label{eq:m-error-bulk}
    \abs{ m( t; \eta_h^\ell, \zeta_h^\ell ) - m_h( t; \eta_h, \zeta_h ) }
    & \le c h^{k+1} \norm{ \eta_h^\ell }_{\V(t)} \norm{ \zeta_h^\ell }_{\V(t)} \\
    \label{eq:g-error-bulk}
    \abs{ g_\ell( t; \eta_h^\ell, \zeta_h^\ell ) - g_h( t; \eta_h, \zeta_h ) }
    & \le c h^{k+1} \norm{ \eta_h^\ell }_{\V(t)} \norm{ \zeta_h^\ell }_{\V(t)} \\
    \label{eq:a-error-bulk}
    \abs{ a( t; \eta_h^\ell, \zeta_h^\ell ) - a_{h}( t; \eta_h, \zeta_h ) }
    & \le c h^{k} \norm{ \eta_h^\ell }_{\V(t)} \norm{ \zeta_h^\ell }_{\V(t)} \\
    \label{eq:b-error-bulk}
    \abs{ b_\ell( t; \eta_h^\ell, \zeta_h^\ell ) - b_{h}( t; \eta_h, \zeta_h ) }
    & \le c h^{k} \norm{ \eta_h^\ell }_{\V(t)} \norm{ \zeta_h^\ell }_{\V(t)} \\
    \label{eq:gt-error-bulk}
    \abs{ g_\ell( t; \eta_h^\ell, \zeta_h^\ell ) - g( t; \eta_h^\ell, \zeta_h^\ell ) }
    & \le c h^{k} \norm{ \eta_h^\ell }_{\V(t)} \norm{ \zeta_h^\ell }_{\V(t)} \\
    \label{eq:bt-error-bulk}
    \abs{ b_\ell( t; \eta_h^\ell, \zeta_h^\ell ) - b( t; \eta_h^\ell, \zeta_h^\ell ) }
    & \le c h^{k} \norm{ \eta_h^\ell }_{\V(t)} \norm{ \zeta_h^\ell }_{\V(t)}.
  \end{align}
  For $\eta, \zeta \in \Z_0(t)$ with inverse lifts $\eta^{-\ell}, \zeta^{-\ell}$, we have
  \begin{align}
    \label{eq:a-error2-bulk}
    \abs{ a( t; \eta, \zeta ) - a_{h}( t; \eta^{-\ell}, \zeta^{-\ell} ) }
    & \le c h^{k+1} \norm{ \eta }_{\Z_0(t)} \norm{ \zeta }_{\Z_0(t)} \\
    \label{eq:b-error2-bulk}
    \abs{ b_\ell( t; \eta, \zeta ) - b_{h}( t; \eta^{-\ell}, \zeta^{-\ell} ) }
    & \le c h^{k+1} \norm{ \eta }_{\Z_0(t)} \norm{ \zeta }_{\Z_0(t)} \\
    \label{eq:amd-error-bulk}
    \abs{ a( t; \mdell \eta, \zeta ) - a_{h}( t; \mdh \eta^{-\ell}, \zeta^{-\ell} ) }
    & \le c h^{k+1} ( \norm{ \eta }_{\Z_0(t)} + \norm{ \md \eta }_{\Z_0(t)} ) \norm{ \zeta }_{\Z_0(t)}.
  \end{align}
\end{lemma}

\begin{proof}
  We use the notation $J_h = \sqrt{ \det\big( ( \nabla \Lambda_h )^t ( \nabla \Lambda_h ) \big) }$. We note that $\Lambda_h$ is the identity on elements away from the boundary and hence $J_h = 1$ on these elements.
  We denote by $[ J_h \neq 1 ]$ the union of boundary elements where $J_h \neq 1$ and note that we have
  \[
    [ J_h \neq 1 ]^\ell := \{ \Lambda_h( x, t ) : x \in [ J_h \neq 1 ] \} \subset \{ x \in \Omega : -h < d( x, t ) \le 0 \}.
  \]
  For \cref{eq:m-error-bulk}, we have
  \begin{equation*}
    \int_{\Omega(t)} \eta_h^\ell \zeta_h^\ell \dd x
    = \int_{\Omega_h(t)} \eta_h \zeta_h J_h \dd x.
  \end{equation*}
  Hence, we have, applying \cref{eq:detDLambdah-bound} and \cref{lem:bulk-lift-stab}, that
  \begin{multline*}
    \abs{ m( t; \eta_h^\ell, \zeta_h^\ell ) - m_h( t; \eta_h, \zeta_h ) }
    = \abs{ \int_{\Omega_h(t)} \eta_h \zeta_h \left(J_h -1 \right) } \dd x \\
    \le \int_{[ J_h \neq 1]} \abs{ \eta_h } \abs{ \zeta_h } \abs{ J_h -1 } \dd x
    \le c h^k \norm{ \eta_h }_{\H_h(t)} \norm{ \zeta_h^\ell }_{\H_h(t)}.
  \end{multline*}
  Applying the narrow band trace inequality \cref{lem:narrow-band} this can be improved to
  \begin{multline*}
    \abs{ m( t; \eta_h^\ell, \zeta_h^\ell ) - m_h( t; \eta_h, \zeta_h ) }
    \le \int_{[ J_h \neq 1]} \abs{ \eta_h } \abs{ \zeta_h } \abs{ J_h -1 } \dd x \\
    \le c \abs{ [ J_h \neq 1 ]^\ell} h^k \norm{ \eta_h^\ell }_{L^2( [J_h \neq 1]^\ell )} \norm{ \zeta_h^\ell }_{L^2( [J_h \neq 1]^\ell)}
    \le c h^{k+1}\norm{ \eta_h }_{\V_h(t)} \norm{ \zeta_h^\ell }_{\V_h(t)}.
  \end{multline*}

  Similarly we have
  \begin{align*}
    & \int_{\Omega(t)} \A_\Omega \nabla \eta_h^\ell \cdot \nabla \zeta_h^\ell
    + \b_\Omega \eta_h^\ell \cdot \nabla \zeta_h^\ell
    + \c_\Omega \eta_h^\ell \zeta_h^\ell \dd x \\
    & = \int_{[ J_h \neq 1]} J_h ( \nabla \Lambda_h ) \A_\Omega^h ( \nabla \Lambda_h )^t \nabla \eta_h \cdot \nabla \zeta_h
      + J_h ( \nabla \Lambda_h ) \b_\Omega^h \eta_h \cdot \nabla \zeta_h \\
    & \qquad\qquad + J_h \c_\Omega^h \eta_h \zeta_h \dd x.
  \end{align*}
  Applying \cref{eq:DLambdah-bound}, \cref{eq:detDLambdah-bound} and \cref{lem:bulk-lift-stab} we can see \cref{eq:a-error-bulk}.
  Again, by applying \cref{lem:narrow-band} we show the improved bound in \cref{eq:a-error2-bulk}.

  We apply a similar process to the proof of \citep[Lemma 3.3.14]{Ran13} combined with the results of \cref{lem:Lambdah-bound} 
  and the narrow band trace inequality (\cref{lem:narrow-band}) to show the estimates \cref{eq:g-error-bulk}, \cref{eq:b-error-bulk} and \cref{eq:b-error2-bulk}.

  Finally, \cref{eq:gt-error-bulk,eq:bt-error-bulk} follow from the estimate \cref{eq:bulk-w-error}.
  The bound \cref{eq:amd-error-bulk} follows from \cref{eq:a-error2-bulk}, the fact that $(\mdell \eta)^{-\ell} = \mdh \eta^{-\ell}$ and the estimate \cref{eq:bulk-md-error}.
  Indeed we can compute that
  \begin{align*}
    & \abs{ a( t; \mdell \eta, \zeta ) - a_{h}( t; \mdh \eta^{-\ell}, \zeta^{-\ell} ) } \\
    & \le \abs{ a( t; ( \mdell \eta - \md \eta ), \zeta ) - a_{h}( t; ( \mdh \eta^{-\ell} - ( \md \eta )^{-\ell} ), \zeta^{-\ell} ) } \\
    & \qquad\qquad + \abs{ a( t; \md \eta, \zeta ) - a_{h}( t; ( \md \eta )^{-\ell}, \zeta^{-\ell} ) } \\
    & \le c h^{2k} \norm{ \eta }_{\Z_0(t)} \norm{ \zeta }_{\V(t)} + c h^{k+1} \norm{ \md \eta }_{\Z_0(t)} \norm{ \zeta }_{\Z_0(t)}.
      && \qedhere
  \end{align*}
 \end{proof}

The remaining assumtion to verify  is the  estimate \cref{eq:b-bound2}.
\begin{lemma}
  \label{lem:bulk-b-bound2}
  For any $t \in [0,T]$, let $z, \pi_h z$ be as in \cref{eq:ritz} and $\eta = z - \pi_h z$ then for all $\zeta \in \Z_0(t)$, we have
  \begin{equation}
    \label{eq:bulk-b-bound2}
    \abs{ b( t; \eta, \zeta ) }
    \le c \left( \norm{ \eta }_{\H(t)} + h \norm{ \eta }_{\V(t)} + h^{k+1} \norm{ z }_{\Z(t)} \right) \norm{ \zeta }_{\Z_0(t)}.
  \end{equation}
\end{lemma}

\begin{proof}
  The proof is based on a duality argument from \citet{DouDup73}.
  Fix $t \in [0,T]$.
We use the Hilbert triple  $( (H^{1/2}(\Gamma(t)) )', L^2( \Gamma(t) ), H^{1/2}(\Gamma(t)) )$  (at each time $t \in [0,T]$) and identify
\begin{equation}
  \label{eq:L2-H12Gamma}
  {}_{( H^{1/2}( \Gamma(t) ))'} \langle \xi_\eta, \zeta \rangle_{H^{1/2}(\Gamma(t))} = ( \eta, \zeta )_{L^2( \Gamma(t) )} \mbox{ when } \eta \in L^2(\Gamma(t)), \zeta \in H^{1/2}( \Gamma(t) ).
\end{equation}
 For any $\eta \in H^1(\Omega(t))$, let $\xi_\eta \in H^{1/2}(\Gamma(t))$ be the solution of the variational problem
  \[
    ( \xi_\eta, \zeta )_{H^{1/2}(\Gamma(t))}
    = {}_{(H^{1/2}(\Gamma(t)))'} \langle \eta, \zeta \rangle_{H^{1/2}(\Gamma(t))}
    \qquad \mbox{ for all } \zeta \in H^{1/2}(\Gamma(t)).
  \]
  By the trace theorem the trace of $\eta$ lies in $L^2(\Gamma(t))$ so that interpreting the duality product 
  on the right-hand side as an $L^2(\Gamma(t))$ inner-product \cref{eq:L2-H12Gamma}, we see that
  \begin{subequations}
    \begin{align}
      \label{eq:bulk-new-dual-1}
      \norm{ \xi_\eta }_{H^{1/2}(\Gamma(t))} & = \norm{ \eta }_{(H^{1/2}(\Gamma(t)))'} \\
      \label{eq:bulk-new-dual-2}
      \int_{\Gamma(t)} \xi_\eta \eta \dd \sigma & = \norm{ \eta }_{(H^{1/2}(\Gamma(t)))'}^2.
    \end{align}
  \end{subequations}
  This is a simple consequence of the trace theorem and the Riesz representation theorem.

  As in \cref{sec:ritz}, we introduce $\kappa > 0$ such that there exists a constant $c>0$ such that
  \[
    a^\kappa( t; \zeta, \zeta ) := a( t; \zeta, \zeta ) + \kappa m( t; \zeta, \zeta )
    \ge c \norm{ \zeta }_{\V(t)}^2
    \quad \mbox{ for all } \zeta \in \V(t).
  \]
  We wish to estimate $\eta=z - \pi_h z$ in the $(H^{1/2}(\Gamma(t)))'$-norm. We consider the dual problem: Given $\xi_\eta$, find $\dual{\xi_\eta} \in H^1(\Omega(t))$ such that
  \begin{equation}
    \label{eq:bulk-new-dual}
    a^\kappa( \zeta, \dual{\xi_\eta} ) = \int_{\Gamma(t)} \xi_\eta \zeta \dd \sigma
    \qquad \mbox{ for all } \zeta \in H^1(\Omega(t)).
  \end{equation}
  The problem is a weak form of an elliptic problem with inhomogenous Neumann boundary data and has a unique solution which satisfies the regularity estimate \citep{LadUra68,GilTru01}
  \begin{equation}
    \label{eq:bulk-new-dual-reg}
    \norm{ \dual{\xi_\eta} }_{H^2(\Omega(t))}
    \le C \norm{ \xi_\eta }_{H^{1/2}(\Gamma(t))}
    = C \norm{ \eta }_{(H^{1/2}(\Gamma(t)))'},
  \end{equation}
  for a constant independent of time $t$.

  We see using \cref{eq:bulk-new-dual-2} and $\zeta = \eta$ in  \cref{eq:bulk-new-dual}, that
  \begin{equation}
    \label{eq:bulk-new-dual-3}
    \norm{ \eta }_{(H^{1/2}(\Gamma(t)))'}
    = \int_{\Gamma(t)} \xi_\eta \eta \dd \sigma
    = a^\kappa( \eta, \dual{\xi_\eta} )
    = a^\kappa( \eta, \dual{\xi_\eta} - I_h \dual{\xi_\eta} ) - a^\kappa( \eta, I_h \dual{\xi_\eta} ).
  \end{equation}
  For the first term on the right hand side of \cref{eq:bulk-new-dual-3}, we apply the boundedness of 
  $a^\kappa$ and the interpolation estimate \cref{eq:bulk-approx-0} to see
  \begin{multline*}
    \abs{ a^\kappa( \eta, \dual{\xi_\eta} - I_h \dual{\xi_\eta} ) } \\
    \le c \norm{ \eta }_{H^1(\Omega(t))} \norm{ \dual{\xi_\eta} - I_h \dual{\xi_\eta} }_{H^1(\Omega(t))}
    \le c h \norm{ \eta }_{H^1(\Omega(t))} \norm{ \dual{\xi_\eta} }_{H^2(\Omega(t))}.
  \end{multline*}
  For the second term on the right hand side, we apply the geometric estimates \cref{eq:a-error-bulk}, \cref{eq:m-error-bulk} 
  and \cref{eq:a-error2-bulk} and the interpolation estimate \cref{eq:bulk-approx-0} to see that
  \begin{align*}
    \abs{ a^\kappa( \eta, I_h \dual{\xi_\eta} ) }
    & = \abs{ a^\kappa( \pi_h z, I_h \dual{\xi_\eta} ) - a^\kappa_h( \Pi_h z, \tilde{I}_h \dual{\xi_\eta} ) } \\
    & \le \abs{ a^\kappa( \pi_h z - z, I_h \dual{\xi_\eta} ) - a^\kappa_h( \Pi_h z - z^{-\ell}, \tilde{I}_h \dual{\xi_\eta} ) } \\
    & \quad + \abs{ a^\kappa( z, I_h \dual{\xi_\eta} - \dual{\xi_\eta} ) - a^\kappa_h( z^{-\ell}, \tilde{I}_h \dual{\xi_\eta} - \dual{\xi_\eta}^{-\ell} ) } \\
    & \quad + \abs{ a^\kappa( z, \dual{\xi_\eta} ) - a^\kappa_h( z^{-\ell}, \dual{\xi_\eta}^{-\ell} ) } \\
    & \le c h^{k+1} \norm{ z }_{\Z_0(t)} \norm{ \dual{\xi_\eta} }_{\Z_0(t)}.
  \end{align*}
  Hence, combining the previous estimates and applying the dual regularity \cref{eq:bulk-new-dual-reg}, we infer that
  \begin{align*}
    \norm{ \eta }_{(H^{1/2}(\Gamma(t)))'}^2
    & \le c \left( h \norm{ \eta }_{H^1(\Omega(t))} + h^{k+1} \norm{ z }_{\Z_0(t)} \right) \norm{ \dual{\xi_\eta} }_{\Z_0(t)} \\
    & \le c \left( h \norm{ \eta }_{H^1(\Omega(t))} + h^{k+1} \norm{ z }_{\Z_0(t)} \right) \norm{ \eta }_{(H^{1/2}(\Gamma(t)))'},
  \end{align*}
  and we have shown that
  \begin{equation}
    \label{eq:bulk-eta-H12-est}
    \norm{ \eta }_{(H^{1/2}(\Gamma(t)))'}
    \le c \left( h \norm{ \eta }_{H^1(\Omega(t))} + h^{k+1} \norm{ z }_{\Z_0(t)} \right).
  \end{equation}

  Returning to \cref{eq:b-bound2}, we now see that for $\eta = z - \pi_h z$ and $\varphi \in \Z_0(t)$ that
  \begin{align*}
    \abs{ b( t; \eta, \zeta ) }
    & \le \abs{ \int_{\Omega(t)} \B( w, \A_\Omega ) \nabla \eta \cdot \nabla \zeta \dd x }
      + \abs{ \int_{\Omega(t)} \B_{\adv}( w, \b_\Omega ) \eta \cdot \nabla \zeta \dd x } \\
    & \quad + \abs{ \int_{\Omega(t)} (\md \c_\Omega + \c_\Omega \nabla \cdot w ) \eta \zeta \dd x } \\
    & =: I_1 + I_2 + I_3.
  \end{align*}
  For $I_1$, using the smoothness of $\A_\Omega$ and the divergence theorem, we have
  \begin{align*}
    I_1
    & = \int_{\Omega(t)} \B( w, \A_\Omega ) \nabla \eta \cdot \nabla \zeta \dd x \\
    & = \sum_{i,j=1}^{n+1} \int_{\Omega(t)} \B( w, \A_\Omega )_{ij} \partial_{x_j} \eta \partial_{x_i} \zeta \dd x \\
    & = \sum_{i,j=1}^{n+1} \int_{\Omega(t)} \partial_{x_j} \big( \B( w, \A_\Omega )_{ij} \eta \partial_{x_i} \zeta \big) \dd x
      - \sum_{i,j=1}^{n+1} \int_{\Omega(t)} \eta \sum_{i,j=1}^{n+1} \partial_{x_j} \big( \B( w, \A_\Omega )_{ij} \partial_i \zeta \big) \dd x \\
    & = \sum_{i,j=1}^{n+1} \int_{\Gamma(t)} \eta \B( w, \A_\Omega )_{ij} \partial_{x_i} \zeta \nu_j \dd \sigma
      - \sum_{i,j=1}^{n+1} \int_{\Omega(t)} \eta \sum_{i,j=1}^{n+1} \partial_{x_j} \big( \B( w, \A_\Omega )_{ij} \partial_i \zeta \big) \dd x \\
    & =: I_{11} + I_{12}.
  \end{align*}
  We interpret the first term, $I_{11}$, on the right hand side as the duality pairing between $H^{1/2}(\Gamma(t))$ and its dual \cref{eq:L2-H12Gamma} and apply \cref{eq:bulk-eta-H12-est} and the trace theorem to see
  \[
    \abs{ I_{11} }
    \le c \sum_{i=1}^{n+1} \norm{ \eta }_{(H^{1/2}(\Gamma(t)))'} \norm{ \partial_{x_i} \zeta }_{H^{1/2}(\Gamma(t))}
    \le  c \left( h \norm{ \eta }_{H^1(\Omega(t))} + h^{k+1} \norm{ z }_{\Z_0(t)} \right) \norm{ \zeta }_{\Z_0(t)}.
  \]
  For $I_{12}$, we use the smoothness of $\A$ and $w$ to see
  \[
    \abs{ I_{12} }
    \le c \norm{ \eta }_{\H(t)} \norm{ \zeta }_{\Z_0(t)}.
  \]
  Similarly for $I_2$ and $I_3$ we have
  \[
    \abs{ I_{2} } + \abs{ I_3 }
    \le c \norm{ \eta }_{\H(t)} \norm{ \zeta }_{\Z_0(t)}.
    \qedhere
  \]
\end{proof}

Finally, we have collected all the estimates we require to show the error bound.

\begin{theorem}
  \label{thm:bulk-error}
  Let $\A_\Omega \in C^2( \Omega_T )$, $\A_\Omega^h = \A_\Omega^{-\ell}$, $\b_\Omega^h = \b_\Omega^{-\ell}$ and $\c_\Omega^h = \c_\Omega^{-\ell}$ and let $\utext \in L^2_\V$ be the solution of \cref{eq:weak-bulk} which we assume satisfies
  \begin{equation}
    \sup_{t \in (0,T)} \norm{ \utext }_{\Z(t)}^2 + \int_0^T \norm{ \md \utext }_{\Z(t)}^2 \dd t
    \le C_u.
  \end{equation}
  Let $U_h \in C^1_{\V_h}$ be the solution of the finite element scheme \cref{eq:fem-bulk} and denote its lift by $u_h = U_h^\ell$.
  Then we have the following error estimate
  \begin{equation}
    \sup_{t \in (0,T)} \norm{ \utext - u_h }_{\H(t)}^2
    + h^2 \int_0^T \norm{ \utext - u_h }_{\V(t)}^2 \dd t
    \le c \norm{ \utext - u_{h,0} }_{\H(t)}^2 + c h^{2k+2} C_u.
  \end{equation}
\end{theorem}

\begin{proof}
  The proof is performed by applying the abstract result from \cref{thm:error}.
  We know the lift is stable from \cref{lem:bulk-lift-stab}.
  The existence and boundedness of $g_\ell$ and $b_\ell^\kappa$ are dealt with in \cref{lem:bulk-lift-transport}.
  The interpolation properties \cref{eq:interp-Z0} and \cref{eq:interp-Z} are shown in \cref{lem:bulk-approx}.
  The geometric perturbation estimates 
  \cref{eq:m-error,eq:c-error,eq:ct-error,eq:a-error,eq:b-error,eq:bt-error,eq:a-error2,eq:b-error2,eq:md-error,eq:mdV-error} 
  are shown in \cref{lem:bulk-geom-pert,lem:bulk-w-md-errors}  and \cref{eq:amd-error2} follow from 
  \cref{eq:m-error,eq:a-error,eq:a-error2} and that fact that lifting and taking material derivatives commute (\cref{lem:commutation-md-lift}).
  We have shown \cref{eq:b-bound2} in \cref{lem:bulk-b-bound2}.
\end{proof}

\section{Application II: Parabolic equation on an evolving surface}\label{SURFPDE}
\label{sec:application1}

In this section, we will formulate and analyse a finite element method for a parabolic equation posed on an evolving 
surface \cref{eq:surf-eqn}. We begin with some notation and a definition of the initial value problem. Our numerical  approach will be to 
first discretise the domain and construct isoparametric surface  finite element spaces based on the general theory in \cref{sec:sfem,sec:lift-fem}.
We will consider isoparametric finite elements of order $k$, which will be fixed throughout this section.
We will analyse this method using both techniques from the general surface finite element constructions in \cref{sec:sfem} and the abstract theory from \cref{AbstractNA}.

\subsection{The domain and function spaces}
Let $0<T < \infty$ and $\Gamma_0 \subset \R^{n+1}$ be a compact
sufficiently smooth hypersurface without boundary. Let  $\{ \Gamma(t) \}_{t \in [0,T)}$ be  a family of evolving
hypersurfaces such that  there exists a sufficiently smooth mapping (called the \emph{flow map})
$\Phi_{(\cdot)}(\cdot)\colon [0,T] \times\Gamma_0  \rightarrow \R^{n+1}$ such that:
\begin{enumerate}
\item $\Phi_t(\cdot)$ is a diffeomorphism of $\Gamma_0$ onto $\Gamma(t)$ for each $t \in [0,T)$;
\item $\Phi_0(\cdot)=\id_{|\Gamma_0}$.
\end{enumerate}

We call
\begin{multline*}
w \colon S_T \left( :=  \bigcup_{t \in (0,T)} \Gamma(t) \times \lbrace t \rbrace \right)
 \rightarrow \R^{n+1}, \\
w(x,t):= \frac{\partial \Phi_t}{\partial t}\bigl( (\Phi_t(\cdot))^{-1}(x) \bigr), \quad x \in \Gamma(t),
t\in (0,T)
\end{multline*}
the material velocity field of $\Gamma(t)$ which also satisfies
\begin{equation} \label{veldef}
\frac{\partial}{\partial t}\Phi_t(p) = w(\Phi_t(p),t), \; t \in (0,T), \quad
\Phi_0(p) = p.
\end{equation}
We denote  $\H(t) = L^2(\Gamma(t))$, $\V(t) = H^1(\Gamma(t))$ and  $\V^*(t) = ( H^1(\Gamma(t)) )^*$. %
We will also make use of the spaces $\Z_0(t) = H^2(\Gamma(t))$ and $\Z(t) = H^{k+1}( \Gamma(t) )$.
We see that $Z(t) \subset \Z_0(t) \subset \V(t)$ for each $t \in [0,T]$ and the inclusions are uniformly continuous.
We define the push-forward operator $\phi_t$ by
\begin{equation}
  \label{eq:surface-pushforward}
  (\phi_t \eta)( \cdot, t ) := \eta( \Phi_{-t}( \cdot ) ) \qquad \mbox{ for } \eta \in \H_0.
\end{equation}
 $( L^2( \Gamma(t) ), \phi_t )_{t \in [0,T]}$ and    $( H^1( \Gamma(t) ), \phi_t )_{t \in [0,T]}$ 
are  compatible pairs (\cref{def:compatibility}), the spaces $L^2_\H$, $L^2_\V$ and  $L^2_{\V^*}$ are well defined (c.f. \cref{eq:L2X}) and 
we have a well defined strong material derivative denoted by $\md$ \cref{eq:strong-md} (see \citealt[Lem.~3.2,3.3]{Vie14}).
$( \V(t), \H(t), \V^*(t) )_{t \in [0,T]}$ form a Hilbert triple (\cref{def:evolving-hilbert-triple}).
We also have that $( \Z_0(t), \phi_t|_{\Z_0(t)} )_{t \in [0,T]}$ and $( \Z(t), \phi_t|_{\Z_0(t)} )_{t \in [0,T]}$ are compatible pairs.
For details see \citet{AlpEllSti15b-pp}.

{
  \begin{remark}
    \label{rem:surf-smoothness}
    For the well posedness of the partial differential equation \cref{Surfprob} we require that the boundary $\Gamma$ is a $C^{2}$-hypersurface 
    and that the flow map $\Phi_{(\cdot)}( \cdot ) \in C^2( 0,T; C^2( \Omega_0 ) )$. See \citet{AlpEllSti15b-pp} for more details.
    For the approximation properties we derive we require that $\Gamma$ is a $C^{k+2}$-hypersurface and that
     $\Phi_{(\cdot)}( \cdot ) \in C^{k+2}( 0,T; C^{k+2}( \Omega_0) )$ with $\Phi_{t} \colon \Omega_0 \to \Omega(t)$ and $\Phi_{-t} \colon \Omega(t) \to \Omega$ both of class $C^{k+2}$.
  \end{remark}
}

{
We introduce a signed distance function for a closed surface $\Gamma(t)$.
We assume that $\Gamma(t) = \partial \Omega(t)$ is the boundary of an open bounded domain.
The oriented distance function for $\Gamma(t)$ is defined by
\begin{equation*}
  d(x,t) =
  \begin{cases}
    \inf\{ \abs{ x - y } : y \in \Gamma(t) \} & \mbox{ for } x \in \R^{n+1} \setminus \bar\Omega(t) \\
    -\inf\{ \abs{ x - y } : y \in \Gamma(t) \} & \mbox{ for } x \in \Omega(t).
  \end{cases}
\end{equation*}
We can orient $\Gamma(t)$ by choosing the unit normal $\nu$ as
\begin{equation}
  \label{eq:nu-defn}
  \nu( x, t ) = \nabla d ( x, t ) \qquad \mbox{ for } x \in \Gamma(t).
\end{equation}
This allows us to define the (extended) Weingarten map by $\mathbb{H} := \{\mathbb H_{ij}\}=\{d_{x_ix_j}\}$ and 
the mean curvature by $H_\kappa := \trace \mathbb H$. 
Note that, for example when $n=2$, this definition of $H_\kappa$ is the sum of the 
principal curvatures   rather than the mean.
For each time $t \in [0,T]$, there exists a narrow band $\N(t)$ such that the distance function $d( \cdot, t )$ is smooth and, for each $x \in \N(t)$ 
there exists a unique point $p(x,t) \in \Gamma(t)$ such that (see Lemma 14.16 of \cite{GilTru01} and \citet{Foote1984})
\begin{equation}
  \label{eq:cp}
  x = p(x,t) + d( x, t ) \nu( p(x,t), t ).
\end{equation}
We call the operator $p(\cdot,t) \colon \N(t) \to \Gamma(t)$ the normal projection operator and note that $p(\cdot,t)$ is also smooth.
We use this projection to extend the unit normal and Weingarten map to be defined in $\N(t)$ by $\nu( x, t ) = \nu( p(x,t), t )$ and $\mathbb{H}( x, t ) = \mathbb{H}( p(x,t), t )$.
See \citet[Lem. 14.16]{GilTru01}; \citet{Foote1984} for more details.
}

 \subsection{The initial value problem}
 \label{sec:surf-prob}
  We assume that $\A_\Gamma \in C^1( S_T; \R^{(n+1)\times(n+1)} )$ is  a symmetric diffusion tensor which 
maps the tangent space of $\Gamma(t)$ at a point into itself, and there exists $a_0 > 0$ such that for all $t \in [0,T]$
\begin{equation*}
  \A_\Gamma(\cdot, t) \xi \cdot \xi \ge a_0 \abs{ \xi }^2
  \quad
  \mbox{ for all } \xi \in \R^{n+1}, \xi \cdot \nu( \cdot, t ) = 0.
\end{equation*}
$\b_\Gamma \in C^1( S_T ; \R^{n+1})$ is a smooth tangential vector field  and  $\c_\Gamma\in C^1( S_T )$ is a smooth scalar field. 

We consider the initial value  problem
\begin{problem}
  \label{Surfprob} Find $\emph \utext$ such that
  \begin{subequations}
    \begin{align}
      \md \emph \utext
      +\nabla_\Gamma\cdot (\b_\Gamma  \emph \utext)  -\nabla_\Gamma\cdot (\A_\Gamma \nabla_\Gamma \emph \utext) +\c_\Gamma  \emph \utext +(\nabla_\Gamma\cdot w )\utext &=0
      && \mbox{on} ~~\Gamma(t) \\
       \emph \utext(0)
    &  =\emph \utext_0
      ~~&&\mbox{on} ~~\Gamma_0.
    \end{align}
  \end{subequations}
\end{problem}
\begin{remark}
  Writing $w = w_\tau + w_\nu$ for a decomposition of $w$ into tangential and normal components,
the problem  \cref{eq:surf-eqn} is recovered by setting
$\A_\Gamma=\la_\Gamma, \b_\Gamma=\lb_\Gamma-w_\tau$ and $\c_\Gamma=\lc_\Gamma - \nabla_\Gamma \cdot w_\nu$.
\end{remark}

\subsubsection{Transport formulae}
  The following transport formulae hold  on $\omega(t)\equiv \Gamma(t)$ and on portions $\{ \omega(t)\subset \Gamma(t)\}_{t \in [0,T]}$ of the domain $\{ \Gamma(t) \}_{t \in [0,T]}$, 
  which follow the flow $\omega(t) = \Phi_t( \omega(0) )$ for $t \in [0,T]$, and have Lipschitz boundaries at each time 
:-
\begin{itemize}
\item
 For $\eta, \zeta \in C^1_\H$
\begin{equation}
  \label{eq:surface-transport}
  \dt \int_{\omega(t)} \eta \zeta \dd \sigma
  = \int_{\omega(t)} ( \md \eta \zeta + \eta \md \zeta ) \dd \sigma
  + g( t; \eta, \zeta ),
\end{equation}
where
\begin{equation*}
  g( t; \eta, \zeta ) = \int_{\omega(t)} \eta \zeta \nabla_{\Gamma} \cdot {w} \dd \sigma.
\end{equation*}
More precisely, we can show that \cref{as:c-exists} holds for $\H(t) = L^2(\Gamma(t))$ \citep[Sec.~4.1]{AlpEllSti15b-pp}.

\item
For $\eta, \zeta \in C^1_\V$, we have the identity
\begin{equation}
  \label{eq:transport-dirichlet}
  \begin{aligned}
  \dt \int_{\omega(t)} \A_\Gamma \nabla_\Gamma \eta \cdot \nabla_\Gamma \zeta \dd \sigma
  & = \int_{\Gamma(t)} \A_\Gamma\nabla_\Gamma \md \eta \cdot \nabla_\Gamma \zeta
  + \A_\Gamma \nabla_\Gamma \eta \cdot \nabla_\Gamma \md \zeta \dd \sigma \\
  & \qquad + \int_{\Gamma(t)} \B( {w}, \A_\Gamma ) \nabla_\Gamma \eta \cdot \nabla_\Gamma \zeta \dd \sigma,
  \end{aligned}
\end{equation}
where $\B( {w}, \A_\Gamma )$ is given by
\begin{equation}
  \label{eq:B-defn}
  \B( {w}, \A_\Gamma ) = \md \A_\Gamma + \nabla_\Gamma \cdot {w} \A_\Gamma - 2 D( {w},\A_\Gamma)
\end{equation}
and $D( {w},\A_\Gamma)$ is the rate of deformation tensor
\begin{equation*}
  D( {w},\A_\Gamma)_{ij} = \frac{1}{2} \sum_{k=1}^{n+1} \{(\A_\Gamma)_{ik} (\nabla_\Gamma)_k {w}_j + (\A_\Gamma)_{jk} (\nabla_\Gamma)_k {w}_i\}
  \qquad \mbox{ for } i,j = 1, \ldots, n+1.
\end{equation*}

\item
For $\eta \in C^1_\H$, $\zeta \in C^1_\V$, we have
\begin{equation}
  \label{eq:transport-advection}
  \begin{aligned}
    \dt
    \int_{\omega(t)} \b_\Gamma \eta \cdot \nabla_\Gamma \zeta \dd \sigma
    & = \int_{\omega(t)} \{\b_\Gamma \md \eta \cdot \nabla_\Gamma \zeta
    + \b_\Gamma \eta \cdot \nabla_\Gamma \md \zeta \} \dd \sigma \\
    & \qquad + \int_{\omega(t)} \B_{\adv}( {w}, \b_\Gamma ) \eta \cdot \nabla_\Gamma \zeta \dd \sigma,
  \end{aligned}
\end{equation}
where $\B_{\adv}( {w}, \b_\Gamma ) $ is given by
\begin{align*}
  \B_{\adv}( {w}, \b_\Gamma )
  = \md \b_\Gamma + \b_\Gamma \nabla_\Gamma \cdot {w}
  - \sum_{j=1}^{n+1}  (\b_\Gamma)_j (\nabla_\Gamma)_j {w}.
\end{align*}

\end{itemize}

\begin{remark}
The identity \cref{eq:transport-advection} is equivalent to \citep[Lem.~A.1]{EllVen15}.
The proof of \cref{eq:transport-dirichlet} and \cref{eq:transport-advection} follows from applying \cref{eq:surface-transport} with the identity
(in which $\nu$ is the unit normal to $\Gamma(t)$)%
\begin{align}
  \label{eq:md-grad-comm}
  \md (\nabla_\Gamma)_i \chi
  = (\nabla_\Gamma)_i \md \chi
  - \sum_{j=1}^{n+1} (\nabla_\Gamma)_i {w}_j (\nabla_\Gamma)_j \chi
  + \left(
  \nabla_\Gamma ( {w} \cdot \nu ) \cdot \nabla_\Gamma \chi
  - \sum_{j,l=1}^{n+1} {w}_j (\nabla_\Gamma)_l \chi (\nabla_\Gamma)_l \nu_j
  \right) \nu_i.
\end{align}
    \label{rem:surface-transport}
\end{remark}

\subsection{The bilinear forms and transport formulae}
We define   %
  \begin{align*}
    m( t; \eta, \zeta )
    & := \int_{\Gamma(t)} \eta \zeta \dd \sigma
    && \eta, \zeta \in \H(t)  \\
    g( t; \eta, \zeta )
    & := \int_{\Gamma(t)} \eta \zeta \nabla_\Gamma \cdot {w} \dd \sigma
    && \eta, \zeta \in \H(t)  \\
    a( t; \eta, \zeta )
    & := a_s( t; \eta, \zeta ) + a_n( t; \eta, \zeta )
    && \eta, \zeta \in \V(t) \\
    a_s( t; \eta, \zeta )
    & := \int_{\Gamma(t)} \A_\Gamma \nabla_\Gamma \eta \cdot \nabla_\Gamma \zeta
      + \c_\Gamma \eta \zeta \dd \sigma
    && \eta, \zeta \in \V(t) \\
    a_n( t; \eta, \zeta )
    & := \int_{\Gamma(t)} \b_\Gamma \eta \cdot \nabla_\Gamma \zeta \dd \sigma
    && \eta \in \H(t), \zeta \in \V(t).
  \end{align*}

We can apply  \cref{eq:surface-transport}  \cref{eq:transport-dirichlet} and \cref{eq:transport-advection} to see that:%
\begin{align}
  \label{eq:surface-m-transport}
  \dt m( t; \eta, \zeta )
  & = m( t; \md \eta, \zeta ) + m( t; \eta, \md \zeta )+ g( t; \eta, \zeta )
  && \mbox{ for all } \eta, \zeta \in C^1_\H \\
  \label{eq:bs-exist-surf}
  \dt a_s( t; \eta, \zeta )
  & = a_s( t; \md \eta, \zeta ) + a_s(t; \eta, \md \zeta )
    + b_s( t; \eta, \zeta )
  && \mbox{ for all } \eta, \zeta \in C^1_\V \\
  \label{eq:bn-exist-surf}
  \dt a_n( t; \eta, \zeta )
  & = a_n( t; \md \eta, \zeta ) + a_n(t; \eta, \md \zeta )
    + b_n( t; \eta, \zeta )
  && \mbox{ for all } \eta \in C^1_\H, \zeta \in C^1_\V
\end{align}
with
\begin{align*}
  b_s( t; \eta, \zeta )
  & := \int_{\Gamma(t)} \B( {w}, \A_\Gamma ) \nabla_\Gamma \eta \cdot \nabla_\Gamma \zeta + ( \md \c_\Gamma + \c_\Gamma \nabla_\Gamma \cdot {w} ) \eta \zeta \dd \sigma \\
  b_n( t; \eta, \zeta )
  & := \int_{\Gamma(t)} \B_{\adv}( {w}, \b_\Gamma ) \eta \cdot \nabla_\Gamma \zeta \dd \sigma.
\end{align*}
We define $b( t; \cdot, \cdot ) \colon \V(t) \times \V(t) \to \R$ to be
\begin{equation*}
  b( t; \eta, \zeta ) := b_s( t ; \eta, \zeta ) + b_n( t; \eta, \zeta ) \qquad \mbox{ for } \eta, \zeta \in \V(t).
\end{equation*}

\subsection{Variational formulation}
The weak formulation of \cref{Surfprob} becomes
\begin{problem}
  \label{pb:weak-surf}
  Given $\emph \utext_0 \in L^2(\Gamma_0)$, find $\emph \utext$ %
  such that for almost every  $t \in (0,T)$ we have
  \begin{equation}
    \label{eq:weak-surf}
    \begin{aligned}
      m( t; \md \emph \utext, \zeta )
      + g( t; \emph \utext, \zeta )
      + a( t; \emph \utext, \zeta )
      & = 0 \qquad \mbox{ for all } \zeta \in H^1(\Gamma(t)) \\
      \emph \utext( 0 ) & = \emph \utext_0.
    \end{aligned}
  \end{equation}
 \end{problem}

We have the following  well-posedness result.

\begin{theorem}
  \label{thm:exist-surf}
  There exists a unique solution $\emph \utext$ to \cref{eq:weak-surf} which satisfies
  \begin{align}
  \label{Hstabsurf}
 \sup_{t\in[0,T]}\norm{ \emph \utext }_{L^2(\Gamma(t))}^2  + \int_0^T  \norm{ \emph \utext }_{H^1(\Gamma(t))}^2 \dd t
    \le c \norm{ \emph \utext_0 }^2_{L^2(\Gamma_0)},\end{align}
    and if $u_0\in H^1(\Gamma_0)$ then
    \begin{align}
    \label{eq:2}
    \sup_{t\in[0,T]}\norm{ \emph \utext }_{H^1(\Gamma(t))}^2 +\int_0^T  \norm{ \md \emph \utext }_{L^2(\Gamma(t))}^2 \dd t
    \le c \norm{ \emph \utext_0 }^2_{H^1(\Gamma_0)}.
  \end{align}
\end{theorem}

\begin{proof}
  We simply check the assumptions required for \cref{thm:abs-exist}.  For \cref{as:c-exists,ass:evolving-space-equivalence}    
  we refer to  \citet[Sec. 4 and Sec. 5]{AlpEllSti15b-pp}.  \cref{ass:u0-approx} is a consequence of \cref{rem:ass:u0-approx}.
  The assumptions \cref{eq:m-symmetric} and \cref{eq:m-bounded} follow simply since $m( t; \cdot, \cdot )$ is the $\H(t) = L^2(\Gamma(t))$-inner product.
  \cref{eq:c-formula} holds from \cref{eq:surface-transport} and \cref{eq:c-bound} from the assumption that $\nabla_\Gamma \cdot {w}$ is uniformly bounded in space and time.
  The bilinear form $a(t;\cdot, \cdot)$ is differentiable in time, hence measurable \cref{eq:a-meas}.
  The coercivity of $a$ \cref{eq:as-coercive} follows from a standard calculation  since $\A_\Gamma$ is positive definite and $\c_\Gamma$ is bounded..
  The smoothness of $\A_\Gamma, \b_\Gamma$ and $\c_\Gamma$ imply that $a_s$ and $a_n$ are bounded \cref{eq:as-bounded}, \cref{eq:an-bounded}.
  The existence of the bilinear forms $b_s$ \cref{eq:bs-exist} and $b_n$ \cref{eq:bn-exist} follow from \cref{eq:bs-exist-surf} 
  and \cref{eq:bn-exist-surf} and the bounds \cref{eq:bs-bound} and \cref{eq:bn-bound} from the smoothness of $\A_\Gamma, \b_\Gamma$ and $\c_\Gamma$.
\end{proof}

\subsection{Discretisation of the domain and finite element spaces}

The first stage in developing our  finite element method is to define the approximate 
computational domain $\{ \Gamma_h(t) \}$.
We do this by constructing an isoparametric approximation of $\Gamma_0$ which is then pushed-forwards under an approximation to the flow $\Phi_t$.
The result will be that the Langrange points of $\Gamma_h(t)$ lie on the surface $\Gamma(t)$ for all times and evolving according to the velocity ${w}$.
In this sense, $\Gamma_h(t)$ can be considered as an interpolation of $\Gamma(t)$. Recall that $p$ denotes the normal 
projection operator \cref{eq:cp} and that $k$ is the order of isoparametric finite elements we will use.
Throughout the remainder of this section we will denote global discrete quantities with a subscript $h \in (0,h_0)$, which is related to 
element size. We assume implicitly that these structures exist for each $h$ in this range (see also \cref{rem:small-hk-surf}).

We will use the simplical, Lagrange reference element $(\hat{K}, \hat{P}, \hat{\Sigma})$ from \cref{ex:standard-fem}.
We start by constructing a family of time dependent element reference maps (\cref{def:sfe}) which defines an evolving conforming 
subdivision $\{ \T_h(t) \}$ (\cref{def:evolving-conforming-subdivision}) of an evolving triangulated hypersurface $\{ \Gamma_h(t) \}$ (\cref{def:evolving-triangulated-hypersurface}).

Let $\tilde\Gamma_{h,0}$ be a polyhedral approximation of $\Gamma_0$ equipped with a quasi-uniform, conforming subdivision $\tilde\T_{h,0}$ (\cref{def:conforming-subdivision}).
We restrict  the vertices of $\tilde\Gamma_{h,0}$ to  lie on the surface $\Gamma_0$ and denote by $\tilde{h}_0$ the maximum element domain diameter on $\tilde\Gamma_{h,0}$ (\cref{def:h}).
We assume that $\tilde{h}_0$ is sufficiently small and $\tilde{\Gamma}_{h,0}$ is such that $p( \cdot, 0 )$ is a smooth bijection from $\tilde{\Gamma}_{h,0}$ onto $\Gamma_0$.
More precisely, for each $\tilde{K} \in \tilde{\T}_{h,0}$, there exists an affine map $F_{\tilde{K}} \colon \hat{K} \to \R^{n+1}$ which satisfies 
the assumptions of \cref{def:sfe} so that we can define a surface finite element $(\tilde{K}, \tilde{P}, \tilde{\Sigma})$ using \cref{eq:sfe}.
Note that the vertices of $\tilde{K}$ lie on $\Gamma$ but the other Lagrange points may not.
We assume the collection of all Lagrange points satisfy \cref{eq:node-agree}.
We write $\tilde{I}$ for the local interpolation operator over $( \tilde{K}, \tilde\P, \tilde\Sigma )$ \cref{eq:3} %
and define an initial element reference map $F_{K_0} \colon \hat{K} \to \R^{n+1}$ by
\[
  F_{K_0}( \hat{x} ) = [ \tilde{I} (p(\cdot,0)) ]( F_{\tilde{K}}( \hat{x} ) ) \qquad \mbox{ for } \hat{x} \in \hat{K}.
\]
An example of element domains \cref{eq:element-domain} defined by $F_{K_0}$ is given for $k=1,2,3$ in \cref{fig:surf-domain}.
We denote the union of all elements constructed in this way $\Gamma_{h,0}$ which is a triangulated hypersurface (\cref{def:triangulaed-hypersurface}) 
equipped with a conforming subdivision $\T_{h,0}$ (\cref{def:conforming-subdivision}), the set of all element domains $K$.

\begin{figure}[tb]
  \centering
  \includegraphics{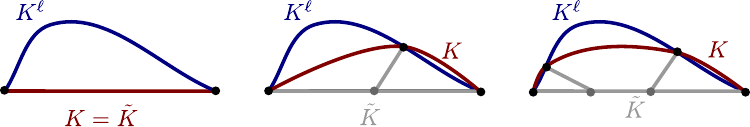}
  \caption{Examples of construction of an isoparametric surface finite element for $k=1$ (left), $k=2$ (centre), $k=3$ (right). The Lagrange nodes $\tilde{a}_i$ are shown in grey on $\tilde{K}$ which are lifted to $K \subset \Gamma_{h,0}$ (red) to the Lagrange nodes $\tilde{a}_i$ (black) which lie on the smooth surface $K^{\ell} \subset \Gamma_0$ (blue).}
  \label{fig:surf-domain}
\end{figure}

To complete the construction of $F_{K(t)} \colon \hat{K} \to \R^{n+1}$, for $K_0 \in \T_{h,0}$, we consider the element flow map $\Phi_t^K \colon K_0 \to K(t)$ (\cref{def:esfe-reference-map}) defined by
\begin{equation*}
  \Phi_t^K := I_{K_0} [ \Phi_t ( p( \cdot, 0 ) ) ] \qquad \mbox{ for all } K_0 \in \T_{h,0},
\end{equation*}
which is a bijection onto its image and we denote its inverse by $\Phi_{-t}^K$. An example is shown in \cref{fig:surf-domain2}.
Using \cref{eq:esfe-reference-map}, $\Phi_t^K$ defines an evolving reference element map:
\begin{equation*}
  F_{K(t)}( \hat{x} ) = \Phi_t^K( F_{K_0}(\hat{x} ) ) \qquad \mbox{ for } \hat{x} \in \hat{K}.
\end{equation*}
Using \cref{eq:sfe}, this defines an evolving surface finite element $( K(t), \P^K(t), \Sigma^K(t) )_{t \in [0,T]}$ (\cref{def:esfe}).

\begin{figure}[tb]
  \centering
  \includegraphics{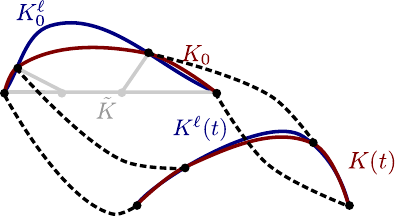}
  \caption{Examples of construction of an isoparametric evolving surface finite element for $k=3$. The Lagrange nodes ${a}_i(t)$ 
  follow the dashed black trajectories from the initial element $K_0 \subset \Gamma_{h,0}$ to a element $K(t) \subset \Gamma_h(t)$.}
  \label{fig:surf-domain2}
\end{figure}

We call the set of such elements $\T_h(t)$ and the union $\Gamma_h(t)$
and write a global discrete flow map $\Phi^h_t \colon \Gamma_{h,0} \to \Gamma_h(t)$ given element-wise by $\Phi^h_t |_{K_0} := \Phi^K_t$ (\cref{def:global-discrete-flow}).
By $h$ we denote the maximum mesh diameter over time \cref{eq:def-h}:
\[
  h := \max_{t \in [0,T]} \max_{K(t) \in \T_h(t)} \diam( \tilde{K}(t) ).
\]
Our construction implies that \cref{eq:surf-evolve-node-agree} holds since 
each of the transformation from $\tilde{\Gamma}_{h,0}$ are continuous.

We will use two Hilbert spaces defined over $\Gamma_h(t)$. First we will 
denote by $\H_h(t) := L^2( \Gamma_h(t) )$ and by $\V_h(t) := H^1_T( \T_h )$ \cref{eq:trace-broken-sobolev-space}.
We equip each space with a norm:
\begin{align}
  \label{eq:surf-Hh-norm}
  \norm{ \eta_h }_{\H_h(t)} & :=
  \norm{ \eta_h }_{L^2(\Gamma_h(t))}
  =
  \left(
    \int_{\Gamma_h(t)} \eta_h^2 \dd \sigma_h
                              \right)^{\frac{1}{2}} \\
  \label{eq:surf-Vh-norm}
  \norm{ \eta_h }_{\V_h(t)} & :=
  \norm{ \eta_h }_{H^1(\T_h(t))}
  =
  \left(
    \sum_{K(t) \in \T_h(t)}
    \int_{K(t)} \abs{ \nabla_K \eta_h }^2 + \eta_h^2 \dd \sigma_h
  \right)^{\frac{1}{2}}.
\end{align}

We note that each element reference map $F_{K(t)}$ is an element of $\hat{P}$ so that at 
each time $t$ the triple $(K(t), P(t), \Sigma(t))$ is an isoparametric surface finite element.
Furthermore, we can use the basis functions of $\hat{P}$ to decompose $F_{K(t)}$:
\[
  F_{K(t)}( \hat{x} )  = \sum_{i=1}^{N_K} F_{K(t)}( \hat{a}_i ) \hat\chi_i( \hat{x} ).
\]
In particular, this implies that the evolving triangulated surface $\Gamma_h(t)$ only depends on 
the evolving of the Lagrange points $\{ a_i(t) \}_{i=1}^N$ which we can infer satisfy
\[
  \dot{a}_i(t) = w( a_i(t), t ).
\]

We define a global evolving surface finite element space (\cref{def:evolving-sfe-space}) by
  \begin{multline}
    \label{eq:surf-Sh}
  \S_h(t) := \Bigl\{
  \chi_h = ( \chi_h )_{K(t) \in \T_h(t)} \in \prod_{K \in \T_h(t)} \bigl\{ \hat\chi \circ F_{K(t)}^{-1} : \hat\chi \in P_k(\hat{K}) \bigr\} : \\
  \chi_K( a(t) ) = \chi_{K'(t)} \mbox{ for all } K(t), K'(t) \in \T(a(t)),
  \mbox{ for all } a(t) \in \N_h(t).
  \Bigr\}
\end{multline}
Using \cref{lem:Sh-closed-subspace} we can identify elements in $\S_h(t)$ as continuous functions and $\S_h(t) \subset \V_h(t)$.

\begin{remark}
  This construction is a generalisation of the construction of \citet{DziEll07}.
  Indeed, in the case that we wish to consider affine finite elements, it is worth noting that 
  $\tilde{I} p( \tilde{x}, 0 ) = \tilde{x}$ for $\tilde{x} \in \tilde{K}$ and $K = \tilde{K}$.
  A different view of the same construction is given by \citet{Kov18} and at each time our construction coincides with the work of \citet{Dem09}.
\end{remark}

We will assume that the evolving subdivision $\{ \T_h(t) \}_{t \in [0,T]}$ is uniformly quasi-uniform (\cref{def:uniformly-quasi-uniform}).
It is clear that our construction maintains the conformity of the initial base triangulation $\tilde\T_{h,0}$.

\begin{proposition}
  \label{prop:surf-fe}
  The evolving surface finite element space $\{ \S_h(t) \}_{t \in [0,T]}$ defined by the above construction consists of evolving $k$-surface 
  finite elements (\cref{def:e-Theta-sfe}) over a uniformly $k$-regular subdivision (\cref{def:uniformly-Theta-regular}).
\end{proposition}

\begin{proof}
  The proof follows in the same way as \cref{prop:bulk-fe} and we do not give full details here.
  The only part to check is that the discrete flow map $\Phi_t^h$ is uniformly bounded in $W^{2,\infty}(\T_{h,0})$.
  However, this follows directly from the definition of $\Phi^K_t$ as an interpolation of $\Phi_t ( p( \cdot, 0 ) )$ which is a smooth function.
\end{proof}

The element flow map $\Phi^K_t$ defines a velocity on each element $W_K$ (\cref{def:element-velocity}) by
\[
  \dt \Phi^K_t( \cdot ) = W_K( \Phi^K( \cdot ), t ) \qquad \mbox{ for } t \in [0,T].
\]
This can be combined into a global velocity $W_h$ (\cref{def:global-discrete-velocity}).
We note that the global velocity is determined purely by the velocity of the vertices $\{ a_i(t) \}_{i=1}^N$:
\begin{equation}
  \label{eq:surf-defn-Wh}
  W_h( x, t ) = \sum_{i=1}^n w( a_i(t), t) \chi_i( x, t ) \qquad \mbox{ for } x \in \Gamma_h(t).
\end{equation}
We also have a discrete push forward map (\cref{def:global-push-forward-map}) $\phi^h_t$ on $\H_h(t)$ by
\[
  \phi^h_t( \eta_h )( x ) = \eta_h( \Phi^h_{-t}( x  ) ) \qquad \mbox{ for } x \in \Gamma_h(t), t \in [0,T].
\]
Since we have constructed a uniformly $k$-regular mesh, we infer that $( \H_h(t), \phi^h_t )_{t \in [0,T]}$ is a compatible 
pair (\cref{lem:abs-Vh-compatible}) and we may define the space $C^1_{\H_h}$ \cref{eq:CkX} and a discrete material derivative by \cref{eq:abs-mdh}:%
\[
  \mdh \eta_h = \phi^h_t \left( \dt \left( \phi^h_{-t} \eta_h \right) \right).
\]
We also have that $( \V_h(t), \phi^h_t|_{\V_{h,0}} )_{t \in [0,T]}$ and $( \S_h(t), \phi^h_t|_{\S_{h,0}} )_{t \in [0,T]}$ 
are compatible pairs (\cref{rem:surf-Wmp-etc-compat}) so we may define the spaces $C^1_{\V_h}$ and $C^1_{\S_h}$.

\begin{lemma}
  \label{lem:gammah-transport}
  For $t \in [0,T]$ and for each $K(t) \in \T_h(t)$,
  let $\A_\Gamma^K$ be a smooth, positive-definite, diffusion tensor on $K(t)$, which maps the tangent space of $K(t)$ to itself, and $\b_\Gamma^K$ be a smooth tangential vector field on $K(t)$ for each $K(t) \in \T_h(t)$ and $t \in [0,T]$.
  Then for $\eta_h \in C^1_{\H_h}$:
  \begin{align}
    \label{eq:gammah-transport}
    \dt \int_{\Gamma_h(t)} \eta_h \dd \sigma_h
    = \int_{\Gamma_h(t)} \mdh \eta_h + \eta_h \nabla_{\Gamma_h} \cdot {W}_h \dd \sigma_h.
  \end{align}
  For all $\eta_h, \zeta_h \in C^1_{\V_h}$, we have
  \begin{multline}
    \label{eq:dirichleth-transport}
    \dt \sum_{K(t) \in \T_h(t)} \int_{K(t)} \A_\Gamma^K \nabla_K \eta_h \cdot \nabla_K \zeta_h \dd \sigma_h \\
    = \sum_{K(t) \in \T_h(t)} \int_{K(t)} \A_\Gamma^K \nabla_{K} \mdh \eta_h \cdot \nabla_{K} \zeta_h
      + \A_\Gamma^K \nabla_{K} \eta_h \cdot \nabla_{K} \mdh \zeta_h  \dd \sigma_h \\
    + \sum_{K(t) \in \T_h(t)} \int_{K(t)} \B_K( {W}_K, \A_\Gamma^K ) \nabla_{K} \eta_h \cdot \nabla_{K} \eta_h \dd \sigma_h,
  \end{multline}
  and for all $\eta_h \in \C^1_{\V_h}, \zeta_h \in C^1_{\H_h}$, we have
  \begin{multline}
    \label{eq:advectionh-transport}
    \dt \sum_{K(t) \in \T_h(t)} \int_{K(t)} \b_\Gamma^K \eta_h \cdot \nabla_{K} \zeta_h \dd \sigma_h \\
    = \sum_{K(t) \in \T_h(t)} \int_{K(t)} \b_\Gamma^K ( \mdh \eta_h \cdot \nabla_{K} \zeta_h +  \eta_h \cdot \nabla_{K} \mdh \zeta_h ) \dd \sigma_h \\
    + \sum_{K(t) \in \T_h(t)} \int_{K(t)} \B_{\adv,K}( {W}_K, \b_\Gamma^K )
      \eta_h \cdot \nabla_{K} \zeta_h \dd \sigma_h,
  \end{multline}
  where $\B_K$ and $\B_{\adv,K}$ are given by
  \begin{align*}
    \B_K( {W}_K, \A_\Gamma^K ) & = \mdK \A_\Gamma^K + \nabla_K \cdot {W}_K \A_\Gamma^K - 2 D_h( {W}_h ) \\
    \B_{\adv, K}( {W}_K, \b_\Gamma^K )
    & = \mdK \b_\Gamma^K + \b_\Gamma^K \nabla_K \cdot {W}_K -
      \sum_{j=1}^{n+1} ( \b_\Gamma^K )_j ( \nabla_K )_j {W}_K,
  \end{align*}
  and $D_h$ is the rate of deformation tensor
  \begin{equation*}
    D( {w} )_{ij} = \frac{1}{2}
    \sum_{k=1}^{n+1} (\A_\Gamma^K)_{ik} ( \nabla_K )_k ( {W}_K )_j
    + (\A_\Gamma^K)_{jk} ( \nabla_K )_k ( {W}_K )_i
    \quad \mbox{ for } i,j = 1, \ldots, n+1.
  \end{equation*}
\end{lemma}

\begin{proof}
  We note that the left hand side may be decomposed into individual elements then apply \cref{eq:surface-transport,eq:transport-dirichlet,eq:transport-advection} on each element.%
\end{proof}

\subsection{Construction of the lifted finite element space}
\label{sec:surf-lift-prop}

Recalling the normal projection operator \cref{eq:cp}, we define the global lifting map 
$\Lambda_h( \cdot, t ) \colon \Gamma_h(t) \to \Gamma(t)$ (\cref{def:global-lifting-map}) by
\begin{equation*}
  \Lambda_h( x, t ) := p( x, t )
  \qquad \mbox{ for } x \in \Gamma_h(t),
\end{equation*}
and denote the restriction to each element by $\Lambda_K( \cdot, t ) := \Lambda_h( \cdot, t ) |_{K(t)}$ for each $K(t) \in \T_h(t)$.
For $\eta_h \colon \Gamma_h(t) \to \R$, we denote its lift by $\eta_h^\ell( x )$ given by
\begin{equation*}
  \eta_h^\ell( \Lambda_h( x, t ) ) = \eta_h( x ) \qquad \mbox{ for } x \in \Gamma_h(t)
\end{equation*}
and for $\eta \colon \Gamma(t) \to \R$, we denote its inverse lift by $\eta^{-\ell}( x )$ given by
\begin{equation*}
  \eta^{-\ell}( x ) = \eta_h( \Lambda_h( x, t ) ) \qquad \mbox{ for } x \in \Gamma_h(t).
\end{equation*}

For each $t \in [0,T]$ and each $K(t) \in T_h(t)$, we use \cref{def:lifted-sfe} to construct an 
associated lifted surface finite element $(K^\ell(t), \P^\ell(t), \Sigma^\ell(t) )$.
We assume the domains $\{ \Gamma_h(t) \}_{t \in [0,T]}$ are such that the set of lifted element domains 
$\T_h^\ell(t)$ defines an exact decomposition of $\Gamma(t)$ (\cref{def:exact-decomposition}) at each $t \in [0,T]$.

We use the lift to define the space of lifted finite element functions $\S_h^\ell(t)$ (\cref{def:lifted-sfe-space}) by
\begin{equation*}
  \S_h^\ell(t) := \{ \chi_h^\ell : \chi_h \in \S_h(t) \}.
\end{equation*}

We continue by showing some basic geometric estimates.
These geometric estimates have been given by \cite[Lemma 5.2]{Kov18}.

\begin{lemma}
  \label{lem:surf-geom-est}
  Under the above smoothness assumptions, we have
  \begin{equation}
    \label{eq:d-estimate}
    \sup_{t \in [0,T]} \max_{K(t) \in \T_h(t)} \norm{ d }_{L^\infty(K(t))}
    \le c h^{k+1}.
  \end{equation}
  \begin{align}
    \label{eq:surf-nunuh}
    \sup_{t \in [0,T]} \max_{K(t) \in \T_h(t)} \norm{ \nu - \nu_K }_{L^\infty(K(t))}
    & \le c h^k \\
    \label{eq:surf-HHh}
    \sup_{t \in [0,T]} \max_{K(t) \in \T_h(t)} \norm{ \mathbb{H} - \mathbb{H}_K }_{L^\infty(K(t))}
    & \le c h^{k-1},
  \end{align}
  where $\mathbb{H}_K := \nabla_K \nu_K$.
  Writing $\delta_h$ for the quotient between discrete and continuous surface measures so that $\mathrm{d} \sigma = \delta_h \dd \sigma_h$, we have
  \begin{align}
    \label{eq:closed-deltah-esimate}
    \sup_{t \in [0,T]} \max_{K(t) \in \T_h(t)} \norm{ 1 - \delta_h }_{L^\infty(K(t))}
    & \le c h^{k+1}.
  \end{align}
\end{lemma}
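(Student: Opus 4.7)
The strategy relies crucially on the fact that, by the construction in Proposition~\ref{prop:surf-fe}, the global Lagrange nodes $\{a_i(t)\}_{i=1}^N$ of $\Gamma_h(t)$ satisfy $a_i(t) = \Phi_t(a_i) \in \Gamma(t)$, so $d(a_i(t),t) = 0$ at every Lagrange node. Throughout, I will argue element-wise and exploit the interpolation machinery developed in Section~\ref{sec:stationary-sfem}.

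To establish \eqref{eq:d-estimate}, I restrict $d(\cdot,t)$ to a single element $K(t)\in\T_h(t)$. Since the lifted finite element $( K^\ell(t), \P^\ell(t), \Sigma^\ell(t) )$ is a $k$-surface finite element whose Lagrange nodes coincide with $\{a_i(t)\}$, the nodal interpolant of $d$ into $\V_h^\ell(t)|_{K^\ell(t)}$ vanishes identically. Applying Theorem~\ref{thm:IK-bound} with $p=q=\infty$ and $m=0$, together with the assumed regularity $d\in C^{k+1}(\N_T)$, yields $\|d\|_{L^\infty(K(t))} = \|d - I_K d\|_{L^\infty(K(t))} \le c h^{k+1}\|d\|_{W^{k+1,\infty}(\N(t))}$ as required. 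Alternatively, one can write $F_{K(t)} = I_{K_0}[\Phi_t\circ p(\cdot,0)]$ and observe that $d\circ F_{K(t)}$ is the reference-element interpolation residual of the smooth function $d\circ\Phi_t\circ p(\cdot,0)$, which vanishes identically on $\Gamma_0$.

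For the normal estimate \eqref{eq:surf-nunuh}, I would express $\nu_K$ via the unit-length normalisation of the wedge product of the partial derivatives $\partial_{\hat x_i}F_{K(t)}$ and $\nu$ via the corresponding product for the smooth parametrisation $\Phi_t\circ p(\cdot,0)$ of $\Gamma(t)$. Since $F_{K(t)}$ is the Lagrange interpolant of $\Phi_t\circ p(\cdot,0)$, applying Theorem~\ref{thm:IK-bound} with $m=1$ (which costs one power of $h$) bounds $\|\nabla F_{K(t)} - \nabla(\Phi_t\circ p(\cdot,0))\|_{L^\infty(\hat K)} \le c h^k$, and then Lipschitz continuity of the normalisation together with the lower bound on $|A_K|$ transfers this into the bound $\|\nu-\nu_K\|_{L^\infty(K(t))}\le c h^k$. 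The mean curvature bound \eqref{eq:surf-HHh} follows by the same argument with $m=2$, losing a further factor of $h$.

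The main obstacle is the measure-ratio estimate \eqref{eq:closed-deltah-esimate}, since the naive derivative count from Theorem~\ref{thm:IK-bound} only yields $h^k$. The sharpening to $h^{k+1}$ must exploit the specific structure $\delta_h = |\det(I - d\,\H)|\,\nu\cdot\nu_K\,(\text{ratio of Gram determinants})$ arising from writing integration over $\Gamma(t)$ in terms of the parametrisation $p(\cdot,t)|_{K(t)}$. The factor $|\det(I-d\H)|$ is $1+O(h^{k+1})$ by \eqref{eq:d-estimate}, while $\nu\cdot\nu_K = 1 - \tfrac12|\nu-\nu_K|^2 = 1 + O(h^{2k})$ because $\nu$ and $\nu_K$ are unit vectors; the cancellation here gives an $h^{2k}\le h^{k+1}$ contribution whenever $k\ge 1$. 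The remaining Gram-determinant ratio is handled by noting that $p(\cdot,t)\circ F_{K(t)}$ and the smooth parametrisation $\Phi_t\circ p(\cdot,0)$ agree to order $k+1$ in $W^{1,\infty}$ once restricted along the tangential directions of $K(t)$, since the normal component of the difference is absorbed into the $\det(I-d\H)$ factor. Combining these three ingredients produces the sharp rate \eqref{eq:closed-deltah-esimate}.
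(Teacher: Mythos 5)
Your overall plan is sound for \eqref{eq:d-estimate} and correctly identifies the two genuine ingredients for \eqref{eq:closed-deltah-esimate}, but the middle steps have real gaps.

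For \eqref{eq:d-estimate} you argue exactly as the paper: $I_K d = 0$ on each element since the Lagrange nodes lie on $\Gamma(t)$, and Theorem~\ref{thm:IK-bound} with $p=q=\infty$ does the rest. Correct. For \eqref{eq:surf-nunuh} you take a different route from the paper. The paper instead observes that $P_h \nu = P_h \nabla d = \nabla_K d$, so the interpolation estimate on $d$ directly gives $\| P_h\nu\|_{L^\infty(K)} \le c h^k$, and then decomposes $\nu - \nu_K = P_h\nu + \big((\nu-\nu_K)\cdot\nu_K\big)\nu_K$, showing the second component is $O(|P_h\nu|^2) = O(h^{2k})$. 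Your wedge-product argument can be made to work, but the "Lipschitz continuity of the normalisation" claim silently requires the uniform lower bound on $\| \nabla F_{K(t)} \|$ (equivalently quasi-uniformity of the mesh); without stating that the normalization map is uniformly Lipschitz on the relevant compact set, the estimate does not follow. The paper's route via $\nabla_K d$ avoids this entirely and also sets up the key ingredient $\| P_h\nu\|^2 = O(h^{2k})$ you need for the last estimate.

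The real gap is \eqref{eq:surf-HHh}. You say it "follows by the same argument with $m=2$, losing a further factor of $h$." That is not what happens, and it would not give the claimed bound. $H_K = \nabla_K \nu_K$ is the tangential gradient of the \emph{normalised} discrete normal, not a second derivative of an interpolant; and $H = \nabla^2 d$ is the extended Weingarten map of the smooth surface, evaluated at points of $K(t)$ off $\Gamma(t)$. Comparing the two is not a second-order interpolation error estimate. The paper's proof crucially uses the tangency identities $H\nu = 0$ and $H_K\nu_K = 0$ to expand $(H - H_K)_{ij}$ into terms each controlled by $\nabla_K^2 d$, $1 - \nu\cdot\nu_K$, $P_h(H\nu_K + H_K\nu)$, etc., and then feeds in \eqref{eq:IKd-estimate} and \eqref{eq:surf-nunuh}. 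Without that algebraic cancellation you cannot see where the claimed $h^{k-1}$ rate comes from. Your sketch would need to be replaced entirely.

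For \eqref{eq:closed-deltah-esimate} you correctly isolate the two quantitative inputs: $\det(I-d\H) = 1 + O(h^{k+1})$ because $d = O(h^{k+1})$, and $\nu\cdot\nu_K = 1 + O(h^{2k})$ by the unit-vector cancellation, with $h^{2k}\le h^{k+1}$ for $k\ge1$. However your proposed factorisation $\delta_h = |\det(I-d\H)|\,\nu\cdot\nu_K\,(\text{Gram-determinant ratio})$ double-counts: the Gram-determinant ratio (Jacobian of the projection $p$) \emph{is} the quantity $\delta_h$, and the \citet{DemDzi08} identity says precisely that this ratio equals $\nu\cdot\nu_K \prod_i(1 - d\kappa_i)$ exactly, with $\kappa_i$ the extended principal curvatures. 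There is no residual third factor to control, and the paragraph you devote to "handling" it is attacking a phantom; you should simply quote the exact formula and apply the two bounds you already derived.
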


\begin{lemma}
  \label{lem:surf-lift-bounds}
  Let $\{ \T_h^\ell(t) \}_{t \in [0,T]}$ be the evolving subdivision defined by the lifting map $\Lambda_h$
  and assume that $\{ \T_h(t) \}_{t \in [0,T]}$ is a uniformly $k$-regular, uniformly quasi-uniform, evolving conforming subdivision.
  Then $\{ \T_h^\ell(t) \}_{t \in [0,T]}$ is also a uniformly $k$-regular, uniformly quasi-uniform, evolving conforming exact subdivision of $\Gamma(t)$.
\end{lemma}

\begin{proof}
  We use the decomposition that
  \[
    \Lambda_h( \cdot, t ) = x + ( p(x,t) - x ) =: x + \tilde{\Lambda}_h( x, t ),
  \]
  and write $\tilde{\Lambda}_K( \cdot, t )$ for $\tilde{\Lambda_h}(\cdot, t)|_{K(t)}$.
  From \cref{rem:id-lift-ass} and \cref{prop:lift-Vht}, we have to show
  \begin{align}
    \sup_{x \in K} \norm{ \nabla_{K(t)} \tilde\Lambda_K(x) }
    & \le \frac{1 - C_K}{1+C_K} && \mbox{ for all } K \in \T_h(t), \mbox{ all } t \in [0,T], \mbox{ and all } h \in (0,h_0),
  \end{align}
  and
  \begin{align}
    \sup_{h \in (0,h_0)} \sup_{t \in [0,T]} \norm{ \Lambda_h( \cdot, t ) }_{W^{k+1,\infty}(\Gamma_h(t))}
    & \le C.
  \end{align}

  The second estimate follows from the smoothness of $p$ \cref{eq:cp} (which follows from the smoothness of $\Gamma(t)$ \citep{Foote1984}).
  Next we compute directly that
  \begin{equation*}
    \partial_{x_j} ( p( x, t ) )_i = \delta_{ij} - \nu_i( x, t ) \nu_j( x, t ) - d( x, t ) \mathbb{H}( x, t )_{ij}
  \end{equation*}
  so that using the notations $P_{ij} = \delta_{ij} - \nu_i \nu_j$ and $( P_h )_{ij} = \delta_{ij} - (\nu_h)_i (nu_h)_j$ for $j=1,\ldots,n+1$, we have
  \begin{align*}
    \nabla_{K(t)} \tilde\Lambda_K( x, t )
    & = P_h( x, t ) ( P( x, t ) - d( x, t ) \mathbb{H}( x, t ) ) - P_h( x, t ) \\
    & = P_h( x, t ) ( P( x, t ) - P_h( x, t ) ) - d( x, t ) P_h( x, t ) \mathbb{H}( x, t ).
  \end{align*}
  Applying \cref{eq:d-estimate} and \cref{eq:surf-nunuh}, we see that
  \begin{align*}
    \norm{ \nabla_{K(t)} \tilde\Lambda_K( x, t ) }
    \le \norm{ P( x, t ) - P_h( x, t ) } + c h^{k+1} \norm{ \mathbb{H}(x,t) }
    \le c h^k.
  \end{align*}
  Clearly the right hand side of this equation is less that $(1-C_K)/(1+C_K)$ for $h$ sufficiently small.
\end{proof}

\begin{lemma}
  \label{lem:surf-lift-stab}
  Let $\eta_h \in \H_h(t)$ and denote its lift by $\eta_h^\ell$. Then there exists constants $c_1, c_2 > 0$ such that
  \begin{align}
    \label{eq:surf-lift-stab-H}
    c_1 \norm{ \eta_h^\ell }_{\H(t)}
    & \le \norm{ \eta_h }_{\H_h(t)}
    \le c_2 \norm{ \eta_h^\ell }_{\H(t)}.
    \intertext{Furthermore, if $\eta_h \in \V_h(t)$, there exists constants $c_3, c_4 > 0$ such that}
    \label{eq:surf-lift-stab-V}
    c_3 \norm{ \eta_h^\ell }_{\V(t)}
    & \le \norm{ \eta_h }_{\V_h(t)}
    \le c_4 \norm{ \eta_h^\ell }_{\V(t)}.
  \end{align}
\end{lemma}

\begin{proof}
  We apply \cref{prop:lift-Vh} using \cref{lem:surf-lift-bounds}.
\end{proof}

Using the lift $\Lambda_h$ and the discrete flow $\Phi^h_t$, we can define the lifted flow map $\Phi^\ell_t$ (\cref{def:lifted-flow-map}), 
lifted discrete velocity $w_h$ (\cref{def:lifted-discrete-material-velocity}) and lifted push forward maps $\phi^\ell_t$ (\cref{def:lifted-push-forward-map,eq:phiellt}).
\cref{lem:surf-lift-bounds} implies, with \cref{prop:lift-Vht}, that $( \H(t), \phi^\ell_t )_{t \in [0,T]}$, $( \V(t), \phi^\ell_t|_{\V_0} )_{t \in [0,T]}$ 
and $( \S^\ell_h(t), \phi^\ell_t|_{\S_h(0)} )_{t \in [0,T]}$ are compatible pairs uniformly for $h \in (0,h_0)$.
We will use the notations $C^1_{(\H,\phi^\ell)}$ and $C^1_{(\V,\phi^\ell)}$ for the spaces of functions smoothly evolving 
in time \cref{eq:CkX} with respect to the push-forward map $\phi^\ell_t$ in $\H(t)$ and $\V(t)$ respectively.
We may also define a lifted material derivative for functions $\eta \in C^1_{(\H,\phi^{\ell})}$ using \cref{eq:lift-md} by %
\begin{equation}
  \label{eq:surf-lift-md}
  \mdell \eta = \phi^\ell_{t} \left( \dt \left( \phi^\ell_{-t} \eta \right) \right).
\end{equation}

\begin{lemma}
  \label{lem:lifted-transport}
  The push forward map $\phi_t^\ell$ induces a new transport formula on $\{ \Gamma(t) \}$. For $\eta \in C^1_{(\H, \phi^\ell)}$ we have
  \begin{equation}
    \dt \int_{\Gamma(t)} \eta \dd \sigma
    = \sum_{K^\ell(t) \in \T_h^\ell(t)} \int_{K^\ell(t)} \mdell \eta + \eta \nabla_\Gamma \cdot {w}_h \dd \sigma.
  \end{equation}
  Let $\A_\Gamma$ be a smooth, positive-definite, diffusion tensor on $\Gamma(t)$, which maps the tangent space of $\Gamma(t)$ to itself, and $\b_\Gamma$ be a smooth tangential vector field on $\Gamma(t)$ for all $t \in [0,T]$.  Furthermore, we have for $\eta, \zeta \in C^1_{( \V, \phi^\ell)}$
  \begin{multline}
    \dt \int_{\Gamma(t)} \A_\Gamma \nabla_\Gamma \eta \cdot \nabla_\Gamma \zeta \dd \sigma \\
    = \sum_{K^\ell(t) \in \T_h^\ell(t)}
    \int_{K^\ell(t)}
    \A_\Gamma \left( \nabla_\Gamma \mdell \eta \cdot \nabla_\Gamma \zeta + \nabla_\Gamma \eta \cdot \nabla_\Gamma \mdell \zeta \right)
    + \B_{K^\ell}( {w}_K, \A_\Gamma ) \nabla_\Gamma \eta \cdot \nabla_\Gamma \zeta \dd \sigma,
  \end{multline}
  and for $\eta \in C^1_{(\V, \phi^\ell)}, \zeta \in C^1_{(\H, \phi^\ell)}$ we have
  \begin{multline}
    \dt \int_{\Gamma(t)} \b_\Gamma \eta \cdot \nabla_\Gamma \zeta \dd \sigma \\
    = \sum_{K^\ell(t) \in \T_h^\ell(t)}
    \int_{K^\ell(t)}
    \b_\Gamma \left( \mdell \eta \cdot \nabla_\Gamma \zeta + \eta \cdot \nabla_\Gamma \mdell \zeta \right)
    + \B_{K^\ell,\adv}( {w}_K, \b_\Gamma ) \eta \cdot \nabla_\Gamma \zeta \dd \sigma,
  \end{multline}
  where $\B_{K^\ell}$ and $\B_{K^\ell,\adv}$ are defined as in \cref{lem:gammah-transport}.
\end{lemma}

\begin{proof}
  We note that the left hand side may be decomposed into individual elements then apply \cref{eq:surface-transport,eq:transport-dirichlet,eq:transport-advection} on each element.%
\end{proof}

\begin{lemma}
  \label{lem:surf-md-estimates}
  \begin{align}
    \label{eq:md-d-estimate}
    \sup_{t \in [0,T]} \norm{ \mdh d }_{L^\infty(\Gamma_h(t))} & \le c h^{k+1} \\
    \label{eq:md-P-nu-h-estimate}
    \sup_{t \in [0,T]} \norm{ \mdh P \nu_h }_{L^\infty(\Gamma_h(t))} & \le c h^k \\
    \label{eq:md-delta-h-estimate}
    \sup_{t \in [0,T]} \norm{ \mdh \delta_h }_{L^\infty(\Gamma_h(t))} & \le c h^{k+1}.
  \end{align}
\end{lemma}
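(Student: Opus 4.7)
The plan is to handle the three estimates in turn, each time exploiting that the discrete flow $\Phi^h_t$ carries every Lagrange node $a_i(0)$ exactly along $\Gamma(t)$: since $\vec{W}_h$ is the nodal interpolant of $\vec{w}$, one has $\vec{W}_h(a_i(t),t) = \vec{w}(a_i(t),t)$, so the trajectory $a_i(t)$ coincides with the smooth flow and $a_i(t) \in \Gamma(t)$ for every $t$.

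For estimate (1), the key observation is that $d(a_i(t),t) = 0$ for all $t$ and every Lagrange node. Differentiating in $t$ along the discrete flow gives $(\md_h d)(a_i(t),t) = d_t(a_i(t),t) + \vec{W}_h(a_i(t),t) \cdot \nabla d(a_i(t),t) = 0$. Since on a single element $K(t) \in \T_h(t)$ the function $\md_h d = d_t + \vec{W}_h \cdot \nabla d$ is smooth and vanishes at all Lagrange nodes, its Lagrange interpolant $I_{K(t)}(\md_h d)$ is identically zero. Then Theorem~\ref{thm:IK-bound} with $m=0$, $p=q=\infty$ gives $\|\md_h d\|_{L^\infty(K(t))} \le c h^{k+1} \|\md_h d\|_{W^{k+1,\infty}(K(t))}$, and the right-hand seminorm is uniformly bounded because $d,d_t \in C^{k+1}(\N_T)$ and $\vec{W}_h$ is a polynomial of degree $k$ on $K(t)$ with nodal values bounded by $\vec{w}$. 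For estimate (3), I would apply the product rule to the formula \eqref{eq:deltah-defn}, $\delta_h = (\nu \cdot \nu_h) \prod_{i=1}^n(1 - d\kappa_i)$. Each factor $1 - d\kappa_i$ equals $1 + O(h^{k+1})$ by \eqref{eq:d-estimate} and its material derivative $-\kappa_i \md_h d - d \md_h \kappa_i$ is $O(h^{k+1})$ by (1). For the normal factor, use $\nu \cdot \nu_h = \sqrt{1 - |P\nu_h|^2}$ to write $\md_h(\nu \cdot \nu_h) = -(P\nu_h \cdot \md_h(P\nu_h))/(\nu \cdot \nu_h)$, which is $O(h^{2k})$ by \eqref{eq:surf-nunuh} and (2).

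The main obstacle is estimate (2): bounding the material derivative of a vector already known to be $O(h^k)$ without losing an order. I would decompose $P\nu_h = \nu_h - (\nu \cdot \nu_h)\nu$ and apply $\md_h$, yielding
\begin{equation*}
\md_h(P\nu_h) = \md_h \nu_h - (\md_h \nu_h \cdot \nu)\nu - (\nu_h \cdot \md_h \nu)\nu - (\nu \cdot \nu_h)\md_h\nu.
\end{equation*}
Using the orthogonality $\md_h \nu \cdot \nu = 0$ (from $|\nu|^2 = 1$) and $\md_h \nu_h \cdot \nu_h = 0$ (from $|\nu_h|^2 = 1$), together with $|\nu - \nu_h|_{L^\infty(K(t))} \le c h^k$ from \eqref{eq:surf-nunuh}, the terms $\md_h \nu_h \cdot \nu$ and $\nu_h \cdot \md_h \nu$ can each be rewritten as a dot product with $\nu - \nu_h$ and so are $O(h^k)$. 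The residual contribution telescopes to $\md_h \nu_h - \md_h\nu + O(h^k)$, where the difference of normal material derivatives must itself be $O(h^k)$. To handle this last difference, I would apply the surface identity \eqref{eq:mdnu} on each element $K(t)$ (with $\vec{W}_h, \nu_h, \H_K$ replacing $\vec{w}, \nu, \H$) and on $\Gamma(t)$, and compare. Direct subtraction of $\H_K \vec{W}_h$ and $\H \vec{w}$ would only yield $O(h^{k-1})$ because of \eqref{eq:surf-HHh}, so the comparison must be carried out tangentially: one tests the difference against arbitrary directions and exploits that $P(\nu - \nu_h) = \nu - \nu_h + O(h^{2k})$ together with the fact that $\md_h \nu_h$ is tangent to $K(t)$ and $\md \nu$ tangent to $\Gamma(t)$, which absorbs the loss into terms of the form $(I - P)(\cdot)$ that are multiplied by quantities of size $O(h^k)$. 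The resulting bound $\|\md_h\nu_h - \md_h\nu\|_{L^\infty(K(t))} \le c h^k$ then yields (2).
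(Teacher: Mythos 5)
Your treatments of \eqref{eq:md-d-estimate} and \eqref{eq:md-delta-h-estimate} coincide with the paper's in substance: for the first you note $\mdh d$ vanishes at the Lagrange nodes so its interpolant is zero and apply Theorem~\ref{thm:IK-bound}; for the third you differentiate the expansion \eqref{eq:deltah-defn} factor by factor.

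The gap is in \eqref{eq:md-P-nu-h-estimate}. Your decomposition $\mdh(P\nu_h) = \mdh\nu_h - (\mdh\nu_h\cdot\nu)\nu - (\nu_h\cdot\mdh\nu)\nu - (\nu\cdot\nu_h)\mdh\nu$ is algebraically correct, and the two cross terms are indeed $O(h^k)$ by the orthogonality relations. But the residual $\mdh\nu_h - (\nu\cdot\nu_h)\mdh\nu = \mdh\nu_h - \mdh\nu + O(h^{2k})$ leaves you needing $\norm{\mdh\nu_h - \mdh\nu}_{L^\infty(K(t))} \le c h^k$, and your proposed route to this --- applying \eqref{eq:mdnu} on $K(t)$ and on $\Gamma(t)$ and then ``testing tangentially'' to absorb the $O(h^{k-1})$ loss from \eqref{eq:surf-HHh} --- does not close. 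The problematic contribution to $\H - \H_K$, visible in the decomposition used in the proof of Lemma~\ref{lem:surf-geom-est}, is the term $\nabla_K^2 d$, which is $O(h^{k-1})$ and already tangential in both slots, so projecting onto tangent spaces does not reduce its size; and replacing $\vec{W}_h$ by $\vec{w}$ gains only $h^{k+1}$, which still leaves $(\H - \H_K)\vec{w}$ at $O(h^{k-1})$ through that term. There is also a second, unaddressed issue: \eqref{eq:mdnu} gives $\md\nu$, the derivative of the normal along the \emph{smooth} flow, whereas you need $\mdh\nu$ along the discrete flow, so even writing down the identity for $\nu$ requires an extra velocity-difference correction that you have not accounted for.

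The paper takes a route that never requires $\mdh\nu_h - \mdh\nu$ to be small --- only that $\mdh\nu$ and $\mdh\nu_h$ be \emph{uniformly bounded}. Its device for the extra order is to combine the algebraic identity $\nu - \nu_h = P_h\nu + (\abs{\nu - P_h\nu} - 1)\nu_h$ with a direct bound on $\mdh(P_h\nu) = \mdh(\nabla_K d)$, obtained from the commutator formula \eqref{eq:md-grad-comm} together with the fact that $I_K(\mdh d) = 0$ controls both $\mdh d$ and $\nabla_K \mdh d$ by interpolation; this yields $\norm{\mdh(P_h\nu)}_{L^\infty(K(t))} \le c h^k$ without ever invoking the curvature comparison $\H - \H_K$. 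If you want to keep your decomposition of $\mdh(P\nu_h)$, the clean fix is to establish $\mdh(\nu_h - \nu) = O(h^k)$ the same way, through the bound on $\mdh(P_h\nu)$ and the algebraic identity above, rather than through \eqref{eq:mdnu} and a curvature estimate that is one order too weak.
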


\begin{proof}
  See \cite[Lem.~5.2]{Kov18}.
\end{proof}

We can characterise the lifted discrete material derivatives \cref{eq:surf-lift-md} using the difference between smooth, $w$, and lifted, $w_h$, velocities.
The result also shows the abstract inclusions \cref{eq:abs-C1-inclusion}.
\begin{lemma}
  \label{lem:md-lift-char}
  If $\eta \in C^1_{\H} \cap C^0_\V$, then $\eta \in C^1_{(\H,\phi^\ell)}$ and conversely if $\eta \in C^1_{(\H,\phi^\ell)} \cap C^0_{(\V,\phi^\ell)}$ then $\eta \in C^1_{\H}$.
  In either case, we have the identity
  \begin{equation}
    \label{eq:md-lift-char}
    \md \eta - \mdh \eta = \nabla_\Gamma \eta \cdot ( w - w_h ).
  \end{equation}
  Furthermore, if $\eta \in C^1_{\V} \cap C^0_{\Z_0}$ then $\eta \in C^1_{(\V,\phi^\ell)}$.
\end{lemma}

\begin{proof}
  We use a chart $X(\cdot, t) \colon U \to V \subset \Gamma(t)$ and write $\eta( X( \theta, t ), t ) = F( \theta, t )$.
  Note that $X_t( \theta, t ) = w( X(\theta,t),t)$.
  Then
  \begin{align*}
    \md \eta(x,t) - \mdell \eta(x,t)
    & = \frac{\partial}{\partial t}
      \left. \left(
      \eta( \Phi_t( x_0 ), t ) - \eta( \Phi_t^{\ell}( y_0 ), t )
      \right) \right|_{x_0 = \Phi_{-t}( x ), y_0 = \Phi_{-t}^\ell( x )}.
  \end{align*}
  Using the notation $X( \theta(t), t ) = \Phi_t( x_0 )$ and $X( \theta^\ell(t), t ) = \Phi_t^\ell( y_0 )$, we have
  \begin{multline*}
    \md \eta( x, t ) - \mdell \eta( x, t ) \\
    = \frac{\partial F}{\partial t}( \theta(t), t ) - \frac{\partial F}{\partial t}( \theta^\ell(t), t )
    + \sum_{i=1}^n \left(
      \frac{\partial F}{\partial \theta_i}( \theta(t), t ) \frac{\partial \theta_i}{\partial t}(t) -
      \frac{\partial F}{\partial \theta_i}( \theta^\ell(t), t ) \frac{\partial \theta^\ell_i}{\partial t}(t)
    \right).
  \end{multline*}
  Using the substitutions $X(\theta(t), t ) = \Phi_t( x_0 )$, $X( \theta^\ell(t), t ) = \Phi_t^\ell( y_0 )$, $x_o = \Phi_{-t}(x)$, and $y_0 = \Phi_{-t}^\ell( x )$, we see
  \[
    \frac{\partial F}{\partial t}( \theta(t), t ) - \frac{\partial F}{\partial t}( \theta^\ell(t), t )
    = 0.
  \]

  Next, we take a time derivative of $X( \theta(t), t ) = \Phi_t( x_0 )$ to see that
  \[
    \sum_{i=1}^n \frac{\partial X}{\partial \theta_i}( \theta(t) ) \frac{\partial \theta_i}{\partial t}(t) = 0.
  \]
  By multiplying by $\partial X / \partial \theta_j$, summing over $i$ and multiplying $G^{-1}$ we see that $\partial \theta / \partial t = 0$.

  Next, we take a time derivative of $X( \theta^\ell(t), t ) = \Phi_t^\ell( x_0 )$, to see that
  \[
    w( X( \theta^\ell(t), t ) ) + \sum_{i=1}^n \frac{\partial X}{\partial \theta_i}( \theta^\ell(t) ) \frac{\partial \theta^\ell_i}{\partial t} (t) = w_h( \Phi_t^\ell(y_0), t ).
  \]
  Multiplying by $\partial X / \partial \theta_j ( \theta^\ell(t) )$ we see that
  \[
    \sum_{i=1}^n g_{ij}( \theta^\ell(t) ) \frac{\partial \theta^\ell_i}{\partial t}( t )
    = \frac{\partial X}{\partial \theta_j}( \theta^\ell(t) ) \left( w_h( \Phi_t^\ell(y_0), t ) - w( \theta^\ell(t), t ) \right),
  \]
  from which we infer that
  \[
    \frac{\partial \theta^\ell_i}{\partial t}( t )
    = \sum_{j=1}^n g^{ij}( \theta^\ell(t) ) \frac{\partial X}{\partial \theta_j}( \theta^\ell(t) ) \left( w_h( \Phi_t^\ell(y_0), t ) - w( \theta^\ell(t), t ) \right).
  \]

  Combining the previous expressions we see that
  \[
    \md \eta( x, t ) - \mdell \eta( x, t )
    = \sum_{i,j=1}^n g^{ij}( \theta^\ell(t) ) \frac{\partial X}{\partial \theta_j}( \theta^\ell(t) ) \frac{\partial F}{\partial \theta_i}( \theta^\ell(t), t ) \left( w_h( \Phi_t^\ell(y_0), t ) - w( \theta^\ell(t), t ) \right).
  \]
  Using the substitutions as above and the parametric definition of tangential gradient gives the result.

  The second result can be shown by applying the tangential gradient to the basic result.
\end{proof}

\subsection{The discrete problem and stability}
\label{sec:surf-discrete-pr}

For each $t \in [0,T]$, and $h \in (0,h_0)$, we assume that $\A_\Gamma^h(t)$ is an element-wise smooth $(n+1) \times (n+1)$ symmetric 
diffusion tensor defined element-wise with $\A_\Gamma^h(t)|_{K(t)} = \A_\Gamma^K(t)$ for each $K(t) \in \T_h(t)$. We assume that 
$ \A_\Gamma^K(t)$ maps the tangent space of $K(t)$ at a point into itself and is uniformly positive definite on the tangent 
space: There exists $\bar{a}_0 > 0$ such that for all $h \in (0,h_0)$, $t \in [0,T]$, and $K(t) \in \T_h(t)$
\begin{equation*}
 \A_\Gamma^K(t)(\cdot) \xi \cdot \xi \ge \bar{a}_0 \abs{ \xi }^2 \quad
  \mbox{ for all } \xi \in \R^{n+1}, \xi \cdot \nu_K( \cdot, t ) = 0.
\end{equation*}
We assume we are also given a element-wise smooth tangential vector field $\b_\Gamma ^h(t)$ (with $\b_\Gamma ^h(t)|_{K} = \b_\Gamma^K(t)$) 
and element-wise smooth scalar field $\c_\Gamma ^h(t)$ (with $\c_\Gamma ^h(t)|_{K(t)} = \c_\Gamma ^K(t))$). We assume that
\begin{equation}
  \sup_{t \in [0,T]} \max_{K(t) \in \T_{h}(t)} \left(
    \norm{ \A_\Gamma^K }_{C^1(K(t))} + \norm{ \b_\Gamma^K }_{C^1(K(t))} + \norm{ \c_\Gamma^K }_{C^1(K(t))}
  \right)
  < C.
\end{equation}

\begin{example}
  Here we are thinking of the case that $\A_\Gamma^h = \A_\Gamma^{-\ell}$, $\b_\Gamma^h = \b_\Gamma^{-\ell}$ and $\c_\Gamma^h = \c_\Gamma^{-\ell}$.
\end{example}

We consider the following semi-discrete problem (c.f. \cref{pb:fem}):
\begin{problem}
  \label{pb:surf-fem}
  Given $U_{h,0} \in \S_h(0)$, find $U_h \in C^1_{\S_h}$ with $U_h(0) = U_{h,0}$ and such that for every $t \in (0,T)$,
  \begin{equation}
    \label{eq:surf-fem}
    \dt m_h( t; U_h, \zeta_h ) + a_h( t; U_h, \zeta_h )
    = m_h( t; U_h, \mdh \zeta_h )
    \qquad \mbox{ for all } \zeta_h \in C^1_{\S_h},
  \end{equation}
  where
  \begin{align*}
    m_h( t; \eta_h, \zeta_h ) & :=
    \int_{\Gamma_h(t)} \eta_h \zeta_h \dd \sigma_h && \mbox{ for } \eta_h, \zeta_h \in \H_h(t) \\
    a_h( t; \eta_h, \zeta_h ) & :=
    \sum_{K(t) \in \T_h(t)}
                                \int_{K(t)} \A_\Gamma^K \nabla_K \eta_h \cdot \nabla_K \zeta_h \\
    & \qquad\qquad\qquad + \eta_h \nabla_K \zeta_h \cdot \b_\Gamma^K + \c_\Gamma^K \eta_h \zeta_h \dd \sigma_h
    && \mbox{ for } \eta_h, \zeta_h \in \V_h(t).
  \end{align*}
\end{problem}

We note that the assumption that $\{ \T_h(t) \}_{t \in [0,T]}$ is uniformly quasi-uniform regular implies that 
$\{ \S_h(t), \phi^h_t \}_{t \in [0,T]}$ is a compatible pair when equipped with the $\V_h(t)$ or $\H_h(t)$-norms (\cref{lem:abs-Vh-compatible}).

To show the properties of these bilinear forms we require one further lemma:
\begin{lemma}
  \label{lem:surf-Wh-bounded}
  The discrete velocity $W_h$ of the discrete evolving surface $\{ \Gamma_h(t) \}$ \cref{eq:surf-defn-Wh} is uniformly bounded in $W^{1,\infty}( \Gamma_h(t) )$.
  That is, there exists a constant $C>0$ such that for all $h \in (0,h_0)$
  \begin{equation}
    \label{eq:surf-Wh-bound}
    \norm{ \nabla_{\Gamma_h} {W}_h }_{L^\infty(\Gamma_h(t))}
    \le c \norm{ {w} }_{W^{1,\infty}(\Gamma(t))}.
  \end{equation}
\end{lemma}

\begin{proof}
  The result follows from the interpolation bound (\cref{cor:global-inv-lift-interp}) and the stability of the lift (\cref{prop:lift-Vh}).
\end{proof}

We have transport formulae on the surface $\{ \Gamma_h(t) \}$.
\begin{lemma}
  \label{lem:surf-transport-bounds}
  There exists a bilinear forms $g_h( t; \cdot, \cdot ) \colon \H_h(t) \times \H_h(t) \to \R$ and $b_h( t; \cdot, \cdot ) \colon \V_h(t) \times \V_h(t) \to \R$ such that
  \begin{align}
    \label{eq:surf-discrete-transport-mh}
    \dt m_h( t; \eta_h, \zeta_h )
    & = m_h( t; \mdh \eta_h, \zeta_h ) + m_h( t; \eta_h, \mdh \zeta_h )
      + g_h( t; \eta_h, \zeta_h )
    && \mbox{ for } \eta_h, \zeta_h \in C^1_{\H_h} \\
    \label{eq:surf-discrete-transport-ah}
    \dt a_h( t; \eta_h, \zeta_h )
    & = a_h( t; \mdh \eta_h, \zeta_h ) + a_h( t; \eta_h, \mdh \zeta_h )
      + b_h( t; \eta_h, \zeta_h )
    && \mbox{ for } \eta_h, \zeta_h \in C^1_{\V_h},
  \end{align}
  where
  \begin{align}
    \label{eq:gh-surface}
    g_h( t; \eta_h, \zeta_h )
    & = \sum_{K(t) \in \T_h(t)} \int_{K(t)} \eta_h \zeta_h \nabla_{K} \cdot {W}_h \dd \sigma_h
  \end{align}
  and
  \begin{equation}
    \label{eq:bh-surface}
    \begin{aligned}
    b_h( t; \eta_h, \zeta_h )
    = \sum_{K(t) \in \T_h(t)} \int_{K(t)}
    & \Big( \B_K( {W}_K, \A_\Gamma^K ) \nabla_K  \eta_h \cdot \nabla_K \zeta_h \\
    & + \B_{\adv,K}( {W}_K, \b_\Gamma^K ) \eta_h \cdot \nabla_K \zeta_h \\
    & + (\mdK \c_\Gamma^K + \c_\Gamma^K \nabla_K {W}_K ) \eta_h \zeta_h \Big)
    \dd \sigma_h.
    \end{aligned}
  \end{equation}
  Furthermore, there exist a constant $c > 0$ such that for all $t \in [0,T]$ and all $h \in (0,h_0)$ we have
  \begin{align}
    \label{eq:surf-gh-bounded}
    \abs{ g_h( t; \eta_h, \zeta_h ) }
    & \le c \norm{ \eta_h }_{\H_h(t)} \norm{ \zeta_h }_{\H_h(t)}
    && \mbox{ for all } \eta_h, \zeta_h \in \H_h(t) \\
    \label{eq:surf-bh-bounded}
    \abs{ b_h( t; \eta_h, \zeta_h ) }
    & \le c \norm{ \eta_h }_{\V_h(t)} \norm{ \zeta_h }_{\V_h(t)}
    && \mbox{ for all } \eta_h, \zeta_h \in \V_h(t).
  \end{align}
\end{lemma}

\begin{proof}
  The transport theorem \cref{eq:gammah-transport} directly gives \cref{eq:gh-surface} and 
  additionally \cref{eq:dirichleth-transport} and \cref{eq:advectionh-transport} give \cref{eq:bh-surface}.
  To see the boundedness properties, we directly apply \cref{lem:surf-Wh-bounded}.
\end{proof}

\begin{theorem}
  \label{thm:surf-fem-stability}
  There exists a unique solution of the finite element scheme \cref{eq:surf-fem}. The solution satisfies the stability bound:
  \begin{equation}
    \sup_{t \in (0,T)} \norm{ U_h }_{\H_h(t)}^2
    + \int_0^T \norm{ U_h }_{\V_h(t)}^2 \dd t
    \le c \norm{ U_{h,0} }_{\H_h(t)}^2.
  \end{equation}
\end{theorem}

\begin{proof}
  The result is shown in the abstract setting in \cref{thm:abs-stab} so we are left to check the assumptions.
  The assumptions \cref{eq:mh-symmetric} and \cref{eq:mh-bounded} follow since $m_h$ is simply the $\H_h(t) = L^2( \Gamma_h(t) )$ inner product.
  For \cref{eq:ch-formula}, we use \cref{eq:gammah-transport} and the product rule $\mdh ( v_h )^2 = 2 v_h \mdh v_h$.
  The bound \cref{eq:ch-bounded} is shown in \cref{eq:surf-gh-bounded}.
  The map $t \mapsto a_h(t ; \cdot, \cdot )$ is differentiable and hence measurable \cref{eq:ah-meas}.
  The bounds \cref{eq:ah-coercive} and \cref{eq:ah-bounded} follow from standard calculations and our assumptions on $\A_\Gamma^h, \b_\Gamma^h, \c_\Gamma^h$.
  Finally we have shown \cref{eq:bh-formula} and \cref{eq:bh-bounded} in \cref{eq:bh-surface} and \cref{eq:surf-bh-bounded}.
\end{proof}

\subsection{Error analysis}
\label{sec:fem-surf-error}

The space $\S_h^\ell(t)$ is equipped with the following approximation property:
\begin{lemma}
  \label{lem:surf-approx}
  The interpolation operator $I_h \colon \Z_0(t) \to \S_h^\ell(t)$ is well defined and satisfies
  \begin{align}
    \norm{ z - I_h z }_{\H(t)} + h \norm{ z - I_h z }_{\V(t)}
    & \le c h^2 \norm{ z }_{\Z_0(t)}
    && \mbox{ for } z \in \Z_0(t) \\
    \norm{ z - I_h z }_{\H(t)} + h \norm{ z - I_h z }_{\V(t)}
    & \le c h^{k+1} \norm{ z }_{\Z(t)}
    && \mbox{ for } z \in \Z(t).
  \end{align}
\end{lemma}

\begin{proof}
  We simply apply \cref{thm:global-lift-interp}. The second result applies the theorem in the obvious way. 
  The first result applies the theorem with $k=1$ noting that $P_1(\hat{K}) \subset P_k(\hat{K})$ and the 
  inclusions for \cref{lem:bramble-hilbert} still hold.
\end{proof}

We can further relate the lifted material velocity with the discrete material velocity.
Let $x = X(t) \in \Gamma_h(t)$ evolve with velocity $W_h(X(t),t)$ and $Y(t) = \Lambda_h( X(t), t )$. Then
\begin{equation}
  \label{eq:surf-wh}
  {w}_h( p( x, t ), t )
  = \dt Y( t ) = \dt p( X(t), t )
  = \nabla p ( X(t), t ) {W}_h( X(t), t )
  + p_t( X(t), t ).
\end{equation}
Then from the above calculation of $\partial_{x_j} p$ we have
\begin{equation}
  \label{eq:surf-wh-calc}
  {w}_h( p( x, t ) )
  = ( P( x, t ) - d( x, t ) \mathbb{H}( x, t ) ) {W}_h( x, t )
  - d_t( x, t ) \nu( x, t ) - d( x, t ) \nu_t( x, t ).
\end{equation}

\begin{lemma}
  \label{lem:surf-wh-estimate}
  We have the estimate:
  \begin{equation}
    \label{eq:surf-wh-estimate}
    \norm{ {w} - {w}_h }_{L^\infty(\Gamma(t))}
    + h \norm{ \nabla_\Gamma ( {w} - {w}_h ) }_{L^\infty(\Gamma(t))}
    \le c h^{k+1}.
  \end{equation}
\end{lemma}

\begin{proof}
  See \citep[Lem~5.4]{Kov18}.
\end{proof}

The lifting operator also defines transport formulae:
\begin{lemma}
  \label{lem:surf-lift-transport}
  There exists bilinear forms $g_\ell \colon \H(t) \times \H(t) \to \R$ and $b_\ell \colon \V(t) \times \V(t)$ given by
  \begin{align}
    g_\ell( t; \eta, \eta )
    & = \int_{\Gamma(t)} \eta \zeta \nabla_{\Gamma} \cdot {w}_h \dd \sigma
    && \mbox{ for all } \eta, \zeta \in \H(t) \\
    b_\ell( t; \eta, \zeta )
    & = \sum_{K^\ell(t) \in \T_h^\ell(t)}
      \int_{K^\ell(t)}  \Big( \B( {w}_h, \A_\Gamma ) \nabla_\Gamma \eta \cdot \nabla_\Gamma \zeta \\
    \nonumber
    & \qquad\qquad\qquad\qquad\qquad + \B_{\adv}( {w}_h, \b_\Gamma ) \eta \cdot \nabla_\Gamma \zeta \\
    \nonumber
    & \qquad\qquad\qquad\qquad\qquad + ( \mdell \c_\Gamma + \c_\Gamma \nabla_\Gamma \cdot {w}_h ) \eta \zeta \Big) \dd \sigma
    && \mbox{ for all } \eta, \zeta \in \V(t).
  \end{align}
  These bilinear forms satisfy the following transport formulae on $\Gamma(t)$:
  \begin{align}
    \label{eq:surf-lift-transport-m}
    \dt m( t; \eta, \zeta )
    & = m( t; \mdell \eta, \zeta )
    + m( t; \eta, \mdell \zeta )
    + g_\ell( t; \eta, \zeta )
    && \mbox{ for } \eta, \zeta \in C^1_{(\H,\phi^\ell)} \\
    \label{eq:surf-lift-transport-a}
    \dt a( t; \eta, \zeta )
    & = a( t; \mdell \eta, \zeta )
    + a( t; \eta, \mdell \zeta )
    + b_\ell( t; \eta, \zeta )
    && \mbox{ for } \eta, \zeta \in C^1_{(\V,\phi^\ell)}.
  \end{align}
  Furthermore, the two new bilinear forms are uniformly bounded in the sense that there 
  exists a constant $c > 0$ such that for all $t \in [0,T]$ and all $h \in (0,h_0)$,
  \begin{align}
    \abs{ g_\ell( t; \eta, \zeta ) }
    & \le c \norm{ \eta }_{\H(t)} \norm{ \zeta }_{\H(t)}
    && \mbox{ for all } \eta, \zeta \in \H(t) \\
    \abs{ b_\ell( t; \eta, \zeta ) }
    & \le c \norm{ \eta }_{\V(t)} \norm{ \zeta }_{\V(t)}
    && \mbox{ for all } \eta, \zeta \in \V(t).
  \end{align}
\end{lemma}

\begin{proof}
  The transport formulae are direct translations of \cref{lem:lifted-transport}.
  The bounds follow from the fact that $\norm{ {w}_h }_{W^{1,\infty}(\Gamma_h(t))}$ is bounded uniformly from \cref{lem:surf-wh-estimate}.
\end{proof}

For the remainder of this section, we will additionally assume that $\A_\Gamma$ is uniformly $C^2$ smooth in space.
 We set $\A_\Gamma^h = \A_\Gamma^{-\ell}, \b_\Gamma^h = \b_\Gamma^{-\ell}, \c_\Gamma^h = \c_\Gamma^{-\ell}$.%

\begin{lemma}
  \label{lem:surface-geom-pert}
  There exists a constant $c > 0$ such that for all $t \in [0,T]$ and all $\eta_h, \zeta_h \in \V_h(t)$ with lifts $\eta_h^\ell, \zeta_h^\ell \in \V(t)$ we have
  \begin{subequations}
    \begin{align}
      \label{eq:m-error-surf}
      \abs{ m( t; \eta_h^\ell, \zeta_h^\ell ) - m_h( t; \eta_h, \zeta_h ) }
      & \le c h^{k+1} \norm{ \eta_h^\ell }_{\V(t)} \norm{ \zeta_h^\ell }_{\V(t)} \\
      \label{eq:a-error-surf}
      \abs{ a( t; \eta_h^\ell, \zeta_h^\ell ) - a_{h}( t; \eta_h, \zeta_h ) }
      & \le c h^{k+1} \norm{ \eta_h^\ell }_{\V(t)} \norm{ \zeta_h^\ell }_{\V(t)} \\
      \label{eq:g-error-surf}
      \abs{ g_\ell( t; \eta_h^\ell, \zeta_h^\ell ) - g_h( t; \eta_h, \zeta_h ) }
      & \le c h^{k+1} \norm{ \eta_h^\ell }_{\V(t)} \norm{ \zeta_h^\ell }_{\V(t)} \\
      \label{eq:b-error-surf}
      \abs{ b_\ell( t; \eta_h^\ell, \zeta_h^\ell ) - b_{h}( t; \eta_h, \zeta_h ) }
      & \le c h^{k+1} \norm{ \eta_h^\ell }_{\V(t)} \norm{ \zeta_h^\ell }_{\V(t)} \\
      \label{eq:gt-error-surf}
      \abs{ g_\ell( t; \eta_h^\ell, \zeta_h^\ell ) - g( t; \eta_h^\ell, \zeta_h^\ell ) }
      & \le c h^{k} \norm{ \eta_h^\ell }_{\V(t)} \norm{ \zeta_h^\ell }_{\V(t)} \\
      \label{eq:bt-error-surf}
      \abs{ b_\ell( t; \eta_h^\ell, \zeta_h^\ell ) - b( t; \eta_h^\ell, \zeta_h^\ell ) }
      & \le c h^{k} \norm{ \eta_h^\ell }_{\V(t)} \norm{ \zeta_h^\ell }_{\V(t)}.
    \end{align}
  \end{subequations}
\end{lemma}

\begin{proof}
  The results \cref{eq:m-error-surf,eq:a-error-surf,eq:g-error-surf,eq:b-error-surf} easily follow using ideas from \citep[Lem~5.6]{Kov18} and \cref{lem:bulk-geom-pert}.
  \cref{eq:gt-error-surf,eq:bt-error-surf} follow directly from \cref{lem:surf-wh-estimate}.
\end{proof}

\begin{lemma}
  \label{lem:closed-md-error}
  \begin{align}
    \label{eq:closed-md-error}
    \norm{ \md \eta - \mdell \eta }_{L^2(\Gamma(t))}
    & \le c h^{k+1} \norm{ \eta }_{H^1(\Gamma(t))}
    && \mbox{ for } \eta \in H^1( \Gamma(t) ) \\
    \label{eq:closed-mdV-error}
    \norm{ \nabla_\Gamma (\md \eta - \mdell \eta) }_{L^2(\Gamma(t))}
    & \le c h^{k} \norm{ \eta }_{H^2(\Gamma(t))}
    && \mbox{ for } \eta \in H^2( \Gamma(t) ).
  \end{align}
  Furthermore for all $\eta \in C^1_{\Z_0}$ and $\zeta \in \Z_0(t)$, we have
  \begin{equation}
    \label{eq:amd-error-surf}
    \abs{ a( t; \mdell \eta, \zeta ) - a_h( t; \mdh \eta^{-\ell}, \zeta^{-\ell} ) }
    \le c h^{k+1} \bigl( \norm{ \eta }_{\Z_0(t)} + \norm{ \md \eta }_{\Z_0(t)} \bigr) \norm{ \zeta }_{\Z_0(t)}.
  \end{equation}
\end{lemma}

\begin{proof}
  We recall \cref{eq:md-lift-char}:
  \begin{equation*}
    \md \eta - \mdell \eta
    = ( {w} - {w}_h ) \cdot \nabla_\Gamma \eta.
  \end{equation*}
  We combine this calculation with \cref{eq:surf-wh-estimate} to see \cref{eq:closed-md-error}.

  We may apply the tangential gradient to the above equation and use \cref{eq:surf-wh-estimate} again to obtain
  \begin{equation*}
    \norm{ \nabla_\Gamma ( \md \eta - \mdell \eta ) }_{L^2(\Gamma(t))}
    \le c h^k \norm{ \eta }_{H^1(\Gamma(t))} + c h^{k+1} \norm{ \eta }_{H^2(\Gamma(t))}.
  \end{equation*}
  \cref{eq:amd-error-surf} follows from \cref{eq:a-error-surf} and \cref{eq:closed-mdV-error}.
\end{proof}

\begin{theorem}
  \label{thm:error-surf}
  Let $\utext \in L^2_\V$ be the solution of \cref{eq:weak-surf} which we assume satisfies the further regularity requirement
  \begin{equation}
    \label{eq:surf-u-reg}
    \sup_{t \in (0,T)} \norm{ \utext }_{\Z(t)}^2 + \int_0^T \norm{ \md \utext }_{\Z(t)}^2 \dd t
    \le C_u.
  \end{equation}
  Let $U_h \in C^1_{\S_h}$ be the solution of \cref{eq:surf-fem} and denote its lift by $u_h = U_h^\ell$. Then we have the following error estimate
  \begin{equation}
    \begin{aligned}
      & \sup_{t \in (0,T)} \norm{ \utext - u_h }_{\H(t)}^2
      + h^2 \int_0^T \norm{ \utext - u_h }_{\V(t)}^2 \dd t
       \le c \norm{ \utext - u_{h,0} }_{\H(t)}^2 + c h^{2k+2} C_u.
    \end{aligned}
  \end{equation}
\end{theorem}

\begin{proof}
  We simply check the assumptions of \cref{thm:error}.
  We know the lift is stable from \cref{lem:surf-lift-stab}.
  The existence and boundedness of $g_\ell$ and $b_\ell^\kappa$ are dealt with in \cref{lem:surf-lift-transport}.
  The interpolation properties \cref{eq:interp-Z0,eq:interp-Z} are shown in \cref{lem:surf-approx}.
  The geometric perturbation estimates \cref{eq:m-error,eq:c-error,eq:ct-error,eq:a-error,eq:b-error,eq:bt-error,eq:a-error2,eq:b-error2,eq:amd-error2,eq:md-error,eq:mdV-error} 
  are shown in the sequence of \cref{lem:surface-geom-pert,lem:surf-wh-estimate,lem:closed-md-error}.  Finally, to show \cref{eq:b-bound2} we follow a 
  calculation given in the proof of \citep[Thm.~6.2]{DziEll13}. In this setting the simpler version  \cref{eq:b-bound2prime}  holds; see  \cref{B3prime}. Observe  that for any $\eta \in \V(t)$, $\zeta \in \Z_0(t)$, that
  \begin{align*}
    b( t; \eta, \zeta )
    & = \int_{\Gamma(t)} \B( w, \A_\Gamma ) \nabla_\Gamma \eta \cdot \nabla_\Gamma \zeta \dd \sigma
      + \int_{\Gamma(t)} \B_{\adv}( w, \b_\Gamma ) \eta \cdot \nabla \zeta \dd \sigma \\
    & \qquad + \int_{\Gamma(t)} ( \md \c_\Gamma + \c_\Gamma \nabla_{\Gamma} \cdot w ) \eta \zeta \dd \sigma
    =: I_1 + I_2 + I_3.
  \end{align*}
  For $I_1$, using the additional smoothness assumptions on $\A_\Gamma$, we can apply integration by parts to see that
  \begin{align*}
    I_1
    & = \int_{\Gamma(t)} \B( w, \A_\Gamma ) \nabla_\Gamma \eta \cdot \nabla_\Gamma \zeta \dd \sigma \\
    & = \sum_{i,j=1}^{n+1} \int_{\Gamma(t)} \B( w, \A_\Gamma )_{ij} ( \nabla_{\Gamma} )_j \eta ( \nabla_\Gamma )_i \zeta \dd \sigma \\
    & = \sum_{i,j=1}^{n+1} \int_{\Gamma(t)} (\nabla_\Gamma)_j \big( \B( w, \A_\Gamma )_{ij} \eta ( \nabla_\Gamma )_i \zeta \big) \dd \sigma
      - \int_{\Gamma(t)} \eta \sum_{i,j=1}^{n+1} ( \nabla_\Gamma )_j \big( \B( w, \A_\Gamma )_{ij} ( \nabla_\Gamma )_i \zeta \big) \dd \sigma \\
    & = \int_{\Gamma(t)} \sum_{i,j=1}^{n+1} {H} \nu_j \B( w, \A_\Gamma )_{ij} \eta ( \nabla_\Gamma )_i \zeta \dd \sigma
      - \int_{\Gamma(t)} \eta \sum_{i,j=1}^{n+1} ( \nabla_\Gamma )_j \big( \B( w, \A_\Gamma )_{ij} ( \nabla_\Gamma )_i \zeta \big) \dd \sigma.
  \end{align*}
  The bounds for $I_2$ and $I_3$ are obvious. This implies that
  \begin{equation}
    \label{eq:surf-b-bound2}
    \abs{ b( t; \eta, \zeta ) }
    \le \abs{ I_1 } + \abs{ I_2 } + \abs{ I_3 }
    \le c \norm{ \eta }_{\H(t)} \norm{ \zeta }_{\Z_0(t)}.
    \qedhere
  \end{equation}
\end{proof}

\section{Application III: A coupled bulk-surface parabolic system}
\label{sec:application3}

In this section we will consider a finite element method for the coupled bulk-surface problem \cref{eq:bulksurf-eqn}.
The method is based on combining the isoparametric approaches from the problem in a bulk domain (\cref{BULKPDE}) and the problem on a surface (\cref{SURFPDE}).
The discretisation will be posed in the product of a bulk isoparametric finite element space of order $k$ and a surface 
isoparametric finite element space of order $k$.
We take our notation from the previous sections (\cref{SURFPDE,BULKPDE}).

\subsection{The domain and function spaces}

We set $\H(t) = L^2(\Omega(t)) \times L^2(\Gamma(t))$, $\V(t) = H^1(\Omega(t)) \times H^1(\Gamma(t))$ and $\V^*(t) = ( H^1(\Omega(t)) )^*  \times ( H^1(\Gamma(t)) )^*.$
We will also make use of the spaces $\Z_0(t) = H^2(\Omega(t)) \times H^2(\Gamma(t))$ and $\Z(t) = H^{k+1}(\Omega(t)) \times H^{k+1}(\Gamma(t))$.
We see that $Z(t) \subset \Z_0(t) \subset \V(t)$ for each $t \in [0,T]$ and the inclusions are uniformly continuous.
We define the push forward operator $\phi_t$ by
\begin{equation}
  \label{eq:coupled-pushforwards}
  ( \phi_t ( \underline{\eta} ) )( x, y )
  := ( \eta( \Phi_{-t}( x ) ), \xi( \Phi_{-t}( y ) ) )
  ~~~\mbox{~~for } \underline{\eta} = (\eta, \xi) \in \H_0, ~x \in \Omega(t), y \in \Gamma(t).
\end{equation}
$(\H(t), \phi_t )_{ t \in [0,T]}$ and $( \V(t), \phi_t )_{t \in [0,T]}$ are compatible pairs (\cref{def:compatibility}) and the spaces $L^2_\H, L^2_\V$ 
and $L^2_{\V^*}$ (c.f. \cref{eq:L2X}) and $C^1_\H, C^1_\V$ and $C^1_{\V^*}$ (c.f. \cref{eq:CkX}) are well defined.
For $\underline{\eta} = ( \eta, \xi ) \in C^1_\H$, we define the strong material derivative, denoted $\md \underline{\eta}$, 
using \cref{eq:strong-md}. The product material derivative coincides with the product of surface and bulk material derivatives:
\[
  \md \underline{\eta} = ( \md \eta, \md \xi ) \qquad \mbox{ for } \underline{\eta} = ( \eta, \xi ) \in C^1_\H.
\]
$(\V(t), \H(t), \V^*(t))_{t \in [0,T]}$ form an evolving Hilbert triple (\cref{def:evolving-hilbert-triple}).
We also have that $( \Z_0(t), \phi_t|_{\Z_0(t)} )_{t \in [0,T]}$ and $( \Z(t), \phi_t|_{\Z_0(t)} )_{t \in [0,T]}$ are compatible pairs.
For further information on this functional analytic setting see \citet[Sec.~5.3]{AlpEllSti15b-pp}.

{
  \begin{remark}
    \label{rem:coupled-smoothness}
    For the well posedness of the partial differential equation \cref{BulkSurfprob} we require that the boundary $\Gamma$ is 
    a $C^{2}$-hypersurface and that the flow map $\Phi_{(\cdot)}( \cdot ) \in C^2( 0,T; C^2( \Omega_0 ) )$. See \citet{AlpEllSti15b-pp} for more details.
    For the approximation properties we derive we require that $\Gamma$ is a $C^{k+2}$-hypersurface and that 
    $\Phi_{(\cdot)}( \cdot ) \in C^{k+2}( 0,T; C^{k+2}( \Omega_0) )$ with $\Phi_{t} \colon \Omega_0 \to \Omega(t)$ and $\Phi_{-t} \colon \Omega(t) \to \Omega$ both of class $C^{k+2}$.
  \end{remark}
}

{
  We introduce a signed distance function for the boundary surface $\Gamma(t) = \partial \Omega(t)$. The oriented signed distance function for $\Gamma(t)$ is given by
  \[
    d( x, t ) = \begin{cases}
      - \inf\{ \abs{ x - y } : y \in \Gamma(t) \} & \mbox{ for } x \in \bar\Omega(t) \\
      \inf\{ \abs{ x - y } : y \in \Gamma(t) \} & \mbox{ otherwise.}
    \end{cases}
  \]
  For each $t \in [0,T]$, we orient $\Gamma(t)$ by choosing the unit normal $\nu( x, t ) = \nabla d( x, t )$ for $x \in \Gamma(t)$. 
  Our assumptions on $\Gamma(t)$ imply that there exists a neighbourhood $\N(t)$ of $\Gamma(t)$ and normal projection 
  operator $p( \cdot, t ) \colon \N(t) \to \Gamma(t)$ given as the unique solution of
  \begin{equation}
    \label{eq:coupled-cp}
    x = p(x,t) + d( x, t ) \nu( p( x, t ), t ).
  \end{equation}
  See \citet[Lem. 14.16]{GilTru01}; \citet{Foote1984} for more details.
}

\subsection{The initial value problem}

Given $\A_\Omega, \b_\Omega, \c_\Omega$ as in \cref{sec:bulk-prob} and $\A_\Gamma, \b_\Gamma, \c_\Gamma$ as in \cref{sec:surf-prob},
we consider the initial value  problem
\begin{problem}
  \label{BulkSurfprob}
  Find $\emph \utext$ and $\emph \vtext$ such that
  \begin{subequations}
  \begin{align}
  \md \emph \utext +\nabla \cdot (\b_\Omega  \emph \utext)  -\nabla \cdot (\A_\Omega\nabla \emph \utext) +\c_\Omega \emph \utext  +(\nabla\cdot w)\utext&=0
  && \mbox{on} ~~\Omega(t)\\
  \md \emph \vtext +\nabla \cdot (\b_\Gamma \emph \vtext) - \nabla \cdot ( \A_\Gamma \nabla_\Gamma \emph \vtext ) +\c_\Gamma  \emph \vtext  +( \nabla_\Gamma \cdot w )\emph \vtext&=0
  && \mbox{on} ~~\Gamma(t)\\
 (-\b_\Omega  \emph \utext  +\A_\Omega\nabla_\Gamma \emph \utext)\cdot \nu + ( \alpha \emph \utext - \beta \emph \vtext ) &=0
  && \mbox{on} ~~\Gamma(t) \\
            \emph \utext(0) =\emph \utext_0 ~~ \mbox{on}~~\Omega_0, \qquad
                     \emph \vtext(0)&=\emph \vtext_0 ~~ \mbox{on}~~\Gamma_0.
  \end{align}
  \end{subequations}
\end{problem}
\begin{remark}
The problem  \cref{eq:bulksurf-eqn} is recovered by setting 
$\A_\Omega=\la_\Omega, \b_\Omega=\lb_\Omega-w$ and $\c_\Omega=\lc_\Omega$ and
$\A_\Gamma=\la_\Gamma, \b_\Gamma=\lb_\Gamma-w_\tau$ and $\c_\Gamma=\lc_\Gamma - \nabla_\Gamma \cdot w_\nu$, where $w = w_\tau + w_\nu$ is a decomposition of $w$ into tangential and normal components on $\Gamma(t)$.
\end{remark}

\subsection{The bilinear forms and transport formulae}

We set
  \begin{align*}
    m\big( t; (\eta, \xi), ( \zeta, \rho) \big)
    & = \alpha \int_{\Omega(t)} \eta \zeta \dd x
      + \beta \int_{\Gamma(t)} \xi \rho \dd \sigma
    && ( \eta, \xi ), ( \zeta, \rho ) \in \H(t) \\
    g\big( t; (\eta, \xi), ( \zeta, \rho) \big)
    & = \alpha \int_{\Omega(t)} \eta \zeta \nabla \cdot {w} \dd x
      + \beta \int_{\Gamma(t)} \xi \rho \nabla_\Gamma \cdot {w} \dd \sigma
    && ( \eta, \xi ), ( \zeta, \rho ) \in \H(t) \\
    a_s\big( t; (\eta, \xi), ( \zeta, \rho) \big)
    & = \alpha \int_{\Omega(t)} \A_\Omega \nabla \eta \cdot \nabla \zeta
      + \c_\Omega \eta \zeta \dd x \\
    & \quad + \beta \int_{\Gamma(t)} \A_\Gamma \nabla_\Gamma \xi \cdot \nabla \rho
      + \c_\Gamma \xi \rho \dd \sigma \\
    & \quad + \int_{\Gamma(t)} ( \alpha \eta - \beta \xi ) ( \alpha \zeta - \beta \rho ) \dd \sigma.
    && ( \eta, \xi ), ( \zeta, \rho ) \in \V(t) \\
    a_n\big( t; (\eta, \xi), ( \zeta, \rho) \big)
    & = \alpha \int_{\Omega(t)}\b_\Omega \eta \cdot \nabla \zeta \dd x + \beta \int_{\Gamma(t)} \b_\Gamma \xi \cdot \nabla_\Gamma \rho\dd \sigma
    && ( \eta, \xi ) \in H(t), \\
    &&& \quad ( \zeta, \rho ) \in \V(t).
  \end{align*}
  We can combine the transport formula for the bulk and surface only cases
  \cref{eq:bulk-m-transport} and \cref{eq:surface-m-transport} for $m$,
  \cref{eq:bulk-as-transport} and \cref{eq:bs-exist-surf} for $a_s$ and \cref{eq:bulk-an-transport} and \cref{eq:bn-exist-surf}
  to derive transport laws for these coupled bilinear forms
First, for $\underline{\eta}, \underline{\zeta} \in C^1_\H$ we have
\begin{align}
  \label{eq:coupled-m-transport}
  \dt m\big( t; \underline{\eta}, \underline{\zeta} \big)
  & = m\big( t; \md \underline{\eta}, \underline{\zeta} \big)
    + m\big( t; \underline{\eta}, \md \underline{\zeta} \big)
    + g\big( t; \underline{\eta}, \underline{\zeta} \big),
\end{align}
for $\underline{\eta}, \underline{\zeta} \in C^1_\V$, we have
\begin{align}
  \label{eq:coupled-as-transport}
  \dt a_s\big( t; \underline{\eta}, \underline{\zeta} \big)
  & = a_s\big( t; \md \underline{\eta}, \underline{\zeta} \big)
    + a_s\big( t; \underline{\eta}, \md \underline{\zeta} \big)
    + b_s\big( t; \underline{\eta}, \underline{\zeta} \big),
\end{align}
and for $\underline{\eta} \in C^1_\H, \underline{\zeta} \in C^1_\V$, we have
\begin{align}
  \label{eq:coupled-an-transport}
  \dt a_n\big( t; \underline{\eta}, \underline{\zeta} \big)
  & = a_n\big( t; \md \underline{\eta}, \underline{\zeta} \big)
    + a_n\big( t; \underline{\eta}, \md \underline{\zeta} \big)
    + b_n\big( t; \underline{\eta}, \underline{\zeta} \big),
\end{align}
where $b_s( t; \cdot, \cdot ) \colon \V(t) \times \V(t) \to \R$ is given for $( \eta, \xi ), ( \zeta, \rho ) \in \V(t)$ by
\begin{align*}
  & b_s\big( t; ( \eta, \xi ), ( \zeta, \rho ) \big) \\
  & = \alpha \int_{\Omega(t)} \B( {w}, \A_\Omega ) \nabla \eta \cdot \nabla \zeta
    + ( \md \c_\Omega + \c_\Omega \nabla \cdot {w} ) \eta \zeta \dd x \\
  & \quad + \beta \int_{\Gamma(t)} \B( {w}, \A_\Gamma ) \nabla_\Gamma \xi \cdot \nabla_\Gamma \rho
    + ( \md \c_\Gamma + \c_\Gamma \nabla_\Gamma \cdot {w} ) \xi \rho \dd \sigma \\
  & \qquad + \int_{\Gamma(t)} ( \alpha \eta - \beta \xi ) ( \alpha \zeta - \beta \rho )
    \nabla_\Gamma \cdot {w} \dd \sigma,
\end{align*}
and $b_n( t; \cdot, \cdot ) \colon \H(t) \times \V(t) \to \R$ is given for $( \eta, \xi ) \in \H(t), ( \zeta, \rho ) \in \V(t)$ by
\begin{align*}
  & b_n\big( t; ( \eta, \xi ), ( \zeta, \rho ) \big) \\
  & = \alpha \int_{\Omega(t)}
    \B_{\adv}( {w}, \b_\Omega ) \eta \cdot \nabla \zeta
  \dd x
  + \beta \int_{\Gamma(t)}
    \B_{\adv}( {w}, \b_\Gamma ) \xi \cdot \nabla_\Gamma \rho
  \dd \sigma.
\end{align*}
We define $b( t; \cdot, \cdot ) \colon \V(t) \times \V(t) \to \R$ to be
\begin{equation*}
  b( t; \underline{\eta}, \underline{\zeta} ) := b_s( t ; \underline{\eta}, \underline{\zeta} ) + b_n( t; \underline{\eta}, \underline{\zeta} ) \qquad \mbox{ for } \underline{\eta}, \underline{\zeta} \in \V(t).
\end{equation*}
We also have the estimates that there exists a constant $c > 0$ such that for all $t \in (0,T)$ we have
\begin{align}
  \label{eq:coupled-g-bounded}
  \abs{ g\big( t; \underline{\eta}, \underline{\zeta} \big) }
  & \le c \norm{ \underline{\eta} }_{\H(t)} \norm{ \underline{\zeta} }_{\H(t)}
  && \mbox{ for all } \underline{\eta}, \underline{\zeta} \in \H(t) \\
  \label{eq:coupled-bs-bounded}
  \abs{ b_s\big( t; \underline{\eta}, \underline{\zeta} \big) }
  & \le c \norm{ \underline{\eta} }_{\V(t)} \norm{ \underline{\zeta} }_{\V(t)}
  && \mbox{ for all } \underline{\eta}, \underline{\zeta} \in \V(t) \\
  \label{eq:coupled-bn-bounded}
  \abs{ b_n\big( t; \underline{\eta}, \underline{\zeta} \big) }
  & \le c \norm{ \underline{\eta} }_{\H(t)} \norm{ \underline{\zeta} }_{\V(t)}
  && \mbox{ for all } \underline{\eta} \in \H(t), \underline{\zeta} \in \V(t).
\end{align}

\subsection{Variational formulation}
We consider a weak form of \cref{eq:bulksurf-eqn}.

\begin{problem}
  \label{pb:coupled}
  Given $(\emph \utext_0, v_0) \in \H_0$, find $(\emph \utext,\emph \vtext) \in \W(\V,\V^*)$  such that for almost every  $t \in [0,T]$ we have
  \begin{equation}
    \label{eq:weak-coupled}
    \begin{aligned}
      m( t; \md (\emph \utext, \emph \vtext), \underline{\zeta} ) + g( t; ( \emph \utext, \emph \vtext ), \underline{\zeta} ) + a( t; ( \emph \utext, \emph \vtext ), \underline{\zeta} )
      & = 0
      && \mbox{ for all } \underline{\zeta} \in H^1(\Omega(t))\times H^1(\Gamma(t)) \\
      \emph \utext( 0 ) = \emph \utext_0, \qquad
      \emph \vtext( 0 ) & = \emph \vtext_0.
    \end{aligned}
  \end{equation}
  \end{problem}

\begin{theorem}
  \label{thm:coupled-exists}
  There exists a unique solution pair $( \utext,  \vtext)$ %
  which satisfies the stability bound:
  \begin{align}
   & \sup_{t\in[0,T]} \norm{ ( \utext,  \vtext) }_ {(L^2(\Omega(t)) \times L^2(\Gamma(t)))}^2 + \int_0^T \norm{ ( \utext,  \vtext) }_{ (H^1(\Omega(t)) \times H^1(\Gamma(t)))}^2 \dd t \\
    \nonumber
   & \hspace{7cm} \le c \norm{ (\utext_0,  \vtext_0) }_{(L^2(\Omega_0)\times L^2(\Gamma_0))}^2   \end{align}
   and if $( \utext,  \vtext)\in H^1(\Omega_0)\times H^1(\Gamma_0))$ then
   \begin{align}
    & \sup_{t\in[0,T]} \norm{ ( \utext,  \vtext) }_ {(H^1(\Omega(t)) \times H^1(\Gamma(t)))}^2 + \int_0^T \norm{ \md( \utext,  \vtext) }_{ (L^2(\Omega(t)) \times L^2(\Gamma(t)))}^2 \dd t \\
    \nonumber
   & \hspace{7cm} \le c \norm{ ( \utext_0,  \vtext_0) }_{(H^1(\Omega_0)\times H^1(\Gamma_0))}^2.
  \end{align}
\end{theorem}

\begin{proof}
  We again apply the abstract theory of \cref{thm:abs-exist} and check the assumptions.
  \cref{ass:evolving-space-equivalence,ass:u0-approx} are shown in \citet[Sec 5.3]{AlpEllSti15b-pp}.
  It is clear that \cref{eq:m-symmetric} and \cref{eq:m-bounded} hold since $m( t; \cdot, \cdot )$ is equal to the $\H(t)$-inner product.
  The assumptions \cref{eq:c-formula} and \cref{eq:c-bounded} are shown in \cref{eq:coupled-m-transport} and \cref{eq:coupled-g-bounded}.
  We know that the map $t \mapsto a( t; \cdot, \cdot )$ is differentiable hence measurable which shows \cref{eq:a-meas}.
  The coercivity \cref{eq:as-coercive} and boundedness \cref{eq:as-bounded} of $a_s$ and boundedness of $a_n$ \cref{eq:an-bounded} follow from standard arguments since the extra cross term is clearly positive (see also \citet[Thm.~3.2]{EllRan13}).
  The existence of the bilinear forms $b_s$ \cref{eq:bs-exist} and $b_n$ \cref{eq:bn-exist} has been shown in \cref{eq:coupled-as-transport} and \cref{eq:coupled-an-transport} and the estimates \cref{eq:bs-bound} and \cref{eq:bn-bound} are shown in \cref{eq:coupled-bs-bounded} and \cref{eq:coupled-bn-bounded}.
\end{proof}

\subsection{Discretisation of the domain and finite element spaces}

In order to define our computational method we use the construction of the isoparametric domain of order $k$ used in \cref{sec:fem-bulk}.
This defines an evolving triangulated bulk domain $\{ \Omega_h(t) \}_{t \in [0,T]}$ (\cref{def:evolving-triangulated-domain}) equipped with an evolving conforming subdivision $\{ \T_h(t) \}_{t \in [0,T]}$ (\cref{def:bulk-evolving-conforming-subdivision}).
We will assume that $\{ \T_h(t) \}_{t \in [0,T]}$ is a uniformly quasi-uniform subdivision (\cref{def:bulk-uniformly-regular}).
We will make the same assumptions on the domain as in \cref{sec:fem-bulk} which allow us to show \cref{prop:bulk-fe}.
Namely, we  conclude that we can define an evolving bulk finite element space $\{ \S_h^\Omega(t) \}$ (\cref{eq:bulk-Sh,def:evolving-bfe-space}) consisting of Lagrange finite elements of order $k$ (\cref{ex:isoparametric-bfe}) over a uniformly $k$-regular evolving subdivision (\cref{def:bulk-uniformly-Theta-regular}).
By $h$ we denote the maximum mesh diameter over time \cref{eq:bulk-def-h}:
\[
  h := \max_{t \in [0,T]} \max_{K(t) \in \T_h(t)} \diam( \tilde{K}(t) ).
\]
Throughout the remainder of this section we will denote global discrete quantities with a subscript $h \in (0,h_0)$. We assume implicitly that these structures exist for each $h$ in this range (see also \cref{rem:small-hk-bulk,rem:small-hk-surf}).

For $t \in [0,T]$, we write $\Gamma_h(t) = \partial\Omega_h(t)$ and $\T_h^\Gamma(t)$ for the boundary faces of $\T_h(t)$:
\begin{equation*}
  \T_h^\Gamma(t) := \left\{
    K(t) \cap \Gamma_h(t) : K(t) \in \T_h(t)
    \right\}.
\end{equation*}
We note that $\{ \T_h^\Gamma(t) \}_{t \in [0,T]}$ is also an evolving conforming subdivision (\cref{def:evolving-conforming-subdivision}) which we assume is uniformly quasi-uniform (\cref{def:uniformly-quasi-uniform}).
In fact, this is the construction we have previously used for an evolving surface in \cref{sec:application1} so that we may make the same conclusions as \cref{prop:surf-fe}.
That is that we can define an evolving surface finite element space $\{ \S_h^\Gamma(t) \}$ (\cref{eq:surf-Sh,def:evolving-sfe-space}) consisting of Lagrange finite elements of order $k$ (\cref{ex:isoparametric-sfe}) over a uniformly $k$-regular, evolving subdivision $\{ \T_h^\Gamma(t) \}_{t \in [0,T]}$ (\cref{def:uniformly-Theta-regular}).

Our analysis will make use of the product Hilbert spaces $\H_h(t) := L^2( \Omega_h(t) ) \times L^2( \Gamma_h(t) )$ and $\V_h(t) := H^1_T( \T_h(t) ) \times H^1_T( \T_h^\Gamma(t) )$ and the product finite element space $\S_h(t) := \S_h^\Omega(t) \times \S_h^\Gamma(t)$.
Using \cref{lem:bulk-Sh-closed,lem:Sh-closed-subspace} we can identify elements of $\S_h(t)$ as a product of continuous functions on $\Omega(t)$ and $\Gamma(t)$ and that $\S_h(t) \subset \V_h(t)$.
We equip $\S_h(t)$ with the norms:
\begin{align*}
  \norm{ ( \chi_h, \myvarrho_h ) }_{\V_h(t)}
  & := \left( \norm{ \chi_h }_{H^1(\T_h(t))}^2 + \norm{ \myvarrho_h }_{H^1(\T_h^\Gamma(t))}^2 \right)^{1/2} \\
  \norm{ ( \chi_h, \myvarrho_h ) }_{\H_h(t)}
  & := \left( \norm{ \chi_h }_{L^2(\Omega_h(t))}^2 + \norm{ \myvarrho_h }_{L^2(\Gamma_h(t))}^2 \right)^{1/2}.
\end{align*}

The previous constructions define a global flow map $\Phi_t^h \colon \Omega_{h,0} \to \Omega_h(t)$ (\cref{def:bulk-global-discrete-flow}) and discrete velocity $W_h$ (\cref{def:bulk-global-discrete-velocity}), with well defined surface restrictions $\Phi_t^h|_{\Gamma_{h,0}}$ (\cref{def:global-discrete-flow}) and $W_h|_{\Gamma_{h}(t)}$ (\cref{def:global-discrete-velocity}).
For each $t \in [0,T]$, we define the discrete push forward map $\phi^h_t \colon \H_h(0) \to \H_h(t)$ by
\[
  ( \phi_t( \eta_h, \xi_h ) )( x, y ) :=
  ( \eta_h( \Phi_{-t}^h( x) ), \xi_h( \Phi_{-t}^h(y) ) )
  \qquad
  \mbox{ for } ( \eta_h, \xi_h ) \in \H_h(0), x \in \Omega_h(t), y \in \Gamma_h(t).
\]
Since we have shown that $\{ \T_h \}_{t \in [0,T]}$ and $\{ \T_h^\Gamma(t) \}_{t \in [0,T]}$ are both uniformly $k$-regular and uniformly quasi-uniform, the spaces $( \H_h(t), \phi^h_t|_{\H_{h,0}} )_{t \in [0,T]}$, $( \V_h(t), \phi^h_t|_{\V_{h,0}} )_{t \in [0,T]}$ and $( \S_h(t), \phi^h_t|_{\S_{h,0}} )_{t \in [0,T]}$ form a compatible pair (\cref{def:compatibility}).
Further, we can define the spaces $L^2_{\H_h}$ \cref{eq:L2X} and $C^1_{\H_h}$ \cref{eq:CkX} and we can define a material derivative for functions $\underline{\eta_h} = (\eta_h, \xi_h) \in C^1_{\H_h}$ which can be identified as \cref{eq:abs-mdh}: %
\begin{equation*}
  \mdh \underline{\eta_h} = ( \mdh \eta_h, \mdh \xi_h ).
\end{equation*}

\subsection{Construction of lifted finite element space}

We have already constructed a bijection between the computational domain $\Omega_h(t)$ and the continuous domain $\bar\Omega(t)$.
In \cref{sec:bulk-lift-prop}, for each $t \in [0,T]$, we constructed element-wise a bijection $\Lambda_h( \cdot, t ) \colon \Omega_h(t) \to \bar\Omega(t)$.
Furthermore, we note that the restriction of the lifting operator to $\Gamma_h(t)$, $\Lambda_h( \cdot, t )|_{\Gamma_h(t)}$, is simply the normal projection operator which is the lifting operator used in \cref{sec:surf-lift-prop}.

For each bulk finite element $K(t) \in \T_h(t)$, we can use \cref{def:lifted-bfe} to construct an associated lifted bulk 
finite element $(K^\ell(t), P^\ell(t), \Sigma^\ell(t))$. We will denote the set of lifted bulk finite elements by $\T_h^\ell(t)$.
For each surface finite element $E(t) \in \T_h^\Gamma(t)$, we can use \cref{def:lifted-sfe} to construct 
an associated lifted surface finite element $(E^\ell(t), P_E^\ell(t), \Sigma_E^\ell(t))$. We will denote the set of lifted 
surface finite elements by $\T_h^{\Gamma,\ell}(t)$.

For $t \in [0,T]$ and a function pair $\underline{\eta_h} = ( \eta_h, \xi_h ) \colon \Omega_h(t) \times \Gamma_h(t) \to \R^2$, 
we define the lift $\underline{\eta_h}^\ell = ( \eta_h, \xi_h )^\ell \colon \Omega(t) \times \Gamma(t) \to \R^2$ by
\begin{equation*}
  \underline{\eta_h}^\ell( \Lambda_h( x, t ), p( y, t ) )
  := \big( \eta_h( x ), \xi_h( y ) \big)
  \qquad \mbox{ for } x \in \Omega_h(t), y \in \Gamma_h(t).
\end{equation*}
We will often write $\underline{\eta_h}^\ell = ( \eta_h, \xi_h )^{\ell} = ( \eta_h^\ell, \xi_h^\ell )$ to signify that 
the lifting process is simply a combination the previous lifts for the surface and bulk components.

We will also make use of an inverse lift for functions on $\Omega(t) \times \Gamma(t)$. 
For $\underline{\eta} = ( \eta, \xi ) \colon \Omega_h \times \Gamma(t) \to \R^2$, we define 
he inverse lift of $\underline{\eta}$, denoted by $\underline{\eta}^{-\ell} = ( \eta, \xi )^{-\ell} \colon \Omega_h(t) \times \Gamma_h(t) \to \R^2$ by
\begin{equation*}
  \underline{\eta}^{-\ell}( x, y )
  = \big( \eta( \Lambda_h( x,t ) ), \xi_h( p(x,t) ) \big)
\qquad \mbox{ for } x \in \Omega_h(t), y \in \Gamma_h(t).
\end{equation*}

\begin{lemma}
  \label{lem:coupled-lift-stab}
  Let $\underline{\eta_h} \in \H_h(t)$ and denote their lift by $\underline{\eta_h}^\ell$. Then there exists constants $c_1, c_2 > 0$ such that
  \begin{align}
    c_1 \norm{ \underline{\eta_h}^\ell }_{\H(t)}
    & \le \norm{ \underline{\eta_h} }_{\H_h(t)}
    \le c_2 \norm{ \underline{\eta_h}^\ell }_{\H(t)} \\
    c_1 \norm{ \underline{\eta_h}^\ell }_{\V(t)}
    & \le \norm{ \underline{\eta_h} }_{\V_h(t)}
    \le c_2 \norm{ \underline{\eta_h}^\ell }_{\V(t)}.
  \end{align}
\end{lemma}

\begin{proof}
  We simply combine the results of \cref{lem:bulk-lift-stab} and \cref{lem:surf-lift-stab}.
\end{proof}

We use the lifts to define a product space of lifted finite element functions $\S_h^\ell(t)$ (\cref{def:lifted-bfe-space,def:lifted-sfe-space}) by
\[
  \S_h^\ell(t) := \{ \underline{\chi_h}^\ell : \underline{\chi_h} \in \S_h \}.
\]

Using the lift $\Lambda_h$ and the discrete flow $\Phi^h_t$, we can defined a lifted flow map 
$\Phi^\ell_t \colon \bar{\Omega}(0) \to \bar{\Omega}(t)$ (\cref{def:bulk-lifted-flow-map}) and lifted discrete velocity 
$w_h$ (\cref{def:bulk-lifted-discrete-material-velocity}), with well defined restrictions 
$\Phi^\ell_t|_{\Gamma(0)}$ (\cref{def:lifted-flow-map}) and $w_h|_{\Gamma(t)}$ (\cref{def:lifted-discrete-material-velocity}).
This allows us to define a lifted push forward map (\cref{eq:phiellt,def:lifted-push-forward-map,def:bulk-lifted-push-forward-map}):
\[
  \phi^\ell_t( ( \eta, \xi ) )( x, y )
  := \bigr( \eta( \Phi_{-t}^\ell( x ) ), \xi( \Phi_{-t}^\ell( y ) ) \bigl)
  \qquad \mbox{ for } ( \eta, \xi ) \in \H_0, x \in \bar\Omega(t), y \in \Gamma(t).
\]
Using the previous constructions, applying \cref{lem:surf-lift-bounds,lem:bulk-Thell-good},
we see that $(\H(t), \phi^\ell_t)_{t \in [0,T]}$, $( \V(t), \phi_t^\ell|_{\V(t)} )_{t \in [0,T]}$ and $( \S_h^\ell(t), \phi_t^\ell|_{\S_h^\ell(t)} )_{t \in [0,T]}$  are compatible pairs uniformly for $h \in (0,h_0)$.
We will use the notations $C^1_{(\H,\phi^\ell)}$ and $C^1_{(\V,\phi^\ell)}$ for the spaces of functions smoothly evolving in time \cref{eq:CkX} with respect to the push-forward map $\phi^\ell_t$ in $\H(t)$ and $\V(t)$ respectively.
This implies we have a well defined strong material derivative \cref{eq:lift-md}:
\[
  \mdell \underline{\eta} = \phi_t^\ell \left( \dt \left( \phi_{-t}^\ell \underline{\eta} \right) \right) \qquad \mbox{ for } \underline{\eta} \in C^1_{(\H,\phi^\ell)}.
\]

\subsection{The discrete problem  and stability}

The finite element method is based on the variation form \cref{eq:abs-var-form} of \cref{pb:coupled}.
We assume we have $\A^K_\Omega, \b^K_\Omega$ and $\c^K_\Omega$ for $K \in \T_h(t)$ as 
in \cref{sec:bulk-discrete-pr} and $\A^E_\Gamma, \b^E_\Gamma$ and $\c^E_\Gamma$ for $E \in \T_h^\Gamma(t)$ as in \cref{sec:surf-discrete-pr}.

\begin{problem}
  \label{pb:fem-coupled}
  Given $(U_{h,0}, V_{h,0} ) \in \S_{h,0}$, find $(U_h, V_h) \in C^1_{\S_h}$ such that for all $t \in [0,T]$
  \begin{equation}
    \label{eq:fem-coupled}
    \begin{aligned}
      \dt m_h\big( t; ( U_h, V_h ), \underline{\chi_h} \big)
      + a_h\big( t; ( U_h, V_h ), \underline{\chi_h} \big)
      & = m_h\big( t; ( U_h, V_h ), \mdh \underline{\chi_h} \big) \\
      & \qquad\qquad \mbox{ for all } \underline{\chi_h} \in C^1_{\S_h} \\
      U_h( 0 ) = U_{h,0},
      V_h( 0 ) & = V_{h,0},
    \end{aligned}
  \end{equation}
  where for $( \eta_h, \xi_h ), ( \zeta_h, \rho_h ) \in \H_h(t)$, we define
  \begin{align*}
    m_h\big( t; ( \eta_h, \xi_h ), ( \zeta_h, \rho_h ) \big)
    & = \alpha \int_{\Omega_h(t)} \eta_h \zeta_h \dd x
    + \beta \int_{\Gamma_h(t)} \xi_h \rho_h \dd \sigma_h,
  \end{align*}
  and, for $( \eta_h, \xi_h ), ( \zeta_h, \rho_h ) \in \V_h(t)$, we define
  \begin{align*}
    a_h\big( t; ( \eta_h, \xi_h ), ( \zeta_h, \rho_h ) \big)
    & = \alpha \sum_{K(t) \in \T_h(t)} \int_{K(t)}
      \A_\Omega^K \nabla \eta_h \cdot \nabla \zeta_h
      + \b_\Omega^K \eta_h \cdot \nabla \zeta_h
      + \c_\Omega^K \eta \zeta_h \dd x \\
    & \quad + \beta \sum_{E(t) \in \T_h^\Gamma(t)} \int_{E(t)}
      \A_\Gamma^E \nabla_E \xi_h \cdot \nabla \rho_h
      + \b_\Gamma^E \xi_h \cdot \nabla_E \rho_h
      + \c_\Gamma^E \xi_h \rho_h \dd \sigma_h \\
    & \quad + \int_{\Gamma_h} ( \alpha \eta_h - \beta \xi_h ) ( \alpha \zeta_h - \beta \rho_h ) \dd \sigma_h.
  \end{align*}
\end{problem}

We also have discrete transport formula from the bulk and surface cases:
\begin{lemma}
  \label{lem:coupled-fem-transport}
  There exists bilinear forms 
  $g_h( t; \cdot, \cdot ) \colon \H_h(t) \times \H_h(t) \to \R$ and $b_h( t; \cdot, \cdot ) \colon \V_h(t) \times \V_h(t) \to \R$ 
  such that for all $\underline{\eta_h} = ( \eta_h, \xi_h ), \underline{\zeta_h} = ( \zeta_h, \rho_h ) \in \C^1_{\H_h}$ we have
  \[
    \dt m_h\big( t; \underline{\eta_h}, \underline{\zeta_h} \big)
    = m_h\big( t; \mdh \underline{\eta_h}, \underline{\zeta_h} \big)
      + m_h\big( t; \underline{\eta_h}, \mdh \underline{\zeta_h} \big)
    + g_h\big( t; \underline{\eta_h}, \underline{\zeta_h} \big),
  \]
  and for all $\underline{\eta_h}, \underline{\zeta_h} \in \C^1_{\V_h}$ we have
  \[
    \dt a_h\big( t; \underline{\eta_h}, \underline{\zeta_h} \big)
    = a_h\big( t; \mdh \underline{\eta_h}, \underline{\zeta_h} \big)
      + a_h\big( t; \underline{\eta_h}, \mdh \underline{\zeta_h} \big)
    + b_h\big( t; \underline{\eta_h}, \underline{\zeta_h} \big),
  \]
  where
  \begin{equation*}
    g_h\big( t; ( \eta_h, \xi_h ), ( \zeta_h, \rho_h ) \big)
    = \alpha \int_{\Omega_h(t)} \eta_h \zeta_h \nabla \cdot {W}_h \dd x
    + \beta \int_{\Gamma_h(t)} \xi_h \rho_h \nabla_{\Gamma_h} \cdot {W}_h \dd \sigma_h,
  \end{equation*}
  and
  \begin{align*}
    & b_h\big( t; ( \eta_h, \xi_h ), ( \zeta_h, \rho_h ) \big) \\
    & = \sum_{K(t) \in \T_h(t)}
    \int_{K(t)} \B_h( {W}_h, \A_\Omega^K ) \nabla \eta_h \cdot \nabla \zeta_h
    + \B_{\adv,h}( {W}_h, \b_\Omega^K ) \eta_h \cdot \nabla \zeta_h \\
    & \qquad\qquad\qquad\qquad
      + ( \mdh \c_\Omega^K + \c_\Omega^K \nabla \cdot {W}_h ) \eta_h \zeta_h \dd x \\
    & + \sum_{E(t) \in \T_h^\Gamma(t)}
      \int_{E(t)} \B_h( {W}_h, \A_\Gamma^K ) \nabla_E \xi_h \cdot \nabla_E \rho_h
      + \B_{\adv,h}( {W}_h, \b_\Gamma^K ) \xi_h \cdot \nabla_E \rho_h \\
    & \qquad\qquad\qquad\qquad
      + ( \mdh \c_\Gamma^K + \c_\Gamma^K \nabla_E \cdot {W}_h ) \xi_h \rho_h \dd \sigma_h.
  \end{align*}
  Further, there exists a constant $c > 0$ such that, for all $t \in [0,T]$,
  \begin{subequations}
    \begin{align}
      \abs{ g_h( t; \underline{\eta_h}, \underline{\zeta_h} ) }
      & \le c \norm{ \underline{\eta_h} }_{\H_h(t)} \norm{ \underline{\zeta_h} }_{\H_h(t)}
      && \mbox{ for all } \underline{\eta_h}, \underline{\zeta_h} \in \H_h(t) \\
      \abs{ b_h( t; \underline{\eta_h}, \underline{\zeta_h} ) }
      & \le c \norm{ \underline{\eta_h} }_{\V_h(t)} \norm{ \underline{\zeta_h} }_{\V_h(t)}
      && \mbox{ for all } \underline{\eta_h}, \underline{\zeta_h} \in \V_h(t).
    \end{align}
  \end{subequations}
\end{lemma}

\begin{proof}
  We simply combine \cref{lem:surf-transport-bounds,lem:bulk-fem-transport}.
\end{proof}

\begin{theorem}
  \label{thm:coupled-fem-stability}
  There exists a unique solution pair $(U_h, V_h)$ of the finite element scheme (\cref{pb:fem-coupled}) which satisfies the stability bound
  \begin{equation}
    \sup_{t \in (0,T)} \norm{ (U_h, V_h) }_{\H_h(t)}^2 + \int_0^T \norm{ (U_h, V_h) }_{\V_h(t)}^2 \dd t
    \le c \norm{ (U_{h,0}, V_{h,0}) }_{\H_h(t)}^2.
  \end{equation}
\end{theorem}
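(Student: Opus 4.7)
The plan is to apply the abstract existence and stability result, Theorem~\ref{thm:abs-stab}, directly in the product-space setting and simply verify each of its hypotheses. Since the continuous problem, surface problem, and bulk problem have each been handled by this same template (see Theorems~\ref{thm:coupled-exists}, the surface stability theorem, and the bulk stability theorem), the work reduces to checking that nothing new appears from the bulk-surface coupling term $\int_{\Gamma_h}(\alpha \eta_h - \beta \xi_h)(\alpha\chi_h-\beta\rho_h)\dd\sigma_h$.

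First I would check the assumptions on $m_h$. Symmetry \eqref{eq:mh-symmetric} is immediate from the definition, and the two-sided bound \eqref{eq:mh-bounded} holds with constants $c_1 = c_2 = \min(\alpha,\beta)^{1/2}$ and $\max(\alpha,\beta)^{1/2}$ because $m_h$ is a positive weighted combination of the $L^2(\Omega_h(t))$ and $L^2(\Gamma_h(t))$ inner products, which together define $\norm{\cdot}_{\H_h(t)}$. The transport identity \eqref{eq:ch-formula} and boundedness \eqref{eq:ch-bounded} of the associated $g_h$ follow directly from Lemma~\ref{lem:coupled-fem-transport} combined with the uniform $W^{1,\infty}$ bound on $\vec{W}_h$ over both $\T_h(t)$ and $\T_h^\Gamma(t)$ (using \eqref{eq:coupled-w-assump} and the interpolation estimate as in Lemma~\ref{lem:surf-transport-bounds} and Lemma~\ref{lem:bulk-fem-transport}).

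Next I would verify the assumptions on $a_h$. Measurability \eqref{eq:ah-meas} follows from the fact that $t \mapsto a_h(t;\cdot,\cdot)$ is continuously differentiable in $t$. For coercivity \eqref{eq:ah-coercive}, I would split $a_h((\eta_h,\xi_h),(\eta_h,\xi_h))$ into its bulk part, its surface part, and the coupling term $\int_{\Gamma_h}(\alpha \eta_h-\beta\xi_h)^2\dd\sigma_h$. The last is nonnegative and may simply be discarded. The bulk and surface parts are then handled exactly as in Theorems~\ref{thm:exist-surf} and \ref{thm:bulk-exist} using uniform positive definiteness of $\A_{\Omega_h},\A_{\Gamma_h}$ on the relevant tangent spaces together with Young's inequality to absorb the $\b$- and $\c$-terms into the $\V_h(t)$ and $\H_h(t)$ norms. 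Boundedness \eqref{eq:ah-bounded} is immediate from Cauchy--Schwarz on each of the three pieces, using the uniform $L^\infty$ bounds on $\A_{\Omega_h},\A_{\Gamma_h},\b_{\Omega_h},\b_{\Gamma_h},\c_{\Omega_h},\c_{\Gamma_h}$ together with the trace inequality on $\V_h^\Omega(t)$ to control the coupling term by the $H^1$-norms, or, more simply, by bounding the coupling term by $(\norm{\eta_h}_{L^2(\Gamma_h(t))}+\norm{\xi_h}_{L^2(\Gamma_h(t))})(\norm{\chi_h}_{L^2(\Gamma_h(t))}+\norm{\rho_h}_{L^2(\Gamma_h(t))})$ and applying the trace theorem.

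Finally I would check \eqref{eq:bh-formula} and \eqref{eq:bh-bounded}. The existence of the bilinear form $b_h$ giving the time-derivative identity is exactly the content of Lemma~\ref{lem:coupled-fem-transport}, with an additional contribution from differentiating the coupling term. Applying the same surface transport formula that produced $g_h$ above to $\int_{\Gamma_h}(\alpha \eta_h-\beta \xi_h)^2$ produces only a further $\nabla_{\Gamma_h}\cdot\vec{W}_h$-weighted boundary term, which together with the interior $\B$ and $\B_{\adv}$ pieces is controlled by the $\V_h(t)$-norms using the $L^\infty$ bounds on coefficients and on $\vec{W}_h,\nabla\vec{W}_h,\nabla_{\Gamma_h}\vec{W}_h$. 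The only mildly delicate point, and the one I would be most careful about, is the handling of the coupling term in coercivity and in the transport identity for $a_h$: one has to be sure the trace terms are bounded uniformly in $h$, which follows from the uniform quasi-uniformity of $\{\T_h(t)\}$ and $\{\T_h^\Gamma(t)\}$ together with a standard scaled trace inequality. With all assumptions of Theorem~\ref{thm:abs-stab} verified, the existence of a unique $(U_h,V_h)\in C^1_{\V_h}$ and the stated stability bound follow at once.
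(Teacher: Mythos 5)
Your proposal is correct and takes the same route as the paper: both apply Theorem~\ref{thm:abs-stab} and verify the hypotheses \eqref{eq:mh-symmetric}--\eqref{eq:bh-bounded} by referring back to the product structure, the coercivity argument of Theorem~\ref{thm:coupled-exists}, and the transport identities of Lemma~\ref{lem:coupled-fem-transport}. You supply more detail than the paper does, and you correctly note that the displayed formula for $b_h$ in Lemma~\ref{lem:coupled-fem-transport} as printed omits the $\int_{\Gamma_h}(\alpha Z_h - \beta Y_h)(\alpha\chi_h - \beta\rho_h)\nabla_{\Gamma_h}\cdot\vec{W}_h\dd\sigma_h$ contribution from differentiating the coupling term (present in the continuous $b$), which your argument supplies and which is indeed needed for \eqref{eq:bh-formula} to hold.
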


\begin{proof}
  We apply the abstract result of \cref{thm:abs-stab} and check the assumptions.
    The assumptions on $m_h$, \cref{eq:mh-symmetric} and \cref{eq:mh-bounded}, follow directly since $m_h$ is equal to the $\H_h(t)$ inner-product.
  The estimates on $a_h$, \cref{eq:ah-coercive} and \cref{eq:ah-bounded} follow in the same manner as \cref{thm:coupled-exists}.
  The transport formulae and estimates for $g_h$ and $b_h$, \cref{eq:ch-formula}, 
  \cref{eq:ch-bounded} \cref{eq:bh-formula} and \cref{eq:bh-bounded}, are shown in \cref{lem:coupled-fem-transport}.
\end{proof}

\subsection{Error analysis}

We assume in this section that $\A_\Omega^h = \A_\Omega^{-\ell}$, $\B_\Omega^h = \B_\Omega^{-\ell}$, $\C_\Omega^h = \C_{\Omega}^{-\ell}$,
$\A_\Gamma^h = \A_\Gamma^{-\ell}$, $\B_\Gamma^h = \B_\Gamma^{-\ell}$, $\C_\Gamma^h = \C_{\Gamma}^{-\ell}$.
The space of lifted finite element functions $\S_h^\ell(t)$
is equipped with the follow approximation property:

\begin{lemma}
  [Approximation property]
  \label{lem:coupled-approx}
  For $\underline{\eta} = ( \eta, \xi ) \in C( \Omega(t) ) \times C( \Gamma(t) )$ the Lagrangian interpolation 
  operator $I_h \underline{\eta}$ is well defined. Furthermore, the following bounds hold for a constant $c > 0$ for all $h \in (0,h_0)$ and $t \in [0,T]$:
  \begin{align}
    \label{eq:coupled-approx}
    \norm{ \underline{\eta} - I_h \underline{\eta} }_{\H(t)}
    + h \norm{ \underline{\eta} - I_h \underline{\eta} }_{\V(t)}
    & \le c h^{k+1} \norm{ \underline{\eta} }_{\Z(t)}
    && \mbox{ for } \underline{\eta} \in \Z(t) \\
    \label{eq:coupled-approx-0}
    \norm{ \underline{\eta} - I_h \underline{\eta} }_{\H(t)}
    + h \norm{ \underline{\eta} - I_h \underline{\eta} }_{\V(t)}
    & \le c h^{2} \norm{ \underline{\eta} }_{\Z_0(t)}
    && \mbox{ for } \underline{\eta} \in \Z_0(t).
  \end{align}
\end{lemma}

\begin{proof}
  We define the interpolation operator to be $I_h( \eta, \xi ) = ( I_h \eta, I_h \xi)$ for $( \eta, \xi ) \in C( \Omega(t) \times C( \Gamma(t)) )$.
  The proof follows by combining the result of \cref{lem:surf-approx} and \cref{lem:bulk-approx}.
\end{proof}

\begin{lemma}
  \label{lem:coupled-lift-transport}
  There exists a bilinear forms $g_\ell( t; \cdot, \cdot ) \colon \H(t) \times \H(t) \to \R$ and $b_\ell( t; \cdot, \cdot ) \colon \V(t) \times \V(t) \to \R$ given by
  \begin{align*}
    g_\ell\big( t; ( \eta, \xi ), ( \zeta, \rho) \big)
    & := \alpha \int_{\Omega(t)} \eta \zeta \nabla \cdot w_h \dd x
      + \beta \int_{\Gamma(t)} \xi \rho \nabla_\Gamma \cdot w_h \dd \sigma \\
    \tilde{b_h}\big( t; ( \eta, \xi ), (\zeta, \rho) \big)
    & := \alpha \int_{\Omega(t)} \B( w_h, \A_\Omega ) \nabla \eta \cdot \nabla \zeta
      + \B_{\adv}( w_h, \b_\Omega ) \eta \cdot \nabla \zeta \\
    & \quad\qquad\qquad + ( \mdell \c_\Omega + \c_\Omega \nabla \cdot w_h ) \eta \zeta \dd x \\
    & \quad + \beta \int_{\Gamma(t)} \B( w_h, \A_\Gamma ) \nabla_\Gamma \xi \cdot \nabla_\Gamma \rho
      + \B_{\adv}( w_h, \b_\Gamma ) \xi \cdot \nabla_\Gamma \rho \\
    & \quad\qquad\qquad
      + ( \mdell c_\Gamma + \c_\Gamma \nabla_\Gamma \cdot w_h ) \xi \rho \dd \sigma \\
    & \quad + \int_{\Gamma(t)} ( \alpha \eta - \beta \xi )( \alpha \zeta - \beta \rho ) \nabla_\Gamma \cdot w_h \dd \sigma,
  \end{align*}
  such that
  \begin{align}
    \dt m( t; \underline{\eta}, \underline{\zeta} )
    & = m( t; \mdell \underline{\eta}, \underline{\zeta} )
    + m( t; \underline{\eta}, \mdell \underline{\zeta} )
    + g_\ell( t; \eta, \underline{\zeta} ) \\
    \nonumber
    & \hspace{5cm} \mbox{ for } \underline{\eta}, \underline{\zeta} \in C^1_{(\H,\phi^\ell)} \\
    \dt a( t; \underline{\eta}, \underline{\zeta} )
    & = a( t; \mdell \underline{\eta}, \underline{\zeta} )
    + a( t; \underline{\eta}, \mdell \underline{\zeta} )
    + b_\ell( t; \eta, \underline{\zeta} ) \\
    \nonumber
    & \hspace{5cm} \mbox{ for } \underline{\eta}, \underline{\zeta} \in C^1_{(\V,\phi^\ell)}.
  \end{align}
  Furthermore, there exists a constant $c > 0$ such that for all $t \in [0,T]$ and all $h \in (0,h_0)$ we have
  \begin{align}
    \abs{ g_\ell( t; \underline{\eta}, \underline{\zeta} ) }
    & \le c \norm{ \underline{\eta} }_{\H(t)} \norm{ \underline{\zeta} }_{\H(t)}
    && \mbox{ for all } \underline{\eta}, \underline{\zeta} \in \H(t) \\
    \abs{ b_\ell( t; \underline{\eta}, \underline{\zeta} ) }
    & \le c \norm{ \underline{\eta} }_{\V(t)} \norm{ \underline{\zeta} }_{\V(t)}
    && \mbox{ for all } \underline{\eta}, \underline{\zeta} \in \V(t).
  \end{align}
\end{lemma}

\begin{proof}
  We combine the results of \cref{lem:surf-lift-transport,lem:bulk-lift-transport}.
\end{proof}

The geometric perturbation results now follow directly by combining the appropriate results from \cref{sec:fem-surf-error,sec:bulk-error}.

\begin{lemma}
  \label{lem:coupled-w-md-errors}
  We have the estimates
  \begin{align}
    \abs{ {w} - {w}_h }_{L^\infty(\Omega(t))}
    + h \abs{ \nabla ({w} - {w}_h) }_{L^\infty(\Omega(t))}
    & \le ch^{k+1} \\
    \abs{ {w} - {w}_h }_{L^\infty(\Gamma(t))}
    + h \abs{ \nabla_\Gamma ({w} - {w}_h) }_{L^\infty(\Gamma(t))} & \le c h^{k+1}.
  \end{align}
  In particular, this implies
  \begin{subequations}
    \begin{align}
      \norm{ \md \underline{\eta} - \mdell \underline{\eta} }_{\H(t)}
      & \le c h^{k+1} \norm{ \underline{\eta} }_{\V(t)}
      && \mbox{ for } \underline{\eta} \in C^1_{\V} \\
      \norm{ \md \underline{\eta} - \mdell \underline{\eta} }_{\V(t)}
      & \le c h^{k} \norm{ \underline{\eta} }_{\Z_0(t)}
      && \mbox{ for } \underline{\eta} \in C^1_{\Z_0}.
    \end{align}
  \end{subequations}
\end{lemma}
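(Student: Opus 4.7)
The plan is to reduce the coupled estimates to the bulk and surface results already established in Sections~\ref{sec:application2} and~\ref{sec:application1}, exploiting the product structure of $\H(t)$ and $\V(t)$ together with the fact that the coupled lifting map $\Lambda_h(\cdot,t)$ was constructed precisely so that $\Lambda_h(\cdot,t)|_{\Gamma_h(t)} = p(\cdot,t)$ is the normal projection.

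First I would verify the identification of the two velocity fields $\vec{w}_h$. By construction, the coupled flow $\Phi_t^\ell = \Lambda_h \circ \Phi_t^h \circ \Lambda_h^{-1}$ restricts to the bulk flow of Section~\ref{sec:application2} on $\Omega(t)$, and—because $\Lambda_h|_{\Gamma_h(t)} = p(\cdot,t)$—restricts on $\Gamma(t)$ to exactly the flow used in Section~\ref{sec:application1} to define the lifted surface velocity. Hence the coupled $\vec{w}_h$ on $\Omega(t)$ coincides with the bulk $\vec{w}_h$ of Lemma~\ref{lem:bulk-w-md-errors}, and its trace on $\Gamma(t)$ coincides with the surface $\vec{w}_h$ of Lemma~\ref{lem:surf-wh-estimate}. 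Consequently the $L^\infty$-bounds for $\vec{w}-\vec{w}_h$ in both norms follow directly by invoking \eqref{eq:bulk-w-error} and \eqref{eq:surf-wh-estimate}, each with the mesh parameter $h$ satisfying the uniform assumption \eqref{eq:coupled-w-assump}.

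Next, for the material-derivative estimates, I would use that the product push-forward $\phi_t^\ell$ decomposes component-wise, so that for $(\eta,\xi)\in C^1_{\V}$,
\begin{equation*}
\md(\eta,\xi) - \mdell(\eta,\xi) = \big(\md\eta - \mdell\eta,\ \md\xi - \mdell\xi\big),
\end{equation*}
with each component obeying the pointwise identity $\md\eta - \mdell\eta = (\vec{w}-\vec{w}_h)\cdot\nabla\tilde\eta$ on $\Omega(t)$ and its surface analogue on $\Gamma(t)$. Combining the $L^\infty$-bound on $\vec{w}-\vec{w}_h$ with Cauchy--Schwarz in each component yields the $\H(t)$-estimate at order $h^{k+1}\|(\eta,\xi)\|_{\V(t)}$. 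For the $\V(t)$-estimate one differentiates these identities once more, picking up a term in $\nabla(\vec{w}-\vec{w}_h)$ of order $h^k$ against $\nabla\eta$, and a term in $\vec{w}-\vec{w}_h$ of order $h^{k+1}$ against the Hessian; since $(\eta,\xi)\in C^1_{\Z_0}$ carries two spatial derivatives, both terms are controlled by $h^k\|(\eta,\xi)\|_{\Z_0(t)}$, exactly as in Lemma~\ref{lem:closed-md-error} and Lemma~\ref{lem:bulk-w-md-errors}.

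There is essentially no genuine obstacle here; the only point that deserves careful checking is the coherence of the two flows at the interface $\Gamma(t)$: one must ensure that the bulk lifting map and the surface lifting map agree where they meet, so that taking the trace on $\Gamma(t)$ of the bulk identity $\md\eta-\mdell\eta = (\vec{w}-\vec{w}_h)\cdot\nabla\tilde\eta$ produces the same object as the intrinsic surface identity applied to $\xi$. This agreement is precisely the content of the construction of $\Lambda_h$ in Section~\ref{sec:bulk-error}, where on boundary elements the lift is built from the normal projection $p$. With this matching in hand, the lemma is just a bookkeeping exercise combining \eqref{eq:bulk-w-error}, \eqref{eq:surf-wh-estimate}, \eqref{eq:bulk-md-error} and \eqref{eq:closed-md-error}--\eqref{eq:closed-mdV-error} in the product norms, with all constants uniform in $h\in(0,h_0)$ and $t\in[0,T]$ thanks to the uniform regularity of $\vec{w}$ postulated in \eqref{eq:coupled-w-assump}.
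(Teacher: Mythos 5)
Your proposal is correct and follows the same route as the paper, which simply states that the estimates are obtained by combining Lemmas~\ref{lem:surf-wh-estimate}, \ref{lem:closed-md-error}, and \ref{lem:bulk-w-md-errors}. The extra care you take in verifying that the coupled lift $\Lambda_h$ restricts on $\Gamma_h(t)$ to the normal projection $p(\cdot,t)$, so that the bulk and surface velocities $\vec{w}_h$ are consistent on the interface, is exactly the implicit justification the paper leans on when invoking those component lemmas.
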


\begin{proof}
  We combine the results of \cref{lem:surf-wh-estimate,lem:closed-md-error,lem:bulk-w-md-errors}.
\end{proof}

\begin{lemma}
  \label{lem:coupled-geom-pert}
  There exists a constant $c > 0$ such that for all $t \in [0,T]$ and all $h \in (0,h_0)$ the following holds for all $\underline{\eta_h}, \underline{\zeta_h} \in \V_h(t)$ with lifts $\underline{\eta_h}^\ell, \underline{\zeta_h}^\ell \in \V(t)$:
  \begin{align}
    \label{eq:m-error-coupled}
    \abs{ m( t; \underline{\eta_h}^\ell, \underline{\zeta_h}^\ell ) - m_h( t; \underline{\eta_h}, \underline{\zeta_h} ) }
    & \le c h^{k+1} \norm{ \underline{\eta_h}^\ell }_{\V(t)} \norm{ \underline{\zeta_h}^\ell }_{\V(t)} \\
    \label{eq:g-error-coupled}
    \abs{ g_\ell( t; \underline{\eta_h}^\ell, \underline{\zeta_h}^\ell ) - g_h( t; \underline{\eta_h}, \underline{\zeta_h} ) }
    & \le c h^{k+1} \norm{ \underline{\eta_h}^\ell }_{\V(t)} \norm{ \underline{\zeta_h}^\ell }_{\V(t)} \\
    \label{eq:a-error-coupled}
    \abs{ a( t; \underline{\eta_h}^\ell, \underline{\zeta_h}^\ell ) - a_{h}( t; \underline{\eta_h}, \underline{\zeta_h} ) }
    & \le c h^{k} \norm{ \underline{\eta_h}^\ell }_{\V(t)} \norm{ \underline{\zeta_h}^\ell }_{\V(t)} \\
    \label{eq:b-error-coupled}
    \abs{ b_\ell( t; \underline{\eta_h}^\ell, \underline{\zeta_h}^\ell ) - b_{h}( t; \underline{\eta_h}, \underline{\zeta_h} ) }
    & \le c h^{k} \norm{ \underline{\eta_h}^\ell }_{\V(t)} \norm{ \underline{\zeta_h}^\ell }_{\V(t)} \\
    \label{eq:gt-error-coupled}
    \abs{ g_\ell( t; \underline{\eta_h}^\ell, \underline{\zeta_h}^\ell ) - g( t; \underline{\eta_h}^\ell, \underline{\zeta_h} ) }
    & \le c h^{k} \norm{ \underline{\eta_h}^\ell }_{\V(t)} \norm{ \underline{\zeta_h}^\ell }_{\V(t)} \\
    \label{eq:bt-error-coupled}
    \abs{ b_\ell( t; \underline{\eta_h}^\ell, \underline{\zeta_h}^\ell ) - b( t; \underline{\eta_h}^\ell, \underline{\zeta_h}^\ell ) }
    & \le c h^{k} \norm{ \underline{\eta_h}^\ell }_{\V(t)} \norm{ \underline{\zeta_h}^\ell }_{\V(t)}.
  \end{align}
  For $\underline{\eta}, \underline{\zeta} \in \Z_0(t)$ with inverse lifts $\underline{\eta}^{-\ell}, \underline{\zeta}^{-\ell}$, we have
  \begin{align}
    \label{eq:a-error2-coupled}
    \abs{ a( t; \underline{\eta}, \underline{\zeta} ) - a_{h}( t; \underline{\eta}^{-\ell}, \underline{\zeta}^{-\ell} ) }
    & \le c h^{k+1} \norm{ \underline{\eta} }_{\Z_0(t)} \norm{ \underline{\zeta} }_{\Z_0(t)} \\
    \label{eq:b-error2-coupled}
    \abs{ b_\ell( t; \underline{\eta}, \underline{\zeta} ) - b_{h}( t; \underline{\eta}^{-\ell}, \underline{\zeta}^{-\ell} ) }
    & \le c h^{k+1} \norm{ \underline{\eta} }_{\Z_0(t)} \norm{ \underline{\zeta} }_{\Z_0(t)} \\
    \label{eq:amd-error-coupled}
    \abs{ a( t; \mdell \underline{\eta}, \underline{\zeta} ) - a_{h}( t; \mdh \underline{\eta}^{-\ell}, \underline{\zeta}^{-\ell} ) }
    & \le c h^{k+1} ( \norm{ \underline{\eta} }_{\Z_0(t)} + \norm{ \md \underline{\eta} }_{\Z_0(t)} ) \norm{ \underline{\zeta} }_{\Z_0(t)}
  \end{align}
\end{lemma}

\begin{proof}
  We combine the results of \cref{lem:surface-geom-pert,lem:bulk-geom-pert}.
\end{proof}

We require one final result in order to show the error bound.

\begin{lemma}
  \label{lem:coupled-b-bound2}
  For any $t \in [0,T]$, let $\underline{z}, {\pi_h} \underline{z}$ be as in \cref{eq:ritz} and $\underline{\eta} = (\eta, \xi) = \underline{z} - {\pi _h} \underline{z} \in \V(t)$, then for all $\underline{\zeta} = ( \zeta, \rho ) \in \Z_0(t)$, we have
  \begin{equation}
    \label{eq:coupled-b-bound2}
    \abs{ b\big( \underline{\eta}, \underline{\zeta} \big) }
    \le c \left( \norm{ \underline{\eta} }_{\H(t)} + h \norm{ \underline{\eta} }_{\V(t)} + h^{k+1} \norm{ \underline{z} }_{\Z_0(t)} \right)
    \norm{ \underline{\zeta} }_{\Z_0(t)}.
  \end{equation}
\end{lemma}

\begin{proof}
  The proof follows a similar path to \cref{lem:bulk-b-bound2}. We start by fixing $t \in [0,T]$.
  We recall that for any $\eta \in H^1(\Omega(t))$, there exists $\xi_\eta \in H^{1/2}(\Gamma(t))$ such that
  \[
    {}_{(H^{1/2}(\Gamma(t)))'} \langle \xi_\eta, \zeta \rangle_{H^{1/2}(\Gamma(t))}
    = ( \eta, \zeta )_{L^2( \Gamma(t)) },
  \]
  and $\xi_\eta$ satisfies
  \begin{subequations}
    \begin{align}
      \label{eq:coupled-new-dual-1}
      \norm{ \xi_\eta }_{H^{1/2}(\Gamma(t))}
      & = \norm{ \eta }_{(H^{1/2}(\Gamma(t)))'} \\
      \label{eq:coupled-new-dual-2}
      \int_{\Gamma(t)} \xi_\eta \eta \dd \sigma
      & = \norm{ \eta }_{(H^{1/2}(\Gamma(t)))'}.
    \end{align}
  \end{subequations}

  As in \cref{sec:ritz}, we introduce $\kappa > 0$ such that there exists a constant $c > 0$ such that
  \[
    a^\kappa( t; \underline{\zeta}, \underline{\zeta} ) := a( t; \underline{\zeta}, \underline{\zeta} ) + \kappa m( t; \underline{\zeta}, \underline{\zeta} ) \ge \norm{ \underline{\zeta} }_{\V(t)}^2
    \mbox{ for all } \underline{\zeta} \in \V(t).
  \]

  We wish to estimate the trace of $\eta$, the bulk component of $\underline{z} - {\pi_h} \underline{z}$, in 
  the $(H^{1/2}(\Gamma(t)))'$-norm. We consider the dual problem: Given $\xi_{\eta}$, find $\dualcoupled{\xi_\eta} = ( \dualbulk{\xi_\eta}, \dualsurface{\xi_\eta}) \in \V(t)$ such that
  \begin{equation}
    \label{eq:coupled-new-dual}
    a^\kappa\big( \underline{\zeta}, \dualcoupled{\xi_\eta} \big) = \int_{\Gamma(t)} \xi_\eta \zeta \dd \sigma
    \qquad \mbox{ for all } \underline{\zeta} = (\zeta, \rho) \in \V(t).
  \end{equation}
  This problem is the weak form of coupled bulk surface elliptic problems and has a unique 
  weak solution in $\V(t)$ (see \citealt[Thm.~3.2]{EllRan13}) which satisfies the estimate
  \[
    \norm{ \dualcoupled{\xi_\eta} }_{\V(t)} \le C \norm{ \xi_\eta }_{H^{1/2}(\Gamma(t))} = C \norm{ \eta }_{(H^{1/2}(\Gamma(t)))'}.
  \]
  The problem with $\rho = 0$ is a weak form of the problem:
  \begin{align}
    - \nabla \cdot ( \A_\Omega \nabla \dualbulk{\xi_\eta} ) + \b_\Omega \cdot \nabla \dualbulk{\xi_\eta} + ( \c_\Omega + \kappa ) \dualbulk{\xi_\eta} & = 0 \\
    \A_\Omega \nabla \dualbulk{\xi_\eta} + \alpha \dualbulk{\xi_\eta} & = \xi_\eta + \alpha \dualsurface{\xi_\eta}.
  \end{align}
  This is a Robin problem which satisfies the regularity estimate \citep{LadUra68,GilTru01}
  \[
    \norm{ \dualbulk{\xi_\eta} }_{H^2(\Omega(t))}
    \le c \norm{ \xi_\eta }_{H^{1/2}(\Gamma(t))} + c \norm{ \dualsurface{\xi_\eta} }_{H^{1/2}(\Gamma)}
    \le c \norm{ \eta }_{(H^{1/2}(\Gamma(t)))'}.
  \]
  The problem with $\zeta = 0$ is a weak form of a surface elliptic problem with right hand 
  side $\beta \dualbulk{\xi_\eta}$ which satisfies the regularity estimate \citep{Aub82}.
  \[
    \norm{ \dualsurface{\xi_\eta} }_{H^2(\Gamma(t))}
    \le c \norm{ \dualbulk{\xi_\eta} }_{L^2(\Gamma(t))}
    \le c \norm{ \eta }_{(H^{1/2}(\Gamma(t)))'}.
  \]
  Combining the two regularity estimates we see that
  \begin{equation}
    \label{eq:coupled-new-dual-reg}
    \norm{ \dualcoupled{\xi_\eta} }_{\Z_0(t)}
    \le c \norm{ \eta }_{(H^{1/2}(\Gamma(t)))'}.
  \end{equation}
  We note that the constant here is independent of time $t \in [0,T]$.

  We see using \cref{eq:coupled-new-dual-2} and $\underline{\zeta} = \underline{\eta} = ( \eta, \xi )$ in \cref{eq:coupled-new-dual} that
  \begin{equation}
    \label{eq:coupled-new-dual-3}
    \norm{ \eta }_{(H^{1/2}(\Gamma(t)))'}
    = \int_{\Gamma(t)} \xi_\eta \eta \dd \sigma
    = a^\kappa\big( \underline{\eta}, \dualcoupled{\xi_\eta} \big)
    = a^\kappa\big( \underline{\eta}, \dualcoupled{\xi_\eta} - I_h \dualcoupled{\xi_\eta} \big)
    + a^\kappa\big( \underline{\eta}, I_h \dualcoupled{\xi_\eta} \big).
  \end{equation}
  For the first term on the right hand side of \cref{eq:coupled-new-dual-3}, we apply the 
  boundedness of $a^\kappa$ and the interpolation estimate \cref{eq:coupled-approx-0} to see
  \[
    \abs{ a^\kappa\big( \underline{\eta}, \dualcoupled{\xi_\eta} - I_h \dualcoupled{\xi_\eta} \big) }
    \le c \norm{ \underline{\eta} }_{\V(t)} \norm{ \dualcoupled{\xi_\eta} - I_h \dualcoupled{\xi_\eta} }_{\V(t)}
    \le c h \norm{ \underline{\eta} }_{\V(t)} \norm{ \dualcoupled{\xi_\eta} }_{\Z_0(t)}.
  \]
  For the second term on the right hand side of \cref{eq:coupled-new-dual-3}, we apply the geometric 
  estimates \cref{eq:a-error-coupled}, \cref{eq:m-error-coupled} and \cref{eq:a-error2-coupled} and the 
  interpolation estimate \cref{eq:coupled-approx-0} to see that
  \begin{align*}
    \abs{ a^\kappa\big( \underline{\eta}, I_h \dualcoupled{\xi_\eta} \big) }
    & = \abs{ a^\kappa( \pi_h \underline{z}, I_h \dualcoupled{\xi_\eta} ) - a^\kappa_h( \Pi_h \underline{z}, \tilde{I}_h \dualcoupled{\xi_\eta} ) } \\
    & \le \abs{ a^\kappa( \pi_h \underline{z} - \underline{z}, I_h \dualcoupled{\xi_\eta} ) - a^\kappa_h( \Pi_h \underline{z} - \underline{z}^{-\ell}, \tilde{I}_h \dualcoupled{\xi_\eta} ) } \\
    & \quad + \abs{ a^\kappa( \underline{z}, I_h \dualcoupled{\xi_\eta} - \dualcoupled{\xi_\eta} ) - a^\kappa_h( \underline{z}^{-\ell}, \tilde{I}_h \dualcoupled{\xi_\eta} - \dualcoupled{\xi_\eta}^{-\ell} ) } \\
    & \quad + \abs{ a^\kappa( \underline{z}, \dualcoupled{\xi_\eta} ) - a^\kappa_h( \underline{z}^{-\ell}, \dualcoupled{\xi_\eta}^{-\ell} ) } \\
    & \le c h^{k+1} \norm{ \underline{z} }_{\Z_0(t)} \norm{ \dualcoupled{\xi_\eta} }_{\Z_0(t)}.
  \end{align*}
  Hence, combining the previous estimates and applying the dual regularity \cref{eq:coupled-new-dual-reg}, we infer that
  \begin{align*}
    \norm{ \eta }_{(H^{1/2}(\Gamma(t)))'}^2
    & \le c \left( h \norm{ \underline{\eta} }_{\V(t)} + h^{k+1} \norm{ \underline{z} }_{\Z_0(t)} \right) \norm{ \dualcoupled{\xi_\eta} }_{\Z_0(t)} \\
    & \le c \left( h \norm{ \underline{\eta} }_{\V(t)} + h^{k+1} \norm{ \underline{z} }_{\Z_0(t)} \right) \norm{ \eta }_{(H^{1/2}(\Gamma(t)))'},
  \end{align*}
  and we have shown that
  \begin{equation}
    \label{eq:coupled-eta-H12-est}
    \norm{ \eta }_{(H^{1/2}(\Gamma(t)))'}
    \le c \left( h \norm{ \underline{\eta} }_{\V(t)} + h^{k+1} \norm{ \underline{z} }_{\Z_0(t)} \right).
  \end{equation}

  Returning to \cref{eq:coupled-b-bound2}, we now see that for $(\eta, \xi) = z - \pi_h z$ and $(\zeta, \rho) \in \Z_0(t)$ that
  \begin{align*}
    \abs{ b\big( t; ( \eta, \xi ), ( \zeta, \rho ) \big) }
    & \le \alpha \abs{ \int_{\Omega(t)} \B( w, \A_\Omega ) \nabla \eta \cdot \nabla \zeta \dd x }
      + \alpha \abs{ \int_{\Omega(t)} \B_{\adv}( w, \b_\Omega )  \eta \cdot \nabla \zeta \dd x } \\
    & \quad + \alpha \abs{ \int_{\Omega(t)} ( \md \c_\Omega + \c_\Omega \nabla \cdot w ) \eta \zeta \dd x }
      + \beta \abs{ \int_{\Gamma(t)} \B( w, \A_\Gamma ) \nabla_\Gamma \xi \cdot \nabla_\Gamma \rho \dd \sigma } \\
    & \quad + \beta \abs{ \int_{\Gamma(t)} \B_{\adv}( w, \b_\Gamma ) \xi \cdot \nabla_\Gamma \rho \dd \sigma }
      + \beta \abs{ \int_{\Gamma(t)} ( \md \c_\Gamma + \c_\Gamma \nabla_\Gamma \cdot w ) \xi \rho \dd \sigma } \\
    & \quad + \abs{ \int_{\Gamma(t)} ( \alpha \eta - \beta \xi ) ( \alpha \zeta - \beta \rho ) \nabla_\Gamma \cdot w \dd \sigma } \\
    & =: \alpha I_1 + \alpha I_2 + \alpha I_3 + \beta I_4 + \beta I_5 + \beta I_6 + I_7.
  \end{align*}
  The estimates for $I_2, I_3, I_5, I_6$ and $I_7$ are clear
  \[
    \alpha I_2 + \alpha I_3 + \beta I_5 + \beta I_6 + I_7
    \le c \norm{ ( \eta, \xi ) }_{\H(t)} \norm{ ( \zeta, \rho ) }_{\Z_0(t)}.
  \]
  For $I_1$, we apply similar reasoning to \cref{lem:bulk-b-bound2} with our estimate for 
  $\eta$ \cref{eq:coupled-eta-H12-est} and for $I_4$, we apply similar reasoning to \cref{eq:surf-b-bound2}.
  Combining these estimates shows the desired bound.
\end{proof}

\begin{theorem}
  \label{thm:coupled-error}
  Let $( u, v ) \in L^2_\V$ be the solution of \cref{eq:weak-coupled} which we assume satisfies the regularity bound
  \begin{equation}
    \sup_{t \in (0,T)} \norm{ (\utext,\vtext) }_{\Z(t)}^2 + \int_0^T \norm{ \md (\utext,\vtext) }_{\Z(t)}^2 \dd t \le C_{u,v}.
  \end{equation}
  Let $(U_h, V_h) \in C^1_{\S_h}$, be the solution of the finite element scheme \cref{eq:fem-coupled} and write $(u_h, v_h) = (U_h, V_h)^\ell$.
  Then we have the following error estimate
  \begin{multline}
    \label{eq:11}
    \sup_{t \in (0,T)} \norm{ (\utext,\vtext) - (u_h,v_h) }_{\H(t)}^2 + h^2 \int_0^T \norm{ (\utext,\vtext) - (u_h,v_h) }_{\V(t)}^2 \dd t \\
    \le c \norm{ (\utext_0,\vtext_0) - (u_{h,0}, v_{h,0}) }^2_{\H(t)}
    + c h^{2k+2} C_{u,v}.
  \end{multline}
\end{theorem}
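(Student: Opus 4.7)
The plan is to apply the abstract error estimate Theorem~\ref{thm:error} by verifying each of the assumptions collected in Section~\ref{sec:error-assumptions} for the coupled bulk-surface setting. Since the product finite element space $\V_h(t)=\V_h^\Omega(t)\times\V_h^\Gamma(t)$ is built from the bulk and surface constructions already analysed in Sections~\ref{sec:application1} and \ref{sec:application2}, most required estimates follow by combining the two, using that the product $\H(t)$ and $\V(t)$ norms split as sums of squared norms of the components. The stability estimates for the continuous and discrete schemes have already been obtained in Theorems~\ref{thm:coupled-exists} and (via the abstract result) the analogue for Problem~\ref{pb:fem-coupled}.

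First I would verify the lifted space assumptions: stability \eqref{eq:lift-stab-H}--\eqref{eq:lift-stab-V} is immediate from Lemma~\ref{lem:coupled-lift-stab}, while the existence and boundedness of $\tilde g_h$ and $\tilde b_h$, together with the identities \eqref{eq:ctilde-formula} and \eqref{eq:btilde-formula}, are provided by Lemma~\ref{lem:coupled-lift-transport}; the bilinear form $\tilde b_h^\kappa$ needed for the Ritz projection analysis is obtained as $\tilde b_h+\kappa\tilde g_h$. The interpolation inequalities \eqref{eq:interp-Z0} and \eqref{eq:interp-Z} follow from Lemma~\ref{lem:coupled-approx}. The geometric perturbation estimates \eqref{eq:m-error}--\eqref{eq:b-error}, \eqref{eq:a-error2}, \eqref{eq:b-error2} and \eqref{eq:amd-error} are exactly the content of Lemma~\ref{lem:coupled-geom-pert}, and the remaining estimates \eqref{eq:bt-error}, \eqref{eq:md-error} and \eqref{eq:mdV-error} follow from Lemma~\ref{lem:coupled-w-md-errors}. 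The regularity assumption \eqref{eq:u-regularity} is the hypothesis on $(u,v)$ in the statement. The dual regularity \eqref{eq:dual-reg} is standard elliptic regularity for the coupled bulk-surface Robin-type problem \citep{EllRan13}: given $(f,g)\in\H(t)$, the solution $(\zeta_\Omega,\zeta_\Gamma)$ of the dual problem lies in $H^2(\Omega(t))\times H^2(\Gamma(t))$ with the required bound.

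The only place where the coupling is genuinely felt is in the cross term $\int_{\Gamma(t)}(\alpha u-\beta v)(\alpha\chi-\beta\rho)\dd\sigma$ that appears in $a$; for this I would observe that it is a purely surface integral of traces of the bulk component and of the surface component, so the geometric perturbation of replacing $\Gamma(t)$ by $\Gamma_h(t)$, $\mathrm{d}\sigma$ by $\delta_h\dd\sigma_h$, and $(u,v)$ by their inverse lifts on $\Gamma_h(t)$, is controlled by the surface estimate \eqref{eq:closed-deltah-esimate} and the surface trace theorem applied in $\Omega(t)$. This provides the $O(h^{k+1})$ contribution needed in \eqref{eq:a-error-coupled}--\eqref{eq:a-error2-coupled}; its time derivative produces an additional boundary term involving $\nabla_\Gamma\cdot\vec w$, which is handled analogously and absorbed into $\tilde b_h-b_h$.

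With every assumption verified, Theorem~\ref{thm:error} delivers the bound \eqref{eq:11} directly, with the claimed dependence on the initial error and on the regularity quantity $C_{u,v}$. The main obstacle, to the extent there is one, is bookkeeping rather than new analytical difficulty: one must check that the two discrete material derivatives (bulk and surface) induced by $\Phi_t^h$ agree on $\Gamma_h(t)$ with the restriction of the bulk material derivative, so that $\mdh(\chi_h,\rho_h)=(\mdh\chi_h,\mdh\rho_h)$ is consistent with the single push-forward $\phi_t^h$, and similarly for the lifted map $\phi_t^\ell$. This consistency is ensured by our construction in Section~\ref{sec:fem-bulk}, where the boundary Lagrange nodes of $\Omega_h(t)$ coincide with those generating $\Gamma_h(t)$ and move with the same velocity $\vec w$.
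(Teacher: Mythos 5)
Your proposal is correct and follows the same strategy as the paper's own proof: apply the abstract error estimate Theorem~\ref{thm:error} after verifying its assumptions via Lemmas~\ref{lem:coupled-lift-stab}, \ref{lem:coupled-lift-transport}, \ref{lem:coupled-approx}, \ref{lem:coupled-geom-pert}, and \ref{lem:coupled-w-md-errors}. In fact you supply more detail than the paper does—its proof of Theorem~\ref{thm:coupled-error} is four lines of citations—and your added remarks on the cross-coupling term, the dual regularity \eqref{eq:dual-reg}, and the consistency of the bulk and surface material derivatives on $\Gamma_h(t)$ fill in steps the paper leaves implicit.
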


\begin{proof}
  We apply abstract \cref{thm:error} and check the assumptions.
  We know the lift is stable from \cref{lem:coupled-lift-stab}.
  The existence and boundedness of $g_\ell$ and $b_\ell^\kappa$ are dealt with in \cref{lem:coupled-lift-transport}.
  The interpolation properties \cref{eq:interp-Z0} and \cref{eq:interp-Z} are shown in \cref{lem:coupled-approx}.
  The geometric perturbation estimates 
  \cref{eq:m-error,eq:c-error,eq:ct-error,eq:a-error,eq:b-error,eq:bt-error,eq:a-error2,eq:b-error2,eq:amd-error2,eq:md-error,eq:mdV-error} are shown in \cref{lem:coupled-geom-pert,lem:coupled-w-md-errors}.
  Finally, \cref{eq:b-bound2} is shown in \cref{lem:coupled-b-bound2}.
\end{proof}

\section{Numerical results}
\label{sec:numerical-results}

\subsection{Implementation}
The  finite element methods were implemented using \textsf{DUNE}.  %
In our numerical examples, we integrate the coefficients $\A_\Omega^h, \b_\Omega^h$, $\c_\Omega^h$, $\A_\Gamma^h$, $\b_\Gamma^h$ and $c_\Gamma^h$ using a sufficiently accurate quadrature which does not 
affect the order of convergence of the schemes.%
We discretise in time using an implicit Euler time stepping scheme. The time step $\tau$ is scaled so that the optimal error scales are recovered. At each time step we solve the full system using the generalised minimal residual method,

The code produced to run these computations is available at
\begin{quote}
  \url{https://github.com/tranner/dune-evolving-domains}
\end{quote}

Let $T = 1$, and $t \in [0,T]$.
For $t \ge 0$, we define $\Omega(t)$ via a parametrisation $G \colon \Omega_0 \times \R_+ \to \R$, for $\Omega_0 = B(0,1) \subset \R^3$ the unit ball in three-dimensions.
The parametrisation $G \colon \bar\Omega \times \R_+ \to \R^3$ is given by
\begin{equation*}
  G( x, t ) = \left( a(t)^{1/2} x_1, x_2, x_3 \right), \qquad a(t) = 1 + \frac{1}{4} \sin(t),
\end{equation*}
with velocity field ${w}$ given by
\begin{equation*}
  {w}( x, t ) = \left( \frac{ \cos(t) x_1 }{ 8 ( 1 + 1/4 \sin(t) ) }, 0, 0 \right) \qquad \mbox{ for } x \in \bar\Omega(t).
\end{equation*}
The geometry is the same for each problem, which corresponds to an ellipsoidal domain growing along a single axis, but we solve in and on different parts of the domain.

For each test problem, for each iteration we complete an appropriate number of bisectional refinements (projecting boundary nodes on to the exact surface using the normal projection operator \cref{eq:cp}) from a macro triangulation in order to approximately half the mesh size $h$ and scale the time step $\tau$ to recover the optimal order of convergence -- i.e. $\tau_j = \tau_0 2^{-(k+1)j}$.
We show the error in an $L^2$-norm at the final time.
The experimental order of convergence (eoc) at level $j$ is computed by
\begin{equation*}
  (\mathrm{eoc})_j = \log( E_j / E_{j-1} ) / \log( h_j / h_{j-1} ).
\end{equation*}
Errors in an $H^1$-norm demonstrate an order of convergence less and are not listed here.

\subsection{Problem on  a bulk domain \cref{eq:bulk-eqn}}

We set the parameters in the equation as $\A = (1+x_1^2)\id$, $\b = (1,2,0)$, $\c = \cos(x_1 x_2)$ and compute additional right hand sides in \cref{eq:bulk-eqn-pde} and \cref{eq:bulk-eqn-bc} and take appropriate initial data so that the solution is given by
\begin{align*}
  \utext( x, t ) & = \sin( t ) \cos( \pi x_1 ) \cos( \pi x_2 ) && \mbox{ for } x \in \Omega(t).
\end{align*}
We compute with $k=1,2$. The results are shown in \cref{tab:bulk-1,tab:bulk-2}.

\begin{table}[h!]
    \begin{tabular}{cc|cc}
    $h$ & $\tau$ & $L^2(\Omega(T))$ error & (eoc) \\
    \hline
    $1.10017$ & $1.00000$ & $7.54412 \cdot 10^{-2}$ & --- \\
    $8.82662 \cdot 10^{-1}$ & $2.50000 \cdot 10^{-1}$ & $1.72380 \cdot 10^{-1}$ & $-3.75139$ \\
    $5.23405 \cdot 10^{-1}$ & $6.25000 \cdot 10^{-2}$ & $1.07326 \cdot 10^{-1}$ & $0.90670$ \\
    $2.79882 \cdot 10^{-1}$ & $1.56250 \cdot 10^{-2}$ & $3.17823 \cdot 10^{-2}$ & $1.94407$ \\
    $1.44128 \cdot 10^{-1}$ & $3.90625 \cdot 10^{-3}$ & $8.34529 \cdot 10^{-3}$ & $2.01489$ \\
  \end{tabular}

  \caption{$k=1$}
  \label{tab:bulk-1}
\end{table}

\begin{table}[h!]
    \begin{tabular}{cc|cc}
    $h$ & $\tau$ & $L^2(\Omega(T))$ error & (eoc) \\
    \hline
    $1.10017$ & $1.00000$ & $4.46630 \cdot 10^{-2}$ & --- \\
    $8.82662 \cdot 10^{-1}$ & $1.25000 \cdot 10^{-1}$ & $4.44526 \cdot 10^{-2}$ & $0.02144$ \\
    $5.23405 \cdot 10^{-1}$ & $1.56250 \cdot 10^{-2}$ & $7.59648 \cdot 10^{-3}$ & $3.38076$ \\
    $2.79882 \cdot 10^{-1}$ & $1.95312 \cdot 10^{-3}$ & $1.05698 \cdot 10^{-3}$ & $3.15065$ \\
    $1.44128 \cdot 10^{-1}$ & $2.44141 \cdot 10^{-4}$ & $1.38589 \cdot 10^{-4}$ & $3.06126$ \\
  \end{tabular}

  \caption{$k=2$}
  \label{tab:bulk-2}
\end{table}

\subsection{Problem on a closed surface \cref{eq:surf-eqn}}

We set the parameters in the equation as $\A = (1+x_1^2)\id$, $\b = (1,2,0)-(1,2,0)\cdot\nu \nu$, $\c = \sin(x_1 x_2)$ and compute an additional right hand side in \cref{eq:surf-eqn-pde} and take appropriate initial data so that the solution is given by
\begin{align*}
  \utext( x, t ) & = \sin( t ) x_2 x_3 && \mbox{ for } x \in \Gamma(t).
\end{align*}
We compute with $k=2,3$. The results are shown in \cref{tab:surf-2,tab:surf-3}.

\begin{table}[h!]
    \begin{tabular}{cc|cc}
    $h$ & $\tau$ & $L^2(\Gamma(T))$ error & (eoc) \\
    \hline
    $8.31246 \cdot 10^{-1}$ & $1.00000$ & $9.83996 \cdot 10^{-2}$ & --- \\
    $4.40053 \cdot 10^{-1}$ & $1.25000 \cdot 10^{-1}$ & $1.47435 \cdot 10^{-2}$ & $2.98450$ \\
    $2.22895 \cdot 10^{-1}$ & $1.56250 \cdot 10^{-2}$ & $1.99237 \cdot 10^{-3}$ & $2.94251$ \\
    $1.11969 \cdot 10^{-1}$ & $1.95312 \cdot 10^{-3}$ & $2.50039 \cdot 10^{-4}$ & $3.01456$ \\
    $5.60891 \cdot 10^{-2}$ & $2.44141 \cdot 10^{-4}$ & $3.12365 \cdot 10^{-5}$ & $3.00895$ \\
  \end{tabular}

  \caption{$k=2$}
  \label{tab:surf-2}
\end{table}

\begin{table}[h!]
    \begin{tabular}{cc|cc}
    $h$ & $\tau$ & $L^2(\Gamma(T))$ error & (eoc) \\
    \hline
    $8.31246 \cdot 10^{-1}$ & $1.00000$ & $9.88086 \cdot 10^{-2}$ & --- \\
    $4.40053 \cdot 10^{-1}$ & $6.25000 \cdot 10^{-2}$ & $7.60635 \cdot 10^{-3}$ & $4.03157$ \\
    $2.22895 \cdot 10^{-1}$ & $3.90625 \cdot 10^{-3}$ & $4.92316 \cdot 10^{-4}$ & $4.02476$ \\
    $1.11969 \cdot 10^{-1}$ & $2.44141 \cdot 10^{-4}$ & $3.08257 \cdot 10^{-5}$ & $4.02448$ \\
    $5.60891 \cdot 10^{-2}$ & $1.52588 \cdot 10^{-5}$ & $1.89574 \cdot 10^{-6}$ & $4.03416$ \\
  \end{tabular}

  \caption{$k=3$}
    \label{tab:surf-3}
\end{table}

\subsection{Problem on a coupled bulk-surface domain \cref{eq:bulksurf-eqn}}

We set the parameters in the equation as $\A_X = \id$, $\b_X = (0,0,0)$, $\c_X = 0$, for $X = \Omega$ and $\Gamma$, and $\alpha = \beta = 1$, and compute additional right hand sides in \cref{eq:bulksurf-bulk}, \cref{eq:bulksurf-coupling} and \cref{eq:bulksurf-surf} and take appropriate initial data so that the solution is given by
\begin{align*}
  \utext( x, t ) & = \sin( t ) x_1 x_2 && \mbox{ for } x \in \Omega(t) \\
  \vtext( x, t ) & = \sin( t ) x_2 x_3 && \mbox{ for } x \in \Gamma(t).
\end{align*}
We compute with $k=1,2$. The results are shown in \cref{tab:coupled-1,tab:coupled-2}.

\begin{table}[h!]
    \begin{tabular}{cc|cc|cc}
    $h$ & $\tau$ & $L^2(\Omega(T))$ error & (eoc) & $L^2(\Gamma(T))$ error & (eoc) \\
    \hline
    $1.10017$ & $1.00000$ & $1.40014 \cdot 10^{-2}$ & --- & $7.41054 \cdot 10^{-2}$ & --- \\
    $8.82662 \cdot 10^{-1}$ & $2.50000 \cdot 10^{-1}$ & $2.61297 \cdot 10^{-2}$ & $-2.83240$ & $4.53161 \cdot 10^{-2}$ & $2.23275$ \\
    $5.23405 \cdot 10^{-1}$ & $6.25000 \cdot 10^{-2}$ & $9.52446 \cdot 10^{-3}$ & $1.93118$ & $1.58725 \cdot 10^{-2}$ & $2.00746$ \\
    $2.79882 \cdot 10^{-1}$ & $1.56250 \cdot 10^{-2}$ & $2.61552 \cdot 10^{-3}$ & $2.06458$ & $4.25452 \cdot 10^{-3}$ & $2.10325$ \\
    $1.44128 \cdot 10^{-1}$ & $3.90625 \cdot 10^{-3}$ & $6.72781 \cdot 10^{-4}$ & $2.04591$ & $1.08139 \cdot 10^{-3}$ & $2.06389$ \\
  \end{tabular}

  \caption{$k=1$}
  \label{tab:coupled-1}
\end{table}

\begin{table}[h!]
    \begin{tabular}{cc|cc|cc}
    $h$ & $\tau$ & $L^2(\Omega(T))$ error & (eoc) & $L^2(\Gamma(T))$ error & (eoc) \\
    \hline
    $1.10017$ & $1.00000$ & $2.44058 \cdot 10^{-2}$ & --- & $1.22069 \cdot 10^{-1}$ & --- \\
    $8.82662 \cdot 10^{-1}$ & $1.25000 \cdot 10^{-1}$ & $2.92797 \cdot 10^{-3}$ & $9.62654$ & $9.65135 \cdot 10^{-3}$ & $11.51950$ \\
    $5.23405 \cdot 10^{-1}$ & $1.56250 \cdot 10^{-2}$ & $4.02385 \cdot 10^{-4}$ & $3.79775$ & $1.47977 \cdot 10^{-3}$ & $3.58832$ \\
    $2.79882 \cdot 10^{-1}$ & $1.95312 \cdot 10^{-3}$ & $5.08882 \cdot 10^{-5}$ & $3.30323$ & $1.87863 \cdot 10^{-4}$ & $3.29708$ \\
    $1.44128 \cdot 10^{-1}$ & $2.44141 \cdot 10^{-4}$ & $6.36219 \cdot 10^{-6}$ & $3.13299$ & $2.34864 \cdot 10^{-5}$ & $3.13304$ \\
  \end{tabular}

  \caption{$k=2$}
  \label{tab:coupled-2}
\end{table}

\clearpage
\begin{appendix}

\section{A Fa\'a di Bruno formula for parametric surfaces}
\label{sec:faa-di-bruno}

A partition of the set $\{ 1, \ldots, m \}$ is a collection of non-empty subsets $( \sigma_1, \ldots, \sigma_r )$ such that $\sigma_s \cap \sigma_{s'} = \emptyset$ if $s' \neq s$ and $\bigcup_{s=1}^r \sigma_s = \{ 1, \ldots, m \}$.
We call $r$ the order of the partition and denote by $\abs{ \sigma_s }$ the number of elements in $\sigma_s$.
We say that two sets $\sigma = \{ \sigma{(1)}, \ldots, \sigma{(\abs{\sigma})} \}$ and $\sigma' = \{ \sigma'{(1)}, \ldots, \sigma'{(\abs{\sigma'})} \}$ are ordered and write $\sigma \prec \sigma'$ if $\min_{1 \le l \le \abs{\sigma}} \sigma{(l)} < \min_{1 \le l \le \abs{\sigma'}} \sigma_{s'}{(l)}$.

We denote by $\Pp_{m,r}$ the set of ordered non-empty partitions of $\{ 1, \ldots, m \}$ of order $r$:
\begin{multline*}
  \Pp_{m,r} := \Big\{ ( \sigma_1, \ldots, \sigma_r ) :
  \abs{ \sigma_s } > 0 \mbox{ for } 1 \le s \le r,
  \sigma_s \cap \sigma_{s'} = \emptyset \mbox{ for } 1 \le s \neq s' \le r, \\
  \bigcup_{s=1}^r \sigma_s = \{ 1, \ldots, m \}, \mbox{ and }
  \sigma_s \prec \sigma_{s'} \mbox{ if } 1 \le s < s' \le r \Big\}.
\end{multline*}
We note that we have $\Pp_{m,1} = \{ ( \{ 1, 2, \ldots, m \} ) \}$ and $\Pp_{m,m} = \{ ( \{1\}, \{2\}, \ldots, \{m\} ) \}$. That is that $\Pp_{m,1}$ and $\Pp_{m,m}$ each contain one partition.

For a subset $\sigma \subset \{ 1, \ldots, m \}$, given $ j_1, \ldots, j_m \in \{ 1, \ldots, n \}$, we write for smooth $\eta \colon \R^n \to \R$
\[
  \frac{\partial^{\abs{\sigma}}}{\partial x_{j_\sigma}} \eta :=
  \frac{\partial}{\partial x_{j_{\sigma(\abs{\sigma})}}} \cdots \frac{\partial}{\partial x_{j_{\sigma(1)}}} \eta.
\]
Note that it is possible to have repeated indexes $j_r$ which are not distinguished in this formula.

\begin{theorem}
  \label{thm:faa-di-bruno}
  Let $\Gamma \subset \R^{n+1}$ be a $C^m$ surface with parametrisation $X \colon \Theta \to \Gamma$ over $\Theta \subset \R^n$.
  We denote the components of $X$ by $X(\theta) = ( X_1(\theta), \ldots, X_{n+1}(\theta) )$ for $\theta \in \Theta$ and the components of $\nabla_\Gamma$ by $( (\nabla_\Gamma)_1, \ldots (\nabla_\Gamma)_{n+1} )$.
  Let $f \colon \Gamma \to \R$ be in $C^m(\Gamma)$ and write $F \colon \Theta \to \R$ for the function defined by $F( \theta ) = f( X( \theta ) )$ for $\theta \in \Theta$.
  Then $F \in C^m( \Theta )$ and
  \begin{equation}
    \label{eq:faa-di-bruno}
    \frac{\partial^m}{\partial \theta_{j_m} \ldots \partial \theta_{j_1}} F(\theta)
    = \sum_{r=1}^m \sum_{\lambda_1, \ldots, \lambda_r=1}^{n+1}
    (\nabla_\Gamma)_{\lambda_r} \cdots (\nabla_\Gamma)_{\lambda_1} f( X(\theta) )
    \sum_{\Pp_{m,r}} \prod_{s=1}^r \frac{\partial^{\abs{\sigma_s}}}{\partial \theta_{j_{\sigma_s}}} X_{\lambda_s}(\theta).
  \end{equation}
\end{theorem}

Before we show the result we will include some examples to show how this result can be interpreted and related to previous results.

\begin{example}
  \begin{enumerate}
  \item Consider the case $n=1$ and $\Gamma$ is a flat hypersurface.
    In this case $j_1 = \ldots j_m = 1$. Then the \cref{eq:faa-di-bruno} translates to
    \begin{align*}
      \frac{\mathrm{d}^m}{\mathrm{d} \theta^m} F( \theta )
      = \sum_{r=1}^m \sum_{\lambda_1, \ldots, \lambda_r=1}^{2}
      ( \nabla_\Gamma )_{\lambda_r} \ldots ( \nabla_\Gamma )_{\lambda_1} f( X(\theta) )
      \sum_{\Pp_{m,r}} \prod_{s=1}^r
      \frac{\mathrm{d}^{\abs{\sigma_s}}}{\mathrm{d} \theta^{\abs{\sigma_s}}} X_{\lambda_s}(\theta).
    \end{align*}
    We note that $(\nabla_\Gamma)_\lambda f = \partial f / \partial x$ if $\lambda = 1$ and $0$ otherwise and that $X(\theta)_2 = 0$.
    Finally we note that the terms involving $X(\theta)$ can be reordered as
    \begin{align*}
      \sum_{\Pp_{m,r}} \prod_{s=1}^r
      \frac{\mathrm{d}^{\abs{\sigma_s}}}{\mathrm{d} \theta^{\abs{\sigma_s}}} (X(\theta))_{\lambda_s}
      = \sum_{E(m,r)} \prod_{q=1}^m c_{i} \left( \frac{\mathrm{d}^q}{\mathrm{d}\theta^q} X(\theta) \right)^{i_q},
    \end{align*}
    where $E(m,r)$ is given by
    \[
      E(m,r) = \{ \vec{i} \in \Nbb^m_0 : \sum_{q=1}^m i_q = r \mbox{ and } \sum_{q=1}^m q i_q = m \}.
    \]
    The set $E(m,r)$ rather than being a set of partitions maps each partitions $(\sigma_1, \ldots, \sigma_r)$ to a vector $\vec{i} \in \Nbb^m_0$ 
    such that $i_q$ is the number of sets in the partition such that $\abs{ \sigma_s } = q$ for $q = 1, \ldots, m$.
    We can make the simplification in this case since, here, the order of derivatives is not important, whereas we wish to track this in \cref{eq:faa-di-bruno}.
    Furthermore, the mapping from partitions to vectors $\vec{i}$ is not one-to-one (in particular because $\vec{i}$ does not care about 
    ordering) which results in the constant $c_{\vec{i}}$ which counts how many partitions map to the same $\vec{i}$.

    We recover the result of \citep[Eq. 2.9]{Ber89} in the scalar case:
    \begin{align*}
      \frac{\mathrm{d}^m}{\mathrm{d} \theta^m} F( \theta )
      = \sum_{r=1}^m
      \frac{\partial^r f}{\partial x^r}
      ( X(\theta) )
      \sum_{E(m,r)} \prod_{q=1}^m c_{i} \left( \frac{\mathrm{d}^q}{\mathrm{d}\theta^q} X(\theta) \right)^{i_q}.
    \end{align*}

  \item We return to the general case, but to low numbers of derivatives.
    It is clear that
    \[
      \frac{\partial}{\partial \theta_{j_1}} F( \theta )
      = \sum_{{\lambda_1}=1}^{n+1} (\nabla_\Gamma)_{\lambda_1} f(X(\theta)) \frac{\partial}{\partial \theta_{j_1}} X_{\lambda_1}(\theta).
    \]

    To start to understand the inductive procedure we will apply in the proof of \cref{thm:faa-di-bruno}, we construct a second derivative.
    We apply the product rule to see
    \begin{align*}
      & \frac{\partial^2}{\partial \theta_{j_2} \partial \theta_{j_1}} F( \theta ) \\
      & = \frac{\partial}{\partial \theta_{j_2}} \sum_{{\lambda_1}=1}^{n+1}
        ( \nabla_\Gamma )_{\lambda_1} f(X(\theta)) \frac{\partial}{\partial \theta_{j_1}} X_{\lambda_1}(\theta) \\
      & = \sum_{\lambda_1=1}^{n+1} \left( \frac{\partial}{\partial \theta_{j_2}}
        (\nabla_\Gamma)_{\lambda_1} f(X(\theta)) \right) \frac{\partial}{\partial \theta_{j_1}} X_{\lambda_1}(\theta) \\
      & \quad + (\nabla_\Gamma)_{\lambda_1} f(X(\theta)) \left(
        \frac{\partial}{\partial \theta_{j_2}}\frac{\partial}{\partial \theta_{j_1}} X_{\lambda_1}(\theta)
        \right) \\
      & = \sum_{\lambda_1=1}^{n+1} \sum_{\lambda_2=1}^{n+1}
        ( \nabla_\Gamma )_{\lambda_2} ( \nabla_\Gamma )_{\lambda_1} f( X( \theta ) )
        \frac{\partial}{\partial \theta_{j_2}} X_{\lambda_2}(\theta)
        \frac{\partial}{\partial \theta_{j_1}} X_{\lambda_1}(\theta) \\
      & \quad + \sum_{\lambda_1=1}^{n+1}
        ( \nabla_\Gamma )_{\lambda_1} f( X( \theta ) )
        \frac{\partial^2}{\partial \theta_{j_2} \partial \theta_{j_1} } X_{\lambda_1}(\theta).
    \end{align*}
    We see that the extra derivative either applies to the terms from the lower order derivative involving $X$ or 
    else apply to the terms involving $f$, in which case an extra first derivative of $X$ is included. The result is a sum of these two terms.

    To understand further how the terms involving $X$ arise, we compute a third order derivative.
    Applying the same procedure as above we see
    \begin{align*}
      & \frac{\partial^3}{\partial \theta_{j_3} \partial \theta_{j_2} \partial \theta_{j_3}}
        F(\theta) \\
      & = \frac{\partial}{\partial \theta_{j_3}}
        \left(
        \sum_{\lambda_1=1}^{n+1} \sum_{\lambda_2=1}^{n+1}
        ( \nabla_\Gamma )_{\lambda_2} ( \nabla_\Gamma )_{\lambda_1} f( X( \theta ) )
        \frac{\partial}{\partial \theta_{j_2}} X_{\lambda_2}(\theta)
        \frac{\partial}{\partial \theta_{j_1}} X_{\lambda_1}(\theta)
        \right. \\
      & \quad
        \left.
        + \sum_{\lambda_1=1}^{n+1}
        ( \nabla_\Gamma )_{\lambda_1} f( X( \theta ) )
        \frac{\partial^2}{\partial \theta_{j_2} \partial \theta_{j_1} } X_{\lambda_1}(\theta)
        \right) \\
      & = \sum_{\lambda_1=1}^{n+1} \sum_{\lambda_2=1}^{n+1}
        \left( \frac{\partial}{\partial \theta_{j_3}} ( \nabla_\Gamma )_{\lambda_2} ( \nabla_\Gamma )_{\lambda_1} f( X( \theta ) ) \right)
        \frac{\partial}{\partial \theta_{j_2}} X_{\lambda_2}(\theta)
        \frac{\partial}{\partial \theta_{j_1}} X_{\lambda_1}(\theta) \\
       & \quad +
        \sum_{\lambda_1=1}^{n+1} \sum_{\lambda_2=1}^{n+1}
        ( \nabla_\Gamma )_{\lambda_2} ( \nabla_\Gamma )_{\lambda_1} f( X( \theta ) )
        \frac{\partial}{\partial \theta_{j_3}} \left(
        \frac{\partial}{\partial \theta_{j_2}} X_{\lambda_2}(\theta)
         \frac{\partial}{\partial \theta_{j_1}} X_{\lambda_1}(\theta) \right) \\
      & \quad
        + \sum_{\lambda_1=1}^{n+1}
        \left(
        \frac{\partial}{\partial \theta_{j_3}} ( \nabla_\Gamma )_{\lambda_1} f(X(\theta))
        \right)
        \frac{\partial^2}{\partial \theta_{j_2} \partial \theta_{j_1} } X_{\lambda_1}(\theta)
         \\
      & \quad
        + \sum_{\lambda_1=1}^{n+1}
        ( \nabla_\Gamma )_{\lambda_1} f( X( \theta ) )
        \frac{\partial}{\partial \theta_{j_3}} \frac{\partial^2}{\partial \theta_{j_2} \partial \theta_{j_1} } X_{\lambda_1}(\theta).
    \end{align*}
    Again, we see that the $\theta_{j_3}$ derivative can either be applied to the terms involving $f$ or the terms involving $X$ and the result is a sum of these terms.
    Computing further we see
    \begin{align*}
      & \frac{\partial^3}{\partial \theta_{j_3} \partial \theta_{j_2} \partial \theta_{j_3}}
        F(\theta) \\
      & = \sum_{\lambda_1=1}^{n+1} \sum_{\lambda_2=1}^{n+1} \sum_{\lambda_3=1}^{n+1}
        ( \nabla_\Gamma )_{\lambda_3} ( \nabla_\Gamma )_{\lambda_2} ( \nabla_\Gamma )_{\lambda_1} f( X( \theta ) )
        \frac{\partial}{\partial \theta_{j_3}} X_{\lambda_3}(\theta)
        \frac{\partial}{\partial \theta_{j_2}} X_{\lambda_2}(\theta)
        \frac{\partial}{\partial \theta_{j_1}} X_{\lambda_1}(\theta) \\
      & \quad +
        \sum_{\lambda_1=1}^{n+1} \sum_{\lambda_2=1}^{n+1}
        ( \nabla_\Gamma )_{\lambda_2} ( \nabla_\Gamma )_{\lambda_1} f( X( \theta ) ) \\
      & \qquad\quad
        \cdot \left(
        \frac{\partial^2}{\partial \theta_{j_3} \partial \theta_{j_2}} X_{\lambda_2}(\theta)
        \frac{\partial}{\partial \theta_{j_1}} X_{\lambda_1}(\theta)
        + \frac{\partial}{\partial \theta_{j_2}} X_{\lambda_2}(\theta)
        \frac{\partial^2}{\partial \theta_{j_3} \partial \theta_{j_1}} X_{\lambda_1}(\theta) \right. \\
      & \qquad\qquad\quad
        \left. + \frac{\partial}{\partial \theta_{j_3}} X_{\lambda_2}(\theta)
        \frac{\partial^2}{\partial \theta_{j_2} \partial \theta_{j_1} } X_{\lambda_1}(\theta)
        \right)
         \\
      & \quad
        + \sum_{\lambda_1=1}^{n+1}
        ( \nabla_\Gamma )_{\lambda_1} f( X( \theta ) )
        \frac{\partial^3}{\partial \theta_{j_3} \partial \theta_{j_2} \partial \theta_{j_1} } X_{\lambda_1}(\theta).
    \end{align*}
    We end up with terms involving one, two and three derivatives of $f$ times sums of products of one, two and three terms involving derivatives of $X$ respectively.
    We notice that the derivatives on $X$ are not all arbitrary combinations of $j$'s and $\lambda$'s.
    The $\theta_{j_1}$ derivative always is against $X_{\lambda_1}$, the $\theta_{j_2}$ derivative is always 
    against $X_{\lambda_1}$ or $X_{\lambda_2}$ and the $\theta_{j_3}$ derivative is against $X_{\lambda_1}$, $X_{\lambda_2}$ or $X_{\lambda_3}$.
    Moreover, we see that the minimum index against $X_{\lambda_s}$ is increasing in $s$.
    This property is written more formally in the definition of $\Pp_{m,r}$.
    Making this association rigorous is the route we take to proving the theorem.
  \end{enumerate}
\end{example}

We will show the result through the following two lemmas.
 
\begin{lemma}
  The derivatives of $F$ can be written as
  \begin{equation}
    \label{eq:faa-di-bruno-lem1}
    \frac{\partial^m}{\partial \theta_{j_m} \ldots \partial \theta_{j_1}} F(\theta)
    = \sum_{r=1}^m \sum_{\lambda_1, \ldots, \lambda_r =1}^{n+1}
    (\nabla_\Gamma)_{\lambda_r} \cdots (\nabla_\Gamma)_{\lambda_1} f( X(\theta) )
    \alpha( j_1, \ldots, j_m; \lambda_1, \ldots, \lambda_r )( \theta ) \end{equation}
 where $\alpha$ satisfies the recurrence relationships
 \begin{subequations}
    \begin{align}
      \label{eq:base}
      \alpha(j; \lambda)(\theta)
      & = \frac{\partial}{\partial \theta_j} X_\lambda(\theta) \\
      \label{eq:case-1}
      \alpha(j_1,\ldots, j_{m+1}; \lambda )( \theta )
      & = \frac{\partial}{\partial \theta_{j_{m+1}}} \alpha( j_1, \ldots, j_m; \lambda )( \theta ) \\
      \label{eq:case-r}
      \alpha(j_1,\ldots, j_{m+1}; \lambda_1, \ldots, \lambda_r )( \theta )
      & = \frac{\partial}{\partial \theta_{j_{m+1}}} \alpha( j_1, \ldots, j_m; \lambda_1, \ldots, \lambda_r )( \theta ) \\
      \nonumber
      & \quad + \left( \frac{\partial }{\partial \theta_{j_{m+1}}}X_{\lambda_r}( \theta ) \right)
        \alpha(j_1, \ldots, j_m, \lambda_1, \ldots, \lambda_{r-1} )( \theta )
      && \mbox{ for } 2 \le r \le m \\
      \label{eq:case-m}
      \alpha(j_1, \ldots, j_{m+1}; \lambda_1, \ldots, \lambda_{m+1} )( \theta )
      & = \left( \frac{\partial }{\partial \theta_{j_{m+1}}}X_{\lambda_{m+1}}(\theta) \right) \alpha(j_1, \ldots, j_m, \lambda_1, \ldots, \lambda_{m} )( \theta ).
    \end{align}
  \end{subequations}
\end{lemma}

\begin{proof}
  For the base case \cref{eq:base}, we see that
  \[
    \frac{\partial}{\partial \theta_j} F(\theta)
    = \sum_{\lambda=1}^{n+1} (\nabla_\Gamma)_\lambda f( X(\theta) ) \frac{\partial }{\partial \theta_j}X_\lambda(\theta)
    = \sum_{\lambda=1}^{n+1} (\nabla_\Gamma)_\lambda f( X(\theta) ) \alpha( j ; \lambda )( \theta ).
  \]

  Then, given \cref{eq:faa-di-bruno-lem1} holds up to $m$ derivatives, we see that
  \begin{align*}
    & \frac{\partial}{\partial \theta_j} \frac{\partial^m}{\partial \theta_{j_m} \cdots \partial \theta_{j_1}} F(\theta) \\
    & = \frac{\partial}{\partial \theta_j} \sum_{r=1}^m \sum_{\lambda_1=1}^{n+1} \cdots \sum_{\lambda_r=1}^{n+1} (\nabla_\Gamma)_{\lambda_r} \cdots (\nabla_\Gamma)_{\lambda_1} f( X(\theta) ) \alpha( j_1, \ldots, j_m; \lambda_1, \ldots, \lambda_r )(\theta) \\
    & = \sum_{r=1}^m \sum_{\lambda_1=1}^{n+1} \cdots \sum_{\lambda_r=1}^{n+1} \left \{ \left( \frac{\partial}{\partial \theta_j} (\nabla_\Gamma)_{\lambda_r} \cdots (\nabla_\Gamma)_{\lambda_1} f( X(\theta) ) \right) \alpha( j_1, \ldots, j_m; \lambda_1, \ldots, \lambda_r )(\theta) \right. \\
    & \qquad + \left. (\nabla_\Gamma)_{\lambda_r} \cdots (\nabla_\Gamma)_{\lambda_1} f( X(\theta) ) \frac{\partial}{\partial \theta_j} \alpha( j_1, \ldots, j_m; \lambda_1, \ldots, \lambda_r )(\theta) \right\} \\
    & = \sum_{r=1}^m \sum_{\lambda_1=1}^{n+1} \cdots \sum_{\lambda_r=1}^{n+1} \left \{ \sum_{\lambda_{r+1}=1}^{n+1} (\nabla_\Gamma)_{\lambda_{r+1}} (\nabla_\Gamma)_{\lambda_r} \cdots (\nabla_\Gamma)_{\lambda_1} f( X(\theta) )
      \left( \frac{\partial }{\partial \theta_{j}}X_{\lambda_{r+1}}(\theta) \right) \alpha( j_1, \ldots, j_m; \lambda_1, \ldots, \lambda_r )(\theta) \right. \\
    & \qquad + \left. (\nabla_\Gamma)_{\lambda_r} \cdots (\nabla_\Gamma)_{\lambda_1} f( X(\theta) ) \frac{\partial}{\partial \theta_j} \alpha( j_1, \ldots, j_m; \lambda_1, \ldots, \lambda_r )( \theta ) \right\} \\
    & = \sum_{r=2}^{m+1} \sum_{\lambda_1=1}^{n+1} \cdots \sum_{\lambda_r=1}^{n+1} (\nabla_\Gamma)_{\lambda_r} \cdots (\nabla_\Gamma)_{\lambda_1} f( X( \theta) ) \left( \frac{\partial }{\partial \theta_{j}}X_{\lambda_{r}}(\theta) \right) \alpha( j_1, \ldots, j_m; \lambda_1, \ldots, \lambda_{r-1} )(\theta) \\
    & \quad + \sum_{r=1}^{m} \sum_{\lambda_1=1}^{n+1} \cdots \sum_{\lambda_r=1}^{n+1} (\nabla_\Gamma)_{\lambda_r} \cdots (\nabla_\Gamma)_{\lambda_1} f(X(\theta)) \frac{\partial}{\partial \theta_j} \alpha( j_1, \ldots, j_m; \lambda_1, \ldots, \lambda_r )(\theta) \\
    & = \sum_{\lambda_1=1}^{n+1} ( \nabla_\Gamma )_{\lambda_1} f(X(\theta)) \frac{\partial}{\partial \theta_j} \alpha( j_1, \ldots, j_m; \lambda_1 )( \theta ) \\
    & \quad + \sum_{r=2}^{m} \sum_{\lambda_1=1}^{n+1} \cdots \sum_{\lambda_r=1}^{n+1}
      ( \nabla_\Gamma )_{\lambda_r} \cdots ( \nabla_\Gamma )_{\lambda_1} f( X(\theta) )
      \left\{
      \left( \frac{\partial }{\partial \theta_{j}}X_{\lambda_{r}}(\theta)\right) \alpha( j_1, \ldots, j_m; \lambda_1, \ldots, \lambda_{r-1} )( \theta )
      \right. \\
    & \qquad\qquad
      \left.
      + \frac{\partial}{\partial \theta_j} \alpha( j_1, \ldots, j_m ; \lambda_1, \ldots, \lambda_r ) ( \theta )
      \right\} \\
    & \quad + \sum_{\lambda_{1}=1}^{n+1} \cdots \sum_{\lambda_{m+1}=1}^{n+1}
      ( \nabla_\Gamma )_{\lambda_{m+1}} \cdots ( \nabla_\Gamma )_{\lambda_{1}} f( X( \theta ) )
      \left( \frac{ \partial }{ \partial \theta_ j } X_{\lambda_{m+1}}( \theta ) \right) \alpha( j_1, \ldots, j_m; \lambda_1, \ldots, \lambda_{m} )( \theta ).
  \end{align*}
  Reading off coefficients gives the result.
\end{proof}

\begin{lemma}
  It holds that
 \[
   \alpha(j_1,\ldots, j_m; \lambda_1, \ldots, \lambda_r)
   = \sum_{\Pp_{m,r}} \prod_{s=1}^r \frac{\partial^{\abs{\sigma_s}}}{\partial \theta_{j_{\sigma_s}}} X_{\lambda_s}(\theta).
\]
\end{lemma}

\begin{proof}
  For the base case, $m=1$, we see that $\alpha( j; \lambda )(\theta) = {\partial}/{\partial \theta_j}\, X_\lambda(\theta)$ and clearly the set of ordered partitions is $\Pp_{1,1} = \{ (1) \}$.

  Suppose the identity holds for derivatives up to order $m$.

  First, we consider $r=1$, then
  \begin{align*}
    \alpha( j_1, \ldots, j_{m+1} ; \lambda )(\theta)
    = \frac{\partial}{\partial \theta_{j_{m+1}}} \alpha( j_1, \ldots, j_m; \lambda )(\theta)
    = \cdots
    = \frac{\partial^{m+1}}{\partial \theta_{j_{m+1}} \ldots \partial \theta_{j_1}} X_{\lambda}(\theta).
  \end{align*}
  We also have that there is only one partition of $1, \ldots, m+1$ into $1$ subset: $\Pp_{m+1,1} = \{ ( 1, \ldots, m+1 ) \}$.

  Next, we consider $r=m+1$, then
  \begin{align*}
    \alpha( j_1, \ldots, j_{m+1}; \lambda_1, \ldots, \lambda_{m+1} )(\theta)
    & = \left( \frac{\partial}{\partial \theta_{j_{m+1}}}X_{\lambda_{m+1}}(\theta) \right) \alpha( j_1, \ldots, j_m; \lambda_1, \ldots, \lambda_{m} )(\theta)
    \\
    & = \cdots
    = \prod_{s=1}^{m+1} \frac{\partial}{\partial \theta_{j_s}}X_{\lambda_{s}}(\theta).
  \end{align*}
  We also have that there is only partition of $1, \ldots, m+1$ into $m+1$ ordered sets: $\Pp_{m+1,m+1} = \{ ( ( 1 ), ( 2 ), \ldots, ( m+1 ) ) \}$ so this case is complete.

  Next, we consider $2 \le r \le m-1$. We see
  \begin{align*}
    & \alpha( j_1, \ldots, j_{m+1}; \lambda_1, \ldots, \lambda_r )( \theta ) \\
    & = \frac{\partial}{\partial \theta_{j_{m+1}}} \alpha( j_1, \ldots, j_m; \lambda_1, \ldots, \lambda_r )( \theta )
      + \left( \frac{\partial}{\partial \theta_{j_{m+1}}} X_{\lambda_r}(\theta) \right)
      \alpha(j_1, \ldots, j_m, \lambda_1, \ldots, \lambda_{r-1} )(\theta) \\
    & = \sum_{\Pp_{m,r}} \sum_{\alpha=1}^r \prod_{s=1}^r \frac{\partial^{\abs{\tilde{\sigma}_{s,\alpha}}}}{\partial \theta_{J_{\tilde{\sigma}_{s,\alpha}}}} X_{\lambda_s}(\theta)
      + \sum_{\Pp_{m,r-1}} \left( \frac{\partial}{\partial \theta_{j_{m+1}}} X_{\lambda_r}(\theta) \right)
      \prod_{s=1}^r \frac{\partial^{\abs{\sigma_s}}}{\partial \theta_{j_{\sigma_s}}} X_{\lambda_s}( \theta ) \\
    & = \sum_{Q} \prod_{s=1}^r \frac{\partial^{\abs{\sigma_s}}}{\partial \theta_{{j}_{\sigma_s}}} X_{\lambda_s}( \theta ),
  \end{align*}
  where $\tilde{\sigma}_{s,\alpha} = \sigma_s \cup \{ m+1 \}$ if $s = \alpha$ and $\sigma_s$ otherwise and $Q$ is given by
  \begin{align*}
    Q & := \bigcup_{\alpha=1}^r \{ ( \tilde{\sigma}_{1,\alpha}, \ldots, \tilde{\sigma}_{r,\alpha} ) : (\sigma_1,\ldots, \sigma_r) \in \Pp_{m,r} \} \\
      & \quad \cup \{ ( \sigma_1, \ldots, \sigma_{r-1}, ( m+1 ) ) : ( \sigma_1, \ldots \sigma_{r-1} ) \in \Pp_{m,r-1} \} \\
      & =: \bigcup_{\alpha=1}^r Q_\alpha \cup Q_0.
  \end{align*}
  The proof will be complete if we show $Q = \Pp_{m+1,r}$.

  First, let $ (\sigma_1, \ldots, \sigma_r) \in Q$. It is clear that $(\sigma_1, \ldots, \sigma_r)$ is a partition. There are two 
  cases to check the ordering $(\sigma_1,\ldots, \sigma_r) \in Q_0$ (i) or $Q_\alpha$ for $\alpha = 1, \ldots, r$ (ii).
  For case (i), we have that $\sigma_r = \{ m+1 \}$ and $(\sigma_1,\ldots, \sigma_{r-1} ) \in \Pp_{m,r-1}$. The first assertion shows 
  that $\sigma_s \prec \sigma_r$ for $1 \le s \le r-1$ and the second shows that $\sigma_s \prec \sigma_{s'}$ for $1 \le s < s' \le r-1$. So we see that $( \sigma_1, \ldots, \sigma_r ) \in \Pp_{m,r}$.
  For case (ii), we have that ${m+1} \in \sigma_{\alpha}$ and $(\sigma_1, \ldots, \sigma_r) = ( \tilde{\sigma'}_{1,\alpha}, \ldots, \tilde{\sigma'}_{r,\alpha} )$ 
  for some $(\sigma'_1, \ldots, \sigma'_r) \in \Pp_{m,r}$. Since $(\sigma'_1,\ldots, \sigma'_r)$ is an ordered partition, we see 
  that $\sigma'_{\alpha} = \sigma_{\alpha} \setminus \{ {m+1} \}$ is non empty and the smallest index in $\sigma'_{\alpha}$ is the same as 
  the smallest index in $\sigma_\alpha$ so that the ordering property is preserved.

  Second, let $(\sigma_1, \ldots, \sigma_r) \in \Pp_{m+1,r}$.
  Let $1 \le s \le r$ be such that ${m+1} \in \sigma_{s}$.
  First, suppose that removing ${m+1}$ from $\sigma_{s}$ results in a non-empty set.
  Then $( \sigma_1, \ldots, \sigma_s \setminus \{ m+1 \}, \ldots, \sigma_r )$ is a partition in $\Pp_{m,r}$. In this case, $(\sigma_1, \ldots, \sigma_r) \in Q_{s}$.
  Otherwise suppose $\sigma_s \setminus \{ {m+1} \}$ is empty.
  Then, by the ordering of partitions in $\Pp_{m,r}$, we must have that $s = r$ and $(\sigma_1, \ldots, \sigma_{r-1})$ is an ordered partition of $(1, \ldots, {m})$.
  In this case $(\sigma_1, \ldots, \sigma_r) \in Q_0$.

  Thus we have shown the desired form of $\alpha$.
\end{proof}

\begin{proof}[Proof of \cref{thm:faa-di-bruno}]
  Simply combine the two previous lemmas.
\end{proof}

We could also apply a similar result using an inverse parametrisation $X^{-1}$ and recover coefficients involving derivatives of $X^{-1}$. 
In the applications we consider, higher derivatives of $X^{-1}$ are hard to estimate. As an alternative we give the following result which is based on rearranging terms in \cref{eq:faa-di-bruno}.

\begin{corollary}
  \label{cor:faa-di-bruno-inverse}
  Under the same assumptions as \cref{thm:faa-di-bruno} we have:
  \begin{equation}
    \label{eq:faa-bi-bruno-inverse}
    \begin{aligned}
      & ( \nabla_\Gamma )_{i_m} \cdots ( \nabla_\Gamma )_{i_1} f( X( \theta ) ) \\
      & = \sum_{j_m, \ldots, j_1=1}^n
      \frac{\partial^m}{\partial \theta_{j_m} \ldots \partial \theta_{j_1}} F(\theta)
      \left( \prod_{s=1}^m ( \nabla X( \theta ) )^\dagger_{j_s,i_s} \right) \\
      & \qquad -
      \sum_{r=1}^{m-1} \sum_{\lambda_1, \ldots, \lambda_r=1}^{n+1}
      (\nabla_\Gamma)_{\lambda_r} \cdots (\nabla_\Gamma)_{\lambda_1} f( X(\theta) )
      \sum_{j_m, \ldots, j_1=1}^n
      \sum_{\Pp_{m,r}} \prod_{s=1}^r \frac{\partial^{\abs{\sigma_s}}}{\partial \theta_{j_{\sigma_s}}} X_{\lambda_s}(\theta)
      \prod_{s=1}^m ( \nabla X )^\dagger_{j_s,i_s}
    \end{aligned}
  \end{equation}
  where $P_{ij} = P_{ij}(x) = \delta_{ij} - \nu_i(x) \nu_j(x)$ is projection onto the tangent space of $\Gamma$ at $x$.
\end{corollary}

\begin{proof}
  We start by noting that we have two equivalent ways of computing surface derivatives: either using the projection onto the 
  tangent space of $\Gamma$ or using the parametrisation of $\Gamma$. We apply each of these formulae to the function $x \mapsto x_i$ and take the $(\nabla_\Gamma)_j$ derivative at $x = X(\theta)$ to see
  \begin{equation}
    \label{eq:right-inverse1}
    P_{ij}( x ) = \delta_{ij} - \nu_i(x) \nu_j(x)
    = ( \nabla_\Gamma )_j ( x_i )
    = \sum_{\lambda=1}^n \left( \frac{\partial }{\partial \theta_\lambda} X_i(\theta) \right) ( \nabla X(\theta) )_{\lambda, j}^\dagger.
  \end{equation}
  In particular, we see that
  \begin{equation}
    \label{eq:right-inverse}
    \sum_{j_m=1}^{n} \cdots \sum_{j_1=1}^n
    \left( \prod_{s=1}^m \frac{\partial}{\partial \theta_{j_s} } X_{\lambda_s}(\theta) \right)
    \left( \prod_{s=1}^m ( \nabla X(\theta) )^\dagger_{j_s i _s} \right)
    = \prod_{s=1}^m P_{\lambda_s, i_s}( x ).
  \end{equation}

  The next step of the proof is to split the right hand side of \cref{eq:faa-di-bruno} into terms involving $m$th order derivatives of $X$ and the rest.
  We multiply by $\prod_{s=1}^m ( \nabla X )^\dagger_{j_s,i_s}$ and sum of each $j_s$ in turn to see that
 
  \begin{align*}
    & \sum_{j_m, \ldots, j_1=1}^n \frac{\partial^m}{\partial \theta_{j_m} \ldots \partial \theta_{j_1}} F(\theta) \prod_{s=1}^m ( \nabla X )^\dagger_{j_s,i_s} \\
    & = \sum_{\lambda_1, \ldots, \lambda_m=1}^{n+1}
    (\nabla_\Gamma)_{\lambda_m} \cdots (\nabla_\Gamma)_{\lambda_1} f( X(\theta) )
      \sum_{j_m, \ldots, j_1=1}^n
      \prod_{s=1}^m \frac{\partial}{\partial \theta_{j_s}} X_{\lambda_s}( \theta )
      \prod_{s=1}^m ( \nabla X )^\dagger_{j_s,i_s}
    \\
    & \qquad +
      \sum_{r=1}^{m-1} \sum_{\lambda_1, \ldots, \lambda_r=1}^{n+1}
      (\nabla_\Gamma)_{\lambda_r} \cdots (\nabla_\Gamma)_{\lambda_1} f( X(\theta) )
      \sum_{j_m, \ldots, j_1=1}^n
      \sum_{\Pp_{m,r}} \prod_{s=1}^r \frac{\partial^{\abs{\sigma_s}}}{\partial \theta_{j_{\sigma_s}}} X_{\lambda_s}(\theta)
      \prod_{s=1}^m ( \nabla X )^\dagger_{j_s,i_s}.
  \end{align*}
  For the first term on the right hand side we apply \cref{eq:right-inverse} and that the tangential gradient is already in the tangent space to see
  \begin{align*}
    & \sum_{\lambda_1, \ldots, \lambda_m=1}^{n+1}
    (\nabla_\Gamma)_{\lambda_m} \cdots (\nabla_\Gamma)_{\lambda_1} f( X(\theta) )
      \sum_{j_m, \ldots, j_1=1}^n
      \prod_{s=1}^m \frac{\partial}{\partial \theta_{j_s}} X_{\lambda_s}( \theta )
      \prod_{s=1}^m ( \nabla X )^\dagger_{j_s,i_s} \\
    & = \sum_{\lambda_1, \ldots, \lambda_m=1}^{n+1}
    (\nabla_\Gamma)_{\lambda_m} \cdots (\nabla_\Gamma)_{\lambda_1} f( X(\theta) )
      \prod_{s=1}^m P_{\lambda_s, i_s}( x ) \\
    & = ( \nabla_\Gamma )_{i_m} \cdots ( \nabla_\Gamma )_{i_1} f( X( \theta ) ).
  \end{align*}
  The result then follows by simply rearranging the terms.
\end{proof}

\end{appendix}

\clearpage


\begin{thebibliography}{72}
\providecommand{\natexlab}[1]{#1}
\providecommand{\url}[1]{\texttt{#1}}
\expandafter\ifx\csname urlstyle\endcsname\relax
  \providecommand{\doi}[1]{doi: #1}\else
  \providecommand{\doi}{doi: \begingroup \urlstyle{rm}\Url}\fi

\bibitem[Alphonse et~al.(2015{\natexlab{a}})Alphonse, Elliott and
  Stinner]{AlpEllSti15a-pp}
A.~Alphonse, C.~M. Elliott and B.~Stinner.
\newblock {An abstract framework for parabolic {PDE}s on evolving spaces}.
\newblock \emph{Port. Math.}, 72\penalty0 (1):\penalty0 1--46,
  2015{\natexlab{a}}.

\bibitem[Alphonse et~al.(2015{\natexlab{b}})Alphonse, Elliott and
  Stinner]{AlpEllSti15b-pp}
A.~Alphonse, C.~M. Elliott and B.~Stinner.
\newblock {On some linear parabolic {PDE}s on moving hypersurfaces}.
\newblock \emph{Interfaces Free Bound.}, 17:\penalty0 157--187,
  2015{\natexlab{b}}.

\bibitem[Alphonse et~al.(2018)Alphonse, Elliott and Terra]{AlpEllTer17}
A.~Alphonse, C.~M. Elliott and J.~Terra.
\newblock {A coupled ligand-receptor bulk surface system, on a moving domain:
  {R}egularity and convergence to equilibrium}.
\newblock \emph{SIAM Journal on Mathematical Analysis}, 50\penalty0
  (2):\penalty0 1544--1592, 2018.

\bibitem[Arnold et~al.(2002)Arnold, Brezzi, Cockburn and Marini]{ArnBreCoc02}
D.~Arnold, F.~Brezzi, B.~Cockburn and L.~Marini.
\newblock {Unified analysis of discontinuous {G}alerkin methods for elliptic
  problems}.
\newblock \emph{SIAM Journal on Numerical Analysis}, 39\penalty0 (5):\penalty0
  1749--1779, 2002.
\newblock \doi{10.1137/S0036142901384162}.

\bibitem[Aubin(1982)]{Aub82}
T.~Aubin.
\newblock {\em{Nonlinear analysis on manifolds, {M}onge-{A}mp{\`e}re
  equations}}, volume 252 of \emph{Grundlehren der mathematischen
  {W}issenschaften}.
\newblock Springer Verlag, Berlin New Yoork, 1982.

\bibitem[Badia and Codina(2006)]{BadCod06}
S.~Badia and R.~Codina.
\newblock {Analysis of a stabilized finite element approximation of the
  transient convection diffusion equation using an {ALE} framework}.
\newblock \emph{SIAM Journal on Numerical Analysis}, 44\penalty0 (5):\penalty0
  2159--2197, 2006.
\newblock \doi{10.1137/050643532}.

\bibitem[Barrett et~al.(2015)Barrett, Garcke and N{\"u}rnberg]{BarGarNur15}
J.~W. Barrett, H.~Garcke and R.~N{\"u}rnberg.
\newblock {On the stable numerical approximation of two-phase flow with
  insoluble surfactant}.
\newblock \emph{ESAIM :Mathematical modelling and numerical analysis},
  49\penalty0 (2):\penalty0 421--458, 2015.

\bibitem[Barrett et~al.(2017)Barrett, Deckelnick and Styles]{BarDecSty17}
J.~W. Barrett, K.~P. Deckelnick and V.~M. Styles.
\newblock {Numerical analysis for a system coupling curve evolution to reaction
  diffusion on the curve}.
\newblock \emph{SIAM Journal on Numerical Analysis}, 55\penalty0 (2):\penalty0
  1080--1100, 2017.
\newblock \doi{10.1137/16M1083682}.

\bibitem[Barrett et~al.(2019)Barrett, Garcke and N{\"u}rnberg]{BarGarNur19}
J.~W. Barrett, H.~Garcke and R.~N{\"u}rnberg.
\newblock {Parametric finite element approximations of curvature driven
  interface evolutions}.
\newblock \emph{arxiv preprint 1903.09462}, 2019.

\bibitem[Bernardi(1989)]{Ber89}
C.~Bernardi.
\newblock {Optimal finite-element Interpolation on curved domains}.
\newblock \emph{SIAM Journal on Numerical Analysis}, 26\penalty0 (5):\penalty0
  1212--1240, 1989.

\bibitem[Boffi and Gastaldi(2004)]{BofGas04}
D.~Boffi and L.~Gastaldi.
\newblock {Stability and geometric conservation laws for {ALE} formulations}.
\newblock \emph{Computer Methods in Applied Mechanics and Engineering},
  193\penalty0 (42-44):\penalty0 4717--4739, 2004.
\newblock \doi{10.1016/j.cma.2004.02.020}.

\bibitem[Bonito et~al.(2013{\natexlab{a}})Bonito, Kyza and
  Nochetto]{Bonito2013}
A.~Bonito, I.~Kyza and R.~H. Nochetto.
\newblock {Time-discrete higher order {ALE} formulations: a priori error
  analysis}.
\newblock \emph{Numerische Mathematik}, 125\penalty0 (2):\penalty0 225--257,
  mar 2013{\natexlab{a}}.
\newblock \doi{10.1007/s00211-013-0539-3}.

\bibitem[Bonito et~al.(2013{\natexlab{b}})Bonito, Kyza and
  Nochetto]{Bonito2013a}
A.~Bonito, I.~Kyza and R.~H. Nochetto.
\newblock {Time-discrete higher-order {ALE} formulations: Stability}.
\newblock \emph{{SIAM} Journal on Numerical Analysis}, 51\penalty0
  (1):\penalty0 577--604, jan 2013{\natexlab{b}}.
\newblock \doi{10.1137/120862715}.

\bibitem[Burman et~al.(2016)Burman, Hansbo, Larson and Zahedi]{BurHanLar16}
E.~Burman, P.~Hansbo, M.~G. Larson and S.~Zahedi.
\newblock {Cut finite element methods for coupled bulk--surface problems}.
\newblock \emph{Numerische Mathematik}, 133\penalty0 (2):\penalty0 203--231,
  2016.

\bibitem[Cermelli et~al.(2005)Cermelli, Fried and Gurtin]{CerFriGur05}
P.~Cermelli, E.~Fried and M.E. Gurtin.
\newblock {Transport relations for surface integrals arising in thge
  formulation of balance laws for evolving fluid interfaces}.
\newblock \emph{J. Fluid Mech.}, 544:\penalty0 339--351, 2005.

\bibitem[Ciarlet(1978)]{Cia78}
P.~G. Ciarlet.
\newblock {\em{The finite element method for elliptic problems}}, volume~4.
\newblock North-Holland Pub. Co., Amsterdam, 1978.

\bibitem[Ciarlet and Raviart(1972)]{CiaRav72}
P.~G. Ciarlet and P.~A. Raviart.
\newblock {{Interpolation theory over curved elements, with applications to
  finite element methods}}.
\newblock \emph{Computer Methods in Applied Mechanics and Engineering},
  1\penalty0 (2):\penalty0 217--249, 1972.

\bibitem[Deckelnick and Styles(2018)]{DecSty18}
K.~Deckelnick and V.~Styles.
\newblock {Stability and error analysis for a diffuse interface approach to an
  advection--diffusion equation on a moving surface}.
\newblock \emph{Numerische Mathematik}, 139\penalty0 (3):\penalty0 709--741,
  2018.
\newblock \doi{10.1007/s00211-018-0946-6}.

\bibitem[Deckelnick et~al.(2005)Deckelnick, Dziuk and Elliott]{DecDziEll05}
K.~Deckelnick, G.~Dziuk and C.~M. Elliott.
\newblock {{Computation of geometric partial differential equations and mean
  curvature flow}}.
\newblock \emph{Acta Numerica}, 14:\penalty0 139--232, May 2005.
\newblock ISSN 0962-4929.
\newblock \doi{10.1017/S0962492904000224}.

\bibitem[Deckelnick et~al.(2009)Deckelnick, Dziuk, Elliott and
  Heine]{DecDziEll09}
K.~Deckelnick, G.~Dziuk, C.~M. Elliott and C.-J. Heine.
\newblock {{An $h$-narrow band finite-element method for elliptic equations on
  implicit surfaces}}.
\newblock \emph{IMA Journal of Numerical Analysis}, 30\penalty0 (2):\penalty0
  351--376, 2009.

\bibitem[Deckelnick et~al.(2014)Deckelnick, Elliott and Ranner]{DecEllRan14}
K.~Deckelnick, C.~M. Elliott and T.~Ranner.
\newblock {{Unfitted finite element methods using bulk meshes for surface
  partial differential equations}}.
\newblock \emph{SIAM Journal on Numerical Analysis}, 52\penalty0 (4):\penalty0
  2137--2162, 2014.
\newblock \doi{10.1137/130948641}.

\bibitem[Deckelnick et~al.(2015)Deckelnick, Elliott, Kornhuber and
  Sethian]{DecEllKor15}
K.~Deckelnick, C.~M. Elliott, R.~Kornhuber and J.~A. Sethian.
\newblock {Geometric partial differential equations: Surface and bulk
  processes}.
\newblock volume~12 of \emph{Oberwolfach Reports}, pages 3101--3178. European
  Mathematical Society Publishing House, 2015.

\bibitem[Deckelnick et~al.(2019)Deckelnick, Elliott, Miura and
  Styles]{DecEllMiu19}
K.~Deckelnick, C.~M. Elliott, T.-H. Miura and V.M. Styles.
\newblock {Hamilton-Jacobi equations on an evolving surface}.
\newblock \emph{Math. Comp.}, 88:\penalty0 2635--2664, 2019.

\bibitem[Demlow(2009)]{Dem09}
A.~Demlow.
\newblock {{Higher-order finite element methods and pointwise error estimates
  for elliptic problems on surface}}.
\newblock \emph{SIAM Journal on Numerical Analysis}, 47\penalty0 (2):\penalty0
  805--827, 2009.

\bibitem[Donea et~al.(1982)Donea, Giuliani and Halleux]{Donea1982}
J.~Donea, S.~Giuliani and J.P. Halleux.
\newblock {An arbitrary {L}agrangian-{E}ulerian finite element method for
  transient dynamic fluid-structure interactions}.
\newblock \emph{Computer Methods in Applied Mechanics and Engineering},
  33\penalty0 (1-3):\penalty0 689--723, sep 1982.
\newblock \doi{10.1016/0045-7825(82)90128-1}.

\bibitem[Douglas and Dupont(1973)]{DouDup73}
J.~Douglas and T.~Dupont.
\newblock {Galerkin methods for parabolic equations with nonlinear boundary
  conditions}.
\newblock \emph{Numerische Mathematik}, 20\penalty0 (3):\penalty0 213--237, jun
  1973.
\newblock \doi{10.1007/bf01436565}.

\bibitem[Dziuk(1988)]{Dzi88}
G.~Dziuk.
\newblock {{Finite elements for the Beltrami operator on arbitrary surfaces}}.
\newblock In Stefan Hildebrandt and Rolf Leis, editors, \emph{Partial
  Differential Equations and Calculus of Variations}, volume 1357 of
  \emph{Lecture Notes in Mathematics}, pages 142--155. Springer-Verlag, Berlin,
  1988.
\newblock ISBN 978-3-540-50508-2.

\bibitem[Dziuk and Elliott(2007)]{DziEll07}
G.~Dziuk and C.~M. Elliott.
\newblock {{Finite elements on evolving surfaces}}.
\newblock \emph{IMA Journal of Numerical Analysis}, 27\penalty0 (2):\penalty0
  262--292, April 2007.
\newblock \doi{10.1093/imanum/drl023}.

\bibitem[Dziuk and Elliott(2010)]{DziEll10}
G.~Dziuk and C.~M. Elliott.
\newblock {An Eulerian approach to transport and diffusion on evolving implicit
  surfaces}.
\newblock \emph{Computing and Visualization in Science}, 13:\penalty0 17--28,
  2010.
\newblock ISSN 1432-9360.
\newblock 10.1007/s00791-008-0122-0.

\bibitem[Dziuk and Elliott(2012)]{DziEll12}
G~Dziuk and C.~M. Elliott.
\newblock {{A fully discrete evolving surface finite element method}}.
\newblock \emph{SIAM Journal on Numerical Analysis}, 50\penalty0 (5):\penalty0
  2677--2694, 2012.
\newblock \doi{10.1137/110828642}.

\bibitem[Dziuk and Elliott(2013{\natexlab{a}})]{DziEll13}
G.~Dziuk and C.~M. Elliott.
\newblock {{$L^2$-estimates for the evolving surface finite element method}}.
\newblock \emph{Mathematics of computation}, 82:\penalty0 1--24,
  2013{\natexlab{a}}.
\newblock ISSN 1088-6842.
\newblock \doi{10.1090/S0025-5718-2012-02601-9}.

\bibitem[Dziuk and Elliott(2013{\natexlab{b}})]{DziEll13a}
G.~Dziuk and C.~M. Elliott.
\newblock {Finite element methods for surface PDEs}.
\newblock \emph{Acta Numerica}, 22:\penalty0 289--396, 5 2013{\natexlab{b}}.
\newblock ISSN 1474-0508.
\newblock \doi{10.1017/S0962492913000056}.

\bibitem[Dziuk et~al.(2011)Dziuk, Lubich and Mansor]{DziLubMan11}
G.~Dziuk, Ch. Lubich and D.~Mansor.
\newblock {{Runga-Kutta time discretization of parabolic differential equations
  on evolving surfaces}}.
\newblock \emph{IMA Journal of Numerical Analysis}, 25\penalty0 (4):\penalty0
  783--796, February 2011.
\newblock ISSN 0272-4979.
\newblock \doi{10.1093/imanum/drr017}.

\bibitem[Elliott and Ranner(2013)]{EllRan13}
C.~M. Elliott and T.~Ranner.
\newblock {Finite element analysis for a coupled bulk--surface partial
  differential equation}.
\newblock \emph{IMA Journal of Numerical Analysis}, 33\penalty0 (2):\penalty0
  377--402, 2013.
\newblock \doi{10.1093/imanum/drs022}.

\bibitem[Elliott and Styles(2012)]{EllSty12}
C.~M. Elliott and V.~Styles.
\newblock {{An ALE ESFEM for solving PDEs on evolving surfaces}}.
\newblock \emph{Milan Journal of Mathematics}, pages 1--33, 2012.

\bibitem[Elliott and Venkataraman(2015)]{EllVen15}
C.~M. Elliott and C.~Venkataraman.
\newblock {Error analysis for an ALE evolving surface finite element method}.
\newblock \emph{Numerical Methods for Partial Differential Equations},
  31\penalty0 (2):\penalty0 459--499, 2015.
\newblock ISSN 1098-2426.
\newblock \doi{10.1002/num.21930}.

\bibitem[Elliott et~al.(2017)Elliott, Ranner and Venkataraman]{EllRanVen15}
C.~M. Elliott, T.~Ranner and C.~Venkataraman.
\newblock {Coupled bulk-surface free boundary problems arising from a
  mathematical model of receptor-ligand dynamics}.
\newblock \emph{SIAM Journal on Mathematical Analysis}, 49\penalty0
  (1):\penalty0 360--397, Jan 2017.
\newblock ISSN 1095-7154.
\newblock \doi{10.1137/15m1050811}.

\bibitem[Foote(1984)]{Foote1984}
R.~L. Foote.
\newblock {Regularity of the distance function}.
\newblock \emph{Proceedings of the American Mathematical Society}, 92\penalty0
  (1):\penalty0 153--153, jan 1984.
\newblock \doi{10.1090/s0002-9939-1984-0749908-9}.

\bibitem[Formaggia and Nobile(1999)]{ForNob99}
L.~Formaggia and F.~Nobile.
\newblock {A stability analysis for the arbitrary lagrangian: Eulerian
  formulation with finite elements}.
\newblock \emph{East-West Journal of Numerical Mathematics}, 7\penalty0
  (2):\penalty0 105--132, 1999.

\bibitem[Formaggia and Nobile(2004)]{ForNob04}
L.~Formaggia and F.~Nobile.
\newblock {Stability analysis of second-order time accurate schemes for
  ALE-FEM}.
\newblock \emph{Computer Methods in Applied Mechanics and Engineering},
  193\penalty0 (39-41):\penalty0 4097--4116, Oct 2004.
\newblock ISSN 0045-7825.
\newblock \doi{10.1016/j.cma.2003.09.028}.

\bibitem[Gastaldi(2001)]{Gas01}
L.~Gastaldi.
\newblock {A priori error estimates for the arbitrary Lagrangian Eulerian
  formulation with finite elements}.
\newblock \emph{Journal of Numerical Mathematics}, 9\penalty0 (2), Jan 2001.
\newblock ISSN 1569-3953.
\newblock \doi{10.1515/jnma.2001.123}.

\bibitem[Gawlik and Lew(2015)]{GawLew15}
E.~S. Gawlik and A.~J. Lew.
\newblock {Unified analysis of finite element methods for problems with moving
  boundaries}.
\newblock \emph{SIAM Journal on Numerical Analysis}, 53\penalty0 (6):\penalty0
  2822--2846, Jan 2015.
\newblock ISSN 1095-7170.
\newblock \doi{10.1137/140990437}.

\bibitem[Giesselmann and M{\"u}ller(2014)]{GieMul14}
J.~Giesselmann and T.~M{\"u}ller.
\newblock {Geometric error of finite volume schemes for conservation laws on
  evolving surfaces}.
\newblock \emph{Numerische Mathematik}, 128\penalty0 (3):\penalty0 489--516,
  2014.
\newblock \doi{10.1007/s00211-014-0621-5}.

\bibitem[Gilbarg and Trudinger(1983)]{GilTru01}
D.~Gilbarg and N.~S. Trudinger.
\newblock {\em{Elliptic partial differential equations of second order}}.
\newblock Springer-Verlag, Berlin, 1983.
\newblock ISBN 3540411607.

\bibitem[Gross et~al.(2015)Gross, Olshanskii and Reusken]{GroOlsReu14-pp}
S.~Gross, M.~A. Olshanskii and A.~Reusken.
\newblock {{A trace finite element method for a class of coupled bulk-interface
  transport problems}}.
\newblock \emph{ESAIM :Mathematical modelling and numerical analysis},
  49:\penalty0 1303--1330, 2015.

\bibitem[Hebey(2000)]{Heb00}
E.~Hebey.
\newblock {\em{Nonlinear analysis on manifolds: Soblev spaces and
  inequalities}}.
\newblock American Mathematical Society, New York, NY, 2000.

\bibitem[Heine(2005)]{Hei05}
C.~J. Heine.
\newblock {{Isoparametric finite element approximation of curvature on
  hypersurfaces}}.
\newblock Technical report, Freiburg, 2005.

\bibitem[Hirt et~al.(1974)Hirt, Amsden and Cook]{Hirt1974}
C.W Hirt, A.A Amsden and J.L Cook.
\newblock {An arbitrary Lagrangian-Eulerian computing method for all flow
  speeds}.
\newblock \emph{Journal of Computational Physics}, 14\penalty0 (3):\penalty0
  227--253, mar 1974.
\newblock \doi{10.1016/0021-9991(74)90051-5}.

\bibitem[Hughes et~al.(1981)Hughes, Liu and Zimmermann]{Hughes1981}
T.~J.R. Hughes, W.~K. Liu and T.~K. Zimmermann.
\newblock {Lagrangian-Eulerian finite element formulation for incompressible
  viscous flows}.
\newblock \emph{Computer Methods in Applied Mechanics and Engineering},
  29\penalty0 (3):\penalty0 329--349, dec 1981.
\newblock \doi{10.1016/0045-7825(81)90049-9}.

\bibitem[Ipsen and Rehman(2008)]{IpsReh08}
I.~Ipsen and R.~Rehman.
\newblock {Perturbation bounds for determinants and characteristic
  polynomials}.
\newblock \emph{SIAM Journal on Matrix Analysis and Applications}, 30\penalty0
  (2):\penalty0 762--776, 2008.
\newblock \doi{10.1137/070704770}.

\bibitem[Kov{\'a}cs(2018)]{Kov18}
B.~Kov{\'a}cs.
\newblock {High-order evolving surface finite element method for parabolic
  problems on evolving surfaces}.
\newblock \emph{IMA Journal of Numerical Analysis}, 38:\penalty0 430--459,
  2018.

\bibitem[Kov{\'a}cs and Guerra(2018)]{KovPow18}
B.~Kov{\'a}cs and C.~A.~Power Guerra.
\newblock {Higher order time discretisations with {ALE} finite elements for
  parabolic problems on evolving surfaces}.
\newblock \emph{IMA Journal of Numerical Analysis}, 38:\penalty0 460--494,
  2018.

\bibitem[Kovacs and Lubich(2017)]{KovLub17}
B.~Kovacs and C.~Lubich.
\newblock {Numerical analysis of parabolic problems with dynamic boundary
  conditions}.
\newblock \emph{IMA Journal of Numerical Analysis}, 37:\penalty0 1--39, 2017.

\bibitem[Kov{\'{a}}cs et~al.(2017)Kov{\'{a}}cs, Li, Lubich and
  Guerra]{KovLiLub17}
B.~Kov{\'{a}}cs, Buyang Li, Ch. Lubich and C.~A.~Power Guerra.
\newblock {Convergence of finite elements on an evolving surface driven by
  diffusion on the surface}.
\newblock \emph{Numerische Mathematik}, 137\penalty0 (3):\penalty0 643--689,
  April 2017.
\newblock \doi{10.1007/s00211-017-0888-4}.

\bibitem[Kov{\'a}cs et~al.(2019)Kov{\'a}cs, Li and Lubich]{KovLiLub18}
B.~Kov{\'a}cs, Buyang Li and Ch. Lubich.
\newblock {A convergent evolving finite element algorithm for mean curvature
  flow of closed surfaces}.
\newblock \emph{Numer. Math.}, 143:\penalty0 797--853, 2019.

\bibitem[Ladyzhenskaya and Uraltseva(1968)]{LadUra68}
O.~A. Ladyzhenskaya and N.~N. Uraltseva.
\newblock {\em{Linear and quasilinear elliptic equations. Translated by Scripta
  Technica. Translation editor: Leon Ehrenpreis}}.
\newblock Academic Press, New York, 1968.

\bibitem[Lehrenfeld and Olshanskii(2019)]{LehOls19}
C.~Lehrenfeld and M.~A. Olshanskii.
\newblock {An Eulerian finite element method for PDEs in time-dependent
  domains}.
\newblock \emph{ESAIM: M2AN}, 53\penalty0 (2):\penalty0 585--614, 2019.
\newblock \doi{10.1051/m2an/2018068}.

\bibitem[Lehrenfeld et~al.(2018)Lehrenfeld, Olshanskii and Xu]{LehOlsXu18}
C.~Lehrenfeld, M.~A. Olshanskii and X.~Xu.
\newblock {A stabilized trace finite element method for partial differential
  equations on evolving surfaces}.
\newblock \emph{SIAM Journal on Numerical Analysis}, 56\penalty0 (3):\penalty0
  1643--1672, 2018.
\newblock \doi{10.1137/17M1148633}.

\bibitem[Lubich et~al.(2013)Lubich, Mansour and Venkataraman]{LubManVen13}
Ch. Lubich, D.~Mansour and C.~Venkataraman.
\newblock {Backward difference time discretization of parabolic differential
  equations on evolving surfaces}.
\newblock \emph{IMA Journal of Numerical Analysis}, 33\penalty0 (4):\penalty0
  1365--1385, 2013.
\newblock \doi{10.1093/imanum/drs044}.

\bibitem[Nedelec(1976)]{Ned76}
J.~C. Nedelec.
\newblock {{Curved finite element methods for the solution of singular integral
  equations on surfaces in $\mathbb{R}^3$}}.
\newblock \emph{Computer Methods in Applied Mechanics and Engineering},
  8\penalty0 (1):\penalty0 61--80, 1976.
\newblock ISSN 0045-7825.
\newblock \doi{10.1016/0045-7825(76)90053-0}.

\bibitem[Nobile(2001)]{Nob01}
F.~Nobile.
\newblock {\em{Numerical approximation of fluid-structure interaction problems
  with application to haemodynamics}}.
\newblock PhD thesis, 2001.

\bibitem[Olshanskii and Reusken(2017)]{OlsReu16}
M.~A. Olshanskii and A.~Reusken.
\newblock {Trace finite element methods for PDEs on surfaces}.
\newblock In \emph{Geometrically Unfitted Finite Element Methods and
  Applications}, volume 121, pages 211--258. Springer, 2017.

\bibitem[Olshanskii et~al.(2014)Olshanskii, Reusken and Xu]{OlsReuXu14}
M.~A. Olshanskii, A.~Reusken and X.~Xu.
\newblock {An Eulerian space-time finite element method for diffusion problems
  on evolving surfaces}.
\newblock \emph{SIAM Journal on Numerical Analysis}, 52\penalty0 (3):\penalty0
  1354--1377, 2014.
\newblock \doi{10.1137/130918149}.

\bibitem[Pozzi and Stinner(2017)]{PozSti17}
P.~Pozzi and B.~Stinner.
\newblock {Curve shortening flow coupled to lateral diffusion}.
\newblock \emph{Numerische Mathematik}, 135\penalty0 (4):\penalty0 1171--1205,
  Apr 2017.
\newblock ISSN 0945-3245.
\newblock \doi{10.1007/s00211-016-0828-8}.

\bibitem[Pozzi and Stinner(2018)]{PozSti18}
P.~Pozzi and B.~Stinner.
\newblock {{Elastic flow interacting with a lateral diffusion process: the
  one-dimensional graph case}}.
\newblock \emph{IMA Journal of Numerical Analysis}, 39\penalty0 (1):\penalty0
  201--234, 03 2018.
\newblock ISSN 0272-4979.
\newblock \doi{10.1093/imanum/dry004}.

\bibitem[Ranner(2013)]{Ran13}
T.~Ranner.
\newblock {\em{Computational surface partial differential equations}}.
\newblock PhD thesis, University of Warwick, Coventry. UK, 2013.

\bibitem[Strang and Fix(2008)]{StrangBook}
G.~Strang and George~J. Fix.
\newblock {\em{An analysis of the finite element method}}.
\newblock Wellesley-Cambridge Press, Wellesley, MA, second edition, 2008.

\bibitem[Suchde and Kuhnert(2019)]{SucKuh19}
P.~Suchde and J.~Kuhnert.
\newblock {A fully Lagrangian meshfree framework for PDEs on evolving
  surfaces}.
\newblock \emph{Journal of Computational Physics}, 395:\penalty0 38 -- 59,
  2019.
\newblock ISSN 0021-9991.
\newblock \doi{https://doi.org/10.1016/j.jcp.2019.06.031}.

\bibitem[Thom\'{e}e(2006)]{Tho06}
V.~Thom\'{e}e.
\newblock {\em{Galerkin finite element methods for parabolic problems}}.
\newblock Springer, 2006.
\newblock ISBN 3540331212.

\bibitem[Torres-S{\'a}nchez et~al.(2019)Torres-S{\'a}nchez, Mill{\'a}n and
  Arroyo]{TorMilArr19}
A.~Torres-S{\'a}nchez, D.~Mill{\'a}n and M.~Arroyo.
\newblock {Modelling fluid deformable surfaces with an emphasis on biological
  interfaces}.
\newblock \emph{Journal of Fluid Mechanics}, 872:\penalty0 218--271, 2019.
\newblock \doi{DOI: 10.1017/jfm.2019.341}.

\bibitem[Vierling(2014)]{Vie14}
M.~Vierling.
\newblock {{Parabolic optimal control problems on evolving surfaces subject to
  point-wise box constraints on the control -- theory and numerical
  realization}}.
\newblock \emph{Interfaces and Free Boundaries}, 16\penalty0 (2):\penalty0
  137--173, 2014.

\bibitem[Zimmermann et~al.(2019)Zimmermann, Toshniwal, Landis, Hughes,
  Mandadapu and Sauer]{ZimTosLan19}
C.~Zimmermann, D.~Toshniwal, C.~M. Landis, T.~J.R. Hughes, K.~K. Mandadapu and
  R.~A. Sauer.
\newblock {An isogeometric finite element formulation for phase transitions on
  deforming surfaces}.
\newblock \emph{Computer Methods in Applied Mechanics and Engineering},
  351:\penalty0 441 -- 477, 2019.
\newblock ISSN 0045-7825.
\newblock \doi{https://doi.org/10.1016/j.cma.2019.03.022}.

\end{thebibliography}
\end{document}